 \newtheorem{proposition}{Proposition}[section] 
 \newtheorem{lemma}[proposition]{Lemma}
 \newtheorem*{lemma*}{Lemma}
 \newtheorem{theorem}[proposition]{Theorem}
 \theoremstyle{definition}
 \newtheorem{definition}[proposition]{Definition}
 \newtheorem*{definition*}{Definition}
 \newtheorem{example}[proposition]{Example}
 \newtheorem*{example*}{Example}
 \newtheorem{remark}[proposition]{Remark}
\numberwithin{equation}{section}
\def\ox{\otimes}
\def\M{\mathbb M}
\def\op{\mathsf{op}}
\def\bl{}
\def\br{}
\def\act{}
\begin{document}

\title{Comodules over weak multiplier bialgebras} 
\author{Gabriella B\"ohm}
\address{Wigner Research Centre for Physics, H-1525 Budapest 114,
P.O.B.\ 49, Hungary} 
\email{bohm.gabriella@wigner.mta.hu}
\thanks{I am grateful to Jos\'e G\'omez-Torrecillas and Alfons Van Daele for
 highly enlightening discussions; and to the referee for several helpful
 comments. My research was supported by the Hungarian Scientific
 Research Fund OTKA, grant K108384.}  
\begin{abstract}
This is a sequel paper of \cite{BoGTLC:wmba} in which we study the 
comodules over a regular weak multiplier bialgebra over a field, with a full
comultiplication. Replacing the usual notion of coassociative coaction over a
(weak) bialgebra, a comodule is defined via a pair of compatible linear
maps. Both the total algebra and the base (co)algebra of a regular weak
multiplier bialgebra with a full comultiplication are shown to carry comodule
structures. Kahng and Van Daele's integrals \cite{VDa:int} are interpreted as
comodule maps from the total to the base algebra. 
Generalizing the counitality of a comodule to the multiplier setting, we
consider the particular class of so-called full comodules. 
They are shown to carry bi(co)module structures over the base (co)algebra and 
constitute a monoidal category via the (co)module tensor
product over the base (co)algebra. If a regular weak multiplier bialgebra 
with a full comultiplication possesses an antipode, then finite dimensional
full comodules are shown to possess duals in the monoidal category of full
comodules. Hopf modules are introduced over regular weak multiplier bialgebras
with a full comultiplication. Whenever there is an antipode, the Fundamental
Theorem of Hopf Modules is proven. It asserts that the category of Hopf
modules is equivalent to the category of firm modules over the base algebra. 
\end{abstract} 
\subjclass[2010]{16T05, 16T10, 16D90, 18B40}
\date{Nov 2013}
\maketitle

\section*{Introduction}

The categories of modules and comodules over a weak bialgebra (say, over a
field), are well-studied in the literature. The most important features are
the following.
To any weak bialgebra $A$ over a field, there is an associated separable
Frobenius algebra $R$ (known as the `base (co)algebra'), see \cite{Nill, WHAI,
BoCaJa}. It is a subalgebra and a quotient coalgebra of $A$. Both the category
$M_A$ of $A$-modules and the category $M^A$ of $A$-comodules are monoidal,
admitting strict monoidal forgetful functors to the category ${}_R M_R$ of
$R$-bimodules \cite{WHAII,BoCaJa}. If $A$ is a weak Hopf algebra, then finite
dimensional modules and comodules possess duals in the appropriate monoidal
category \cite{WHAII}, see also \cite{Scha:DuDoQgp, PhHH}.

The aim of this paper is to study analogous questions about {\em weak
multiplier bialgebras} in \cite{BoGTLC:wmba}. These are generalizations of weak
bialgebras in the spirit of \cite{VDae:MHA,VDaWa:Banach,VDaWa}. That is, the
algebra underlying a weak multiplier bialgebra $A$, is not required to possess
a unit. Instead, the multiplication is assumed to be non-degenerate and
surjective, so that the notion of multiplier algebra $\M(A)$
\cite{Dauns:Multiplier} is available. The comultiplication is a
multiplicative map from $A$ to the multiplier algebra of $A\otimes A$. It is
subject to certain compatibility axioms in \cite[Definition
2.1]{BoGTLC:wmba}. A central role is played by a canonical idempotent element
$E$ in the multiplier algebra of $A\ox A$. Whenever the comultiplication is
regular and full (in the sense discussed in \cite[Definition 2.3 and Theorem
3.13]{BoGTLC:wmba}, respectively), there is a co-separable co-Frobenius
coalgebra $R$ associated to $A$, see \cite[Theorem 4.6]{BoGTLC:wmba} (it is a
non-unital subalgebra of the multiplier algebra of $A$). As shown in
\cite{BoVe}, $R$ possesses then a firm algebra structure and the monoidal
category of $R$-bicomodules is isomorphic to the monoidal category ${}_R M_R$
of firm $R$-bimodules. As a crucial difference from usual weak bialgebras, a
weak multiplier bialgebra $A$ is not known to induce any monad or
comonad. Still, whenever the comultiplication is regular and full, it was
shown in \cite[Section 5]{BoGTLC:wmba} that the category $M_{(A)}$, of
non-degenerate $A$-modules with surjective action, is monoidal and it
possesses a strict monoidal (forgetful) functor $U_{(A)}:M_{(A)} \to {}_R M_R$. 

In this paper we continue the study started in \cite{BoGTLC:wmba} by analyzing
the category of comodules.
Comodules over a weak multiplier bialgebra $A$ need to be defined in such a way
that in particular $A$ itself is a comodule via the comultiplication. Since in
this particular case the comultiplication $\Delta$ lands in the multiplier
algebra of $A\ox A$ (rather than in $A\ox A$), the existence of a coaction
from any comodule $V$ to $V\ox A$ would be too much to expect. However, we can
also not work with a `multiplier valued' coaction, what would be the most
naive generalization of the comultiplication $\Delta$, as the notion of
multiplier on $V\ox A$ is not available for an arbitrary vector space
$V$. Instead, one can observe that the comultiplication of a regular weak
multiplier bialgebra $A$ is uniquely encoded in the maps $T_1(a\ox
b):=\Delta(a)(1_{\M(A)}\ox b)$ and $T_3(a\ox b):=(1_{\M(A)}\ox b)\Delta(a)$ from
$A\ox A$ to $A\ox A$. In a similar spirit, in the same way as in
\cite{VDaZha:corep_I,VDaZha:corep_II} in the non-weak case, we define a right 
$A$-comodule $V$ in terms of a compatible pair of maps $V\ox A\to V\ox A$ 
(instead of a map $V\to V\ox A$). As a generalized coassociativity condition,
we require the same pentagonal equation in \cite{VDaZha:corep_I}, but we
supplement it with a normalization condition in terms of the canonical
idempotent element $E$. The counitality condition is replaced by the property
of `fullness': the `first leg' of either coaction map $V\ox A \to V\ox A$ is
required to generate the vector space $V$. 

Throughout the paper, we deal with regular weak multiplier bialgebras 
over a field. Their definition and some of the most important properties are
recalled from \cite{BoGTLC:wmba} in Section \ref{sec:prelims}, where also some
new technical lemmata are proven. The axioms of weak multiplier bialgebra are
presented in a somewhat new but equivalent form. In Section \ref{sec:comod} we
spell out the notions of comodule and its morphisms. We show that any regular
weak multiplier bialgebra $A$ is a right comodule over itself. 
Whenever in addition the comultiplication is right full, there is another
distinguished right comodule on the base algebra $R$. In this case, in
Section \ref{sec:int} we describe the integrals in \cite{VDa:int} as comodule
homomorphisms from $A$ to $R$. 
Under the same assumption of right full comultiplication, in Section
\ref{sec:bim} we equip any full right $A$-comodule with a firm $R$-bimodule
structure.
In Section \ref{sec:mon_cat}, the full subcategory of full right $A$-comodules
is shown to be monoidal, in such a way that there is a strict monoidal forgetful
functor to the category of firm $R$-bimodules. A similar result for multiplier
Hopf algebras was obtained in \cite{VDaZha:corep_I}. In Section \ref{sec:dual}
we show that the dual vector space of any finite dimensional right
$A$-comodule admits a left $A$-comodule structure. The dual of a finite
dimensional full right comodule is proven to be an (obviously finite
dimensional) full left comodule. If $A$ possesses an antipode, then it 
is shown to induce a (full) right $A$-comodule structure on any (full) left
$A$-comodule; in particular it induces a right comodule structure on
the dual vector space of any finite dimensional right comodule. A finite
dimensional full right comodule, and the corresponding full right comodule on
its dual vector space are shown to be duals in the monoidal category of full
right comodules. In Section \ref{sec:Hopf_mod} we introduce Hopf modules over a
regular weak multiplier bialgebra $A$. They are both non-degenerate
$A$-modules with a surjective action, and full $A$-comodules, in a compatible
way. In particular, whenever the comultiplication is right full, $A$ itself
is an $A$-Hopf module. Assuming that the comultiplication is both left and
right full and there exists an antipode, for any $A$-Hopf module $V$ we
discuss a distinguished subspace of the vector space of Hopf module
homomorphisms $A\to V$. It plays the role of the space of $A$-coinvariants in
$V$ and it can be smaller indeed than the vector space of Hopf module
homomorphisms $A\to V$. For example, the space of coinvariants of the $A$-Hopf
module $A$ is isomorphic to the base algebra of $A$ (which is a non-unital
subalgebra of the unital algebra of Hopf module endomorphisms of $A$). For a
regular weak multiplier bialgebra $A$ with left and right full
comultiplication and possessing an antipode, we prove the Fundamental Theorem 
of Hopf Modules; that is, an equivalence of the category of $A$-Hopf modules
and the category of firm modules over the base algebra of $A$. 

\section{Preliminaries}\label{sec:prelims}

In this section we recall from \cite{BoGTLC:wmba} the definition and the basic
properties of weak multiplier bialgebra. We also prove some new technical
lemmata for use in the later sections.

\subsection{Notation}\label{sec:notation}

Throughout the paper, for any vector space $V$, we denote the identity map
$V\to V$ also by $V$. On elements $v\in V$, also the notation $v\mapsto 1v$
or $v\mapsto v1$ is used, meaning multiplication with the unit $1$ of the base
field. For any vector spaces $V$ and $W$, the space of linear maps $V\to W$ is
denoted by $\mathsf{Lin}(V,W)$. For any subset $I$ of a vector space $V$,
$\langle v\ |\ v\in I\rangle$ denotes the subspace generated by $I$ in
$V$. For linear maps $f:V\to W$ and $g:W\to Z$, the composite is denoted by
juxtaposition $gf$. The tensor product of vector spaces is denoted by
$\ox$. For vector spaces $V$ and $W$, we denote by $\mathsf{tw}$ the flip map 
$$ 
\mathsf{tw}:V\ox W\to W\ox V,\qquad v\ox w\mapsto w\ox v.
$$
For any linear map $f:V\ox W\to V'\ox W'$, we consider the associated linear
maps $f^{21}:=\mathsf{tw}f\mathsf{tw}:W\ox V\to W'\ox V'$, 
$f^{13}:=(V'\ox \mathsf{tw})(f\ox Z)(V\ox \mathsf{tw}):V\ox Z\ox W \to V'\ox
Z\ox W'$, $f^{31}:=(W'\ox \mathsf{tw})(f^{21}\ox Z)(W\ox \mathsf{tw}):W\ox
Z\ox V \to W'\ox Z\ox V'$ and so on, for any vector space $Z$. 

\subsection{Multiplier algebra}\label{sec:multiplier}

Let $A$ be an associative algebra over a field, with multiplication $\mu:A\ox
A \to A$, $a\ox b \mapsto ab$, possibly possessing no unit. It is said to be
{\em idempotent} if $\mu$ is surjective; that is, $A$ is spanned by elements
of the form $ab$ for $a,b\in A$. It is termed {\em non-degenerate} if any of
the conditions $(ab=0\ \forall b\in A)$ and $(ba=0\ \forall b\in A)$ implies
$a=0$. A {\em multiplier} \cite{Dauns:Multiplier} on an idempotent
non-degenerate algebra $A$ is a pair $(\lambda,\varrho)$ of linear maps $A\to
A$ such that 
\begin{equation}\label{eq:mp}
a\lambda(b)=\varrho(a)b,\qquad \forall a,b\in A.
\end{equation}
For any multiplier $(\lambda,\varrho)$ on $A$, $\lambda$ is a morphism of right
$A$-modules and $\varrho$ is a morphism of left $A$-modules. Multipliers on $A$
constitute a unital associative algebra, the so-called {\em multiplier
algebra} $\M(A)$ with multiplication $(\lambda,\varrho)(\lambda',\varrho')=
(\lambda\lambda',\varrho'\varrho)$ and unit $1=(A,A)$. The multiplication in
$\M(A)$ will be denoted by $\mu$ too. Any element $a$ of $A$ can be regarded
as a multiplier via the embedding 
$$
A\to \M(A),\qquad a\mapsto (a(-),(-)a).
$$
This makes $A$ a two-sided ideal in $\M(A)$. Indeed, 
$$
a(\lambda,\varrho)=\varrho(a)\qquad \textrm{and}\qquad
(\lambda,\varrho)a=\lambda(a).
$$
The ideal $A$ is dense in $\M(A)$ in the sense that for $\nu\in \M(A)$, any
of the conditions $(\nu b=0\ \forall b\in A)$ and $(b\nu=0\ \forall b\in
A)$ implies $\nu=0$.

The opposite $A^{\mathsf{op}}$ of an idempotent non-degenerate algebra $A$ is
the same vector space $A$ equipped with the multiplication
$\mu^{\mathsf{op}}:a\ox b\mapsto ba$. It is again an idempotent non-degenerate
algebra and $\M(A^{\mathsf{op}})\cong \M(A)^{\mathsf{op}}$.

The tensor product $A\ox B$ of idempotent non-degenerate algebras $A$ and $B$
is again an idempotent non-degenerate algebra via the factorwise
multiplication. We extend the leg numbering notation of linear maps in Section
\ref{sec:notation} to multipliers on $A\ox B$ by putting
$(\lambda,\varrho)^{ij}:=(\lambda^{ij},\varrho^{ij})$, for any labels $i,j$.

The following result, about the extension of maps to the multiplier algebra, is
due to Van Daele and Wang \cite[Proposition A.3]{VDaWa:Banach}: Let $A$ and
$B$ be idempotent non-degenerate algebras. Let $\phi:A\to \M(B)$ be a
multiplicative linear map and $e$ be an idempotent element of $B$ (i.e. such
that $e^2=e$). If 
$$
\langle \phi(a)b\ |\ a\in A,b\in B\rangle=\langle eb\ | \ b\in B\rangle
\quad \textrm{and}\quad
\langle b\phi(a)\ |\ a\in A,b\in B\rangle=\langle be\ | \ b\in B\rangle
$$
then there is a unique multiplicative map $\overline \phi:\M(A)\to \M(B)$ such
that $\overline \phi(1)=e$ and $\overline \phi(a)=\phi(a)$ for any $a\in A$.

\subsection{Weak multiplier bialgebra}\label{sec:wmba}

We present here the definition of weak multiplier bialgebra in
\cite[Definition 2.1]{BoGTLC:wmba} in a slightly different, but equivalent
form. Recall from Section \ref{sec:notation} that for any vector space $A$,
we denote both as $a\mapsto 1a$ and as $a\mapsto a1$ the action of the
identity map $A$ on elements $a\in A$, where $1$ means the unit of the base
field.

\begin{definition}\label{def:wmba}
A {\em weak multiplier bialgebra} over a field $k$ is given by an idempotent
non-degenerate $k$-algebra $A$ with multiplication $\mu:A\ox A \to A$, and
linear maps $E_1,E_2,T_1,T_2:A\ox A \to A\ox A$ and a linear map
$\epsilon:A\to k$ --- called the {\em counit} --- subject to the following
axioms. 
\begin{itemize}
\item[{(i)}] For all $a,b,c,d\in A$, $(a\ox b)E_1(c\ox d)=E_2(a\ox b)(c\ox d)$.
\item[{(ii)}] $E_1^2=E_1$, equivalently, $E_2^2=E_2$.
\item[{(iii)}] $(T_2\ox A)(A\ox T_1)=(A\ox T_1)(T_2\ox A)$.
\item[{(iv)}] $(\epsilon \ox A)T_1=\mu=(A\ox \epsilon)T_2$.
\item[{(v)}] $(\mu \ox A)T_1^{13}(A\ox T_1)=T_1(\mu\ox A)$,
equivalently,  
$(A\ox \mu)T_2^{13}(T_2\ox A)=T_2(A\ox \mu)$.
\item[{(vi)}] $\langle T_1(a\ox c)(b\ox 1)\ |\ a,b,c\in A\rangle =
\langle E_1(b\ox c)\ |\ b,c\in A\rangle$ and \\
$\langle (1\ox c)T_2(b\ox a)\ |\ a,b,c\in A\rangle =
\langle E_2(b\ox c)\ |\ b,c\in A\rangle$.
\item[{(vii)}] $(E_1\ox A)(A\ox T_1)=(A\ox T_1)(E_1\ox A)$ 
(equivalently, \\
$(A\ox E_2)(T_2\ox A)=(T_2\ox A)(A\ox E_2)$) and \\
$(E_2\ox A)(A\ox T_2)=(A\ox T_2)E_2^{13}$
(equivalently, $(A\ox E_1)(T_1\ox A)=(T_1\ox A)E_1^{13}$). 
\item[{(viii)}] For all $a,b,c\in A$, \\
$(\epsilon \ox A)((1\ox a)E_1(b\ox c))=
(\epsilon \ox A)(T_1(a\ox c)(b\ox 1))$ and \\
$(\epsilon \ox A)(E_2(a\ox b)(1\ox c))=
(\epsilon \ox A)((1\ox b)T_2(a\ox c))$.
\end{itemize}
The weak multiplier bialgebra $A$ is said to be {\em regular} if there exist
further two maps $T_3,T_4:A\ox A \to A\ox A$ such that for all $a,b,c\in A$,
\begin{itemize}
\item[{(ix)}] $(1\ox a)T_1(c\ox b)=T_3(c\ox a)(1\ox b)$ and 
$T_2(a\ox c)(b\ox 1)=(a\ox 1)T_4(b\ox c)$.
\end{itemize}
\end{definition}

Throughout the paper, we shall often use the index notation $T_i(a\ox
b)=:a^i\ox b^i$, for $i\in \{1,2,3,4\}$, where implicit summation is
understood. 

It follows from the axioms in Definition \ref{def:wmba} that
\begin{eqnarray*}
(\mu \ox A)(A\ox T_1)&\stackrel{\textrm{(iv)}}=&
(A\ox \epsilon \ox A)(T_2\ox A)(A\ox T_1)\\
&\stackrel{\textrm{(iii)}}=&
(A\ox \epsilon \ox A)(A\ox T_1)(T_2\ox A)\stackrel{\textrm{(iv)}}=
(A\ox \mu)(T_2\ox A).
\end{eqnarray*}
That is, for all $a,b,c\in A$, 
\begin{equation}\label{eq:iii}
(a\ox 1)T_1(c\ox b)=T_2(a\ox c)(1\ox b).
\end{equation}

\begin{theorem}\label{thm:def_eq}
The definition of (regular) weak multiplier bialgebra in Definition
\ref{def:wmba} above, is equivalent to that in 
\cite[Definition 2.1 (and Definition 2.3)]{BoGTLC:wmba}. 
\end{theorem}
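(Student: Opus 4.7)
The plan is to establish a dictionary between the data of Definition \ref{def:wmba} and that of \cite[Definition 2.1]{BoGTLC:wmba}, then verify in each direction that the axioms translate into each other. The crucial correspondence is
\begin{equation*}
T_1(a\ox b)=\Delta(a)(1\ox b),\quad T_2(a\ox b)=(a\ox 1)\Delta(b),\quad E_1(a\ox b)=E(a\ox b),\quad E_2(a\ox b)=(a\ox b)E,
\end{equation*}
where $\Delta:A\to\M(A\ox A)$ is the comultiplication of the original definition and $E=\Delta(1)\in\M(A\ox A)$ is the canonical idempotent; regularity corresponds to $T_3(a\ox b)=(1\ox b)\Delta(a)$ and $T_4(a\ox b)=\Delta(b)(a\ox 1)$.

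For the direction \cite[Def.~2.1]{BoGTLC:wmba}$\Rightarrow$Definition \ref{def:wmba}, I would start from $\Delta$ and $E$ and define $T_1,T_2,E_1,E_2$ as above. Then I would read off each axiom in order: axiom (i) becomes the associativity of the $\M(A\ox A)$-action on $A\ox A$; axiom (ii) becomes $E^2=E$; axiom (iv) is the counit law $(\epsilon\ox \id)\Delta=\id=(\id\ox \epsilon)\Delta$ evaluated against factors of $A$; axiom (v) is coassociativity of $\Delta$, written after multiplying out the appropriate legs; axiom (iii) is its symmetric consequence, and the derived relation \eqref{eq:iii} is just the statement that $(a\ox 1)\Delta(c)(1\ox b)$ is well-defined in $A\ox A$. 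Axioms (vi) and (vii) translate the known range conditions and the compatibility of $\Delta$ with $E$ that are explicitly listed in \cite{BoGTLC:wmba}; axiom (viii) is the `weak counitality' identity relating $\epsilon$ to $E$. Finally, (ix) is exactly the regularity condition of \cite[Definition 2.3]{BoGTLC:wmba} expressed in terms of $T_3,T_4$.

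For the converse direction Definition \ref{def:wmba}$\Rightarrow$\cite[Def.~2.1]{BoGTLC:wmba}, the main task is to reconstruct $\Delta$ and $E$ from the data $(T_i,E_j,\epsilon)$. First I would use axiom (i) and non-degeneracy of $\mu$ on $A\ox A$ to show that there is a unique multiplier $E\in\M(A\ox A)$ with $E(a\ox b)=E_1(a\ox b)$ and $(a\ox b)E=E_2(a\ox b)$; axiom (ii) makes $E$ idempotent. To construct $\Delta$, I would verify using axioms (v) and (vi)--(vii) (together with \eqref{eq:iii}) that the pair of maps $b\mapsto T_1(a\ox b)$ and $b\mapsto T_2(b\ox ?)$ (or rather its reformulation yielding the correct left action) defines, for each $a\in A$, a multiplier $\Delta(a)\in\M(A\ox A)$; multiplicativity of $\Delta$ follows from axiom (v), and the range conditions (vi) together with the Van Daele--Wang extension result recalled in Section \ref{sec:multiplier} give a unique multiplicative extension $\overline{\Delta}:\M(A)\to\M(A\ox A)$ sending $1$ to $E$. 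The counit and compatibility axioms of \cite[Def.~2.1]{BoGTLC:wmba} then translate back from (iv), (vii), (viii); regularity follows from (ix).

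The main obstacle will be the $\Delta$-reconstruction step: one must check that the two half-formulae coming from $T_1$ and $T_2$ (linked by \eqref{eq:iii}) genuinely paste into a single well-defined multiplier on $A\ox A$, and that the resulting map $A\to\M(A\ox A)$ is multiplicative and satisfies the range/idempotent compatibility with $E$ required to invoke the Van Daele--Wang extension. This is where axioms (i), (iii), (v)--(vii), together with the non-degeneracy of multiplication on $A\ox A$, must be combined carefully; once $\Delta$ is in hand, everything else is a translation of symbols.
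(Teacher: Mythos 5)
Your overall strategy --- set up the dictionary $T_1(a\ox b)=\Delta(a)(1\ox b)$, $T_2(a\ox b)=(a\ox 1)\Delta(b)$, $E=(E_1,E_2)$, translate axiom by axiom, and reconstruct $\Delta$ as a multiplier pasted together from $T_1$ and $T_2$ via \eqref{eq:iii} --- is exactly the paper's. But two entries of your dictionary are wrong. First, axiom (v) is \emph{not} coassociativity: unwinding $(\mu\ox A)T_1^{13}(A\ox T_1)=T_1(\mu\ox A)$ gives $\Delta(a)\Delta(b)(1\ox c)=\Delta(ab)(1\ox c)$, i.e.\ \emph{multiplicativity} of $\Delta$ (via non-degeneracy). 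Coassociativity is axiom (iii), $(T_2\ox A)(A\ox T_1)=(A\ox T_1)(T_2\ox A)$, which appears verbatim in \cite[Definition 2.1]{BoGTLC:wmba} as an independent axiom; it is not a consequence of (v), and an attempt to derive it from (v) would fail. What \emph{is} derived from (iii) together with (iv) is the relation \eqref{eq:iii}, and that relation (not (v)--(vii)) is precisely the pasting condition making the pair $(a\ox b)\mapsto T_1(c\ox b)(a\ox 1)$, $(a\ox b)\mapsto (1\ox b)T_2(a\ox c)$ into a multiplier $\Delta(c)$. Second, $E$ is not $\Delta(1)$: the algebra is non-unital, and in \cite[Definition 2.1]{BoGTLC:wmba} the idempotent $E$ is part of the axiomatic data, pinned down by the range conditions in axiom (vi); only after the extension is constructed does $E=\overline\Delta(1)$ hold.

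The step where the paper does genuine work and which your plan leaves unexamined is axiom (vii). Showing that $(E_1\ox A)(A\ox T_1)=(A\ox T_1)(E_1\ox A)$ is equivalent to the axiom $(E\ox 1)(1\ox E)=(\overline{A\ox\Delta})(E)$ of \cite[Definition 2.1]{BoGTLC:wmba} requires first extending $A\ox\Delta$ (not $\Delta$ itself, as in your sketch) to a multiplicative map $\M(A\ox A)\to\M(A\ox A\ox A)$ with $(\overline{A\ox\Delta})(1)=1\ox E$ --- this uses (i), (ii), \eqref{eq:iii}, (v), (vi) and the Van Daele--Wang result --- and then using axiom (vi) to trade elements of the form $\Delta(b)(d\ox c)$ for arbitrary elements $E(d\ox c)$. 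This needs to be spelled out. The regularity part is essentially right, but the nontrivial direction is the converse one: given maps $T_3,T_4$ satisfying (ix), one must show they are forced to equal $(1\ox b)\Delta(a)$ and $\Delta(b)(a\ox 1)$, which takes a short non-degeneracy argument rather than merely observing that those formulas satisfy (ix).
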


\begin{proof}
Axiom (i) in Definition \ref{def:wmba} is equivalent to saying that there is a
multiplier $E=(E_1,E_2)$ on $A\ox A$ and axiom (ii) is equivalent to $E$ being
idempotent. Identity \eqref{eq:iii} is equivalent to the existence of a linear
map $\Delta:A\to \M(A\ox A)$ --- the so-called {\em comultiplication} ---
defined by 
$$
\Delta(c)(a\ox b):=T_1(c\ox b)(a\ox 1)
\qquad \textrm{and}\qquad 
(a\ox b)\Delta(c):=(1\ox b)T_2(a\ox c)
$$
(so that $T_1(c\ox b)=\Delta(c)(1\ox b)$ and $T_2(a\ox c)=(a\ox 1)\Delta(c)$).
By the non-degeneracy of the right $A$-module $A\ox A$ (via multiplication in
the second factor), and by the non-degeneracy of the left $A$-module $A\ox A$
(via multiplication in the first factor), either equality in axiom (v) is
equivalent to the multiplicativity of the map $\Delta$. Axioms 
(iii), (iv), (vi) and (viii) are literally the same as in \cite[Definition
2.1]{BoGTLC:wmba}. Let us investigate axiom (vii). Its first equality is
equivalent to 
\begin{eqnarray*}
((E_1\ox A)(A\ox T_1)(a\ox b\ox c))(1\ox d\ox 1)&=&
((A\ox T_1)(E_1\ox A)(a\ox b\ox c))(1\ox d\ox 1)\\ 
\Leftrightarrow\qquad
(E\ox 1)(a\ox \Delta(b)(d\ox c))&=&(A\ox \Delta)(E(a\ox b))(1\ox d\ox c)
\end{eqnarray*}
holding true, for any $a,b,c,d\in A$. By (i), (ii), \eqref{eq:iii}, (v)
and (vi), $A\ox \Delta$ extends to a unique multiplicative
map $\overline{A\ox  \Delta}:\M(A\ox A)\to \M(A\ox A\ox A)$ such that
$(\overline{A\ox\Delta})(1)=1\ox E$. Hence for any $w\in \M(A\ox A)$,  
$(\overline{A\ox \Delta})(w)=
(\overline{A\ox \Delta})(w1)= 
(\overline{A\ox \Delta})(w)(\overline{A\ox \Delta})(1)= 
(\overline{A\ox \Delta})(w)(1\ox E)$. 
In terms of this extended map, the first equality in (vii) is further
equivalent to
$$
(E\ox 1)(a\ox \Delta(b)(d\ox c))=
(\overline{A\ox \Delta})(E)(a\ox \Delta(b)(d\ox c))
$$
holding true for any $a,b,c,d\in A$. By (vi), this is equivalent to the
validity of
\begin{eqnarray}\label{eq:E_coproduct}
(E\ox 1)(1\ox E)(a\ox d\ox c)&=&
(\overline{A\ox \Delta})(E)(1\ox E)(a\ox d\ox c)\quad \forall a,d,c\in A
\qquad\Leftrightarrow\nonumber\\ 
(E\ox 1)(1\ox E)&=&(\overline{A\ox \Delta})(E),
\end{eqnarray}
which is one of the axioms in \cite[Definition 2.1]{BoGTLC:wmba}~(v). One
proves symmetrically the equivalence of the second equality in (vii) and of
the other axiom in \cite[Definition 2.1]{BoGTLC:wmba}~(v).

If $A$ is a regular weak multiplier bialgebra in the sense of 
\cite[Definition 2.3]{BoGTLC:wmba}, then $T_3(a\ox b):=(1\ox b)\Delta(a)$ and
$T_4(a\ox b):= \Delta(b)(a\ox 1)$ clearly obey (ix). In the opposite
direction, we need to show that the maps $T_3$ and $T_4$ satisfying (ix) must
be of this form. Using the first equality in (ix), it follows for any
$a,b,c,d\in A$ that
$$
(a\ox 1)T_3(c\ox b)(1\ox d)=
(a\ox b)T_1(c\ox d)=
(a\ox b)\Delta(c)(1\ox d).
$$
Using the non-degeneracy of $A$ and simplifying by $a$ and $d$, we conclude
that $T_3$ is of the desired form. The claim about $T_4$ follows symmetrically
by the second equality in (ix). 
\end{proof}

If the algebra $A$ in Definition \ref{def:wmba} is in addition unital, then we
obtain an equivalent definition of usual weak bialgebra in \cite{WHAI}, in
light of \cite[Theorem 2.10]{BoGTLC:wmba}. On the other hand, if we add any of
the equivalent conditions 
\begin{itemize}
\item $E_1=A\ox A$
\item $E_2=A\ox A$
\item $\epsilon\mu=\epsilon \ox \epsilon$
\end{itemize}
in Definition \ref{def:wmba}, then we obtain an equivalent definition of
non-weak multiplier bialgebra over a field in \cite[Theorem
2.11]{BoGTLC:wmba}.  

Note that if $A$ is a regular weak multiplier bialgebra, then its
comultiplication $\Delta:A\to \M(A\ox A)$ can be defined equivalently by the
prescriptions
$$
\Delta(c)(a\ox b):=T_1(c\ox b)(a\ox 1)
\qquad \textrm{and}\qquad 
(a\ox b)\Delta(c):=(a\ox 1)T_3(c\ox b).
$$
Using the first identity in axiom (ix) in the first equality and (iv) in the
second one, for all $a,b,c\in A$ 
$$
((\epsilon \ox A)T_3(b\ox a))c=
a((\epsilon \ox A)T_1(b\ox c))=
abc.
$$
Symmetrically, by the second equality in (ix) and by (iv), 
$$
c((A\ox \epsilon)T_4(b\ox a))=
((A\ox \epsilon)T_2(c\ox a))b=
cab.
$$
Thus by the non-degeneracy of $A$,
\begin{equation}\label{eq:reg_counit}
(\epsilon \ox A)T_3=
\mu^\op=
(A\ox \epsilon)T_4.
\end{equation}
A symmetric reasoning shows that \eqref{eq:reg_counit} is in fact an
equivalent form of the counitality axiom (iv) in Definition \ref{def:wmba}. 
The axioms in Definition \ref{def:wmba} imply the further identities
\begin{eqnarray}
\label{eq:T_14}
T_1(c\ox b)(a\ox 1)=&\Delta(c)(a\ox b)&=T_4(a\ox c)(1\ox b)\\
\label{eq:T_23}
(1\ox b)T_2(a\ox c)=&(a \ox b)\Delta(c)&=(a\ox 1)T_3(c\ox b)\\
\label{eq:T_34}
T_3(c\ox b)(a\ox 1)=&(1\ox b)\Delta(c)(a\ox 1)&=(1\ox b)T_4(a\ox c)
\end{eqnarray}
for any regular weak multiplier bialgebra $A$ and all $a,b,c\in A$. It is
immediate from axioms (ii) and (vi) that $E_1T_1=T_1$ and $E_2T_2=T_2$ for any
weak multiplier bialgebra; and by (\ref{eq:T_14}-\ref{eq:T_23}), also
$E_2T_3=T_3$ and $E_1T_4=T_4$ in the regular case. 

By \cite[Proposition 2.4 and Proposition 2.6]{BoGTLC:wmba}, for any weak
multiplier bialgebra $A$ there exist linear maps $\overline \sqcap^L,
\overline \sqcap^R:A\to \M(A)$ defined by the prescriptions 
\begin{eqnarray}\label{eq:Pibar}
&\overline \sqcap^L(a)b:=(\epsilon \ox A)T_2(a\ox b)
\qquad \textrm{and}\qquad 
&b\overline \sqcap^L(a):=(\epsilon \ox A)E_2(a\ox b)\\
&\overline \sqcap^R(a)b:=(A\ox \epsilon)E_1(b\ox a)
\qquad \textrm{and}\qquad 
&b\overline \sqcap^R(a):=(A\ox \epsilon)T_1(b\ox a).
\nonumber
\end{eqnarray}
They obey similar properties to the analogous maps in a usual, unital weak
bialgebra in \cite{WHAI}, see \cite{BoGTLC:wmba}. If $A$ is regular, then
there are further two similar linear maps $\sqcap^L,\sqcap^R:A\to \M(A)$
defined as in (3.1), (3.2) and (3.3) in \cite{BoGTLC:wmba}, by the
prescriptions 
\begin{eqnarray}\label{eq:Pi}
&\sqcap^L(a)b:=(\epsilon \ox A)E_1(a\ox b)
\qquad \textrm{and}\qquad 
&b \sqcap^L(a):=(\epsilon \ox A)T_4(a\ox b)\\
&\sqcap^R(a)b:=(A\ox \epsilon)T_3(b\ox a)
\qquad \textrm{and}\qquad 
&b\sqcap^R(a):=(A\ox \epsilon)E_2(b\ox a).
\nonumber
\end{eqnarray}

\begin{lemma}\label{lem:Pi_on_E}
For any weak multiplier bialgebra $A$ and any $a,b\in A$, the
following identities hold.
\begin{itemize}
\item[{(1)}] $(\overline\sqcap^R\ox A)E_1(a\ox b)=
E(\overline\sqcap^R(a)\ox b)$.
\item[{(2)}] $(A\ox \overline\sqcap^L)E_2(a\ox b)=
(a\ox \overline\sqcap^L(b))E$.
\end{itemize}
If $A$ is regular, then also the following hold for any $a,b\in A$.
\begin{itemize}
\item[{(3)}] $(\sqcap^R\ox A)E_2(a\ox b)=
(\sqcap^R(a)\ox b)E$.
\item[{(4)}] $(A\ox \sqcap^L)E_1(a\ox b)=
E(a\ox \sqcap^L(b))$.
\end{itemize}
\end{lemma}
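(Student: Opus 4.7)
\emph{Plan.} All four identities are equalities in $\M(A\ox A)$, and their proofs are mutually related by symmetries built into Definition \ref{def:wmba}. Under regularity, passage to the opposite algebra $A^\op$ exchanges $E_1\leftrightarrow E_2$, $T_1\leftrightarrow T_3$, $\overline\sqcap^R\leftrightarrow \sqcap^R$ and $\overline\sqcap^L\leftrightarrow \sqcap^L$, so assertion (1) for $A^\op$ yields assertion (3) for $A$, and likewise (2) for $A^\op$ yields (4) for $A$. Assertions (1) and (2) are in turn proved by parallel arguments, exchanging the roles of the two tensor slots. It therefore suffices to explain (1).

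The plan for (1) is to reduce the claimed multiplier equality $(\overline\sqcap^R\ox A)E_1(a\ox b)=E(\overline\sqcap^R(a)\ox b)$ in $\M(A\ox A)$ to an equality in $A\ox A$, by right-multiplying both sides by an arbitrary $(p\ox q)\in A\ox A\subset \M(A\ox A)$ and invoking the density of $A\ox A$ in $\M(A\ox A)$. Writing $E_1(a\ox b)=a_{[1]}\ox b_{[1]}$ with implicit summation, the left-hand side becomes $\overline\sqcap^R(a_{[1]})p\ox b_{[1]}q$, while the right-hand side becomes $E_1(\overline\sqcap^R(a)p\ox bq)$. Substituting the formula $\overline\sqcap^R(x)y=(A\ox\epsilon)E_1(y\ox x)$ from \eqref{eq:Pibar}, both expressions acquire the form $(A\ox\epsilon\ox A)$ applied to a three-fold tensor obtained from two iterated applications of $E_1$ to $p\ox a\ox b$.

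To establish this identity in $A\ox A$, I appeal to axiom (vii) of Definition \ref{def:wmba} --- specifically the second relation $(A\ox E_1)(T_1\ox A)=(T_1\ox A)E_1^{13}$ --- together with axiom (vi), which allows one to rewrite the image of $E_1$ as a span of expressions of the form $T_1(a'\ox c)(b'\ox 1)$, and the counital axiom (iv) $(\epsilon\ox A)T_1=\mu$. Roughly, the strategy is to replace an interior $E_1$-occurrence on one side by such a $T_1$-expression, commute the remaining outer $E_1$ past $T_1$ using (vii), and then contract the middle $\epsilon$-factor via (iv); a parallel manipulation on the other side produces the same element of $A\ox A$.

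The main obstacle is bookkeeping: carefully tracking the iterated indices through several applications of $E_1$ and $T_1$, and confirming that the chosen test by right-multiplication indeed pins down the multiplier. Conceptually, no input beyond the axioms of Definition \ref{def:wmba} and the definitions \eqref{eq:Pibar}, \eqref{eq:Pi} is required.
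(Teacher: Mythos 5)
Your symmetry reductions (opposite and co-opposite bialgebra) and the reduction of the multiplier identity to an identity in $A\ox A$ by right-multiplying with $p\ox q$ are both sound, and are at the same level of detail as the paper's own ``all other parts are proven analogously''. The problem is your third paragraph, which is where all of the mathematical content of part (1) sits and which, as a ``rough strategy'', does not close. Concretely: (a) after writing the inner $E_1(a\ox b)$ as $\sum_i T_1(p^i\ox r^i)(q^i\ox 1)$ via axiom (vi), commuting the outer $E_1$ (acting on legs $1,2$) past the resulting $T_1$ (on legs $2,3$) requires the \emph{first} relation of axiom (vii), $(E_1\ox A)(A\ox T_1)=(A\ox T_1)(E_1\ox A)$, not the second relation $(A\ox E_1)(T_1\ox A)=(T_1\ox A)E_1^{13}$ that you cite (and the latter points in the wrong direction for the manipulation on the other side as well); (b) the subsequent $\epsilon$-contraction of the middle leg is not an instance of (iv), because that leg carries the extra right factor $q^i$ — one faces $(\epsilon\ox A)(T_1(x\ox z)(y\ox 1))$, which is governed by axiom (viii), not by $(\epsilon\ox A)T_1=\mu$; and (c) even after (a) and (b) the two sides do not land on the same expression: what remains is an identity of the type $E(\overline\sqcap^R(a)p\ox b)=E(p\ox \sqcap^L(a)b)$, equivalently the multiplicativity-type property $\overline\sqcap^R(\overline\sqcap^R(x)y)=\overline\sqcap^R(x)\,\overline\sqcap^R(y)$. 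This last identity is the actual crux; it is not bookkeeping, and it does not fall out of the axioms you list by the manipulations you describe.

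The paper's proof makes the missing ingredient explicit: writing $b$ as a sum of products $bc$ by idempotency of $A$, it quotes the identity $E(a\ox bc)=\overline\sqcap^R(b^1)a\ox c^1$ (identity (2.3) of \cite{BoGTLC:wmba}, with $T_1(b\ox c)=b^1\ox c^1$) and then the key fact $\overline\sqcap^R(\overline\sqcap^R(b^1)a)=\overline\sqcap^R(b^1)\overline\sqcap^R(a)$ (\cite[Lemma 3.4]{BoGTLC:wmba}), after which the claim is a two-line computation. Your plan would become a proof if you either invoke these two facts or derive the analogue of \cite[Lemma 3.4]{BoGTLC:wmba} from the axioms; as written, that step is absent.
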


\begin{proof}
We only prove part (1), all other parts are proven analogously.
For any $b,c\in A$ introduce the notation $T_1(b\ox c)=b^1\ox c^1$ where
implicit summation is understood. Then for any $a\in A$, 
$$
(\overline\sqcap^R\ox A)(E(a\ox bc))=
\overline\sqcap^R(\overline\sqcap^R(b^1)a)\ox c^1=
\overline\sqcap^R(b^1)\overline\sqcap^R(a)\ox c^1=
E(\overline\sqcap^R(a)\ox bc).
$$
From this we conclude by the idempotency of $A$. The first and the last
equalities follow by identity (2.3) in \cite{BoGTLC:wmba} and the middle one
follows by \cite[Lemma 3.4]{BoGTLC:wmba}.
\end{proof}

\begin{lemma}\label{lem:prep_d}
For any regular weak multiplier bialgebra $A$ and any $a,b\in A$, the
following identities hold.
\begin{itemize}
\item[{(1)}] $(A\ox \sqcap^L)T_1(a\ox b)=E(a\overline \sqcap^R(b)\ox 1)$.
\item[{(2)}] $(\overline\sqcap^R\ox \sqcap^L)T_1(a\ox b)=
E(\overline \sqcap^R(ab)\ox 1)$.
\end{itemize}
\end{lemma}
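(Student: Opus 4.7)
The plan is to prove (1) first by unfolding the definitions and using one exchange law from the axioms, and then to deduce (2) by applying $\overline \sqcap^R\ox A$ to (1).

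For (1), both sides are naturally elements of $A\ox \M(A)$. I test the identity by multiplying on the right by $1\ox c$ for arbitrary $c\in A$, reducing the claim to an equality in $A\ox A$. Writing $T_1(a\ox b)=a^1\ox b^1$ (implicit summation) and using $\sqcap^L(y)c=(\epsilon\ox A)E_1(y\ox c)$ from \eqref{eq:Pi}, the left-hand side of (1) tested on $1\ox c$ rewrites as $(A\ox \epsilon\ox A)(A\ox E_1)(T_1\ox A)(a\ox b\ox c)$. The key step is the second identity in axiom (vii) of Definition \ref{def:wmba}, namely $(A\ox E_1)(T_1\ox A)=(T_1\ox A)E_1^{13}$. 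Writing $E_1(a\ox c)=u\ox v$, so that $E_1^{13}(a\ox b\ox c)=u\ox b\ox v$, and using the defining identity $u\overline \sqcap^R(b)=(A\ox \epsilon)T_1(u\ox b)$ from \eqref{eq:Pibar}, the expression collapses to $u\overline \sqcap^R(b)\ox v=E(a\overline \sqcap^R(b)\ox c)$, matching the right-hand side of (1) tested on $1\ox c$.

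For (2), I apply $\overline \sqcap^R\ox A$ to both sides of (1). On the left this gives $(\overline \sqcap^R\ox \sqcap^L)T_1(a\ox b)$. On the right, after pairing with $1\ox c$ to land in $A\ox A$, Lemma \ref{lem:Pi_on_E}(1) yields $(\overline \sqcap^R\ox A)E_1(a\overline \sqcap^R(b)\ox c) = E(\overline \sqcap^R(a\overline \sqcap^R(b))\ox c)$. The claim is thus reduced to the identity $\overline \sqcap^R(a\overline \sqcap^R(b))=\overline \sqcap^R(ab)$, which is one of the standard multiplicativity-type properties of the right projection recalled from \cite{BoGTLC:wmba}.

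The main obstacle is the careful bookkeeping between the \emph{multiplier} and \emph{element} interpretations of both sides in (1), together with the correct application of axiom (vii); once these are in place, (2) is essentially a one-line consequence of (1), Lemma \ref{lem:Pi_on_E}(1), and the image-idempotence property of $\overline \sqcap^R$.
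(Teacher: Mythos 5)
Your proof is correct, and part (2) coincides with the paper's own argument (apply $\overline\sqcap^R\ox A$ to (1), invoke Lemma \ref{lem:Pi_on_E}~(1), and finish with $\overline\sqcap^R(a\overline\sqcap^R(b))=\overline\sqcap^R(ab)$, which is Lemma 3.2 of \cite{BoGTLC:wmba}). Part (1), however, follows a genuinely different route. The paper tests the identity by right multiplication with $c\ox 1$ in the \emph{first} tensor factor, rewrites $T_1(a\ox b)(c\ox 1)=\Delta(a)(c\ox b)$, replaces $b$ by $\overline\sqcap^R(b)$ under $\sqcap^L$ (using $\sqcap^L(xb)=\sqcap^L(x\overline\sqcap^R(b))$, via Lemma 3.1 of \cite{BoGTLC:wmba}), passes through $T_4$ by Lemma 3.3 and multiplicativity of $\Delta$, and concludes with identity (3.4) of \cite{BoGTLC:wmba}. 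You instead test with $1\ox c$ in the \emph{second} factor, which turns the left leg of $\sqcap^L$ into $(\epsilon\ox A)E_1(-\ox c)$ and lets you invoke the axiom (vii) identity $(A\ox E_1)(T_1\ox A)=(T_1\ox A)E_1^{13}$ directly, after which the counit formula $(A\ox\epsilon)T_1(u\ox b)=u\overline\sqcap^R(b)$ from \eqref{eq:Pibar} and the right $\M(A\ox A)$-linearity of $E_1$ finish the job. Your version is more self-contained in that it appeals only to the axioms of Definition \ref{def:wmba} and the defining formulas \eqref{eq:Pibar}--\eqref{eq:Pi}, at the price of somewhat heavier multiplier bookkeeping (in particular the step $u\overline\sqcap^R(b)\ox v=(E(a\ox c))(\overline\sqcap^R(b)\ox 1)=E(a\overline\sqcap^R(b)\ox c)$, which needs the extension of the right-module property of $E_1$ to multipliers of the form $\overline\sqcap^R(b)\ox 1$ via non-degeneracy); the paper's version outsources that bookkeeping to the previously established lemmas of \cite{BoGTLC:wmba}. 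Both are valid, and the passage from the tested identity back to the untested one is justified in either case by the non-degeneracy of the relevant one-sided $A$-module structure on $A\ox A$.
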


\begin{proof}
(1) For any $a,b,c\in A$,
\begin{eqnarray*}
((A\ox \sqcap^L)T_1(a\ox b))(c\ox 1)&=&
(A\ox \sqcap^L)(\Delta(a)(c\ox b))=
(A\ox \sqcap^L)(\Delta(a)(c\ox \overline \sqcap^R(b)))\\
&=&(A\ox \sqcap^L)T_4(c\ox a\overline\sqcap^R(b))=
E(a\overline \sqcap^R(b)c\ox 1),
\end{eqnarray*}
from which we conclude by the non-degeneracy of $A$. In the second equality we
used that by \eqref{eq:Pi} and \cite[Lemma 3.1]{BoGTLC:wmba}, for any
$a,b,c\in A$, 
$$
\sqcap^L(a\overline\sqcap^R(b))c=
(\epsilon \ox A)(E(a\overline\sqcap^R(b)\ox c))=
(\epsilon \ox A)(E(ab\ox c))=
\sqcap^L(ab)c,
$$
hence $\sqcap^L(ab)=\sqcap^L(a\overline\sqcap^R(b))$.
The third equality follows by \cite[Lemma 3.3]{BoGTLC:wmba} and the
multiplicativity of the comultiplication. The last equality follows by (3.4)
in \cite{BoGTLC:wmba}.

(2) For any $a,b\in A$,
$$
(\overline\sqcap^R\ox \sqcap^L)T_1(a\ox b)\stackrel{(1)}=
(\overline\sqcap^R\ox A)(E(a\overline\sqcap^R(b)\ox 1))=
E(\overline\sqcap^R(a\overline\sqcap^R(b))\ox 1)=
E(\overline\sqcap^R(ab)\ox 1),
$$
where the second equality follows by Lemma \ref{lem:Pi_on_E}~(1) and the 
last equality follows by \cite[Lemma 3.2]{BoGTLC:wmba}.
\end{proof}

By Lemma 3.4 and identities (3.8) in \cite{BoGTLC:wmba}, the ranges of the
maps \eqref{eq:Pibar} and \eqref{eq:Pi} are (non-unital) subalgebras of
$\M(A)$. In particularly nice cases they carry even more structure: Let $A$ be
a regular weak multiplier bialgebra over a field $k$. As in \cite[Theorem
3.13]{BoGTLC:wmba}, we say that its comultiplication is {\em right full} if
$$
\langle (A\ox \omega)T_1(a\ox b)\ |\ a,b\in A, \omega \in \mathsf{Lin}(A,k)
\rangle=A
$$
equivalently, 
$$
\langle (A\ox \omega)T_3(a\ox b)\ |\ a,b\in A, \omega \in \mathsf{Lin}(A,k)
\rangle=A.
$$
These conditions hold if and only if the ranges of the maps $\overline
\sqcap^R$ and $\sqcap^R$ coincide. Let us denote this coinciding range by
$R$. It carries a coalgebra structure as follows. By \cite[Proposition
4.3~(1)]{BoGTLC:wmba}, there is a multiplier $F$ on $A\ox A$ defined by the
prescriptions 
\begin{equation}\label{eq:F}
F(a\ox bc):=((\overline \sqcap^R\ox A)\mathsf{tw}T_4(c\ox b))(a\ox 1)
\qquad \textrm{and}\qquad 
(ab\ox c)F:=(1\ox c)((A\ox \sqcap^R)T_2(a\ox b)).
\end{equation}
By \cite[Theorem 4.4]{BoGTLC:wmba}, it induces a coassociative
comultiplication 
\begin{equation}\label{eq:delta}
\delta:R\to R\ox R, \qquad 
\sqcap^R(a)\mapsto (\sqcap^R(a)\ox 1)F=F(1\ox \sqcap^R(a))
\end{equation}
with the counit 
\begin{equation}\label{eq:varepsilon}
\varepsilon:R\to k,\qquad \sqcap^R(a)\mapsto \epsilon(a).
\end{equation}
The comultiplication $\delta$ is an $R$-bimodule map and it provides a section
of the multiplication $R\ox R \to R$. That is to say, $R$ is a co-separable
co-Frobenius coalgebra, see \cite[Theorem 4.6]{BoGTLC:wmba}. The algebra $R$
is not unital but it has (idempotent) local units. There is a unique
automorphism, the so-called Nakayama automorphism $\vartheta$ of $R$, for which
$\varepsilon(rs)=\varepsilon (\vartheta(s)r)$, for any $s,r\in R$. 

As a particular consequence of the above properties, $R$ is a firm subalgebra
of $\M(A)$, meaning that the multiplication $R\ox R \to R$ projects to an
isomorphism $R\ox_R R \to R$ (where $\ox_R$ stands for the $R$-module tensor
product). There are evident notions of (associative but not unital) left and
right $R$-modules, and hence of $R$-bimodules. A (say, right) $R$-module $M$
is said to be firm if the action $M\ox R \to M$ projects to an isomorphism
$M\ox_R R\to M$. Since $R$ is a coseparable coalgebra, it follows by
\cite[Proposition 2.17]{BoVe} that the category of firm modules over the firm
algebra $R$ is isomorphic to the category of comodules over the coalgebra $R$.
A bimodule is said to be firm if it is firm as a left and as a right
$R$-module. Firm $R$-bimodules constitute a monoidal category ${}_R M_R$
(which is isomorphic to the category of $R$-bicomodules). The monoidal product
is the module tensor product $\ox_R$ over the firm algebra $R$ or, what is
isomorphic to it, the comodule tensor product over the coalgebra $R$. It is a
linear retract of the tensor product of vector spaces, see \cite[Proposition
2.17]{BoVe}. The monoidal unit is $R$ (with (co)actions provided by the
(co)multiplication). 

One defines symmetrically the {\em left full} property of the comultiplication
of a regular weak multiplier bialgebra. It implies analogous properties of the
coinciding image $L$ of the maps $\overline \sqcap^L$ and $\sqcap^L:A\to
\M(A)$, see \cite[Theorem 3.13]{BoGTLC:wmba}. 

If $A$ is a regular weak multiplier bialgebra with a right full
comultiplication, then by \cite[Lemma 4.8]{BoGTLC:wmba} there are 
anti-multiplicative maps 
\begin{eqnarray*}
&\tau:\sqcap^R(A)=\overline \sqcap^R(A)\to \overline \sqcap^L(A),\qquad
&\sqcap^R(a) \mapsto \overline \sqcap^L(a),\\
&\overline\tau:\sqcap^R(A)=\overline\sqcap^R(A)\to \sqcap^L(A),\qquad
&\overline\sqcap^R(a) \mapsto \sqcap^L(a).
\end{eqnarray*}
Symmetrically, if the comultiplication is left full then there are 
anti-multiplicative maps 
\begin{eqnarray*}
&\sigma:\sqcap^L(A)=\overline \sqcap^L(A)\to \sqcap^R(A),\qquad
&\overline \sqcap^L(a) \mapsto \sqcap^R(a),\\
&\overline\sigma:\sqcap^L(A)=\overline\sqcap^L(A)\to \overline\sqcap^R(A),\qquad
&\sqcap^L(a) \mapsto \overline\sqcap^R(a).
\end{eqnarray*}
If the comultiplication is both right and left full, then $\tau=\sigma^{-1}$
and $\overline\tau=\overline\sigma^{\,-1}$; by \cite[Proposition
4.9]{BoGTLC:wmba} they are anti-coalgebra maps; and the Nakayama automorphism
of $R$ is equal to $\sigma\overline\sigma\, ^{-1}$ and the Nakayama
automorphism of $L$ is equal to $\overline\sigma\, ^{-1}\sigma$.

\begin{lemma}\label{lem:Pi_spanned}
For any regular weak multiplier bialgebra $A$, 
the following hold.
\begin{itemize}
\item[{(1)}] The vector space $A$ is spanned by elements of the form
$a\overline \sqcap^L(b)$, for $a,b\in A$. 
\item[{(2)}] The vector space $A$ is spanned by elements of the form
$\overline \sqcap^R(b)a$, for $a,b\in A$.
\item[{(3)}] The vector space $A$ is spanned by elements of the form
$\sqcap^L(b)a$, for $a,b\in A$.
\item[{(4)}] The vector space $A$ is spanned by elements of the form
$a\sqcap^R(b)$, for $a,b\in A$.
\end{itemize}
If the comultiplication of $A$ is right full, then also the following
hold. 
\begin{itemize}
\item[{(5)}] The vector space $A$ is spanned by elements of the form
$a\overline \sqcap^R(b)$, for $a,b\in A$.
\item[{(6)}] The vector space $A$ is spanned by elements of the form
$\sqcap^R(b)a$, for $a,b\in A$.
\end{itemize}
If the comultiplication of $A$ is left full, then also the following
hold. 
\begin{itemize}
\item[{(7)}] The vector space $A$ is spanned by elements of the form
$\overline \sqcap^L(b)a$, for $a,b\in A$. 
\item[{(8)}] The vector space $A$ is spanned by elements of the form
$a\sqcap^L(b)$, for $a,b\in A$.
\end{itemize}
\end{lemma}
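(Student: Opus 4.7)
The plan is to exploit the inclusions $T_i(A\ox A)\subseteq E_j(A\ox A)$ that follow from the identities $E_1T_1=T_1$, $E_2T_2=T_2$, $E_2T_3=T_3$, $E_1T_4=T_4$ (the latter two requiring regularity) recorded immediately after \eqref{eq:T_34}. Combined with the counitality identities (iv) of Definition~\ref{def:wmba} and \eqref{eq:reg_counit}, each of these inclusions telescopes into one of the spanning statements (1)--(4).

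Explicitly, for part (3) I would argue
\begin{equation*}
A = \mu(A\ox A) = (\epsilon\ox A)T_1(A\ox A) \subseteq (\epsilon\ox A)E_1(A\ox A) = \langle \sqcap^L(b)a\mid a,b\in A\rangle \subseteq A,
\end{equation*}
where the first equality uses idempotency of $A$, the second uses (iv) of Definition~\ref{def:wmba}, the inclusion uses $E_1T_1=T_1$, the subsequent equality is the defining formula \eqref{eq:Pi}, and the final inclusion holds because $\sqcap^L(b)\in \M(A)$. The proofs of (1), (2) and (4) have exactly the same shape, invoking the triples $(T_3,E_2,\mu^{\op})$ with $\epsilon\ox A$, $(T_4,E_1,\mu^{\op})$ with $A\ox\epsilon$, and $(T_2,E_2,\mu)$ with $A\ox\epsilon$ respectively; in each case the image of $E_j$ under the relevant counit is read off from \eqref{eq:Pibar} or \eqref{eq:Pi}.

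The conditional statements (5)--(8) then come for free from (1)--(4). Indeed, as recalled in the paragraph introducing $R$, right fullness of the comultiplication is equivalent to the equality $\overline \sqcap^R(A) = \sqcap^R(A) = R$ of subsets of $\M(A)$. Consequently the spanning sets in (5) and (4) are literally the same subspace $A\cdot R$ of $A$, and those in (6) and (2) both coincide with $R\cdot A$. Symmetrically, left fullness gives $\overline \sqcap^L(A) = \sqcap^L(A) = L$, whence (7) reduces to (3) and (8) to (1).

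I do not anticipate any serious obstacle: the whole argument is a telescoping of three ingredients (idempotency of $A$, the four counit identities, and the four relations $E_jT_i=T_i$). The only care needed, in each of the eight parts, is to match which side of the counit is applied and which defining formula in \eqref{eq:Pibar}--\eqref{eq:Pi} identifies the relevant spanning set.
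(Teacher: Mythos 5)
Your proposal is correct and follows essentially the same route as the paper: the paper disposes of (1)--(4) by citing the identities $\mu(\sqcap^L\ox A)T_1=\mu$, etc., from Lemma 3.7 of \cite{BoGTLC:wmba} together with idempotency of $A$, and these identities are exactly what your telescoping of $E_jT_i=T_i$ with the counit relations (iv) and \eqref{eq:reg_counit} re-derives. Your treatment of (5)--(8), via the coincidence of the ranges of $\sqcap^R,\overline\sqcap^R$ (resp. $\sqcap^L,\overline\sqcap^L$) under right (resp. left) fullness, is word for word the paper's argument.
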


\begin{proof}
Assertions (1)-(4) follow by the idempotency of $A$ and 
\cite[Lemma 3.7]{BoGTLC:wmba}. 
Assertions (5)-(8) follow from assertions (1) to (4) noting that by
\cite[Theorem 3.13]{BoGTLC:wmba} the ranges of $\sqcap^R$ and $\overline
\sqcap^R$ coincide whenever the comultiplication is right full; and the ranges
of $\sqcap^L$ and $\overline \sqcap^L$ coincide whenever the comultiplication
is left full. 
\end{proof}

\subsection{The antipode}\label{sec:antipode}

For a regular weak multiplier bialgebra $A$, consider the idempotent map 
\begin{equation}\label{eq:G_1}
G_1:A\ox A \to A\ox A, \qquad
a\ox bc\mapsto (a\ox 1)F(1\ox bc)=
(a\ox 1)((\overline \sqcap^R \ox A)\mathsf{tw}T_4(c\ox b))
\end{equation}
in (6.5) of \cite{BoGTLC:wmba} and its symmetric counterpart 
\begin{equation}\label{eq:G_2}
G_2:A\ox A \to A\ox A, \qquad
ab\ox c\mapsto 
((A\ox \overline \sqcap^L)\mathsf{tw}T_3(b\ox a))(1\ox c)
\end{equation}
in (6.11) of \cite{BoGTLC:wmba}.
By \cite[Proposition 6.3]{BoGTLC:wmba} and its symmetric counterpart, they
obey $E_iT_i=T_i=T_iG_i$, for $i\in \{1,2\}$. By \cite[Theorem
6.8]{BoGTLC:wmba}, the maps $T_i$ are `weakly invertible' --- in the sense
that there are linear maps $R_1,R_2:A\ox A\to A\ox A$ obeying $R_iT_i=G_i$,
$T_iR_i=E_i$ and $R_iT_iR_i=R_i$ for $i\in \{1,2\}$ --- if and only if there
is a linear map $S:A \to \M(A)$ --- the so-called {\em antipode} --- obeying
the equalities in part (2) of \cite[Theorem 6.8]{BoGTLC:wmba}.
The resulting structure is in generality between arbitrary, and regular 
weak multiplier Hopf algebra in \cite{VDaWa}.
The conditions on $S$ in part (2) of \cite[Theorem 6.8]{BoGTLC:wmba} imply 
\begin{equation}\label{eq:S_id}
\begin{array}{ll}
\mu(S\ox A)T_1=\mu(\sqcap^R\ox A),\qquad
&\mu(A\ox S)T_2=\mu(A\ox \sqcap^L),\\
\mu(S\ox A)E_1=\mu(S\ox A),\qquad
&\mu(A\ox S)E_2=\mu(A\ox S),
\end{array}
\end{equation}
see (6.14) in \cite{BoGTLC:wmba}, but do not seem be equivalent to them. The
antipode is anti-multiplicative by \cite[Theorem 6.12]{BoGTLC:wmba}, see also
\cite[Proposition 3.5]{VDaWa}, and anti-comultiplicative (in the multiplier
sense) by \cite[Corollary 6.16]{BoGTLC:wmba}.

\begin{lemma}\label{lem:S_nd}
Let $A$ be a regular weak multiplier bialgebra possessing an antipode $S$. If
the comultiplication is right full, then for any element $a\in A$ the
following are equivalent.
\begin{itemize}
\item[{(a)}] $a=0$.
\item[{(b)}] $aS(b)=0$ for all $b\in A$.
\end{itemize}
If the comultiplication is left full, then for any element $a\in A$ the
following are equivalent. 
\begin{itemize}
\item[{(a')}] $a=0$.
\item[{(b')}] $S(b)a=0$ for all $b\in A$.
\end{itemize}
\end{lemma}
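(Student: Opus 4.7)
The implications (a)$\Rightarrow$(b) and (a')$\Rightarrow$(b') are immediate, so the content is in the converses. For (b)$\Rightarrow$(a), my plan is to show that under the right fullness hypothesis, every element of $A$ can be written as a (sum of) element(s) of the form $S(x)y$ with $x,y\in A$. Once this is established, the hypothesis $aS(b)=0$ for all $b\in A$ implies, by associativity in $\M(A)$, that $aS(b)y=0$ for all $b,y\in A$; hence $a$ annihilates a spanning set of $A$ on the right, and non-degeneracy of $A$ forces $a=0$.

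The key input is the first identity in \eqref{eq:S_id}, namely $\mu(S\otimes A)T_1=\mu(\sqcap^R\otimes A)$. Writing $T_1(b\otimes c)=b^1\otimes c^1$ in the implicit summation convention, this says $S(b^1)c^1=\sqcap^R(b)c$ for all $b,c\in A$. In particular, every element of the form $\sqcap^R(b)c$ lies in the subspace $\langle S(x)y\ |\ x,y\in A\rangle$ of $A$. Now Lemma \ref{lem:Pi_spanned}~(6) applies precisely when the comultiplication is right full, and it says that $A$ is spanned by such elements $\sqcap^R(b)c$. Combining these facts yields $A=\langle S(x)y\ |\ x,y\in A\rangle$, which is exactly what is needed.

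For the left full case (b')$\Rightarrow$(a'), the argument is entirely symmetric: use the second identity in \eqref{eq:S_id}, $\mu(A\otimes S)T_2=\mu(A\otimes \sqcap^L)$, to write $c^2 S(b^2)=c\sqcap^L(b)$ (with $T_2(c\otimes b)=c^2\otimes b^2$), so that $\langle yS(x)\ |\ x,y\in A\rangle$ contains every element $c\sqcap^L(b)$. By Lemma \ref{lem:Pi_spanned}~(8), under the left fullness hypothesis these elements span $A$, and then the assumption $S(b)a=0$ for all $b\in A$ together with associativity and non-degeneracy yields $a=0$.

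There is no real obstacle here: the proof is a direct assembly of the counital-type identities \eqref{eq:S_id} for the antipode with the right/left full spanning statements of Lemma \ref{lem:Pi_spanned}. The only thing worth a second look is the bookkeeping that $S(b^1)c^1$ actually lands in $\langle S(x)y\ |\ x,y\in A\rangle$, which it does since $T_1(b\otimes c)\in A\otimes A$, not merely $\M(A\otimes A)$, so the implicit sum is finite with each tensorand in $A$.
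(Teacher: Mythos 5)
Your proof is correct and follows essentially the same route as the paper: both rest on the identity $\mu(S\ox A)T_1=\mu(\sqcap^R\ox A)$ from \eqref{eq:S_id} together with Lemma \ref{lem:Pi_spanned}~(6) (resp.\ (8)) and the non-degeneracy of $A$. The only difference is cosmetic --- you first package the computation as the spanning statement $A=\langle S(x)y\mid x,y\in A\rangle$, whereas the paper applies the hypothesis directly to obtain $0=aS(b^1)c^1=a\sqcap^R(b)c$ and concludes from there.
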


\begin{proof}
The implication (a)$\Rightarrow$(b) is trivial. If (b) holds then for all
$b,c\in A$, using the implicit summation index notation $T_1(b\ox c)=b^1\ox
c^1$, 
\begin{equation}\label{eq:S_nd}
0=aS(b^1)c^1=a\sqcap^R(b)c,
\end{equation}
where the last equality follows by (6.14) in \cite{BoGTLC:wmba}. By
Lemma \ref{lem:Pi_spanned}~(6), any element of $A$ can be written as a
linear combination of elements of the form $\sqcap^R(b)c$. Thus we conclude by
the non-degeneracy of $A$ from \eqref{eq:S_nd} that (a) holds. The equivalence
(a')$\Leftrightarrow$(b') follows symmetrically. 
\end{proof}

\begin{lemma}\label{lem:T_1_S}
Consider a regular weak multiplier bialgebra $A$ possessing an antipode
$S$. For any $b,d\in A$, introduce the implicit summation index notation
$T_2(d\ox b)=d^2\ox b^2$. For any $a,b,c,d\in A$,
$$
T_1(aS(b) \ox S(d)c)=T_1(a\ox S(d^2)c)(S(b^2)\ox 1).
$$
\end{lemma}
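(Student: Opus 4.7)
The plan is to reduce the stated identity to the anti-multiplicativity and anti-comultiplicativity of $S$, provided by \cite[Theorem 6.12 and Corollary 6.16]{BoGTLC:wmba}. First, I would use $T_1(x\ox y)=\Delta(x)(1\ox y)$ together with the multiplicativity of the extended comultiplication $\Delta:\M(A)\to\M(A\ox A)$ to rewrite the left-hand side as
\begin{equation*}
T_1(aS(b)\ox S(d)c)=\Delta(aS(b))(1\ox S(d)c)=\Delta(a)\,\Delta(S(b))\,(1\ox S(d))(1\ox c).
\end{equation*}
Similarly, exploiting that factors sitting in different tensorands commute, so that $(1\ox S(d^2)c)(S(b^2)\ox 1)=(S(b^2)\ox S(d^2))(1\ox c)$ in $\M(A\ox A)$, the right-hand side unpacks as
\begin{equation*}
T_1(a\ox S(d^2)c)(S(b^2)\ox 1)=\Delta(a)\,(S(b^2)\ox S(d^2))\,(1\ox c).
\end{equation*}

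Comparing the two expressions, the task reduces to proving the multiplier identity
\begin{equation*}
\Delta(S(b))(1\ox S(d))=S(b^2)\ox S(d^2)\qquad\text{in } \M(A\ox A),
\end{equation*}
where by definition $T_2(d\ox b)=d^2\ox b^2=(d\ox 1)\Delta(b)$. The right-hand side equals $\mathsf{tw}(S\ox S)T_2(d\ox b)$. In Sweedler shorthand, anti-multiplicativity of $S$ rewrites this as $S(b_{(2)})\ox S(b_{(1)})S(d)$, while anti-comultiplicativity of $S$ in the form given by \cite[Corollary 6.16]{BoGTLC:wmba} identifies $\Delta(S(b))(1\ox S(d))$ with the same expression $S(b_{(2)})\ox S(b_{(1)})S(d)$. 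This is exactly the desired equality.

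The main obstacle is to translate the Sweedler-level manipulation of the previous paragraph into the multiplier framework, since $\Delta(b)$ and $\Delta(S(b))$ exist only as multipliers on $A\ox A$. Concretely, I would multiply the reduced identity on the right by $(y\ox z)$ for arbitrary $y,z\in A$, which brings both sides into $A\ox A$ and re-expresses $\Delta(S(b))(y\ox S(d)z)$ and $(S\ox S)\mathsf{tw}T_2(d\ox b)(y\ox z)$ as images of elements of $A$ to which the anti-comultiplicativity statement of \cite[Corollary 6.16]{BoGTLC:wmba} applies directly; combined with the anti-multiplicativity of $S$ and the non-degeneracy of $A$, this yields the multiplier equality and hence the lemma.
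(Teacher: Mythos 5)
Your proposal is correct and follows essentially the same route as the paper: both arguments rewrite $T_1$ via $\Delta$, use the multiplicativity of the extended comultiplication to peel off $\Delta(a)$, and then combine the anti-comultiplicativity of $S$ (\cite[Corollary 6.16]{BoGTLC:wmba}) with its anti-multiplicativity (\cite[Theorem 6.12]{BoGTLC:wmba}) to identify $\overline\Delta(S(b))(1\ox S(d))$ with $S(b^2)\ox S(d^2)$. The only difference is organizational — you isolate this multiplier identity as the core step and verify it from both sides, whereas the paper runs the same ingredients in one chain of equalities.
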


\begin{proof}
Using the multiplicativity of $\Delta$ and hence of its extension $\overline
\Delta:\M(A)\to \M(A\ox A)$ in the second equality; the
anti-comultiplicativity of $S$ (cf. \cite[Corollary 6.16]{BoGTLC:wmba}) in the
third equality; and the anti-multiplicativity of $S$ (cf. \cite[Theorem
6.12]{BoGTLC:wmba}) in the fourth equality,
\begin{eqnarray*}
T_1(aS(b) \ox S(d)c)&=&
\Delta(aS(b))(1 \ox S(d)c)=
\Delta(a)\overline \Delta S(b)(1 \ox S(d)c)\\
&=&\Delta(a)(\overline{S\ox S})(\Delta(b)^{21})(1\ox S(d)c)=
\Delta(a)(S(b^2)\ox S(d^2))(1\ox c)\\
&=&T_1(a\ox S(d^2)c)(S(b^2)\ox 1).
\end{eqnarray*}
\end{proof}

\begin{lemma}\label{lem:T_tw_R}
For a regular weak multiplier bialgebra $A$ with a left and right full
comultiplication and possessing an antipode $S$, 
and for any elements $a,b\in A$, the following identities hold.
\begin{itemize}
\item[{(1)}] $T_3 \mathsf{tw} R_1(a\ox b)=(S(a)\ox 1)\Delta(b)$.
\item[{(2)}] $T_4 \mathsf{tw} R_2(a\ox b)=\Delta(a)(1\ox S(b))$.
\end{itemize}
\end{lemma}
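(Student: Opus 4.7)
The two identities are symmetric under the swap $T_1\leftrightarrow T_2$, $T_3\leftrightarrow T_4$, $R_1\leftrightarrow R_2$ combined with interchanging left- and right-fullness; I therefore focus on (1) and obtain (2) by the parallel (mirror) argument.

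For (1), my plan is direct computation of $T_3\mathsf{tw}R_1(a\ox b)$. By part~(2) of \cite[Theorem 6.8]{BoGTLC:wmba}, the weak inverse $R_1$ is determined explicitly by the antipode $S$; heuristically, $R_1(a\ox b)$ has the form $a_{(1)}\ox S(a_{(2)})b$ in Sweedler-type notation. After applying $\mathsf{tw}$ and then $T_3=(1\ox\cdot)\Delta(\cdot)$, I would unfold $\Delta(S(a_{(2)})b)$ using multiplicativity of the extension $\overline\Delta:\M(A)\to\M(A\ox A)$ together with the anti-comultiplicativity identity $\overline\Delta\circ S=(S\ox S)\circ\mathsf{tw}\circ\Delta$ from \cite[Corollary 6.16]{BoGTLC:wmba}. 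A coassociative relabeling then brings the expression to $(S(a_{(3)})\ox a_{(1)}S(a_{(2)}))\Delta(b)$.

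The last step uses the weak antipode relation $\mu(A\ox S)T_2=\mu(A\ox\sqcap^L)$ from \eqref{eq:S_id} in the iterated form $\sum a_{(1)}S(a_{(2)})\ox a_{(3)}=(\sqcap^L\ox A)\Delta(a)$ (which follows from \eqref{eq:S_id} together with coassociativity). Combined with $E\Delta(b)=\Delta(b)$ and the compatibility in Lemma \ref{lem:Pi_on_E}~(4), the residual $\sqcap^L$-factor is absorbed into $\Delta(b)$, leaving $(S(a)\ox 1)\Delta(b)$.

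The principal difficulty is executing these Sweedler-style manipulations rigorously in the multiplier framework, since $\Delta(a)$ lives in $\M(A\ox A)$ rather than $A\ox A$. Each rewrite must be validated either by multiplying against auxiliary elements of $A$ to land in $A\ox A$ (and then invoking non-degeneracy to lift the equality back to the multiplier level), or by careful use of the extension $\overline{A\ox\Delta}$ and its compatibility with the idempotent $E$ from \eqref{eq:E_coproduct}. Correctly tracking the $\sqcap^L$-corrections so that they annihilate against $\Delta(b)$ via $E$ is the main substantive step; once this is arranged, the identity in (1) emerges, and (2) is obtained by the analogous argument using left-fullness and the companion relations for $R_2$ and $T_4$.
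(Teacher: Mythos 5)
Your overall strategy is the same as the paper's: write $R_1$ explicitly via the antipode (so that $(c\ox 1)R_1(a\ox b)=((A\ox S)T_2(c\ox a))(1\ox b)$, which is your Sweedler expression $a_{(1)}\ox S(a_{(2)})b$), expand $\Delta(S(a_{(2)})b)$ using multiplicativity of $\Delta$ together with the anti-comultiplicativity and anti-multiplicativity of $S$, and then contract with $\mu(A\ox S)T_2=\mu(A\ox\sqcap^L)$ from \eqref{eq:S_id}. Up to the expression $(S(a_{(3)})\ox a_{(1)}S(a_{(2)}))\Delta(b)$ your outline matches the paper's computation step for step.

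The gap is in the step you yourself identify as "the main substantive step": absorbing the residual $\sqcap^L$-factor. After the contraction you are left with (schematically) $(S(a_{(2)})\ox \sqcap^L(a_{(1)}))\Delta(b)$, and the factor $\sqcap^L(a_{(1)})$ sits in the \emph{second} tensor leg, multiplying $\Delta(b)$ from the left. Lemma \ref{lem:Pi_on_E}~(4), which you propose to use, says $(A\ox\sqcap^L)E_1(a\ox b)=E(a\ox\sqcap^L(b))$ --- it moves an application of $\sqcap^L$ to the second leg of $E$, and neither it nor $E\Delta(b)=\Delta(b)$ transports a left multiplication by $1\ox\sqcap^L(a_{(1)})$ into the first leg, which is what is needed to produce $S(a)=S(\sqcap^L(a_{(1)})a_{(2)})$ there. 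The paper resolves this by never letting the expression leave $A\ox A$: it pre-multiplies by an auxiliary element $S(d)\ox c$ chosen so that $d$ rides along inside $T_1(a\ox d)$; the $\sqcap^L$-factor is then eliminated by $\mu(\sqcap^L\ox A)T_1=\mu$ (Lemma~3.7~(3) of \cite{BoGTLC:wmba}), after a transfer between the two legs via Lemmas 3.9 and 6.14 of \cite{BoGTLC:wmba}, yielding $S(ad)\ox c=(S(d)\ox c)(S(a)\ox 1)$; finally the auxiliary factor $S(d)\ox c$ is cancelled by Lemma \ref{lem:S_nd}, which is exactly where the left fullness hypothesis enters (plain non-degeneracy of $A$ would not let you cancel a factor of the form $S(d)$). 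Your proposal neither supplies the correct transfer identity nor the cancellation mechanism, so as written the key step would not go through.
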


\begin{proof}
We only prove part (1), part (2) follows symmetrically. For any $a,c\in A$,
introduce the notation $T_2(c\ox a)=c^2\ox a^2$ where implicit summation is
understood. For any $b,d\in A$,
\begin{eqnarray*}
(S(d)\ox c)(T_3 \mathsf{tw} R_1(a\ox b))&=&
(S(d)\ox 1)(T_3 \mathsf{tw}((c\ox 1)R_1(a\ox b)))\\&=&
(S(d)\ox 1)(T_3 \mathsf{tw}(((A\ox S)T_2(c\ox a))(1\ox b)))\\&=&
(S(d)\ox c^2)\Delta(S(a^2)b)\\&=&
(S(d)\ox c^2)((\overline{S\ox S})\Delta(a^2)^{21})\Delta(b)\\&=&
(\mathsf{tw}(\mu\ox A)(A\ox S\ox S)(A\ox T_1)(T_2\ox A)(c\ox a\ox d))
\Delta(b)\\&=&
(\mathsf{tw}(\mu\ox A)(A\ox S\ox S)(T_2\ox A)(A\ox T_1)(c\ox a\ox d))
\Delta(b)\\&=&
(\mathsf{tw}(\mu\ox A)(A\ox \sqcap^L\ox S)(A\ox T_1)(c\ox a\ox d))
\Delta(b)\\&=&
(S\mu (\sqcap^L\ox A)T_1(a\ox d)\ox c)\Delta(b)\\&=&
(S(ad)\ox c)\Delta(b)\\&=&
(S(d)\ox c)(S(a)\ox 1)\Delta(b).
\end{eqnarray*}
Simplifying by $S(d)\ox c$ (cf. Lemma \ref{lem:S_nd}), we conclude the claim. 
In the first equality we used the left $A$-module property $T_3(b\ox ca)=(1\ox
c)T_3(b\ox a)$. The second equality follows applying identity (6.3) in
\cite{BoGTLC:wmba} to $R_1$. The fourth equality follows by the
multiplicativity of $\Delta$ and the anti-comultiplicativity of $S$ (in the
multiplier sense, cf. \cite[Corollary 6.16]{BoGTLC:wmba}) and the fifth and
the last equalities follow by the anti-multiplicativity of $S$ (see
\cite[Theorem 6.12]{BoGTLC:wmba}). In the sixth equality we made use of the
coassociativity axiom (iii) of weak multiplier bialgebra in Definition
\ref{def:wmba}. The seventh equality follows by an identity in (6.14) in
\cite{BoGTLC:wmba} and the eighth one follows by \cite[Lemma 3.9 and Lemma
6.14]{BoGTLC:wmba}. The penultimate equality is a consequence of 
\cite[Lemma 3.7~(3)]{BoGTLC:wmba}. 
\end{proof}

\section{Comodules and their morphisms}\label{sec:comod}

Before we can formulate the definition of {\em comodule} over a regular weak
multiplier bialgebra, some preparation is needed. 
Let $k$ be a field, $V$ a $k$-vector space and $A$ a (not necessarily unital)
algebra over $k$. Then we can regard the tensor product vector space $V\ox A$
as an $A\cong k\ox A$-bimodule via the left and right actions
$$
(1\ox a)(v\ox b)=v\ox ab\qquad \textrm{and}\qquad 
(v\ox b)(1\ox a)=v \ox ab,
$$
where $1$ stands for the unit element of the base field.

\begin{proposition}\label{prop:prep_comod}
Let $A$ be a regular weak multiplier bialgebra, let $V$ be a vector space and
let $\lambda$ and $\varrho$ be linear maps $V\ox A \to V \ox A$ such that 
\begin{equation}\label{eq:prep}
(1\ox a)\lambda (v\ox b)=\varrho(v\ox a)(1\ox b)\qquad 
\textrm{for all } v\in V \textrm{ and } a,b\in A.
\end{equation}
Then the following statements hold.
\begin{itemize}
\item[{(1)}] The map $\lambda$ is a morphism of right $A$-modules and
 $\varrho$ is a morphism of left $A$-modules.
\item[{(2)}] The following assertions are equivalent.
\begin{itemize}
\item[{(2.a)}] $(V\ox E_1)(\lambda\ox A)\lambda^{13}=(\lambda\ox A)\lambda^{13}$.
\item[{(2.b)}] $(\varrho\ox A)\varrho^{13}(V\ox E_2)=
(\varrho \ox A)\varrho^{13}$. 
\end{itemize}
\item[{(3)}] The following assertions are equivalent.
\begin{itemize}
\item[{(3.a)}] $(\lambda\ox A)\lambda^{13}(V\ox E_1)=(\lambda\ox A)\lambda^{13}$.
\item[{(3.b)}] $(V\ox E_2)(\varrho\ox A)\varrho^{13}=
(\varrho \ox A)\varrho^{13}$. 
\end{itemize}
\item[{(4)}] The following assertions are equivalent.
\begin{itemize}
\item[{(4.a)}] The equalities in part (3) hold and 
\begin{equation}\label{eq:coass-l-prep}
(\lambda\ox A)\lambda^{13}(V\ox T_1)=(V\ox T_1)(\lambda\ox A).
\end{equation}
\item[{(4.b)}] The equalities in part (3) hold and 
\begin{equation}\label{eq:coass-l-prep-b}
(\lambda\ox A)\lambda^{13}(V\ox T_4)=(V\ox T_4)\lambda^{13}.
\end{equation}
\item[{(4.c)}] The equalities in part (3) hold and 
\begin{equation}\label{eq:coass-l-prep-c}
(\lambda\ox A)(V\ox T_3)\varrho^{13}=(V\ox T_3)(\lambda\ox A).
\end{equation}
\item[{(4.d)}] The equalities in part (2) hold and 
\begin{equation}\label{eq:coass-r-prep}
(\varrho\ox A)\varrho^{13}(V\ox T_3)=(V\ox T_3)(\varrho \ox A).
\end{equation} 
\item[{(4.e)}] The equalities in part (2) hold and 
\begin{equation}\label{eq:coass-r-prep-e}
(\varrho\ox A)\varrho^{13}(V\ox T_2)=(V\ox T_2)\varrho^{13}.
\end{equation} 
\item[{(4.f)}] The equalities in part (2) hold and 
\begin{equation}\label{eq:coass-r-prep-f}
(\varrho\ox A)(V\ox T_1)\lambda^{13}=(V\ox T_1)(\varrho \ox A).
\end{equation} 
\end{itemize}
\end{itemize}
\end{proposition}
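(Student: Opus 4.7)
\emph{Plan.} Write $\Psi:=(\lambda\ox A)\lambda^{13}$ and $\Phi:=(\varrho\ox A)\varrho^{13}$. Part~(1) follows by applying \eqref{eq:prep} twice: $(1\ox a)\lambda(v\ox bc)=\varrho(v\ox a)(1\ox b)(1\ox c)=(1\ox a)\lambda(v\ox b)(1\ox c)$, and cancelling $a$ via non-degeneracy of $A$ gives $\lambda(v\ox bc)=\lambda(v\ox b)(1\ox c)$; the claim for $\varrho$ is symmetric.

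The technical backbone of parts~(2) and~(3) is the multiplier-type identity
\begin{equation}\label{eq:star-plan}
(1\ox c\ox d)\,\Psi(v\ox a\ox b) = \Phi(v\ox c\ox d)(1\ox a\ox b),
\end{equation}
valid for all $v\in V$ and $a,b,c,d\in A$. To derive \eqref{eq:star-plan}, write $\lambda(v\ox b)=\sum_i v_i\ox b_i$ and $\lambda(v_i\ox a)=\sum_j v_{ij}\ox a_{ij}$, so that $\Psi(v\ox a\ox b)=\sum_{i,j}v_{ij}\ox a_{ij}\ox b_i$. Multiplying by $(1\ox c\ox d)$ on the left and using \eqref{eq:prep} in the middle factor gives $\sum_i\varrho(v_i\ox c)(1\ox a)\ox db_i$; the identity $\sum_i v_i\ox db_i=\varrho(v\ox d)(1\ox b)$ (again by \eqref{eq:prep}) combined with the linear map $u\ox x\mapsto\varrho(u\ox c)(1\ox a)\ox x$ identifies this with $\Phi(v\ox c\ox d)(1\ox a\ox b)$. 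Combining \eqref{eq:star-plan} with axiom~(i) of Definition~\ref{def:wmba}, $(c\ox d)E_1(a\ox b)=E_2(c\ox d)(a\ox b)$, proves both (2.a)$\Leftrightarrow$(2.b) and (3.a)$\Leftrightarrow$(3.b): for (2.a)$\Rightarrow$(2.b), assuming $(V\ox E_1)\Psi=\Psi$, axiom~(i) yields $(1\ox E_2(c\ox d))\Psi(v\ox a\ox b)=(1\ox c\ox d)(V\ox E_1)\Psi(v\ox a\ox b)=(1\ox c\ox d)\Psi(v\ox a\ox b)$, and applying \eqref{eq:star-plan} summand-wise to both sides reduces this to $\Phi(v\ox E_2(c\ox d))(1\ox a\ox b)=\Phi(v\ox c\ox d)(1\ox a\ox b)$; non-degeneracy in $a,b$ yields (2.b). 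The converses and part~(3) are symmetric.

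For part~(4), the within-class equivalences (4.a)$\Leftrightarrow$(4.b)$\Leftrightarrow$(4.c) and (4.d)$\Leftrightarrow$(4.e)$\Leftrightarrow$(4.f) follow by pre- or post-multiplying each equation by elements of $A$ and substituting among $T_1,T_2,T_3,T_4$ via \eqref{eq:T_14}--\eqref{eq:T_34}. For instance, (4.a)$\Rightarrow$(4.b): post-multiply (4.a) applied to $v\ox c\ox b$ by $(1\ox a\ox 1)$; since $\Psi$ is a right $A\ox A$-module map in factors~2 and~3 (by part~(1)), the relation $T_1(c\ox b)(a\ox 1)=T_4(a\ox c)(1\ox b)$ of \eqref{eq:T_14} converts both sides to $\Psi(v\ox T_4(a\ox c))(1\ox 1\ox b)$ and $(V\ox T_4)\lambda^{13}(v\ox a\ox c)(1\ox 1\ox b)$ respectively; cancelling $b$ by non-degeneracy yields (4.b). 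The cross-class equivalences such as (4.a)$\Leftrightarrow$(4.d) additionally require deriving the equalities of~(2) from those of~(3) together with $\lambda$-coassociativity, and translating $\lambda$-coassociativity into $\varrho$-coassociativity. For the first, use spanning axiom~(vi) to write $E_1(p\ox q)=\sum_n T_1(a_n\ox c_n)(b_n\ox 1)$; from (3.a) and (4.a), $\Psi(v\ox p\ox q)=\Psi(v\ox E_1(p\ox q))=\sum_n(V\ox T_1)(\lambda(v\ox a_n)\ox c_n)(1\ox b_n\ox 1)$, which lies in $V\ox\mathrm{image}(E_1)$ again by (vi), so (2.a) follows from $E_1^2=E_1$. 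For the second, pre-multiply the $\lambda$-coassociativity by $(1\ox c\ox d)$, apply \eqref{eq:star-plan}, and use axiom~(ix) (e.g.\ $(1\ox c)T_1(a\ox b)=T_3(a\ox c)(1\ox b)$) to convert $T_1$ into $T_3$, obtaining (4.d).

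The principal obstacle is the cross-class equivalence in part~(4): one must simultaneously extract the $E$-equalities of~(2) via the spanning axiom~(vi) together with $E_1^2=E_1$, and translate $\lambda$-coassociativity to $\varrho$-coassociativity via \eqref{eq:star-plan} and the $T_i$-interrelations of axiom~(ix). The remaining parts reduce to straightforward uses of \eqref{eq:prep}, \eqref{eq:star-plan}, and multiplier axiom~(i).
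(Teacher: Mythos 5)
Your treatment of parts (1), (2), (3), of the within-class equivalences in (4), and of the derivation of the $E$-equality (2.a) from (3.a) together with \eqref{eq:coass-l-prep} coincides with the paper's proof: your exchange identity $(1\ox c\ox d)\,(\lambda\ox A)\lambda^{13}(v\ox a\ox b)=(\varrho\ox A)\varrho^{13}(v\ox c\ox d)\,(1\ox a\ox b)$ is, up to renaming of variables, exactly the paper's formula \eqref{eq:l-r-2}, and your use of axiom (vi) plus idempotency of $E_1$ for the $E$-part is the paper's argument.

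The gap is in the other half of the cross-class implication (4.a)$\Rightarrow$(4.d), namely deriving the $\varrho$-coassociativity \eqref{eq:coass-r-prep} from \eqref{eq:coass-l-prep}. Your recipe --- pre-multiply \eqref{eq:coass-l-prep} by $(1\ox c\ox d)$, apply the exchange identity, and convert $T_1$ into $T_3$ via axiom (ix) --- does not reach the target. Carrying it out, the right-hand side of \eqref{eq:coass-l-prep} becomes $v^\lambda\ox(c\ox d)\Delta(a^\lambda)(1\ox b)$ (with $\lambda(v\ox a)=v^\lambda\ox a^\lambda$), whereas the right-hand side of \eqref{eq:coass-r-prep}, multiplied on the right by $(1\ox a\ox b)$, equals $v^\varrho\ox(1\ox d)\Delta(c^\varrho)(a\ox b)$ (with $\varrho(v\ox c)=v^\varrho\ox c^\varrho$). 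In the first expression the $\lambda$-transformed leg sits inside $\Delta$; in the second the $\varrho$-transformed leg does. Axiom (ix) only reshuffles left and right multiplications around a fixed $\Delta(\cdot)$, and the compatibility \eqref{eq:prep} only exchanges $v^\lambda\ox c\,a^\lambda$ with $v^\varrho\ox c^\varrho a$, i.e.\ products --- neither device can move a transformed element into or out of the argument of $\Delta$. The paper bridges this by a further use of the normalization conditions ((3.b), resp.\ $E_2T_3=T_3$) to insert a factor of $E$, the axiom-(vi) decomposition $E(a\ox b)=\sum_i\Delta(p^i)(q^i\ox r^i)$, and --- crucially --- the multiplicativity of $\Delta$, which merges $\Delta(c^\varrho)\Delta(p^i)$ into $\Delta(c^\varrho p^i)$; only at that point is the argument of $\Delta$ a genuine product, so that \eqref{eq:prep}, applied through the linear map $x\mapsto(1\ox d)\Delta(x)(q^i\ox r^i)$, can be invoked. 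This multiplicativity step is the essential idea missing from your outline of the coassociativity translation.
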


\begin{proof}
(1) Applying twice \eqref{eq:prep},
$$
(1\ox a)\lambda(v\ox bc)=
\varrho(v\ox a)(1\ox bc)=
(1\ox a)\lambda(v\ox b)(1\ox c),
$$
for any $v\in V$ and $a,b,c\in A$. Hence by non-degeneracy of the left
$A$-module $V\ox A$ it follows that $\lambda$ is a right $A$-module map. It is
proven symmetrically that $\varrho$ is a left $A$-module map.

(2) Again, applying twice \eqref{eq:prep},
\begin{eqnarray}\label{eq:l-r-2}
((\varrho \ox A)\varrho^{13}(v\ox a\ox b))\hspace{-1.cm}&&(1\ox c\ox d)\\
&=&(1\ox a\ox 1)(\lambda\ox A)[\varrho^{13}(v\ox c\ox b)(1\ox 1\ox d)]
\nonumber\\
&=&(1\ox a\ox b)((\lambda\ox A)\lambda^{13}(v\ox c\ox d)),
\nonumber
\end{eqnarray}
for any $v\in V$ and $a,b,c,d\in A$. With this identity at hand,
\begin{eqnarray*}
((\varrho\ox A)\varrho^{13}(V\ox E_2)(v\ox a\ox b)) 
\hspace{-1.3cm}&&(1\ox c\ox d)\\
&=&((\varrho\ox A)\varrho^{13}(v\ox (a\ox b)E))(1\ox c\ox d)\\
&\stackrel{\eqref{eq:l-r-2}}=&
(1\ox (a\ox b)E)((\lambda\ox A)\lambda^{13}(v\ox c\ox d))\\
&=&(1\ox a\ox b)((V\ox E_1)(\lambda\ox A)\lambda^{13}(v\ox c\ox d)).
\end{eqnarray*}
Hence the implication (2.a)$\Rightarrow$(2.b) follows by non-degeneracy of the
right $A\ox A$-module $V\ox A \ox A$ and (2.b)$\Rightarrow$(2.a) follows by
non-degeneracy of the left $A\ox A$-module $V\ox A \ox A$.

(3) is proven analogously to (2).

(4.a)$\Leftrightarrow$(4.b) The equality in \eqref{eq:coass-l-prep} is
equivalent to
$$
((\lambda\ox A)\lambda^{13}(V\ox T_1)(v\ox a\ox b))(1\ox c\ox 1)=
((V\ox T_1)(\lambda\ox A)(v\ox a\ox b))(1\ox c\ox 1),
$$
for all $v\in V$ and $a,b,c\in A$. Using \eqref{eq:T_14} and the right
$A$-module map property of $\lambda$ from part (1), the displayed equality is
equivalent to
$$
((\lambda\ox A)\lambda^{13}(V\ox T_4)(v\ox c\ox a))(1\ox 1\ox b)=
((V\ox T_4)\lambda^{13}(v\ox c\ox a))(1\ox 1\ox b).
$$
Its validity for all $v\in V$ and $a,b,c\in A$ is equivalent to
\eqref{eq:coass-l-prep-b}. 
The equivalence (4.d)$\Leftrightarrow$(4.e) is proven symmetrically.

(4.a)$\Leftrightarrow$(4.c) The equality in \eqref{eq:coass-l-prep} is
equivalent also to
$$
(1\ox 1\ox c)((\lambda\ox A)\lambda^{13}(V\ox T_1)(v\ox a\ox b))=
(1\ox 1\ox c)((V\ox T_1)(\lambda\ox A)(v\ox a\ox b)),
$$
for all $v\in V$ and $a,b,c\in A$. Combining the first equality in axiom (ix)
in Definition \ref{def:wmba} with \eqref{eq:prep}, the displayed equality is
equivalent to  
$$
((\lambda\ox A)(V\ox T_3)\varrho^{13}(v\ox a\ox c))(1\ox 1\ox b)=
((V\ox T_3)(\lambda\ox A)(v\ox a\ox c))(1\ox 1\ox b).
$$
Its validity for all $v\in V$ and $a,b,c\in A$ is equivalent to
\eqref{eq:coass-l-prep-c}. 
The equivalence (4.d)$\Leftrightarrow$(4.f) is proven symmetrically.
 
(4.a)$\Rightarrow$(4.d)
First we show that (4.a) implies (2.a). By axiom (vi) of weak multiplier
bialgebra in Definition \ref{def:wmba}, for any elements $a,b\in A$ 
there exist finitely many elements $p^i,q^i,r^i\in A$ such that $E(a\ox
b)=\sum_i\Delta(p^i)(q^i\ox r^i)$. In terms of these elements and for any
$v\in V$, omitting for brevity the summation symbols,
\begin{eqnarray}\label{eq:4a>b}
(\lambda\ox A)\lambda^{13}(v\ox a\ox b)
&\stackrel{\textrm{(3.a)}}=&
(\lambda\ox A)\lambda^{13}(v\ox E(a\ox b))\\
&=&(\lambda\ox A)\lambda^{13}(v\ox T_1(p^i\ox r^i)(q^i\ox 1))\nonumber\\
&\stackrel{\textrm{part}\ (1)}=&
((\lambda\ox A)\lambda^{13}(V\ox T_1)(v\ox p^i\ox r^i))(1\ox q^i\ox 1)
\nonumber\\
&\stackrel{\eqref{eq:coass-l-prep}}=&
((V\ox T_1)(\lambda\ox A)(v\ox p^i\ox r^i))(1\ox q^i\ox 1).
\nonumber
\end{eqnarray}
Using this identity in the first and the last equalities, the right $A$-module
map property of $E_1$ in the second equality and $E_1T_1=T_1$ in the third
one, 
\begin{eqnarray*}
(V\ox E_1)(\lambda\ox A)\lambda^{13}(v\ox a\ox b)&=&
(V\ox E_1)[((V\ox T_1)(\lambda\ox A)(v\ox p^i\ox r^i))(1\ox q^i\ox 1)]\\
&=&((V\ox E_1)(V\ox T_1)(\lambda\ox A)(v\ox p^i\ox r^i))(1\ox q^i\ox 1)\\
&=&((V\ox T_1)(\lambda\ox A)(v\ox p^i\ox r^i))(1\ox q^i\ox 1)\\
&=&(\lambda\ox A)\lambda^{13}(v\ox a\ox b).
\end{eqnarray*}
Finally --- using the same notation as above and the implicit summation index
notation $\varrho(v\ox a)=v^\varrho\ox a^\varrho$, $\lambda(v\ox
p)=v^\lambda\ox p^\lambda$ --- we show that (4.a) implies
\eqref{eq:coass-r-prep}. Take any $v\in V$ and $a,b,c,d\in A$. Then
\begin{eqnarray*}
((V\ox T_3)(\varrho\ox A)(v\ox c\ox d))\hspace{-1cm}&&(1\ox a\ox b)\\
&=&((V\ox E_2)(V\ox T_3)(\varrho\ox A)(v\ox c\ox d))(1\ox a\ox b)\\
&=&((V\ox T_3)(\varrho\ox A)(v\ox c\ox d))(1\ox E(a\ox b))\\
&=&v^\varrho\ox (1\ox d)\Delta(c^\varrho p^i)(q^i\ox r^i)\\
&=&v^\lambda\ox (1\ox d)\Delta(cp^{i\lambda} )(q^i\ox r^i)\\
&=&(1\ox T_3(c\ox d))((V\ox T_1)(\lambda\ox A)(v\ox p^i\ox r^i))(1\ox q^i\ox 1).
\end{eqnarray*}
In the first equality we used $E_2T_3=T_3$ and in the second equality we used
axiom (i) of weak multiplier bialgebra in Definition \ref{def:wmba}.
In the third and the last equalities we used the multiplicativity of $\Delta$
and in the penultimate one we applied \eqref{eq:prep}. On the other hand,
\begin{eqnarray*}
((\varrho\ox A)\varrho^{13}(V\ox T_3)\hspace{-1cm}&&(v\ox c\ox d))
(1\ox a\ox b)\\
&\stackrel{\textrm{(3.b)}}=&
((V\ox E_2)(\varrho\ox A)\varrho^{13}(V\ox T_3)(v\ox c\ox d))(1\ox a\ox
b)\\
&\stackrel{\textrm{(i)}}=&
((\varrho\ox A)\varrho^{13}(V\ox T_3)(v\ox c\ox d))(1\ox E(a\ox b))\\
&=&((\varrho\ox A)\varrho^{13}(V\ox T_3)(v\ox c\ox d))(1\ox T_1(p^i\ox r^i)
(q^i\ox 1))\\
&\stackrel{\eqref{eq:l-r-2}}=&
(1\ox T_3(c\ox d))((\lambda\ox A)\lambda^{13}(V\ox T_1)(v\ox p^i\ox r^i))
(1\ox q^i\ox 1).
\end{eqnarray*}
The expressions we obtained are equal by \eqref{eq:coass-l-prep} so that
\eqref{eq:coass-r-prep} holds by the non-degeneracy of the right $A\ox
A$-module $V\ox A\ox A$. 
The converse implication (4.d)$\Rightarrow$(4.a) follows symmetrically. 
\end{proof}

We define a right comodule over a regular weak multiplier bialgebra $A$ as a
triple $(V,\lambda,\varrho)$ satisfying \eqref{eq:prep} and the equivalent
conditions in Proposition \ref{prop:prep_comod}~(4). That is, we propose the
following.

\begin{definition}\label{def:comod}
A {\em right comodule} over a regular weak multiplier bialgebra $A$ is a
triple consisting of a vector space $V$ and linear maps $\lambda,\varrho:V\ox
A \to V\ox A$ satisfying 
\begin{equation}\label{eq:comp}
(1\ox a)\lambda(v\ox b)=
\varrho(v\ox a)(1\ox b),
\qquad \forall v\in V,\ a,b\in A
\end{equation}
together with
\begin{eqnarray}
\label{eq:l-i-norm}
&&(\lambda\ox A)\lambda^{13}(V \ox E_1)=(\lambda\ox A)\lambda^{13}\\
\label{eq:l-coass}
&&(\lambda\ox A)\lambda^{13}(V \ox T_1)=(V \ox T_1)(\lambda\ox A).
\end{eqnarray}
Equivalently, $\lambda$ and $\varrho$ satisfy \eqref{eq:comp} and 
\begin{eqnarray}
\label{eq:r-i-norm}
&&(\varrho \ox A)\varrho^{13}(V \ox E_2)=(\varrho \ox A)\varrho^{13}\\
\label{eq:r-coass}
&&(\varrho\ox A)\varrho^{13}(V \ox T_3)=(V \ox T_3)(\varrho\ox A).
\end{eqnarray}
\end{definition}

\begin{remark}\label{rem:unital_com}
If $A$ is a usual weak bialgebra as in \cite{WHAI,Nill} possessing an
algebraic unit $1$, and $V$ is a vector space, then a pair of linear maps
$\lambda,\varrho:V\ox A \to V\ox A$ satisfying \eqref{eq:comp} is equivalent
to a linear map
$$
\tau:V\to V\ox A,\qquad 
v\mapsto \lambda(v\ox 1)\stackrel{\eqref{eq:comp}}=\varrho(v\ox 1). 
$$
For this, the implicit summation index notation $\tau(v)=v^0\ox v^1$ is widely
used in the literature. In this situation the normalization condition
\eqref{eq:l-i-norm} translates to
$$
((\tau\ox A)\tau(v))(1\ox \Delta(1))=(\tau\ox A)\tau(v),\qquad \forall v\in V
$$
and the coassociativity condition \eqref{eq:l-coass} translates to
$$
((\tau\ox A)\tau(v))(1\ox \Delta(1))=(V\ox \Delta)\tau(v),\qquad \forall v\in
V. 
$$
These conditions together are equivalent to the usual coassociativity
condition $(\tau\ox A)\tau=(V\ox \Delta)\tau$.

Comodules over coalgebras (so in particular over (weak) bialgebras) are
usually required to be counital as well. Later we will require an additional
condition replacing it; see Definition \ref{def:full} and Remark
\ref{rem:counital}. 
\end{remark}

\begin{example}\label{ex:A_com}
For any regular weak multiplier bialgebra $A$, there is a right $A$-comodule
$(A,T_1,T_3)$. 
\end{example}

\begin{proof}
Condition \eqref{eq:comp} holds by the first equality in axiom (ix) in
Definition \ref{def:wmba}. For any $b,c\in A$, introduce the notation
$T_1(b\ox c)=:b^1\ox c^1$, where implicit summation is understood. Then for
any $a,b,c,d\in A$,
\begin{eqnarray*}
(d\ox 1\ox 1)((T_1\ox A)T_1^{13}(A\ox T_1)\hspace{-1cm}&&(a\ox b\ox c))\\
&=&((T_2\ox A)(A\ox T_1)(d\ox a\ox c^1))(1\ox b^1\ox 1)\\
&=&((A\ox T_1)(T_2\ox A)(d\ox a\ox c^1))(1\ox b^1\ox 1)\\
&=&(A\ox T_1)(T_2(d\ox a)(1\ox b)\ox c)\\
&=&(d\ox 1\ox 1)((A\ox T_1)(T_1\ox A)(a\ox b\ox c)).
\end{eqnarray*}
In the first and the last equalities we used \eqref{eq:iii}, in the second
equality we used the coassociativity axiom (iii), and in the penultimate
equality we used axiom (v) of weak multiplier bialgebra in Definition
\ref{def:wmba}.  
By non-degeneracy of the left $A$-module $A\ox A\ox A$, this proves the
coassociativity condition \eqref{eq:l-coass}. It remains to check the
normalization condition \eqref{eq:l-i-norm}. To this end, note that for any
$a,b,d\in A$, 
\begin{equation}\label{eq:T_1_balanced}
T_1(a\ox \overline\sqcap^R(b)d)=
\Delta(a)(1\ox \overline\sqcap^R(b)d)=
\Delta(a\overline\sqcap^R(b))(1\ox d)=
T_1(a \overline\sqcap^R(b)\ox d)
\end{equation}
and
\begin{equation}\label{eq:T_1_L_mod}
T_1(a\ox d)(\overline \sqcap^R(b)\ox 1)=
\Delta(a)(\overline \sqcap^R(b)\ox d)=
\Delta(a)(1 \ox \sqcap^L(b) d)=
T_1(a\ox \sqcap^L(b) d),
\end{equation}
where the second equalities follow by Lemma 3.3 and Lemma 3.9 in
\cite{BoGTLC:wmba}, respectively. Combining these
equalities, for any $a,b,c,d\in A$
\begin{equation}\label{eq:A_com}
(T_1\ox A)T_1^{13}(a\ox \overline \sqcap^R(b)d\ox c)=
(T_1\ox A)T_1^{13}(a\ox d\ox \sqcap^L(b)c).
\end{equation}
With this identity at hand, and using again the notation $T_1(b\ox c)=:b^1\ox
c^1$ (with implicit summation understood),
\begin{eqnarray*}
(T_1\ox A)T_1^{13}(A\ox E_1)(a\ox d\ox bc)&=&
(T_1\ox A)T_1^{13}(a\ox \overline \sqcap^R(b^1)d\ox c^1)\\
&\stackrel{\eqref{eq:A_com}}=&
(T_1\ox A)T_1^{13}(a\ox d\ox \sqcap^L(b^1)c^1)\\
&=&(T_1\ox A)T_1^{13}(a\ox d\ox bc).
\end{eqnarray*}
The first equality follows by identity (2.3) in \cite{BoGTLC:wmba}, and the
last equality follows by Lemma 3.7~(3) in \cite{BoGTLC:wmba}. Using that the
algebra $A$ is idempotent, this proves the normalization condition
\eqref{eq:l-i-norm}.
\end{proof}

\begin{example}\label{ex:R_com}
For any regular weak multiplier bialgebra $A$ with a right full
comultiplication, denote by $R$ the coinciding range of the maps
$\sqcap^R:A\to \M(A)$ and $\overline \sqcap^R:A\to \M(A)$ (cf. \cite[Theorem
3.13]{BoGTLC:wmba}). Then there is a right $A$-comodule $(R,r\ox a \mapsto
E(1\ox ra),r\ox a \mapsto (1\ox ar)E)$. 
\end{example}

\begin{proof}
Both maps $\lambda:r\ox a \mapsto E(1\ox ra)$ and $\varrho:r\ox a \mapsto
(1\ox ar)E$ have their range in $R\ox A$ by identities (2.3) and (3.4) in 
\cite{BoGTLC:wmba}, respectively. By \cite[Lemma 3.3]{BoGTLC:wmba}, we can
write equivalently
$$
\lambda(r\ox a) =(1\ox r)E(1\ox a)
\qquad \textrm{and}\qquad 
\varrho(r\ox a)= (1\ox a)E(1\ox r)
$$
so the compatibility condition \eqref{eq:comp} evidently holds. For any $r\in
R$ and $a,b\in A$, 
\begin{equation}\label{eq:R_com}
(\lambda\ox A)\lambda^{13}(r\ox a\ox b)=
(\lambda\ox A)(E^{13}(1\ox a\ox rb))=
(E\ox 1)(1\ox E)(1\ox a\ox rb).
\end{equation}
Thus
\begin{eqnarray*}
(R\ox T_1)(\lambda \ox A)(r\ox a\ox b)&=&
(R\ox T_1)((E\ox 1)(1\ox ra\ox b))\\
&=&(E\ox 1)(1\ox E)(1\ox T_1(ra\ox b))\\
&=&(E\ox 1)(1\ox E)(1\ox1\ox r)(1\ox T_1(a\ox b))\\
&\stackrel{\eqref{eq:R_com}}=&
(\lambda\ox A)\lambda^{13}(R\ox T_1)(r\ox a\ox b).
\end{eqnarray*}
The second equality follows by axiom (vii) of weak multiplier bialgebra in
Definition \ref{def:wmba} and $E_1T_1=T_1$. The third equality follows by
\cite[Lemma 3.3]{BoGTLC:wmba}. This proves the coassociativity condition
\eqref{eq:l-coass}. Finally,  
\begin{eqnarray*}
(\lambda\ox A)\lambda^{13}(R\ox E_1)(r\ox a\ox b)
&\stackrel{\eqref{eq:R_com}}=&
(E\ox 1)(1\ox E)(1\ox1\ox r)(1\ox E(a\ox b))\\
&=&(E\ox 1)(1\ox E)(1\ox a\ox rb)\\
&\stackrel{\eqref{eq:R_com}}=&
(\lambda\ox A)\lambda^{13}(r\ox a\ox b).
\end{eqnarray*}
The second equality follows by \cite[Lemma 3.3]{BoGTLC:wmba} and $E^2=E$.
This proves the normalization condition \eqref{eq:l-i-norm}.
\end{proof}

In the following proposition the compatibility of comodules with the counit is
studied. 

\begin{proposition}\label{prop:old_cu}
Let $A$ be a regular weak multiplier bialgebra and let $(V,\lambda,\varrho)$
be a right $A$-comodule. Then the following equalities hold.
\begin{itemize}
\item[{(1)}] $(V\ox \epsilon\ox A)(\lambda\ox A)\lambda^{13}=
\lambda(V\ox \mu(\sqcap^L\ox A))$.
\item[{(2)}] $(V\ox \epsilon\ox A)\lambda^{13}(\lambda\ox A)=
\lambda(V\ox \mu(\overline \sqcap^R\ox A))$.
\item[{(3)}] $(V\ox A \ox \epsilon)(\varrho\ox A)\varrho^{13}=
\varrho(V\ox \mu(A\ox \sqcap^R))$.
\item[{(4)}] $(V\ox A\ox \epsilon)\varrho^{13}(\varrho\ox A)=
\varrho(V\ox \mu(A \ox \overline \sqcap^L))$.
\end{itemize}
\end{proposition}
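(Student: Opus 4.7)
The plan is to prove all four identities by a common strategy: insert the appropriate normalization from Proposition \ref{prop:prep_comod}, decompose $E_1$ (or $E_2$) using axiom (vi) of Definition \ref{def:wmba}, apply one of the equivalent coassociativity conditions from Proposition \ref{prop:prep_comod}(4), and finally identify the counit of what remains via one of the defining formulae \eqref{eq:Pibar}--\eqref{eq:Pi}.

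Concretely for Part (1), I would proceed as follows. By \eqref{eq:l-i-norm} one may replace $v\ox a\ox b$ inside $(\lambda\ox A)\lambda^{13}$ by $v\ox E_1(a\ox b)$, and axiom (vi) writes the latter as $\sum_i T_1(p_i\ox r_i)(q_i\ox 1) = \sum_i\Delta(p_i)(q_i\ox r_i)$. The right $A$-linearity of $\lambda$ on the middle slot (Proposition \ref{prop:prep_comod}(1)) extracts the factor $(q_i\ox 1)$, and coassociativity \eqref{eq:l-coass} then converts $(\lambda\ox A)\lambda^{13}(v\ox T_1(p_i\ox r_i))$ into $(V\ox T_1)(\lambda(v\ox p_i)\ox r_i)$; reinserting $(q_i\ox 1)$ one obtains $\lambda(v\ox p_i)$ paired with $\Delta(-)(q_i\ox r_i)$ on the last two slots. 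Applying $V\ox\epsilon\ox A$ and evaluating
$$(\epsilon\ox A)(\Delta(x)(q_i\ox r_i))=(\epsilon\ox A)T_4(q_i\ox x)\cdot r_i = x\sqcap^L(q_i)r_i$$
(via axiom (ix) followed by \eqref{eq:Pi}) collapses the result, using again the right $A$-linearity of $\lambda$, to $\lambda(v\ox p_i\sqcap^L(q_i)r_i)$. Finally, the same $(\epsilon\ox A)$-computation applied directly to the decomposition of $E_1(a\ox b)$ gives $\sum_i p_i\sqcap^L(q_i)r_i = (\epsilon\ox A)E_1(a\ox b)=\sqcap^L(a)b$, whence Part (1).

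Parts (2), (3), (4) follow by analogous computations. Part (3) is obtained by the mirror argument on the $\varrho$-side, using \eqref{eq:r-i-norm}, the decomposition $E_2(a\ox b)=\sum_i(1\ox s_i)T_2(r_i\ox q_i)$, the left $A$-linearity of $\varrho$, the equivalent form \eqref{eq:coass-r-prep-e} of coassociativity, and $(A\ox\epsilon)T_3(x\ox y)=\sqcap^R(x)y$. Parts (2) and (4) differ from (1) and (3) only in the order in which the two $\lambda$'s (resp.\ two $\varrho$'s) are composed: for them one starts instead from the ``opposite'' normalizations $(V\ox E_1)(\lambda\ox A)\lambda^{13}=(\lambda\ox A)\lambda^{13}$ and $(V\ox E_2)(\varrho\ox A)\varrho^{13}=(\varrho\ox A)\varrho^{13}$ of Proposition \ref{prop:prep_comod}(2). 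These are not built into Definition \ref{def:comod}, but the proof of the implication (4.a)$\Rightarrow$(4.d) given for Proposition \ref{prop:prep_comod} in fact also establishes them in any comodule; from there the same manipulations as in Part (1) apply, and one identifies the resulting combinations with $\overline\sqcap^R(a)b=(A\ox\epsilon)E_1(b\ox a)$ and $b\overline\sqcap^L(a)=(\epsilon\ox A)E_2(a\ox b)$ via \eqref{eq:Pibar}. The main point to watch throughout is to match the ``handedness'' of each of the four expressions on the left-hand side with the correct choice of decomposition of $E$, the correct equivalent form of coassociativity, and the correct one-sided formula for $\sqcap^{L/R}$ or $\overline\sqcap^{L/R}$; once this matching is right, the calculation in each case is a mild variant of the one done for Part (1).
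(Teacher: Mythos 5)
Your part (1) is exactly the paper's argument: the normalization \eqref{eq:l-i-norm}, the axiom (vi) decomposition $E(a\ox b)=\sum_i\Delta(p^i)(q^i\ox r^i)=\sum_iT_1(p^i\ox r^i)(q^i\ox 1)$, right $A$-linearity of $\lambda$, coassociativity \eqref{eq:l-coass}, and the counit identity $(\epsilon\ox A)(E(a\ox b))=\sqcap^L(a)b$. Part (3) is the correct mirror image of this (note only that \eqref{eq:Pi} gives $(A\ox\epsilon)T_3(x\ox y)=\sqcap^R(y)x$, not $\sqcap^R(x)y$ as you wrote; the arguments are transposed).

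The genuine problem is your roadmap for parts (2) and (4). You propose to start from the output-side normalizations $(V\ox E_1)(\lambda\ox A)\lambda^{13}=(\lambda\ox A)\lambda^{13}$ and $(V\ox E_2)(\varrho\ox A)\varrho^{13}=(\varrho\ox A)\varrho^{13}$ of Proposition \ref{prop:prep_comod}(2). You are right that these hold for every comodule (the proof of (4.a)$\Rightarrow$(4.d) derives (2.a) from (4.a)), but they cannot launch the computation: there the idempotent $E$ sits on the two output $A$-legs of $(\lambda\ox A)\lambda^{13}$, for which you have no explicit elements in hand, so axiom (vi) gives you nothing to decompose and feed into coassociativity. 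What actually makes (2) and (4) work --- and what the paper does --- is that they need no new normalization at all: the single intermediate identity
$$
(\lambda\ox A)\lambda^{13}(v\ox a\ox b)=
\sum_i\bigl((V\ox T_1)(\lambda\ox A)(v\ox p^i\ox r^i)\bigr)(1\ox q^i\ox 1),
$$
obtained from \eqref{eq:l-i-norm}, axiom (vi) and \eqref{eq:l-coass} exactly as in your part (1), yields part (1) when the counit is contracted against the second tensor leg (producing $\sqcap^L(a)b=(\epsilon\ox A)(E(a\ox b))$) and part (2) when it is contracted against the third leg (producing $\overline\sqcap^R(b)a=(A\ox\epsilon)(E(a\ox b))$ via \eqref{eq:Pibar}); the statement of (2), with its transposed composition $\lambda^{13}(\lambda\ox A)$ and $\epsilon$ in the middle slot, is precisely this last computation after interchanging the two $A$-tensorands. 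The same remark applies to (4) versus (3) on the $\varrho$-side. So the repair is to drop the appeal to Proposition \ref{prop:prep_comod}(2) and instead reuse the part (1) intermediate identity with the counit placed on the other leg.
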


\begin{proof}
By axiom (vi) of weak multiplier bialgebra in Definition \ref{def:wmba}, for
any $a,b\in A$ there exist finitely many elements $p^i,q^i,r^i\in A$ such that
$E(a\ox b)=\sum_i\Delta(p^i)(q^i\ox r^i)=\sum_iT_1(p^i\ox r^i)(q^i\ox 1)$. In
terms of these elements, omitting the summation symbols for brevity, for any
$v\in V$ the equality \eqref{eq:4a>b} holds. Equivalently, by
\eqref{eq:T_14}, 
\begin{equation}\label{eq:4a>b_v}
(\lambda\ox A)\lambda^{13}(v\ox a\ox b)=
((V\ox T_4)\lambda^{13}(v\ox q^i\ox p^i))(1\ox 1\ox r^i).
\end{equation}
The identities \eqref{eq:4a>b_v} and \eqref{eq:4a>b} are used to prove parts
(1) and (2), respectively. 

(1) Using the above elements $p^i,q^i,r^i\in A$ associated to $a,b\in A$, for
any $v\in V$ 
\begin{eqnarray*}
(V\ox \epsilon\ox A)(\lambda\ox A)\lambda^{13}(v\ox a\ox b)
&\stackrel{\eqref{eq:4a>b_v}}=&
((V\ox \epsilon\ox A)(V\ox T_4)\lambda^{13}(v\ox q^i\ox p^i))(1\ox r^i)\\
&\stackrel{\eqref{eq:Pi}}=&
\lambda(v\ox p^i)(1\ox \sqcap^L(q^i)r^i)\\
&=&\lambda(v\ox p^i\sqcap^L(q^i)r^i)=
\lambda(v\ox \sqcap^L(a)b).
\end{eqnarray*}
The penultimate equality follows by Proposition \ref{prop:prep_comod}~(1) and
the last one follows by  
$$
p^i\sqcap^L(q^i)r^i\stackrel{\eqref{eq:Pi}}=
(\epsilon \ox A)(\Delta(p^i)(q^i\ox r^i))=
(\epsilon \ox A)(E(a\ox b))\stackrel{\eqref{eq:Pi}}=
\sqcap^L(a)b.
$$

(2) Using the same notation as above,
\begin{eqnarray*}
(V\ox A\ox \epsilon)(\lambda\ox A)\lambda^{13}(v\ox a\ox b)
&\stackrel{\eqref{eq:4a>b}}=&
((V\ox A \ox \epsilon)(V\ox T_1)(\lambda(v\ox p^i)\ox r^i))
(1\ox q^i)\\
&\stackrel{\eqref{eq:Pibar}}=& 
\lambda(v\ox p^i)(1\ox \overline\sqcap^R(r^i)q^i)\\
&=&\lambda(v\ox p^i\overline \sqcap^R(r^i)q^i)=
\lambda(v\ox\overline \sqcap^R(b)a).
\end{eqnarray*}
The penultimate equality follows by Proposition \ref{prop:prep_comod}~(1) and
the last equality follows by 
$$
p^i\overline \sqcap^R(r^i)q^i\stackrel{\eqref{eq:Pibar}}=
(A\ox \epsilon)(\Delta(p^i)(q^i\ox r^i))=
(A\ox \epsilon)(E(a\ox b))\stackrel{\eqref{eq:Pibar}}=
\overline\sqcap^R(b)a.
$$

The remaining assertions follow symmetrically.
\end{proof}

\begin{lemma}\label{lem:bim}
For any regular weak multiplier bialgebra $A$, any right $A$-comodule
$(V,\lambda,\varrho)$ and any $v\in V$ and $a,b\in A$, the following
statements hold.
\begin{itemize}
\item[{(1)}] $(V\ox A\ox \epsilon)\lambda^{13}(\varrho\ox A)(v\ox a \ox b)=
\varrho(v\ox a)(1\ox \sqcap^L(b))$.
\item[{(2)}] $(V\ox A\ox \epsilon)\varrho^{13}(\lambda\ox A)(v\ox a \ox b)=
(1\ox \overline\sqcap^L(b))\lambda(v\ox a)$.
\end{itemize}
\end{lemma}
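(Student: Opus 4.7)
My plan is to verify each identity after pairing with an arbitrary $c\in A$, and then to invoke non-degeneracy of $A$ acting on $V\ox A$ (from the right for part (1), from the left for part (2)). The ingredients beyond this pairing trick are the compatibility relation \eqref{eq:comp} (used to convert between $\lambda$ and $\varrho$) and Proposition \ref{prop:old_cu} (which identifies the composition of two copies of $\lambda$ or $\varrho$ with a counital contraction as a single copy composed with a $\sqcap$-type projection). Throughout I use the elementary fact that $\lambda^{13}$, $\varrho^{13}$, and the counital contractions $V\ox A\ox\epsilon$ and $V\ox\epsilon\ox A$ all commute with multiplication on the untouched middle tensor factor, which is immediate from the definition of $f^{13}$ in Section \ref{sec:notation}.

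For part (1), I multiply both sides on the right by $1\ox c$ and push the resulting $1\ox c$ into the inner expression as $(1\ox c\ox 1)$ acting on position 2. Past $(V\ox A\ox\epsilon)$ and $\lambda^{13}$ the innermost expression becomes $\varrho(v\ox a)(1\ox c)\ox b$; applying \eqref{eq:comp} rewrites this as $(1\ox a)\lambda(v\ox c)\ox b$, and pulling the leading $1\ox a$ back out yields $(1\ox a)(V\ox A\ox\epsilon)\lambda^{13}(\lambda\ox A)(v\ox c\ox b)$. On the right hand side, $\varrho(v\ox a)(1\ox \sqcap^L(b)c)=(1\ox a)\lambda(v\ox \sqcap^L(b)c)$ by \eqref{eq:comp}, and Proposition \ref{prop:old_cu}~(1) identifies this with $(1\ox a)(V\ox\epsilon\ox A)(\lambda\ox A)\lambda^{13}(v\ox b\ox c)$. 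Writing $\lambda(v\ox c)=\sum v_1\ox c_1$ and $\lambda(v_1\ox b)=\sum v_{12}\ox b_2$ in implicit summation, both $(V\ox A\ox\epsilon)\lambda^{13}(\lambda\ox A)(v\ox c\ox b)$ and $(V\ox\epsilon\ox A)(\lambda\ox A)\lambda^{13}(v\ox b\ox c)$ visibly reduce to $\sum \epsilon(b_2)v_{12}\ox c_1$, so the two sides agree.

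Part (2) is strictly symmetric. I multiply both sides on the left by $1\ox c$ and push $(1\ox c\ox 1)$ inside past $(V\ox A\ox\epsilon)$ and $\varrho^{13}$ to act on position 2 of $(\lambda\ox A)(v\ox a\ox b)$, producing $(1\ox c)\lambda(v\ox a)\ox b$. The compatibility \eqref{eq:comp} (with the roles of its $a$ and $b$ swapped) converts this to $\varrho(v\ox c)(1\ox a)\ox b=(\varrho\ox A)(v\ox c\ox b)(1\ox a\ox 1)$. Pulling the trailing $(1\ox a\ox 1)$ back out past $\varrho^{13}$ and $(V\ox A\ox\epsilon)$ yields $\bigl((V\ox A\ox\epsilon)\varrho^{13}(\varrho\ox A)(v\ox c\ox b)\bigr)(1\ox a)$, which by Proposition \ref{prop:old_cu}~(4) equals $\varrho(v\ox c\overline\sqcap^L(b))(1\ox a)$. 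The right hand side, after the same left-multiplication, becomes $(1\ox c\overline\sqcap^L(b))\lambda(v\ox a)=\varrho(v\ox c\overline\sqcap^L(b))(1\ox a)$ by \eqref{eq:comp}.

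The only real obstacle is the notational bookkeeping: one must track which tensor factor is being multiplied at each step and which displayed operation commutes with that multiplication. Once that is in place, both parts follow from two applications of \eqref{eq:comp} together with one appeal to Proposition \ref{prop:old_cu}.
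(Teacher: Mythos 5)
Your proof is correct and follows essentially the same route as the paper: pair with an arbitrary $c\in A$, apply \eqref{eq:comp} twice to trade $\lambda$ for $\varrho$, invoke Proposition \ref{prop:old_cu}~(1) (resp.\ (4)), and conclude by non-degeneracy of $V\ox A$ as a right (resp.\ left) $A$-module. The only difference is that you spell out the leg-renumbering identity $(V\ox A\ox \epsilon)\lambda^{13}(\lambda\ox A)(v\ox c\ox b)=(V\ox\epsilon\ox A)(\lambda\ox A)\lambda^{13}(v\ox b\ox c)$, which the paper uses silently when citing Proposition \ref{prop:old_cu}~(1).
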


\begin{proof}
We only prove part (1), part (2) is proven symmetrically. For any $v\in V$ and
$a,b,c\in A$, 
\begin{eqnarray*}
((V\ox A\ox \epsilon)\lambda^{13}(\varrho\ox A)\hspace{-1.5cm}&&
(v\ox a \ox b))(1\ox c)\\
&\stackrel{\eqref{eq:comp}}=&
(1\ox a)((V\ox A\ox \epsilon)\lambda^{13}(\lambda\ox A)(v\ox c \ox b))\\
&=&(1\ox a)\lambda(v\ox \sqcap^L(b)c)
\stackrel{\eqref{eq:comp}}=
\varrho(v\ox a)(1\ox \sqcap^L(b)c),
\end{eqnarray*}
from which we conclude by the non-degeneracy of the right $A$-module $V\ox
A$. The second equality follows by Proposition \ref{prop:old_cu}~(1).
\end{proof}

In order to define morphisms between comodules, we need the following
lemma. Since for any vector space $V$, the left and right modules $V\ox
A$ over a weak multiplier bialgebra $A$ are non-degenerate, it follows
immediately from \eqref{eq:comp}. 

\begin{lemma}\label{lem:comod_mor}
Let $A$ be a regular weak multiplier bialgebra and let $(V,\lambda,\varrho)$
and $(V',\lambda',\varrho')$ be right $A$-comodules. For any linear map
$f:V\to V'$, the following assertions are equivalent.
\begin{itemize}
\item[{(a)}] $\lambda'(f\ox A)=(f\ox A)\lambda$.
\item[{(b)}] $\varrho'(f\ox A)=(f\ox A)\varrho$.
\end{itemize}
\end{lemma}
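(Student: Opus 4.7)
The plan is to exploit the compatibility condition \eqref{eq:comp} on both sides to convert an identity involving $\lambda,\lambda'$ into one involving $\varrho,\varrho'$, and then cancel the test factor using non-degeneracy of $A$. The symmetry of \eqref{eq:comp} in $\lambda$ and $\varrho$ will make the reverse implication immediate by the mirror argument, so it suffices to establish one direction.

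Concretely, assuming (a), I would multiply both sides of $\lambda'(f\otimes A)=(f\otimes A)\lambda$, evaluated at an arbitrary $v\otimes b$, on the left by $1\otimes a$ for an arbitrary $a\in A$. On the left-hand side this yields $(1\otimes a)\lambda'(f(v)\otimes b)$, which by \eqref{eq:comp} applied to $(V',\lambda',\varrho')$ equals $\varrho'(f(v)\otimes a)(1\otimes b)$. On the right-hand side, since $f\otimes A$ is evidently a left $A$-module map, we can move the factor $1\otimes a$ past it and then apply \eqref{eq:comp} for $(V,\lambda,\varrho)$, obtaining $((f\otimes A)\varrho(v\otimes a))(1\otimes b)$. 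Equating these gives
\[
\varrho'(f(v)\otimes a)(1\otimes b)=((f\otimes A)\varrho(v\otimes a))(1\otimes b)
\]
for every $b\in A$. Since $A$ is idempotent and non-degenerate, the right $A$-module $V'\otimes A$ is non-degenerate, so the factor $(1\otimes b)$ may be cancelled, yielding $\varrho'(f(v)\otimes a)=(f\otimes A)\varrho(v\otimes a)$ for every $v\in V$ and $a\in A$, which is exactly (b).

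The implication (b)$\Rightarrow$(a) follows by the symmetric argument: one multiplies on the right by $1\otimes a$ instead, uses \eqref{eq:comp} twice to reintroduce $\lambda$ and $\lambda'$, and cancels using non-degeneracy of the left $A$-module $V\otimes A$. There is no genuine obstacle here; the only substantive ingredients are the algebraic manipulation allowed by \eqref{eq:comp} and the density/non-degeneracy of $A$ inside $\mathsf{M}(A)$ that lets us drop the test element, both of which are already available from Section \ref{sec:prelims}.
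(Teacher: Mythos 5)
Your proposal is correct and is exactly the argument the paper intends: the paper dispenses with the proof by remarking, just before the lemma, that the claim "follows immediately from \eqref{eq:comp}" together with the non-degeneracy of the left and right $A$-modules $V\ox A$, which is precisely the multiply-by-a-test-element-and-cancel computation you carry out in detail.
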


We define the morphisms between right comodules over a regular weak multiplier
bialgebra as linear maps satisfying the equivalent conditions in Lemma
\ref{lem:comod_mor}. That is, we put the following.

\begin{definition}\label{def:comod_mor}
A {\em morphism of right comodules} over a regular weak multiplier bialgebra
$A$ is a linear map $f:V \to V'$ such that any (hence both) of the following
diagrams commutes.
$$
\xymatrix{
V\ox A \ar[r]^-\lambda\ar[d]_-{f\ox A}&
V\ox A\ar[d]^-{f\ox A}&&
V\ox A \ar[r]^-\varrho\ar[d]_-{f\ox A}&
V\ox A\ar[d]^-{f\ox A}\\
V'\ox A \ar[r]_-{\lambda'}&
V'\ox A&&
V'\ox A \ar[r]_-{\varrho'}&
V'\ox A}
$$
\end{definition}

Left comodules are defined and treated symmetrically. One of several
equivalent definitions is the following.

\begin{definition}\label{def:left_comod}
A {\em left comodule} over a regular weak multiplier bialgebra $A$ is a triple
consisting of a vector space $V$ and linear maps $\lambda,\varrho:A\ox V \to
A\ox V$ obeying the following conditions. Compatibility, in the sense that 
\begin{equation}\label{eq:left_comp}
(a\ox 1)\lambda(b\ox v)=\varrho(a\ox v)(b\ox 1)\qquad
\textrm{for any $v\in V$ and $a,b\in A$};
\end{equation}
normalization in the sense 
\begin{equation}\label{eq:left_norm}
(A\ox \lambda)\lambda^{13}(E_1\ox V)=(A\ox \lambda)\lambda^{13};
\end{equation}
and coassociativity in the sense 
\begin{equation}\label{eq:left_coass}
(A\ox \lambda)\lambda^{13}(T_1\ox V)=(T_1\ox V)\lambda^{13}.
\end{equation}
\end{definition}

\section{Integrals on weak multiplier bialgebras}\label{sec:int}

Integrals on unital (weak) bialgebras can be interpreted as comodule maps from
the regular comodule to the comodule defined on the base object. Our aim in
this section is to give a similar interpretation of integrals on regular weak
multiplier bialgebras with a right full comultiplication (when the comodule on
the base object is available). A very general notion of {\em integral} is
studied in \cite{VDa:int}. Applying it to a regular weak multiplier bialgebra
with right full comultiplication, it yields the notion in forthcoming
Proposition \ref{prop:int_def}.

Consider a regular weak multiplier bialgebra $A$ over a field $k$. By the
first axiom in (ix) in Definition \ref{def:wmba}, any linear map
$\psi:A\to k$ determines a linear map $(-) \leftharpoonup \psi: A\to \M(A)$ by
the prescriptions
$$
(a \leftharpoonup \psi)b=(\psi \ox A)T_1(a\ox b)
\qquad \textrm{and} \qquad
b(a \leftharpoonup \psi)=(\psi \ox A)T_3(a\ox b),
$$
see \cite{VDaWa_III}. For this map $(-)\leftharpoonup \psi$, the following
holds. 

\begin{lemma}\label{lem:harpoon}
Let $A$ be a regular weak multiplier bialgebra over a field $k$.
For any linear map $\psi:A\to k$ and for any $a,b\in A$,
$$
((-)\leftharpoonup \psi \ox A)T_3(a\ox b)=
(1\ox b)\overline\Delta(a \leftharpoonup \psi).
$$
\end{lemma}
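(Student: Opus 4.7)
The plan is to verify the asserted identity as an equality in $\M(A\ox A)$, which accommodates both the left-hand side $((-)\leftharpoonup \psi \ox A)T_3(a\ox b)\in \M(A)\ox A$ and the right-hand side $(1\ox b)\overline\Delta(a \leftharpoonup \psi)\in A\ox A$ via the natural embeddings into $\M(A\ox A)$. By the non-degeneracy of the right $A\ox A$-action on $\M(A\ox A)$, it suffices to verify the equality after multiplication on the right by an arbitrary element $(c\ox d)\in A\ox A$.

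Writing $T_3(a\ox b)=a^3\ox b^3$ with implicit summation, and using the defining relation $(x\leftharpoonup\psi)c=(\psi\ox A)T_1(x\ox c)$, the left-hand side paired with $(c\ox d)$ becomes $(\psi\ox A)T_1(a^3\ox c)\ox b^3 d$. For the right-hand side, I exploit $\overline\Delta(a\leftharpoonup\psi)E=\overline\Delta(a\leftharpoonup\psi)$ (since $\overline\Delta(1)=E$) together with axiom (vi) of Definition \ref{def:wmba}, which lets me write $E(c\ox d)=\sum_i\Delta(p^i)(q^i\ox r^i)$ for suitable elements $p^i,q^i,r^i\in A$. By multiplicativity of $\overline\Delta$ and the identity $(a\leftharpoonup\psi)p^i=(\psi\ox A)T_1(a\ox p^i)\in A$, the right-hand side paired with $(c\ox d)$ equals $\sum_i(1\ox b)\Delta\bigl((\psi\ox A)T_1(a\ox p^i)\bigr)(q^i\ox r^i)$.

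The remaining step is to match these two expressions. Applying axiom (ix) in the form $(1\ox b)T_1(x\ox r)=T_3(x\ox b)(1\ox r)$ pulls the factor $(1\ox b)$ across $\Delta$ on the right-hand side, converting the inner $\Delta$ back into a $T_3$ applied to $(a\leftharpoonup\psi)p^i\ox b$; axiom (v), encoding the multiplicativity of $\Delta$ in the form $T_1(\mu\ox A)=(\mu\ox A)T_1^{13}(A\ox T_1)$, then unravels the nested $T_1(a\ox p^i)$ into a double coproduct applied to $a$. Coassociativity axiom (iii) identifies $(\Delta\ox A)\Delta(a)=(A\ox\Delta)\Delta(a)$ and thereby matches the result with the left-hand side pairing; symbolically, both sides collapse to $\psi(a_{(1)})\,a_{(2)}c\ox b\,a_{(3)}d$. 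The main obstacle is the careful bookkeeping of the iterated applications of $T_1$, $T_3$, and the extension $\overline\Delta$, but every step is justified individually by one of the axioms of weak multiplier bialgebra or by the defining formulas for $(-)\leftharpoonup\psi$, and non-degeneracy then yields the claim.
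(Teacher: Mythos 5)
Your overall strategy is sound and is essentially the mirror image of the paper's. The paper multiplies the claimed identity on the left by $(c\ox 1)$, which turns $c\,(a^3\leftharpoonup\psi)$ into $(\psi\ox A)T_3(a^3\ox c)$ (writing $T_3(a\ox b)=a^3\ox b^3$) and hence expresses everything through the iterated map $(T_3\ox A)T_3^{13}$; it then inserts $(A\ox E_2)$ using Proposition \ref{prop:prep_comod}~(2.b), decomposes $(c\ox b)E=\sum_i(p^i\ox 1)T_3(r^i\ox q^i)$ by axiom (vi), and applies the coassociativity (4.d) of the comodule $(A,T_1,T_3)$ of Example \ref{ex:A_com} to reassemble $\overline\Delta(a\leftharpoonup\psi)$. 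You instead multiply on the right by $(c\ox d)$, which produces $(\psi\ox A)T_1(a^3\ox c)$ on the left-hand side, and you decompose $E(c\ox d)=\sum_i\Delta(p^i)(q^i\ox r^i)$ on the right-hand side. Both decompositions are legitimate instances of axiom (vi), and your version can be completed; the difference is only which of the equivalent forms of the comodule identities for $(A,T_1,T_3)$ one ends up needing (the $T_1$/$E_1$ side rather than the $T_3$/$E_2$ side).

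The weak point is the final ``matching'' step. In the multiplier setting the Sweedler-style collapse to $\psi(a_{(1)})a_{(2)}c\ox ba_{(3)}d$ is not itself an argument: after you convert the right-hand side to $\sum_iT_3\bigl((\psi\ox A)T_1(a\ox p^i)\ox b\bigr)(q^i\ox r^i)$, the summation index $i$ is entangled with the legs of $\Delta(a)$ through $T_1(a\ox p^i)$, and you cannot separate $\Delta(p^i)(q^i\ox r^i)$ from the double coproduct of $a$ by citing axioms (iii) and (v) alone. What is actually required is the multiplier-level coassociativity $(A\ox T_1)(T_1\ox A)=(T_1\ox A)T_1^{13}(A\ox T_1)$ together with the normalization $(T_1\ox A)T_1^{13}(A\ox E_1)=(T_1\ox A)T_1^{13}$; the latter is indispensable because you have inserted an $E$ via $\overline\Delta(a\leftharpoonup\psi)=\overline\Delta(a\leftharpoonup\psi)E$ and must discharge it at the end from inside an iterated coproduct of $a$, where the single-coproduct identity $\Delta(x)E=\Delta(x)$ no longer applies directly. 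Both identities are exactly conditions \eqref{eq:l-coass} and \eqref{eq:l-i-norm} for the comodule $(A,T_1,T_3)$, proved in Example \ref{ex:A_com}; you should invoke them (as the paper invokes their $T_3$-counterparts (2.b) and (4.d) of Proposition \ref{prop:prep_comod}) rather than attribute the step to the raw axioms, since upgrading those axioms to these iterated identities is precisely the nontrivial content of that example.
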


\begin{proof}
By axiom (vi) of weak multiplier bialgebra in Definition \ref{def:wmba}, for
any fixed elements $b,c\in A$ there exist finitely many elements
$p^i,q^i,r^i\in A$ such that $(c\ox b)E=\sum_i(p^i\ox q^i)\Delta(r^i)=
\sum_i(p^i\ox 1)T_3(r^i\ox q^i)$. In terms of these elements (omitting the
summation symbols for brevity), for any $a\in A$, 
\begin{eqnarray*}
(c\ox 1)[((-)\leftharpoonup \psi \ox A)&&\hspace{-1cm}T_3(a\ox b)]=
(\psi\ox A \ox A)(T_3\ox A)T_3^{13}(a\ox c\ox b)\\
&=&(\psi\ox A \ox A)(T_3\ox A)T_3^{13}(A\ox E_2)(a\ox c\ox b)\\
&=&(\psi\ox A \ox A)(T_3\ox A)T_3^{13}(a\ox (p^i\ox 1)T_3(r^i\ox q^i))\\
&=&(p^i\ox 1)[(\psi\ox A \ox A)(T_3\ox A)T_3^{13}(A\ox T_3)
(a\ox r^i\ox q^i)]\\
&=&(p^i\ox 1)[(\psi\ox A \ox A)(A\ox T_3)(T_3\ox A)(a\ox r^i\ox q^i)]\\
&=&(p^i\ox 1)T_3(r^i(a \leftharpoonup \psi) \ox q^i)=
(p^i\ox 1)T_3(r^i\ox q^i)\overline\Delta(a \leftharpoonup \psi)\\
&=&(c\ox b)E\overline\Delta(a \leftharpoonup \psi)
=(c\ox b)\overline\Delta(a \leftharpoonup \psi),
\end{eqnarray*}
from which we conclude by simplifying with $c$. The second equality follows by
part (2.b), and the fifth equality follows by part (4.d) of Proposition
\ref{prop:prep_comod}, applied to the $A$-comodule $(A,T_1,T_3)$ in Example
\ref{ex:A_com}. 
\end{proof}

\begin{proposition}\label{prop:int_def}
For a regular weak multiplier bialgebra $A$ over a field $k$ with right full
comultiplication; and for a linear map $\psi:A \to k$, the following
assertions are equivalent. 
\begin{itemize}
\item[{(a)}] $(\psi\ox A)T_1=(\psi\ox A)G_1$ (where $G_1$ is the idempotent
map in \eqref{eq:G_1}). 
\item[{(b)}] $(\psi\ox A)T_3=(\psi\ox A)[(-)F]$ (where $F\in \M(A\ox A)$ is 
as in \eqref{eq:F}).
\item[{(c)}] $\overline \Delta(a \leftharpoonup \psi)=
(1\ox (a \leftharpoonup \psi))E$, for all $a\in A$.
\item[{(d)}] $a \leftharpoonup \psi\in \sqcap^R(A)$, for all $a\in A$.
\end{itemize}
If these equivalent assertions hold then we term $\psi$ a {\em right
integral} on $A$.
\end{proposition}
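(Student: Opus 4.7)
The plan is to establish the cycle (a)$\Leftrightarrow$(b)$\Rightarrow$(d)$\Rightarrow$(c)$\Rightarrow$(a).

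For (a)$\Leftrightarrow$(b), I combine the regularity relation $(1\ox a)T_1(c\ox b)=T_3(c\ox a)(1\ox b)$ from axiom (ix) with the elementary identity $(1\ox a)G_1(c\ox b)=((c\ox a)F)(1\ox b)$, which follows from the definition \eqref{eq:G_1} of $G_1$ together with $(1\ox a)(c\ox 1)=c\ox a$ and associativity in $\M(A\ox A)$. Applying $\psi\ox A$ to both identities and using the non-degeneracy of $A$, the equation $(\psi\ox A)T_1=(\psi\ox A)G_1$ holds if and only if $(\psi\ox A)T_3=(\psi\ox A)[(-)F]$ does.

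For (b)$\Rightarrow$(d), I unfold $(\psi\ox A)((a\ox b)F)$ using the formula $(a'b'\ox c)F=(1\ox c)((A\ox\sqcap^R)T_2(a'\ox b'))$ in \eqref{eq:F}, writing $a$ as a sum of products via idempotency of $A$. The outcome presents $(\psi\ox A)((a\ox b)F)$ in the form $b\cdot m_{a,\psi}$ for some $m_{a,\psi}\in\sqcap^R(A)=R$ that is independent of $b$. Since by definition $b(a\leftharpoonup\psi)=(\psi\ox A)T_3(a\ox b)$, assumption (b) then forces $b(a\leftharpoonup\psi)=b\cdot m_{a,\psi}$ for all $b\in A$; non-degeneracy of $A$ gives $a\leftharpoonup\psi=m_{a,\psi}\in R$, which is (d). For (d)$\Rightarrow$(c), the crucial ingredient is the multiplier-algebra identity $\overline\Delta(\overline\sqcap^R(x))=(1\ox\overline\sqcap^R(x))E$ in $\M(A\ox A)$, which I would prove by combining the multiplicativity of the extension $\overline\Delta:\M(A)\to\M(A\ox A)$ (with $\overline\Delta(1_{\M(A)})=E$ by the Van Daele--Wang extension recalled in Section \ref{sec:multiplier}, applicable thanks to axiom (vi)) with the identity $\Delta(a\overline\sqcap^R(b))(1\ox d)=\Delta(a)(1\ox\overline\sqcap^R(b)d)$ from \cite[Lemma 3.3]{BoGTLC:wmba}. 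Given (d), right fullness yields $R=\overline\sqcap^R(A)=\sqcap^R(A)$, so writing $a\leftharpoonup\psi=\overline\sqcap^R(x)$ and applying this identity delivers (c).

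Finally, (c)$\Rightarrow$(a) uses Lemma \ref{lem:harpoon}: $((-)\leftharpoonup\psi\ox A)T_3(a\ox b)=(1\ox b)\overline\Delta(a\leftharpoonup\psi)$. Under (c), the right-hand side collapses to $(1\ox b(a\leftharpoonup\psi))E=(1\ox(\psi\ox A)T_3(a\ox b))E$; pairing both sides of the Lemma with suitable test elements of $A\ox A$ and unfolding the formula for $F$ in \eqref{eq:F} then recovers $(\psi\ox A)T_3=(\psi\ox A)[(-)F]$, which is (b), and hence (a) via the first step. The principal obstacle is the multiplier-algebra identity $\overline\Delta(\overline\sqcap^R(x))=(1\ox\overline\sqcap^R(x))E$ used in (d)$\Rightarrow$(c): though intuitively plausible from the weak-bialgebra analogue, rigorously establishing it requires careful tracking of multiplier-algebra extensions and their compatibility with axioms (i), (ii), and (vi); the remaining steps are largely routine manipulations with axiom (ix), the formulae for $F$ and $G_1$, and Lemma \ref{lem:harpoon}.
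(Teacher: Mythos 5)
Your overall architecture --- (a)$\Leftrightarrow$(b) by translating both sides through axiom (ix) and the definition of $G_1$, then a cycle through (d) and (c) with Lemma \ref{lem:harpoon} closing the loop --- is essentially the paper's (which runs (a)$\Leftrightarrow$(b), (a)$\Rightarrow$(d)$\Rightarrow$(c)$\Rightarrow$(b)), and your steps (a)$\Leftrightarrow$(b) and (b)$\Rightarrow$(d) are correct as described. The problem sits in (d)$\Rightarrow$(c), exactly at the point you flag as the principal obstacle. The identity you propose, $\overline\Delta(\overline\sqcap^R(x))=(1\ox\overline\sqcap^R(x))E$, is not what your derivation produces. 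Multiplicativity of $\overline\Delta$ together with \cite[Lemma 3.3]{BoGTLC:wmba} gives $\Delta(a)\,\overline\Delta(\overline\sqcap^R(x))=\Delta(a\overline\sqcap^R(x))=\Delta(a)(1\ox\overline\sqcap^R(x))$ for all $a$; multiplying on the left by $c\ox d$ and using that the elements $(c\ox d)\Delta(a)$ span $(A\ox A)E$ (axiom (vi)), you get $E\,\overline\Delta(\overline\sqcap^R(x))=E(1\ox\overline\sqcap^R(x))$, hence (since $\overline\Delta(-)=E\,\overline\Delta(-)$) the identity $\overline\Delta(\overline\sqcap^R(x))=E(1\ox\overline\sqcap^R(x))$ --- with $E$ on the \emph{left}, which is not assertion (c). This is not a cosmetic issue: already in a unital weak bialgebra the two counital maps satisfy different-sided identities, $\Delta\sqcap^R(x)=(1\ox\sqcap^R(x))\Delta(1)$ but $\Delta\overline\sqcap^R(x)=\Delta(1)(1\ox\overline\sqcap^R(x))$. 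The correct move is to use that (d) literally places $a\leftharpoonup\psi$ in $\sqcap^R(A)$, write it as $\sqcap^R(x)$, and invoke the right-handed identity $\overline\Delta(\sqcap^R(x))=(1\ox\sqcap^R(x))E$, which is precisely (3.7) of \cite{BoGTLC:wmba} and is the paper's one-line argument. Your detour through $\overline\sqcap^R$ can only be repaired by showing $(1\ox r)E=E(1\ox r)$ for $r\in R$, which in effect presupposes the $\sqcap^R$-identity you are trying to avoid.

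A second, smaller point: in (c)$\Rightarrow$(a) the phrase ``pairing both sides with suitable test elements and unfolding the formula for $F$'' hides the two nontrivial ingredients of the actual computation, namely the identity $T_3^{13}(a\ox c\ox b)(1\ox E)=T_3^{13}(a\ox c\ox b)(F\ox 1)$ (which itself needs (3.4), Lemma 3.9 and Proposition 4.3(1) of \cite{BoGTLC:wmba}), and --- crucially --- the right fullness of the comultiplication, which is what allows you to strip off the outer $T_3$-leg because every element of $A$ is a linear combination of elements $(A\ox\omega)T_3(a\ox b)$. Right fullness is a standing hypothesis of the proposition and this implication is where it is genuinely used; your sketch never invokes it.
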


\begin{proof}
By the form of $G_1$ in (6.1) of \cite{BoGTLC:wmba} (cf. \eqref{eq:G_1})
and by the non-degeneracy of $A\ox A$ as a left and right $A$-module via
multiplication in the second factor, both assertions (a) and (b) are
equivalent to
$$
(\psi\ox A)[(1\ox c)\Delta(a)(1\ox b)]=
(\psi\ox A)[(a\ox c)F(1\ox b)]\qquad
\forall a,b,c\in A,
$$
hence they are equivalent also to each other.

Assertion (d) implies (c) by (3.7) in \cite{BoGTLC:wmba}. 

If (a) holds then 
$a \leftharpoonup \psi=
(\psi\ox A)[(a\ox 1)F]$
is an element of $\sqcap^R(A)$ for any $a\in A$ by \cite[Proposition
4.3~(1)]{BoGTLC:wmba} (cf. \eqref{eq:F}) and the idempotency of $A$, proving
that (d) holds. 

The implication (c)$\Rightarrow$(b) can be found in \cite[Proposition
2.7]{VDa:int}: For any $a,b,c\in A$,
\begin{eqnarray*}
(\psi \ox A\ox A)(T_3\ox A)T_3^{13}(a\ox c\ox b)
&=&(c\ox 1)[((-)\leftharpoonup \psi \ox A)T_3(a\ox b)]\\
&=&(c\ox b)\overline\Delta(a \leftharpoonup \psi)\\
&\stackrel{\textrm{(c)}}=&(c\ox b(a \leftharpoonup \psi))E\\
&=&(\psi \ox A\ox A)[T_3^{13}(a\ox c\ox b)(1\ox E)]\\
&=&(\psi \ox A\ox A)[T_3^{13}(a\ox c\ox b)](F\ox 1)].
\end{eqnarray*}
From this we conclude by using that the comultiplication is right full; that
is, any element of $A$ is a linear combination of elements of the form $(A\ox
\omega)T_3(a\ox b)$, for $a,b\in A$ and $\omega\in \mathsf{Lin}(A,k)$. 
The second equality follows by Lemma \ref{lem:harpoon} and the last one
follows since for any $a,p,q,r\in A$,
\begin{eqnarray*}
E^{13}(1\ox E)(r\ox pq\ox 1)
&=&E^{13}[r\ox (A\ox \sqcap^L)T_4(q\ox p)]\\
&=&E^{13}[((\overline \sqcap^R\ox A)\mathsf{tw} 
T_4(q\ox p))(r \ox 1)\ox 1]\\
&=&E^{13}(F\ox 1)(r\ox pq \ox 1),
\end{eqnarray*}
--- where in the first equality we applied (3.4) in \cite{BoGTLC:wmba}, in the
second equality we used \cite[Lemma 3.9]{BoGTLC:wmba}, and in the last one we
made use of \cite[Proposition 4.3~(1)]{BoGTLC:wmba} (cf. \eqref{eq:F}) ---
hence by $T_3=E_2T_3$, $T_3^{13}(a\ox c\ox b)(1\ox E)= T_3^{13}(a\ox c\ox
b)(F\ox 1)$, for any $a,b,c\in A$. 
\end{proof}

The aim of this section is to prove an isomorphism between the vector space of
right integrals on $A$; and the space of homomorphisms from the comodule $A$
in Example \ref{ex:A_com} to the comodule $R$ in Example \ref{ex:R_com} ---
assuming that the comultiplication of $A$ is right full.

Let $A$ be a regular weak multiplier bialgebra over a field $k$ 
with a right full comultiplication. Recall that, by Theorem 3.13 and Theorem
4.6 in \cite{BoGTLC:wmba}, the coinciding range of the maps $\sqcap^R$ and
$\overline \sqcap^R:A\to \M(A)$ --- the so-called base algebra $R$ --- is a
co-separable co-Frobenius coalgebra over $k$ and it is a firm $k$-subalgebra
of $\M(A)$. Applying this fact, we can relate the $k$-dual and the $R$-dual of
any firm right $R$-module. 

\begin{proposition}\label{prop:R/k-dual} 
Let $A$ be a regular weak multiplier bialgebra over a field $k$ with right
full comultiplication, and let $R$ be the coinciding range of the maps
$\sqcap^R$ and $\overline \sqcap^R:A\to \M(A)$. For any firm right $R$-module
$M$, the vector space $\mathsf{Lin}(M,k)$ of linear maps $M\to k$ is
isomorphic to the vector space $\mathsf{Hom}_R(M,R)$ of $R$-module maps $M\to
R$. 
\end{proposition}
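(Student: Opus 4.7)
The plan is to construct mutually inverse maps
$$\Phi:\mathsf{Hom}_R(M,R)\to\mathsf{Lin}(M,k),\qquad f\mapsto \varepsilon\circ f,$$
(evident from \eqref{eq:varepsilon}) and an inverse $\Psi$, exploiting the coseparable co-Frobenius structure of $R$. The substance of the proof is the construction of $\Psi$ and verification of the two composite identities.

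For $\Psi$, I would use the firmness isomorphism $\cdot:M\ox_R R\xrightarrow{\sim}M$ (with inverse denoted $\cdot^{-1}$) together with the $R$-bimodule map comultiplication $\delta:R\to R\ox R$ of \eqref{eq:delta}, which is a section of the multiplication. Given $\omega:M\to k$, define $\Psi(\omega):M\to R$ as the composite of $\cdot^{-1}$, the map $M\ox_R\delta$, the canonical isomorphism $M\ox_R R\ox R\cong (M\ox_R R)\ox R$, the map $\cdot\ox R$, and finally $\omega\ox R$. Writing $\cdot^{-1}(m)=m'\ox_R s$ (with implicit summation) and $\delta(s)=s_{(1)}\ox s_{(2)}$, this reads
$$\Psi(\omega)(m)=\omega(m'\cdot s_{(1)})\,s_{(2)}.$$
Equivalently, one can package the firm $R$-module structure into the corresponding right $R$-coaction $\rho:M\to M\ox R$ via \cite[Proposition 2.17]{BoVe} recalled in the preliminaries, and write $\Psi(\omega)=(\omega\ox R)\rho$.

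The three verifications are short. \emph{$R$-linearity of $\Psi(\omega)$:} since $\cdot^{-1}(m\cdot t)=m'\ox_R(st)$, right $R$-linearity of $\delta$ (i.e. $\delta(st)=s_{(1)}\ox s_{(2)}t$) gives $\Psi(\omega)(m\cdot t)=\omega(m'\cdot s_{(1)})s_{(2)}t=\Psi(\omega)(m)\cdot t$. \emph{$\Phi\Psi=\id$:} the counit axiom $(R\ox\varepsilon)\delta=\id_R$ yields $\varepsilon(\Psi(\omega)(m))=\omega(m'\cdot s_{(1)}\varepsilon(s_{(2)}))=\omega(m'\cdot s)=\omega(m)$. \emph{$\Psi\Phi=\id$:} for $R$-linear $f:M\to R$, using the $R$-linearity of $f$, the left $R$-linearity of $\delta$ (i.e. $\delta(rs)=rs_{(1)}\ox s_{(2)}$) and the counit,
$$\Psi(\varepsilon f)(m)=\varepsilon(f(m')\cdot s_{(1)})\,s_{(2)}=\varepsilon((f(m')s)_{(1)})\,(f(m')s)_{(2)}=f(m')\cdot s=f(m).$$

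The main subtlety I anticipate is a clean handling of firmness: $\cdot^{-1}(m)$ is an element of $M\ox_R R$, so formally a sum $\sum_i m'_i\ox_R s_i$ rather than a simple tensor, and all the displayed single-term formulas must be read with implicit summation. Well-definedness on $M\ox_R R$ (as opposed to $M\ox R$) is what ensures the expressions are unambiguous, and it rests precisely on $\delta$ being an $R$-bimodule map; this is the only non-trivial bookkeeping, and every other step reduces to a standard axiom of the coseparable co-Frobenius structure on $R$ collected in Section~\ref{sec:wmba}.
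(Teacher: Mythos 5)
Your proposal is correct and follows essentially the same route as the paper: both directions are given by composing with the counit $\varepsilon$ and by $\psi\mapsto(\psi\ox R)((-\ox 1)\delta(-))$, with the inverse properties resting on counitality and the $R$-bilinearity of $\delta$. The only (immaterial) difference is in the well-definedness bookkeeping: you check balancedness over $R$ via left $R$-linearity of $\delta$, whereas the paper defines the map on a presentation $\sum_i m^i r^i$ and invokes non-degeneracy of the multiplication of $R$.
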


\begin{proof}
In terms of the comultiplication $\delta:R\to R\ox R$ in \cite[Proposition
4.3~(3-4)]{BoGTLC:wmba} (cf. \eqref{eq:delta}) and the counit
$\varepsilon:R\to k$ in \cite[Proposition 4.1]{BoGTLC:wmba}
(cf. \eqref{eq:varepsilon}), consider the maps
\begin{eqnarray}\label{eq:com>int}
&\mathsf{Hom}_R(M,R) \to \mathsf{Lin}(M,k), \qquad 
&\Psi\mapsto \varepsilon \Psi\quad \textrm{and}\\
\label{eq:int>com}
&\mathsf{Lin}(M,k)\to \mathsf{Hom}_R(M,R), \qquad
&\psi \mapsto [m\br r\mapsto (\psi\ox R)((m\ox 1)\delta(r))].
\end{eqnarray}
Since $M$ is a firm right $R$-module by assumption, any element of $M$ can be
written as a linear combination of elements of the form $m\br r$ for
$m\in M$ and $r\in R$. So it is enough to define the image of $\psi$ in
\eqref{eq:int>com} on such elements. In order to see that it is a well-defined
linear map, we need to show that it maps a zero element to zero. So assume that
for some elements $m^i\in M$ and $r^i\in R$, $\sum_i m^i\br r^i=0$. Then
omitting the summation symbol for brevity, and using in the second equality
that $\delta$ is a morphism of $R$-bimodules, it follows for any $s\in R$ that 
$$
0=
(\psi \ox R)[(m^ir^i \ox 1)\delta(s)]=
(\psi \ox R)[(m^i\ox 1)\delta(r^i)(1\ox s)]=
(\psi \ox R)[(m^i\ox 1)\delta(r^i)]s.
$$
Since the multiplication in $R$ is non-degenerate by \cite[Theorem
4.6~(2)]{BoGTLC:wmba}, this proves that the image of $\psi$ in
\eqref{eq:int>com} is a well-defined linear map. It is a homomorphism of right
$R$-modules since $\delta$ is so. We claim that the maps \eqref{eq:com>int}
and \eqref{eq:int>com} establish the stated isomorphism.

Take a linear map $\psi:M \to k$ and apply to it the constructions in
\eqref{eq:int>com} and in \eqref{eq:com>int}. For any $m\in M$ and $r\in R$,
the resulting map takes $m\br r \in M$ to 
$$
\varepsilon(\psi\ox R)[(m\ox 1)\delta(r)]=
\psi[m\br ((R\ox \varepsilon)\delta(r))]=\psi(m\br r).
$$
We conclude by the surjectivity of the $R$-action on $M$ that we re-obtained
$\psi$. In the last equality we used that by \cite[Theorem 4.4]{BoGTLC:wmba},
$\varepsilon$ is the counit of the comultiplication $\delta$. 
 
Take next an $R$-module map $\Psi:M\to R$ and apply to it the constructions in
\eqref{eq:com>int} and in \eqref{eq:int>com}. For any $m\in M$ and $r\in R$,
the resulting $R$-module map takes $m\br r\in M$ to
\begin{eqnarray*}
(\varepsilon \Psi \ox R)[(m\ox 1)\delta(r)]&=&
(\varepsilon \ox R)[(\Psi(m)\ox 1)\delta(r)]\\
&=&(\varepsilon \ox R)\delta(\Psi(m)r)=
\Psi(m)r=
\Psi(m\br r).
\end{eqnarray*}
So by surjectivity of the $R$-action on $M$ we re-obtained $\Psi$. 
In the first and the last equalities we used the right $R$-module map
property of $\Psi$, the second and the third equalities follow by left
$R$-linearity and the counitality of $\delta$, respectively.
\end{proof}

Proposition \ref{prop:R/k-dual} provides in particular a vector space
isomorphism between $\mathsf{Lin}(A,k)$ and $\mathsf{Hom}_R(A,R)$, for any
regular weak multiplier bialgebra $A$ over a field $k$ with a right full
comultiplication, and its base algebra $R$. In this particular case, using the
multiplier $F$ on $A\ox A$ in \cite[Proposition 4.3~(1)]{BoGTLC:wmba}
(cf. \eqref{eq:F}), the map in \eqref{eq:int>com} can be rewritten in the
equivalent form 
$$
\mathsf{Lin}(A,k)\to \mathsf{Hom}_R(A,R), \qquad
\psi \mapsto (\psi \ox R)[(-\ox 1)F],
$$
see \cite[Proposition 4.3]{BoGTLC:wmba} (or \eqref{eq:delta}).

\begin{proposition}\label{prop:com>int}
Let $A$ be a regular weak multiplier bialgebra with right full
comultiplication. Any comodule homomorphism $\Psi$ from the comodule $A$
in Example \ref{ex:A_com} to the comodule $R$ in Example \ref{ex:R_com}, is a
homomorphism of right $R$-modules. Moreover, for the counit $\varepsilon$ of
$R$ in \cite[Proposition 4.1]{BoGTLC:wmba} (cf. \eqref{eq:varepsilon}), the
composite $\varepsilon \Psi$ is a right integral on $A$.
\end{proposition}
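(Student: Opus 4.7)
My plan is to prove the two claims in parallel by applying a suitable counit to the comodule compatibility $\lambda_R(\Psi\ox A)=(\Psi\ox A)T_1$. For right $R$-linearity of $\Psi$, I would apply $A\ox\epsilon$ to both sides evaluated at $a\ox b$. Using the characterisation $a\overline\sqcap^R(b)=(A\ox\epsilon)T_1(a\ox b)$ in~\eqref{eq:Pibar}, the right-hand side becomes $\Psi(a\overline\sqcap^R(b))$. For the left-hand side, rewrite $\lambda_R(\Psi(a)\ox b)=E(1\ox\Psi(a)b)$ using Example~\ref{ex:R_com} and Lemma~3.3 of~\cite{BoGTLC:wmba} to move $1\ox\Psi(a)$ past $E$ (legitimate since $\Psi(a)\in R$), then identify $(A\ox\epsilon)E(1\ox c)=\overline\sqcap^R(c)$ in $\M(A)$ by pairing with $d\in A$ on the left and reducing, via $\Delta(d)(x\ox y)=T_1(d\ox y)(x\ox 1)$, to $(A\ox\epsilon)T_1(d\ox c)=d\overline\sqcap^R(c)$. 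The left-hand side thus equals $\overline\sqcap^R(\Psi(a)b)$, which by the left $R$-linearity of $\overline\sqcap^R$ (a property of the canonical projection from~\cite{BoGTLC:wmba}) equals $\Psi(a)\overline\sqcap^R(b)$. Combining, $\Psi(a\overline\sqcap^R(b))=\Psi(a)\overline\sqcap^R(b)$, and Lemma~\ref{lem:Pi_spanned}~(5) extends this to $\Psi(ar)=\Psi(a)r$ for every $r\in R$.

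For the integral assertion, set $\psi:=\varepsilon\Psi$ and verify criterion~(d) of Proposition~\ref{prop:int_def}, namely $a\leftharpoonup\psi\in R$ for every $a\in A$. By the comodule map property,
\[
(a\leftharpoonup\psi)b=(\psi\ox A)T_1(a\ox b)=(\varepsilon\ox A)\lambda_R(\Psi(a)\ox b)=(\varepsilon\ox A)E(1\ox\Psi(a)b).
\]
The counit-type identity $(\varepsilon\ox A)E(1\ox c)=c$---which expresses that $c\mapsto E(1\ox c)$ is a counital left $R$-coaction on $A$, in view of $E=\overline\Delta(1)$ and $\varepsilon(\sqcap^R(a))=\epsilon(a)$---then yields $(a\leftharpoonup\psi)b=\Psi(a)b$ for all $b\in A$. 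By non-degeneracy of $A$, we conclude $a\leftharpoonup\psi=\Psi(a)\in R$, so $\psi$ is a right integral.

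The main technical hurdle will be to justify rigorously the two multiplier-level ``counit'' identities $(A\ox\epsilon)E(1\ox c)=\overline\sqcap^R(c)$ and $(\varepsilon\ox A)E(1\ox c)=c$ (both reflecting counitality of $E=\overline\Delta(1)$ with respect to $\epsilon$ and $\varepsilon$ respectively), together with the left $R$-linearity of $\overline\sqcap^R$ on $A$; all three reduce to multiplier-theoretic manipulations from~\cite{BoGTLC:wmba}, but must be handled with care because $E\in\M(A\ox A)$ rather than $A\ox A$, so each occurrence has to be tested against actual elements of $A$ on the appropriate side.
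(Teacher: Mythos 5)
Your proof is correct and follows essentially the same route as the paper: both arguments extract from the comodule-map property the key identity $(\varepsilon\Psi\ox A)T_1(a\ox b)=\Psi(a)b$ (via the counit-type identities for $E(1\ox c)$ that you correctly flag as the technical content) and then conclude by verifying criterion (d) of Proposition \ref{prop:int_def}. The only divergence is in the $R$-linearity step, where you apply $A\ox\epsilon$ and invoke $\overline\sqcap^R(rb)=r\,\overline\sqcap^R(b)$, while the paper applies $\varepsilon\ox A$ and uses $T_1(a\overline\sqcap^R(c)\ox b)=T_1(a\ox\overline\sqcap^R(c)b)$ from \eqref{eq:T_1_balanced}; both reductions rest on the same multiplier identities from \cite{BoGTLC:wmba}.
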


\begin{proof}
First we show that $\Psi$ is a homomorphism of right $R$-modules. Since $\Psi$
is a comodule map, it renders commutative the diagram
$$
\xymatrix{
A\ox A\ar[r]^-{\Psi\ox A}\ar[d]_-{T_1}&
R\ox A\ar[d]^-{r\ox a\ \mapsto \ (1\ox r)E(1\ox a)}\\
A\ox A \ar[r]_-{\Psi\ox A}&
R\ox A.}
$$
That is, $(\Psi\ox A)T_1(a\ox b)=
(1\ox \Psi(a))E(1\ox b)$
for any $a,b\in A$. Applying $\varepsilon \ox A$ to both sides of this
equality, we conclude that 
\begin{equation}\label{eq:PsiT1}
(\varepsilon\Psi\ox A)T_1(a\ox b)=
\Psi(a)((\varepsilon \ox A)(E(1\ox b)))=
\Psi(a)b.
\end{equation}
The last equality follows by the idempotency of $A$ and 
$$
(\varepsilon \ox A)(E(1\ox bc))=
(\varepsilon\overline\sqcap^R \ox A)T_1(b\ox c)=
(\epsilon \ox A)T_1(b\ox c)=
bc,
$$
for any $b,c\in A$, where we applied identity (2.3) in \cite{BoGTLC:wmba},
Lemma 4.5 (3) in \cite{BoGTLC:wmba} and the counitality axiom (iv) 
of weak multiplier bialgebra in Definition \ref{def:wmba}. Therefore,
$$
\Psi(a\overline\sqcap^R(c))b\stackrel{\eqref{eq:PsiT1}}=
(\varepsilon\Psi\ox A)T_1(a\overline\sqcap^R(c)\ox b)
\stackrel{\eqref{eq:T_1_balanced}}=
(\varepsilon\Psi\ox A)T_1(a\ox \overline\sqcap^R(c) b)
\stackrel{\eqref{eq:PsiT1}}=
\Psi(a)\overline\sqcap^R(c)b
$$
for any $a,b,c\in A$, so that $\Psi$ is a right $R$-module map. 

Applying \eqref{eq:PsiT1} in the second equality, it follows for any $a,b\in
A$ that
$$
(a\leftharpoonup \varepsilon \Psi)b=
(\varepsilon\Psi\ox A)T_1(a\ox b)=
\Psi(a)b
$$
so that $a\leftharpoonup \varepsilon \Psi=\Psi(a)$ is an element of $R$. 
Thus $\varepsilon\Psi$ is a right integral on $A$ by Proposition
\ref{prop:int_def}.
\end{proof}

\begin{proposition}\label{prop:int>com}
For a regular weak multiplier bialgebra $A$ with right full comultiplication,
consider the multiplier $F$ on $A\ox A$ in \cite[Proposition
4.3~(1)]{BoGTLC:wmba} (cf. \eqref{eq:F}). For any right integral $\psi$ on
$A$, the map \begin{equation}\label{eq:Psi}
A\to R,\qquad
a\mapsto (\psi \ox R)((a\ox 1)F)
\end{equation}
is a homomorphism from the comodule in Example \ref{ex:A_com} to the
comodule in Example \ref{ex:R_com}. 
\end{proposition}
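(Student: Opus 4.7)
My plan is first to show that the formula $\Psi(a)=(\psi\ox R)((a\ox 1)F)$ in \eqref{eq:Psi} agrees with the map $a\mapsto a\leftharpoonup\psi$. Since $\psi$ is a right integral, condition (a) of Proposition \ref{prop:int_def} together with the form \eqref{eq:G_1} of $G_1$ yields
\begin{equation*}
(a\leftharpoonup\psi)b=(\psi\ox A)T_1(a\ox b)=(\psi\ox A)G_1(a\ox b)=(\psi\ox A)[(a\ox 1)F(1\ox b)]
\end{equation*}
for all $b\in A$. Using idempotency of $A$, the two candidate formulae for $\Psi(a)$ coincide as multipliers on $A$; and by condition (d) of the same proposition, $\Psi(a)\in\sqcap^R(A)=R$, so $\Psi:A\to R$ is well defined.

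To verify that $\Psi$ is a comodule morphism from the comodule $(A,T_1,T_3)$ of Example \ref{ex:A_com} to the comodule of Example \ref{ex:R_com}, I invoke Lemma \ref{lem:comod_mor}, which reduces the problem to checking only the $\varrho$-component, namely
\begin{equation*}
(\Psi\ox A)T_3(a\ox b)=(1\ox b\Psi(a))E\qquad\forall\,a,b\in A.
\end{equation*}
The right-hand side is by definition the image of $\Psi(a)\ox b$ under the map $\varrho$ of Example \ref{ex:R_com}, using $\Psi(a)\in R\subset\M(A)$. For the left-hand side I apply Lemma \ref{lem:harpoon} to the linear map $\psi$, obtaining
\begin{equation*}
(\Psi\ox A)T_3(a\ox b)=((-)\leftharpoonup\psi\ox A)T_3(a\ox b)=(1\ox b)\overline\Delta(a\leftharpoonup\psi)=(1\ox b)\overline\Delta(\Psi(a)).
\end{equation*}
Condition (c) of Proposition \ref{prop:int_def}, which is available precisely because $\psi$ is a right integral, then gives $\overline\Delta(\Psi(a))=(1\ox\Psi(a))E$; multiplying on the left by $1\ox b$ produces exactly the required $(1\ox b\Psi(a))E$.

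Most of the substantive content is already packaged into the preceding lemmata, so the main obstacle I foresee is the bookkeeping in the opening identification: confirming rigorously that the formal expression $(\psi\ox R)((a\ox 1)F)$ is a well-defined element of $R$ and coincides with the multiplier $a\leftharpoonup\psi$. Once that identification is settled, the verification of the $\varrho$-form of the comodule condition is a two-step chain combining Lemma \ref{lem:harpoon} with condition (c) of Proposition \ref{prop:int_def}, and by Lemma \ref{lem:comod_mor} this is enough.
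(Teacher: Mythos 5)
Your proposal is correct and follows essentially the same route as the paper: both reduce to checking the $\varrho$-component of the comodule map condition and conclude by combining Lemma \ref{lem:harpoon} with Proposition \ref{prop:int_def}~(c). The only (minor) divergence is in establishing well-definedness: you identify $(\psi\ox R)((a\ox 1)F)$ with $a\leftharpoonup\psi$ via parts (a) and (d) of Proposition \ref{prop:int_def}, whereas the paper instead invokes Proposition \ref{prop:R/k-dual} (firmness of $A$ as a right $R$-module); both are valid, and your identification is in any case already contained in the paper's proof of Proposition \ref{prop:int_def}.
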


\begin{proof}
By Lemma \ref{lem:Pi_spanned}~(4), the right $R$-action on $A$ is
surjective. Since $R$ has local units by \cite[Theorem 4.6~(2)]{BoGTLC:wmba},
this proves that $A$ is a firm right $R$-module and thus \eqref{eq:Psi} ---
being a particular instance of \eqref{eq:int>com} --- is a well-defined (right
$R$-module) map by (the proof of) Proposition \ref{prop:R/k-dual}.

In light of the form of the structure map $\varrho$ in Example \ref{ex:R_com},
\eqref{eq:Psi} is a comodule map provided that for all 
$a,b,c \in A$, 
$$
[(\psi \ox A)((-\ox c)F)\ox A]T_3(a\ox b)=
[c\ox (\psi\ox A)((a\ox b)F)]E.
$$ 
By Proposition \ref{prop:int_def}~(b), this is equivalent to
$$
[(\psi \ox A)T_3(-\ox c)\ox A]T_3(a\ox b)=
[c\ox (\psi\ox A)T_3(a\ox b)]E,
$$
hence to
$$
[c((-)\leftharpoonup \psi)\ox A]T_3(a\ox b)=
[c\ox b(a\leftharpoonup \psi)]E.
$$
This equality holds by Proposition \ref{prop:int_def}~(c) and Lemma
\ref{lem:harpoon}. 
\end{proof}

Combining Proposition \ref{prop:R/k-dual} with Proposition \ref{prop:com>int}
and Proposition \ref{prop:int>com}, we proved the following.

\begin{theorem}\label{thm:int}
Consider a regular weak multiplier bialgebra $A$ with right full
comultiplication. The constructions in Proposition \ref{prop:com>int} and in
Proposition \ref{prop:int>com} establish a vector space isomorphism between
the right integrals on $A$ on one hand, and the homomorphisms from the
comodule in Example \ref{ex:A_com} to the comodule in Example \ref{ex:R_com}
on the other hand.
\end{theorem}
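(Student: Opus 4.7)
The plan is to derive the theorem essentially as a corollary of Propositions \ref{prop:R/k-dual}, \ref{prop:com>int} and \ref{prop:int>com}, without any further calculation.

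First I would verify that $A$ is itself a firm right $R$-module so that Proposition \ref{prop:R/k-dual} applies with $M=A$: by Lemma \ref{lem:Pi_spanned}~(4) the right action of $R$ on $A$ is surjective, and $R$ admits local units by \cite[Theorem 4.6~(2)]{BoGTLC:wmba}, which together give the firmness. As noted in the remark following Proposition \ref{prop:R/k-dual}, the resulting vector space isomorphism $\mathsf{Lin}(A,k)\cong\mathsf{Hom}_R(A,R)$ is given in one direction by $\Psi\mapsto \varepsilon\Psi$ and in the opposite direction by the map $\psi\mapsto (\psi\ox R)[(-\ox 1)F]$ obtained from \eqref{eq:int>com} via \eqref{eq:delta}.

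Next I would identify the constructions of Propositions \ref{prop:com>int} and \ref{prop:int>com} with the restrictions of these two mutually inverse maps. Proposition \ref{prop:com>int} states that any comodule homomorphism $\Psi:A\to R$ is automatically a homomorphism of right $R$-modules and that $\varepsilon\Psi$ is a right integral on $A$; hence under the first of the two maps above, the subspace of comodule homomorphisms (sitting inside $\mathsf{Hom}_R(A,R)$) is sent into the subspace of right integrals (sitting inside $\mathsf{Lin}(A,k)$). Dually, Proposition \ref{prop:int>com} states that the map $a\mapsto(\psi\ox R)((a\ox 1)F)$ associated to a right integral $\psi$ is a comodule homomorphism, so the inverse map sends right integrals into comodule homomorphisms.

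Since the restrictions of two mutually inverse linear maps to a pair of subspaces preserved by them remain mutually inverse, restricting the isomorphism of Proposition \ref{prop:R/k-dual} then yields the desired vector space isomorphism between right integrals on $A$ and comodule homomorphisms from the comodule in Example \ref{ex:A_com} to the comodule in Example \ref{ex:R_com}. Since all substantive content is already contained in the three referenced propositions, I do not anticipate a real obstacle here beyond the bookkeeping step of checking that the two constructions in the statement of the theorem coincide with the two maps exhibited in the proof of Proposition \ref{prop:R/k-dual}; this is immediate from the formula relating $F$ and $\delta$ in \eqref{eq:delta}.
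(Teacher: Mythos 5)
Your proposal is correct and follows exactly the paper's own argument: the paper proves Theorem \ref{thm:int} by simply combining Proposition \ref{prop:R/k-dual} with Propositions \ref{prop:com>int} and \ref{prop:int>com}, which is precisely the restriction-of-mutually-inverse-maps argument you spell out. The only detail you add explicitly (firmness of $A$ as a right $R$-module) is already recorded in the proof of Proposition \ref{prop:int>com}, so nothing is missing.
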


\section{The bimodule structure of full comodules}\label{sec:bim}

In this section we consider a particular class of comodules over a regular
weak multiplier bialgebra $A$ that we call {\em full} comodules. Assuming that
also the comultiplication of $A$ is right full, full right comodules are shown
to carry natural bimodule structures over the base algebra $R$ of $A$. This
results in a faithful functor from the full subcategory of full right
$A$-comodules to the category of firm $R$-bimodules.

The definition of full comodule will be based on the following lemma. 

\begin{lemma}\label{lem:full}
For a regular weak multiplier bialgebra $A$ over a field $k$, and a right
$A$-comodule $(V,\lambda,\varrho)$, the following statements are equivalent.
\begin{itemize}
\item[{(a)}] $V=\langle (V\ox \omega)\lambda(v\ox a)\ | \
v\in V,\ a\in A,\ \omega\in \mathsf{Lin}(A,k) \rangle$.
\item[{(b)}] $V=\langle (V\ox \omega)\varrho(v\ox a)\ | \
v\in V,\ a\in A,\ \omega\in \mathsf{Lin}(A,k) \rangle$.
\end{itemize}
\end{lemma}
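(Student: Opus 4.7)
The plan is to dualize both conditions and use the compatibility relation \eqref{eq:comp} to identify the corresponding annihilator subspaces of $V^\ast$. For any $v^\ast\in V^\ast$, $v\in V$, $a\in A$ and $\omega\in\mathsf{Lin}(A,k)$, the transposition identity
\[
v^\ast\bigl((V\ox \omega)\lambda(v\ox a)\bigr)=\omega\bigl((v^\ast\ox A)\lambda(v\ox a)\bigr),
\]
together with non-degeneracy of $A$, shows that $v^\ast$ annihilates every element of the form $(V\ox \omega)\lambda(v\ox a)$ if and only if $(v^\ast\ox A)\lambda=0$ as a map $V\ox A\to A$. Consequently (a) is equivalent to the implication $(v^\ast\ox A)\lambda=0\Rightarrow v^\ast=0$, and (b) to its analogue for $\varrho$; this reduces the lemma to showing that these two annihilator conditions are equivalent.

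To do that, I would apply $(v^\ast\ox A)$ to both sides of \eqref{eq:comp}. This yields the identity
\[
a\bigl((v^\ast\ox A)\lambda(v\ox b)\bigr)=\bigl((v^\ast\ox A)\varrho(v\ox a)\bigr)b
\]
in $A$, valid for all $v\in V$ and $a,b\in A$. Assuming $(v^\ast\ox A)\lambda=0$, the right-hand side vanishes for every $b\in A$, and the non-degeneracy of $A$ forces $(v^\ast\ox A)\varrho(v\ox a)=0$ for all $v$ and $a$; the converse follows symmetrically by the non-degeneracy of $A$ on the opposite factor. Hence the two annihilator conditions coincide, giving the equivalence (a)$\Leftrightarrow$(b).

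No serious obstacle is expected: the whole argument is a single application of $(v^\ast\ox A)$ to the compatibility identity followed by one non-degeneracy step in $A$. The only conceptual point is the initial transposition, which reformulates the fullness condition on $\lambda$ (resp.\ $\varrho$) as the injectivity of the map $v^\ast\mapsto (v^\ast\ox A)\lambda$ (resp.\ $v^\ast\mapsto (v^\ast\ox A)\varrho$) on $V^\ast$, after which the compatibility does the work.
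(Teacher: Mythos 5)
Your proposal is correct and follows essentially the same route as the paper: both arguments dualize the spanning condition to a statement about annihilating functionals, then transfer $(v^\ast\ox A)\lambda=0$ to $(v^\ast\ox A)\varrho=0$ (and back) by applying $v^\ast\ox A$ to the compatibility relation \eqref{eq:comp} and invoking the non-degeneracy of $A$. The only cosmetic difference is that the paper phrases the duality step via a complementary subspace $W$ rather than directly via annihilators.
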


\begin{proof}
We only prove (a)$\Rightarrow$(b), the converse implication follows
symmetrically. The idea of the proof is the same as in \cite[Lemma
1.11]{VDaWa:Banach}. Take any linear decomposition of $V$ of the form
$$
\langle (V\ox \omega)\varrho(v\ox a)\ | \
v\in V,\ a\in A,\ \omega\in \mathsf{Lin}(A,k) \rangle \oplus W
$$
and any linear functional $\varphi:V\to k$ vanishing on 
$\langle (V\ox \omega)\varrho(v\ox a)\ | \
v\in V,\ a\in A,\ \omega\in \mathsf{Lin}(A,k) \rangle$.
Then
$$
0=\varphi(V\ox \omega)\varrho(v\ox a)=
\omega(\varphi \ox A)\varrho(v\ox a),
$$
for all $v\in V$, $a\in A$, and $\omega\in \mathsf{Lin}(A,k)$. Hence
$0=(\varphi \ox A)\varrho(v\ox a)$ for all $v\in V$ and $a\in A$ and therefore 
$$
0=((\varphi \ox A)\varrho(v\ox a))b
\stackrel{\eqref{eq:comp}}=
a((\varphi \ox A)\lambda(v\ox b))
$$
for all $v\in V$ and $a,b\in A$. Thus by the non-degeneracy of $A$,
$0=(\varphi \ox A)\lambda(v\ox b)$ for all $v\in V$ and $b\in A$ and so 
$$
0=\omega(\varphi \ox A)\lambda(v\ox b)=
\varphi(V\ox \omega)\lambda(v\ox b)
$$
for all $v\in V$, $b\in A$, and $\omega\in \mathsf{Lin}(A,k)$. If (a) holds
then this implies $\varphi=0$ and hence $W=\emptyset$, proving that also (b)
holds.
\end{proof}

\begin{definition}\label{def:full}
A right comodule $(V,\lambda,\varrho)$ for a regular weak multiplier bialgebra
$A$ over a field $k$ is said to be {\em full} if the equivalent conditions in
Lemma \ref{lem:full} hold. That is, $V$ is equal to any (hence both) of the
subspaces 
\begin{eqnarray*}
&&\langle (V\ox \omega)\lambda(v\ox a)\ | \
v\in V,\ a\in A,\ \omega\in \mathsf{Lin}(A,k) \rangle
\qquad \textrm{and}\\
&&\langle (V\ox \omega)\varrho(v\ox a)\ | \
v\in V,\ a\in A,\ \omega\in \mathsf{Lin}(A,k) \rangle.
\end{eqnarray*}
\end{definition}

For a regular weak multiplier bialgebra $A$, we denote by $M^{(A)}$ the full
subcategory in the category of right $A$-comodules whose objects are the full
comodules.

As discussed in Remark \ref{rem:unital_com}, for a usual, unital weak
bialgebra $A$ as in \cite{WHAI,Nill}, a right $A$-comodule in the sense of
Definition \ref{def:comod} is given by a coassociative coaction $\tau:V\to
V\ox A$. Its fullness in the sense of Definition \ref{def:full} is in fact
equivalent to counitality in the usual sense: $(V\ox \epsilon)\tau=V$. This
will be proven in Remark \ref{rem:counital}. 

\begin{example}\label{ex:A_full}
Consider a regular weak multiplier bialgebra $A$. The comultiplication of
$A$ is right full if and only if the $A$-comodule in Example \ref{ex:A_com} is
full. 
\end{example}

\begin{example}\label{ex:R_full}
For a regular weak multiplier bialgebra $A$ with right full comultiplication,
the $A$-comodule in Example \ref{ex:R_com} is full. 
\end{example}

\begin{proof}
By equality (3.2) in \cite{BoGTLC:wmba}, any element of $R$ can be written as
$(R\ox \epsilon)((1\ox a)E)$ for an appropriate element $a$ of $A$. By 
Lemma \ref{lem:Pi_spanned}~(4), any element of $A$ can be written as a linear
combination of elements of the form $ar$ for $a\in A$ and $r\in R$. Hence
\begin{eqnarray*}
R&\subseteq&
\langle (R\ox \epsilon)((1\ox a)E)\ | \ a\in A \rangle \\
&\subseteq&
\langle (R\ox \omega)((1\ox ar)E)=(R\ox \omega)\varrho(r\ox a)
\ | \ r\in R,\ a\in A,\ \omega \in \mathsf{Lin}(A,k) \rangle 
\subseteq R,
\end{eqnarray*}
so that the comodule in Example \ref{ex:R_com} is full.
\end{proof}

\begin{theorem}\label{thm:ff}
For any regular weak multiplier bialgebra $A$ with right full comultiplication,
there is a faithful functor $M^{(A)}\to {}_R M_R$.
\end{theorem}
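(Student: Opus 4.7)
The plan is to construct a functor $U\colon M^{(A)}\to {}_RM_R$ that is the identity on morphisms; from this, faithfulness is immediate. The real content is therefore to equip each full right $A$-comodule $(V,\lambda,\varrho)$ with a functorial firm $R$-bimodule structure.

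My approach uses the fullness decomposition from Lemma \ref{lem:full}. Since the comultiplication is right full, $R=\sqcap^R(A)=\overline{\sqcap}^R(A)$ by \cite[Theorem 3.13]{BoGTLC:wmba}, so any $r\in R$ admits a presentation $r=\sqcap^R(b)$ for some $b\in A$. For $v=\sum_i(V\ox\omega_i)\varrho(u_i\ox a_i)\in V$ (possible by fullness), I would define the right $R$-action by
\begin{equation*}
v\cdot r \;:=\; \sum_i(V\ox\omega_i)\varrho(u_i\ox a_ir),
\end{equation*}
which makes sense as $a_ir\in A$ since $R\subset\M(A)$ and $A$ is an ideal in $\M(A)$. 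By Proposition \ref{prop:old_cu}(3), this quantity equals $\sum_i(V\ox\omega_i\ox\epsilon)(\varrho\ox A)\varrho^{13}(u_i\ox a_i\ox b)$. Symmetrically, the left $R$-action is defined via a $\lambda$-presentation of $v$ and left-multiplication by $r$, with an analogous rewriting coming from Proposition \ref{prop:old_cu}(2).

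The main obstacle will be well-definedness: the prescription must depend only on $v$ and $r$, not on the chosen presentation of $v$ or on the chosen $b$ with $r=\sqcap^R(b)$. Using the rewriting above, this reduces to showing that if $\sum_i(V\ox\omega_i)\varrho(u_i\ox a_i)=0$, then $\sum_i(V\ox\omega_i\ox\epsilon)(\varrho\ox A)\varrho^{13}(u_i\ox a_i\ox b)=0$ for every $b\in A$. I expect this to follow from combining the normalization \eqref{eq:r-i-norm} and coassociativity \eqref{eq:r-coass} (or its variant \eqref{eq:coass-r-prep-e}) with axiom (vi), which allows $E_2(a_i\ox b)$ to be expanded as a sum of terms of the form $(1\ox c)T_2(d\ox e)$, bringing $(\varrho\ox A)\varrho^{13}$ into a form whose outermost $\varrho$ can be absorbed into the presentation of $v$. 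Equivalence of the two presentations $r=\sqcap^R(b)=\overline{\sqcap}^R(c)$ should follow from a parallel calculation using Proposition \ref{prop:old_cu}(4) combined with the compatibility \eqref{eq:comp} of $\lambda$ and $\varrho$.

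Once the actions are defined, the $R$-bimodule axioms (associativity and commutation of the two sides) follow from \eqref{eq:comp} and the associativity of multiplication in $\M(A)$. Firmness of the bimodule is a consequence of surjectivity of both actions --- built into the construction via fullness together with the existence of local units in $R$, \cite[Theorem 4.6]{BoGTLC:wmba} --- together with non-degeneracy inherited from $A$. Any morphism $f\colon V\to V'$ in $M^{(A)}$ intertwines $\varrho$ and $\varrho'$ by Definition \ref{def:comod_mor} and hence intertwines the derived right $R$-actions; similarly for the left actions. Thus $f$ is automatically a morphism of $R$-bimodules, so $U$ is a well-defined functor, and it is faithful since it is the identity on underlying linear maps of morphisms.
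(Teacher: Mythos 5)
There is a genuine gap, and it sits exactly where you flag ``the main obstacle''. Your actions are defined through a fullness presentation $v=\sum_i(V\ox\omega_i)\varrho(u_i\ox a_i)$, so the entire content of the construction is the independence of $\sum_i(V\ox\omega_i)\varrho(u_i\ox a_ir)$ from the chosen presentation --- and this you only conjecture (``I expect this to follow from\dots''). Moreover, the mechanism you sketch does not engage with the hypothesis. Rewriting via Proposition \ref{prop:old_cu}~(3) turns your expression into
$\sum_i(V\ox\omega_i\ox\epsilon)(\varrho\ox A)\varrho^{13}(u_i\ox a_i\ox b)
=\sum_i(V\ox\omega_i)\varrho\bigl((V\ox\epsilon)\varrho(u_i\ox b)\ox a_i\bigr)$,
i.e.\ the extra coaction lands on the $u_i$ \emph{inside} the presentation. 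The maps $u\mapsto(V\ox\omega_i)\varrho(u\ox a_i)$ do not intertwine $u\mapsto(V\ox\epsilon)\varrho(u\ox b)$ with any operator on $V$: by coassociativity, $(V\ox\omega\ox\epsilon)(\varrho\ox A)\varrho^{13}(u\ox a\ox b)$ and $(V\ox\epsilon\ox\omega)(\varrho\ox A)\varrho^{13}(u\ox b\ox a)$ apply $\epsilon(b\,\cdot)$ to different ``legs'' of the iterated coaction and differ in general. Hence the assumption $\sum_i(V\ox\omega_i)\varrho(u_i\ox a_i)=0$ cannot be brought to bear on the transformed sum, and expanding $E_2(a_i\ox b)$ by axiom (vi) only reshuffles the same difficulty. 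Already for $V=A$ your formula reads $[(A\ox\omega)((1\ox a)\Delta(u))]\cdot\sqcap^R(b)=(A\ox\omega)((1\ox a\sqcap^R(b))\Delta(u))$, which inserts $\sqcap^R(b)$ into the second tensor leg and is neither visibly independent of the triple $(u,a,\omega)$ nor equal to the right multiplication $v\mapsto v\,\overline\sqcap^R(\cdot)$ that the correct $R$-action on $A$ must restrict to (Example \ref{ex:A_bim}).

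The paper avoids all of this by defining the actions by formulas linear in $v$ from the outset: $v\cdot\overline\sqcap^R(a):=(V\ox\epsilon)\lambda(v\ox a)$ and $\sqcap^R(a)\cdot v:=(V\ox\epsilon)\varrho(v\ox a)$ (note that $\lambda$ goes with the \emph{right} action and $\varrho$ with the \emph{left} one, opposite to your assignment). The only well-definedness issue is then in the $R$-variable: one must show that $\overline\sqcap^R(a)=0$ forces $(V\ox\epsilon)\lambda(v\ox a)=0$, and this is precisely where fullness of $V$ enters, via $\overline\sqcap^R(a)=0\Rightarrow\sqcap^L(a)=0$ (\cite[Lemma 4.8~(4)]{BoGTLC:wmba}) and Proposition \ref{prop:old_cu}~(1). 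Two further points are underestimated in your sketch: the commutation of the two actions is not a formal consequence of \eqref{eq:comp} alone but needs Lemma \ref{lem:bim} together with identity (3.5) and Lemma 3.1 of \cite{BoGTLC:wmba}; and associativity of each action rests on Proposition \ref{prop:old_cu} and \cite[Lemma 3.12]{BoGTLC:wmba}, not on associativity in $\M(A)$. I recommend abandoning the presentation-based definition in favour of formulas defined directly on elements of $V$.
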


\begin{proof}
First we equip any full right $A$-comodule $V$ with the structure of
$R$-bimodule. As right and left actions, we put
$$
v\br \overline \sqcap^R(a):=(V\ox \epsilon)\lambda(v\ox a)
\qquad \textrm{and}\qquad
\sqcap^R(a)\bl v:=(V\ox \epsilon)\varrho(v\ox a).
$$
In order to see that the right action is well defined, we need to check that
whenever $\overline \sqcap^R(a)=0$, also $(V\ox \epsilon)\lambda(v\ox
a)=0$ for any $v\in V$. So let us assume that $\overline \sqcap^R(a)=0$ for
some $a\in A$. Then also $\sqcap^L(a)=0$ by \cite[Lemma 4.8~(4)]{BoGTLC:wmba},
so that for any $v\in V$ and $b\in A$, 
$$
0=\lambda(v\ox \sqcap^L(a)b)=
(V\ox \epsilon\ox A)(\lambda \ox A)\lambda^{13}(v\ox a\ox b),
$$
where the equality follows by Proposition \ref{prop:old_cu}~(1). Applying any
$\omega\in \mathsf{Lin}(A,k)$ to the second factor, we obtain 
$$
0=(V\ox \epsilon)\lambda((V\ox \omega)\lambda(v\ox b)\ox a),
$$
for any $v\in V$, $b\in A$ and $\omega\in \mathsf{Lin}(A,k)$. By the
assumption that $V$ is a full comodule, this implies $(V\ox
\epsilon)\lambda(v\ox a)=0$ for all $v\in V$. Thus the right $R$-action on
$V$ is well-defined. It is proven symmetrically that the left action is
well-defined.

The right action is associative by 
\begin{eqnarray*}
(v\br \overline \sqcap^R(a))\br \overline \sqcap^R(b)&=&
(V\ox \epsilon\ox \epsilon)(\lambda \ox A)\lambda^{13}(v\ox b\ox a)=
(V\ox \epsilon)\lambda(v\ox \sqcap^L(b)a)\\
&=&v\br \overline \sqcap^R(\sqcap^L(b)a)=
v\br (\overline \sqcap^R(a)\overline \sqcap^R(b)),
\end{eqnarray*}
for any $v\in V$ and $a,b\in A$.
The second equality follows by Proposition \ref{prop:old_cu}~(1) and the last
one follows by \cite[Lemma 3.12]{BoGTLC:wmba}. Associativity of the left
action is proven symmetrically.

In order to see that the left and right actions commute, compute for any $v\in
V$ and $a,b\in A$
\begin{eqnarray*}
(\sqcap^R(a)\bl v)\br \overline \sqcap^R(b)&=&
(V\ox \epsilon\ox \epsilon)\lambda^{13}(\varrho\ox A)(v\ox a\ox b)\\
&=&(V\ox \epsilon)(\varrho(v\ox a)(1\ox \sqcap^L(b)))\\
&=&(V\ox \epsilon)(\varrho(v\ox a)(1\ox b)).
\end{eqnarray*}
The second equality follows by Lemma \ref{lem:bim}~(1) and the last one
follows by identity (3.5) in \cite{BoGTLC:wmba}. Symmetrically, compute 
\begin{eqnarray*}
\sqcap^R(a)\bl (v \br \overline \sqcap^R(b))&=&
(V\ox \epsilon\ox \epsilon)\varrho^{13}(\lambda\ox A)(v\ox b\ox a)\\
&=&(V\ox \epsilon)((1\ox \overline\sqcap^L(a))\lambda(v\ox b))\\
&=&(V\ox \epsilon)((1\ox a)\lambda(v\ox b)).
\end{eqnarray*}
The second equality follows by Lemma \ref{lem:bim}~(2) and the last one
follows by Lemma 3.1 in \cite{BoGTLC:wmba}. These expressions are equal by
\eqref{eq:comp}, proving that $V$ is an $R$-bimodule.

Since $R$ has local units by \cite[Theorem 4.6~(2)]{BoGTLC:wmba}, an
$R$-module is firm if and only if the $R$-action on it is surjective.
So we only need to show that the above $R$-actions are surjective. In the case
of the right action it can be seen by the following considerations; for the
left action symmetric reasoning applies.
By Lemma \ref{lem:Pi_spanned}~(3), $A$ is spanned by elements of the form
$\sqcap^L(b)a$ for $a,b\in A$. Since $V$ is a full comodule by assumption, we
conclude that it is spanned by elements of the form 
\begin{eqnarray*}
(V\ox \omega)\lambda(v\ox \sqcap^L(b)a)=
(V\ox \epsilon)\lambda((V\ox \omega)\lambda(v\ox a)\ox b),
\end{eqnarray*}
for $v\in V$, $a,b\in A$ and $\omega\in \mathsf{Lin}(A,k)$, where we applied
Proposition \ref{prop:old_cu}~(1). Hence $V$ is spanned by elements of the
form 
$(V\ox \epsilon)\lambda(v \ox b)=v\br \overline \sqcap^R(b),$
for $v\in V$ and $b\in A$.

The above construction gives the object map of the stated functor $M^{(A)}\to
{}_R M_R$. On the morphisms it acts as the identity map. Any morphism $f$ in
$M^{(A)}$ is indeed a morphism of right $R$-modules by
\begin{eqnarray*}
f(v\br \overline \sqcap^R(a))&=&
f(V\ox \epsilon)\lambda(v\ox a)=
(V'\ox \epsilon)(f\ox A)\lambda(v\ox a)\\
&=&(V'\ox \epsilon)\lambda'(f(v)\ox a)=
f(v)\br \overline \sqcap^R(a)
\end{eqnarray*}
for any $v\in V$ and $a\in A$; and it is a morphism of left $R$-modules by a
symmetric reasoning.
\end{proof}

\begin{remark}\label{rem:counital}
Consider a usual weak bialgebra $A$ as in \cite{WHAI,Nill} possessing an
algebraic unit $1$. Let $(V,\lambda,\varrho)$ be a right $A$-comodule in the
sense of Definition \ref{def:comod}. As discussed in Remark
\ref{rem:unital_com}, this means the existence of a coassociative coaction 
$$
\tau:=\lambda(-\ox 1)=\varrho(-\ox 1):V\to V\ox A,\qquad
v \mapsto v^0\ox v^1.
$$
Here we claim that $\tau$ is counital --- i.e. $(V\ox \epsilon)\tau=V$ --- if
and only if $V$ is a full comodule in the sense of Definition \ref{def:full}.

Assume first that $\tau$ is counital. Then
\begin{eqnarray*}
V&=&
\langle v=(V\ox \epsilon)\tau(v)=(V\ox \epsilon)\lambda(v\ox 1)\ |\ v\in V
\rangle\\
&\subseteq& \langle (V\ox \omega)\lambda(v\ox a)\ |\ v\in V,\ a\in A,\ \omega
\in \mathsf{Lin}(A,k) \rangle \subseteq V
\end{eqnarray*}
so that $V$ is a full comodule. 

Conversely, assume that the comodule $(V,\lambda,\varrho)$ is full. Then the
$R$-actions on $V$ are surjective by Theorem \ref{thm:ff}. Because in this
case $R$ is a unital algebra, this implies that the $R$-actions on $V$ are
unital as well. Then for any $v\in V$,
$$
v=
v\br \overline \sqcap^R(1)=
(V\ox \epsilon)\lambda(v\ox 1)=
(V\ox \epsilon)\tau(v),
$$
so that $\tau$ is counital. 
\end{remark}

\begin{example}\label{ex:A_bim}
Consider a regular weak multiplier bialgebra $A$ with right full
comultiplication. Applying the functor in Theorem \ref{thm:ff} to the full
comodule in Example \ref{ex:A_full}, we obtain the $R$-actions 
\begin{eqnarray*}
&&a\ox \overline \sqcap^R(b)\mapsto(A\ox \epsilon)T_1(a\ox b)=
a\overline \sqcap^R(b)\\
&&\sqcap^R(b)\ox a\mapsto(A\ox \epsilon)T_3(a\ox b)=\sqcap^R(b)a,
\end{eqnarray*}
for $a,b\in A$. In the last equalities we used the identities (2.2) and (3.3)
in \cite{BoGTLC:wmba} (see \eqref{eq:Pibar} and \eqref{eq:Pi}), respectively.
\end{example}

\begin{example}\label{ex:R_bim}
Consider a regular weak multiplier bialgebra $A$ with right full
comultiplication. Applying the functor in Theorem \ref{thm:ff} to the full
comodule in Example \ref{ex:R_full}, we obtain the $R$-actions 
\begin{eqnarray*}
&&r\ox \overline \sqcap^R(a)\mapsto
(R\ox \epsilon)(E(1\ox ra))=
\overline \sqcap^R(ra)=
r\overline\sqcap^R(a)\\
&&\sqcap^R(a)\ox r\mapsto
(R\ox \epsilon)((1\ox ar)E)=
\sqcap^R(ar)=
\sqcap^R(a)r,
\end{eqnarray*}
for any $r\in R$ and $a\in A$. In the first line we used identity (2.2) and
Lemma 3.4 in \cite{BoGTLC:wmba}, and in the second line we used identities
(3.2) and (3.8) in \cite{BoGTLC:wmba}.
\end{example}

In terms of the $R$-actions in Theorem \ref{thm:ff}, we can reformulate
Proposition \ref{prop:old_cu} in the following form.

\begin{lemma}\label{lem:l-r-bim}
For a regular weak multiplier bialgebra $A$ with right full comultiplication,
and for a full right $A$-comodule $(V,\lambda,\varrho)$, the following
statements hold for any $v\in V$ and $a,b\in A$.
\begin{itemize} 
\item[{(1)}] $\lambda(v\ox \sqcap^L(a)b)=
\lambda(v\ox b)(\overline \sqcap^R(a)\ox 1)$.
\item[{(2)}] $\lambda(v\ox \overline \sqcap^R(a)b)=
\lambda(v\br \overline \sqcap^R(a)\ox b)$.
\item[{(3)}] $\varrho(v\ox a\sqcap^R(b))=
\varrho(\sqcap^R(b)\bl v\ox a)$.
\item[{(4)}] $\varrho(v\ox a\overline\sqcap^L(b))=
(\sqcap^R(b)\ox 1)\varrho(v\ox a)$.
\item[{(5)}] $\varrho(v\ox b)(1\ox \sqcap^L(a))=
\varrho(v\ox b)(\overline\sqcap^R(a)\ox 1)$.
\item[{(6)}] $\varrho(v\ox b)(1\ox \overline \sqcap^R(a))=
\varrho(v\br \overline\sqcap^R(a)\ox b)$.
\item[{(7)}] $(1\ox \sqcap^R(a))\lambda(v\ox b)=
\lambda(\sqcap^R(a)\bl v\ox b)$.
\item[{(8)}] $(1\ox \overline \sqcap^L(a))\lambda(v\ox b)=
(\sqcap^R(a)\ox 1)\lambda(v\ox b)$.
\end{itemize}
\end{lemma}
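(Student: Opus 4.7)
The plan is to derive each of the eight identities from the integrated versions in Proposition \ref{prop:old_cu} and Lemma \ref{lem:bim}, by unwinding the leg numbering using the defining formulas $v\br \overline\sqcap^R(b) = (V\ox\epsilon)\lambda(v\ox b)$ and $\sqcap^R(b)\bl v = (V\ox\epsilon)\varrho(v\ox b)$ from Theorem \ref{thm:ff}. The key observation is that, in each of the six left-hand sides appearing in Proposition \ref{prop:old_cu} and Lemma \ref{lem:bim}, the inner application of $\lambda$ or $\varrho$ contracts against $\epsilon$ to produce precisely one of the $R$-actions on $V$ on the surviving first or third factor. Writing, for instance, $\lambda(v\ox b) = v_{(0)}\ox v_{(1)}$ with the usual summation convention, one has $\lambda^{13}(v\ox a\ox b) = v_{(0)}\ox a\ox v_{(1)}$ and hence
\begin{equation*}
(V\ox\epsilon\ox A)(\lambda\ox A)\lambda^{13}(v\ox a\ox b) = (v_{(0)}\br\overline\sqcap^R(a))\ox v_{(1)},
\end{equation*}
which matches $\lambda(v\ox b)(\overline\sqcap^R(a)\ox 1)$ under the convention that the latter denotes the right $R$-action on the first tensorand. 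Comparing with Proposition \ref{prop:old_cu}(1) gives part (1), and the analogous bookkeeping yields (2) from Proposition \ref{prop:old_cu}(2), (3) and (4) from Proposition \ref{prop:old_cu}(3)-(4), (5) from Lemma \ref{lem:bim}(1), and (8) from Lemma \ref{lem:bim}(2).

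Parts (6) and (7) do not directly match the shape of any identity in Proposition \ref{prop:old_cu} or Lemma \ref{lem:bim}, and I would instead deduce them from (2) and (3) via the compatibility \eqref{eq:comp}. For (6), multiplying the two sides by $(1\ox c)$ on the right for arbitrary $c\in A$ and chaining \eqref{eq:comp}, part (2), and \eqref{eq:comp} once more yields
\begin{equation*}
\varrho(v\ox b)(1\ox \overline\sqcap^R(a)c)
=(1\ox b)\lambda(v\ox \overline\sqcap^R(a)c)
=(1\ox b)\lambda(v\br \overline\sqcap^R(a)\ox c)
=\varrho(v\br \overline\sqcap^R(a)\ox b)(1\ox c),
\end{equation*}
so non-degeneracy of $V\ox A$ as a right $A$-module delivers (6). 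Part (7) follows symmetrically: multiplying $(1\ox \sqcap^R(a))\lambda(v\ox b)$ by $(1\ox c)$ on the left, applying \eqref{eq:comp}, part (3), and \eqref{eq:comp} again, then invoking non-degeneracy of $V\ox A$ as a left $A$-module.

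The only real difficulty is notational: one must carefully interpret tensor factors such as $\overline\sqcap^R(a)\ox 1$ and $\sqcap^R(a)\ox 1$ as acting on the $V$-factor via the $R$-bimodule structure from Theorem \ref{thm:ff}, rather than as multipliers on an $A$-factor. With that convention in place, the entire lemma reduces to a direct unfolding of the definitions and previously proven identities, and no deeper obstruction arises.
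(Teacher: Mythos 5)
Your proof is correct and follows essentially the same route as the paper: parts (1)--(4) are direct unwindings of Proposition \ref{prop:old_cu} once the $R$-actions from Theorem \ref{thm:ff} are substituted, and the remaining parts follow from these via the compatibility \eqref{eq:comp} together with non-degeneracy of $V\ox A$. The only cosmetic difference is that you read off (5) and (8) from Lemma \ref{lem:bim} rather than re-deriving them from (1) and (4) via \eqref{eq:comp}, which amounts to the same computation since Lemma \ref{lem:bim} was itself obtained that way.
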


\begin{proof}
Assertions (1)-(4) are evident reformulations of the statements in Proposition
\ref{prop:old_cu}. The remaining parts are obtained from them applying
\eqref{eq:comp}. For example, (5) follows by 
\begin{eqnarray*}
\varrho(v\ox b)(1\ox \sqcap^L(a)c)
&\stackrel{\eqref{eq:comp}}=&
(1\ox b)\lambda(v\ox \sqcap^L(a)c)\\
&\stackrel{(1)}=&
(1\ox b)\lambda(v\ox c)(\overline \sqcap^R(a)\ox 1)\\
&\stackrel{\eqref{eq:comp}}=&
\varrho(v\ox b)(\overline \sqcap^R(a)\ox c)
\end{eqnarray*}
--- for any $v\in V$ and $a,b,c\in A$ --- and non-degeneracy of the right
$A$-module $V\ox A$.
\end{proof}

In the next section we shall need the following compatibilities between full
comodules and the idempotent $E$. Recall that for a regular weak multiplier
bialgebra $A$ with right full comultiplication, by identities (2.3) and (3.4)
in \cite{BoGTLC:wmba}, both $E(1\ox a)$ and $(1\ox a)E$ are elements of $R\ox
A$, for any $a\in A$. Hence the maps $E_1,E_2:A\ox A\to A \ox A$ allow for the
generalizations
\begin{eqnarray*}
&E_1:V\ox A \to V\ox A,\qquad
&v\ox a \mapsto ((-)\bl v\ox A)[E(1\ox a)]\equiv E(v\ox a)\\
&E_2:V\ox A \to V\ox A,\qquad
&v\ox a \mapsto (v\br (-)\ox A)[(1\ox a)E]\equiv (v\ox a)E
\end{eqnarray*}
for any full right $A$-comodule $V$. 

\begin{lemma}\label{lem:l-r-E}
For a regular weak multiplier bialgebra $A$ with right full comultiplication,
and for a full right $A$-comodule $(V,\lambda,\varrho)$, the following
statements hold.
\begin{itemize}
\item[{(1)}] $E_1\lambda=\lambda$.
\item[{(2)}] $(V\ox E_1)(\lambda \ox A)=(\lambda \ox A)E_1^{13}$.
\item[{(3)}] $E_2\varrho=\varrho$.
\item[{(4)}] $(V\ox E_2)(\varrho \ox A)=(\varrho \ox A)E_2^{13}$.
\end{itemize}
\end{lemma}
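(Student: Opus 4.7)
The plan is to prove (2) by a direct computation, then derive (1) from (2) combined with the comodule normalization; parts (4) and (3) are treated by symmetric arguments.

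For (2), I would evaluate both sides at $v\ox a\ox b$, using the idempotency of $A$ to decompose $b=b_1b_2$. With the shorthand $\lambda(v\ox a)=v'\ox a'$ and $T_1(b_1\ox b_2)=b_1^1\ox b_2^1$ (implicit summations throughout), identity (2.3) in \cite{BoGTLC:wmba} gives
\[
(V\ox E_1)(\lambda\ox A)(v\ox a\ox b)=v'\ox E_1(a'\ox b_1b_2)=v'\ox\overline\sqcap^R(b_1^1)a'\ox b_2^1.
\]
The same identity with $a=1$ computes the extended $E_1$ as $E_1(v\ox b_1b_2)=(\overline\sqcap^R(b_1^1)\bl v)\ox b_2^1$, so $E_1^{13}(v\ox a\ox b)=(\overline\sqcap^R(b_1^1)\bl v)\ox a\ox b_2^1$. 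Since the comultiplication is right full, $\overline\sqcap^R(b_1^1)\in R=\sqcap^R(A)$, and Lemma \ref{lem:l-r-bim}(7) moves this factor past $\lambda$, producing
\[
(\lambda\ox A)E_1^{13}(v\ox a\ox b)=(1\ox\overline\sqcap^R(b_1^1))\lambda(v\ox a)\ox b_2^1=v'\ox\overline\sqcap^R(b_1^1)a'\ox b_2^1,
\]
which matches the previous expression.

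To deduce (1), I would compose (2) with $\lambda^{13}$ on the right and combine with the normalization $(V\ox E_1)(\lambda\ox A)\lambda^{13}=(\lambda\ox A)\lambda^{13}$ from Proposition \ref{prop:prep_comod}(2.a); noting that $E_1^{13}\lambda^{13}=(E_1\lambda)^{13}$ this gives $(\lambda\ox A)(E_1\lambda-\lambda)^{13}=0$. Evaluating at $v\ox a\ox b$ and writing $(E_1\lambda-\lambda)(v\ox b)=\sum_i u_i\ox y_i$ with $\{y_i\}$ linearly independent forces $\lambda(u_i\ox a)=0$ for every index $i$ and every $a\in A$. Applying $V\ox\epsilon$, this in turn yields $u_i\br\overline\sqcap^R(a)=0$ for all $a$, that is $u_i\br r=0$ for every $r\in R$. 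By Theorem \ref{thm:ff} the right $R$-action on $V$ is surjective, and $R$ has local units by \cite[Theorem 4.6~(2)]{BoGTLC:wmba}; writing $u_i=\sum_j u_{ij}\br r_j$ and picking a common local unit $e$ for the $r_j$ gives $u_i\br e=u_i$, so $u_i=u_i\br e=0$. Thus $(E_1\lambda-\lambda)(v\ox b)=0$, proving (1).

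Parts (4) and (3) are the exact mirror images: one replaces $\lambda$, $E_1$, $\overline\sqcap^R$ and Lemma \ref{lem:l-r-bim}(7) by $\varrho$, $E_2$, $\sqcap^R$ and Lemma \ref{lem:l-r-bim}(6) (which encodes the right action $v\br\overline\sqcap^R(a)$ passing through $\varrho$), uses the identity for $(1\ox a)E$ in place of (2.3), and Proposition \ref{prop:prep_comod}(3.b) in place of (2.a). The main technical hurdle is (1): a direct evaluation of $E_1\lambda(v\ox a)$ would require a decomposition of the ``second leg'' of $\lambda(v\ox a)$ adapted to $E$, which is not readily at hand, so the detour through (2), followed by the fullness/firmness extraction, is what makes the argument close.
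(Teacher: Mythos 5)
Your proofs of parts (2), (3) and (4) coincide with the paper's: decompose the last tensor factor as $b_1b_2$, use identity (2.3) of \cite{BoGTLC:wmba} to express $E(1\ox b_1b_2)$ through $(\overline\sqcap^R\ox A)T_1$, and move the resulting $R$-factor past $\lambda$ with Lemma \ref{lem:l-r-bim}~(7) (respectively its mirror images). For part (1), however, you take a genuinely different route. The paper proves $E_1\lambda=\lambda$ directly: it multiplies $E_1\lambda(v\ox a)$ on the left by $1\ox bc$, uses the multiplier property (axiom (i)) to rewrite this via $(1\ox bc)E=(\sqcap^R\ox A)T_3(c\ox b)$ (identity (3.4) in \cite{BoGTLC:wmba}), converts the $\sqcap^R$-factor sitting in the $V$-leg into an $\overline\sqcap^L$-factor in the $A$-leg by Lemma \ref{lem:l-r-bim}~(8), collapses everything back to $1\ox bc$ by \cite[Lemma 3.7~(1)]{BoGTLC:wmba}, and concludes by non-degeneracy of the left $A$-module $V\ox A$. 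Your argument instead deduces (1) from (2) together with the normalization identity $(V\ox E_1)(\lambda\ox A)\lambda^{13}=(\lambda\ox A)\lambda^{13}$ (which indeed holds for every comodule by Proposition \ref{prop:prep_comod}, via the equivalence (4.a)$\Leftrightarrow$(4.d)), then extracts $E_1\lambda-\lambda=0$ by a linear-independence argument followed by applying $V\ox\epsilon$, right fullness of the comultiplication, and firmness of the $R$-action from Theorem \ref{thm:ff}. This is correct: the step $\lambda(u_i\ox a)=0$ for all $a$ really does force $u_i\br r=0$ for all $r\in R$ and hence $u_i=0$ by local units. Your detour trades the paper's reliance on (3.4), Lemma \ref{lem:l-r-bim}~(8) and \cite[Lemma 3.7~(1)]{BoGTLC:wmba} for a heavier use of the comodule axioms and the firm bimodule structure; it is slightly longer but equally valid, and it makes explicit that (1) is essentially a consequence of (2) plus normalization rather than an independent computation.
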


\begin{proof} (1) For any $v\in V$ and $a,b,c\in A$,
\begin{eqnarray*}
(1\ox bc)(E_1\lambda(v\ox a))&\stackrel{\mathsf{(i)}}=&
E_2(1\ox bc)\lambda(v\ox a)\\
&=&((\sqcap^R\ox A)T_3(c\ox b))\lambda(v\ox a)\\
&=&(V\ox \mu^{\mathsf{op}}(\overline \sqcap^L\ox A)T_3(c\ox b))
\lambda(v\ox a)\\
&=&(1\ox bc)\lambda(v\ox a),
\end{eqnarray*}
from which we conclude by the non-degeneracy of the left $A$-module $V\ox
A$. In the second equality we used identity (3.4) in \cite{BoGTLC:wmba}, in
the third equality we used Lemma \ref{lem:l-r-bim}~(8) and in the last
equality we used \cite[Lemma 3.7~(1)]{BoGTLC:wmba}.

(2) For any $v\in V$ and $a,b,c\in A$,
\begin{eqnarray*}
(V\ox E_1)(\lambda \ox A)(v\ox a \ox bc)&=&
(1\ox (\overline \sqcap^R \ox A)T_1(b\ox c))(\lambda(v\ox a)\ox 1)\\
&=&(\lambda\ox A)((\overline \sqcap^R \ox A)T_1(b\ox c))^{13}(v\ox a\ox 1)\\
&=&(\lambda\ox A)E_1^{13}(v\ox a\ox bc),
\end{eqnarray*}
from which we conclude by the idempotency of $A$. In the first and the last
equalities we used identity (2.3) in \cite{BoGTLC:wmba} and in the second
equality we used Lemma \ref{lem:l-r-bim}~(7). 

The remaining assertions follow symmetrically.
\end{proof}

The following technical lemmata will be applied in Section \ref{sec:Hopf_mod}.

\begin{lemma}\label{lem:l-r-Pi_ac}
Consider a regular weak multiplier bialgebra $A$ with right full
comultiplication, and a full right $A$-comodule $(V,\lambda,\varrho)$. For
any $v\in V$ and $a\in A$, introduce the index notation $\varrho(v\ox
a)=:v^\varrho\ox a^\varrho$ and $\lambda(v\ox a)=:v^\lambda\ox a^\lambda$
(where in both cases implicit summation is understood). Then for any $v\in V$
and $a\in A$, the following identities hold.
\begin{itemize}
\item[{(1)}] $v^\varrho\br \sqcap^R(a^\varrho)=\sqcap^R(a)\bl v$.
\item[{(2)}] $\overline \sqcap^R(a^\lambda)\bl v^\lambda=
v\br \overline \sqcap^R(a)$.
\end{itemize}
\end{lemma}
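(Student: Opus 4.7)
My plan is, for each part, to apply $V\ox A\ox \epsilon$ to one of the equivalent coassociativity formulations in Proposition~\ref{prop:prep_comod}(4), simplify using Lemma~\ref{lem:l-r-bim}, and then invoke the faithfulness of the $R$-actions on the firm bimodule $V$ to extract the identity of elements of $V$ from an identity tested against all $r\in R$.

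For part (2) I would apply $V\ox A\ox \epsilon$ to both sides of \eqref{eq:coass-r-prep-f} in Proposition~\ref{prop:prep_comod}(4.f), evaluated at $v\ox c\ox a$. By bilinearity of $\varrho$ together with the formula $(A\ox \epsilon)T_1(c\ox a^\lambda)=c\overline\sqcap^R(a^\lambda)$ from \eqref{eq:Pibar}, the left hand side simplifies to $\varrho(v^\lambda\ox c\overline\sqcap^R(a^\lambda))$; the right hand side equals $v^\varrho\ox c^\varrho\overline\sqcap^R(a)=\varrho(v\ox c)(1\ox \overline\sqcap^R(a))$. Since $R=\overline\sqcap^R(A)=\sqcap^R(A)$, Lemma~\ref{lem:l-r-bim}(3) applied with the element $\overline\sqcap^R(a^\lambda)\in R$ in place of $\sqcap^R(b)$ converts the left hand side to $\varrho(\overline\sqcap^R(a^\lambda)\bl v^\lambda\ox c)$, and Lemma~\ref{lem:l-r-bim}(6) converts the right hand side to $\varrho(v\br \overline\sqcap^R(a)\ox c)$. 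Applying $V\ox \epsilon$ to the resulting equality, the definition of the left $R$-action yields
$$
\sqcap^R(c)\bl \bigl(\overline\sqcap^R(a^\lambda)\bl v^\lambda\bigr)=\sqcap^R(c)\bl \bigl(v\br \overline\sqcap^R(a)\bigr) \qquad \text{for every }c\in A.
$$
Because $V$ is a firm left $R$-module by Theorem~\ref{thm:ff} and $R$ has local units, the left $R$-action is faithful in the sense that $r\bl w=0$ for all $r\in R$ forces $w=0$: writing $w=\sum_i r_i\bl v_i$ and choosing a local unit $e$ for $\{r_i\}\subset R$ gives $e\bl w=w$. Applied to the difference of the two sides above, this extracts the claimed identity in (2).

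Part (1) would follow by the entirely parallel argument based on Proposition~\ref{prop:prep_comod}(4.c): applying $V\ox A\ox \epsilon$ to \eqref{eq:coass-l-prep-c} at $v\ox c\ox a$ and using $(A\ox \epsilon)T_3(c\ox a^\varrho)=\sqcap^R(a^\varrho)c$ from \eqref{eq:Pi} produces the equality $\lambda(v^\varrho\ox \sqcap^R(a^\varrho)c)=v^\lambda \ox \sqcap^R(a)c^\lambda$. Lemma~\ref{lem:l-r-bim}(2) rewrites the left hand side as $\lambda(v^\varrho \br \sqcap^R(a^\varrho)\ox c)$, while Lemma~\ref{lem:l-r-bim}(7) rewrites the right hand side as $\lambda(\sqcap^R(a)\bl v\ox c)$; applying $V\ox \epsilon$ and invoking the symmetric faithfulness of the right $R$-action on the firm right $R$-module $V$ concludes the proof.

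The main obstacle I foresee is the careful bookkeeping of the implicit summations, and in particular the justification of applying Lemma~\ref{lem:l-r-bim}(2), (3), (6), (7) with a general element of $R$ in place of the explicitly stated $\sqcap^R$- or $\overline\sqcap^R$-image; this is legitimate precisely because the $R$-actions on $V$ constructed in Theorem~\ref{thm:ff} depend only on the underlying element of $R$, but this point must be checked before the substitutions are made.
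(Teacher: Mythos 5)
Your proof is correct, but it follows a genuinely different route from the paper's. The paper first establishes the absorption formula $v\br \sqcap^R(a)=(V\ox\epsilon)E_2(v\ox a)$ directly from identities (3.3), (3.4) and (3.6) of \cite{BoGTLC:wmba}, and then part (1) drops out in one line from $E_2\varrho=\varrho$ (Lemma \ref{lem:l-r-E}~(3)) together with the definition of the left $R$-action; part (2) is symmetric. You instead apply $V\ox A\ox\epsilon$ to the mixed coassociativity identities \eqref{eq:coass-r-prep-f} and \eqref{eq:coass-l-prep-c}, rewrite both sides via Lemma \ref{lem:l-r-bim}, and then cancel using the fact that a surjective action of an algebra with local units is faithful in the sense you describe. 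All the ingredients you invoke are available: the comodule axioms give all six equivalent forms in Proposition \ref{prop:prep_comod}~(4); the substitution of a general element of $R$ for $\sqcap^R(b)$ or $\overline\sqcap^R(a)$ in Lemma \ref{lem:l-r-bim} is justified exactly as you say, by the well-definedness of the actions established in Theorem \ref{thm:ff}; and the local-units argument for extracting $w=w'$ from $r\bl w=r\bl w'$ for all $r\in R$ is sound. The trade-off is that the paper's computation is shorter and works directly with the idempotent $E$, whereas yours stays entirely within the packaged comodule identities (Proposition \ref{prop:prep_comod} and Lemma \ref{lem:l-r-bim}) at the cost of an extra non-degeneracy step; since Lemma \ref{lem:l-r-E} is itself proved from Lemma \ref{lem:l-r-bim}, the underlying machinery is comparable.
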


\begin{proof}
(1) For any $a,b\in A$ and any $v\in V$,
$$
v\br \sqcap^R(ab)=
v\br \sqcap^R(\sqcap^R(a)b)=
(V\ox \epsilon)[(v\ox 1)((\sqcap^R\ox A)T_3(b\ox a))]=
(V\ox \epsilon)E_2(v\ox ab).
$$
The first equality follows by identity (3.6), the second one follows by (3.3),
and the last one follows by (3.4) in \cite{BoGTLC:wmba}. By the idempotency of
$A$, this proves $v\br \sqcap^R(a)=(V\ox \epsilon)E_2(v\ox a)$. With this
identity at hand, 
$$
v^\varrho\br \sqcap^R(a^\varrho)=
(V\ox \epsilon)E_2\varrho(v\ox a)=
(V\ox \epsilon)\varrho(v\ox a)=
\sqcap^R(a)\bl v.
$$
In the second equality we used Lemma \ref{lem:l-r-E}~(3). 
Assertion (2) follows symmetrically.
\end{proof}

\begin{lemma}\label{lem:prep_dual_comod}
Let $A$ be a regular weak multiplier bialgebra with right full
comultiplication and let $(V,\lambda,\varrho)$ be a full right $A$-comodule.
For any $v\in V$, $a,b\in A$ and $r\in R$, the following identities hold. 
\begin{itemize}
\item[{(1)}] $(V\ox \sqcap^R)\varrho(v\br r\ox b)=
(\sqcap^R(b)\bl v \ox 1)\delta(r)$ (where $\delta$ denotes the
 comultiplication on the coalgebra $R$ in \cite[Theorem 4.4]{BoGTLC:wmba};
 cf. \eqref{eq:delta}).
\item[{(2)}] $(1\ox b)((V\ox \sqcap^L)\varrho(v\ox a))=
((\sqcap^R\ox A)T_4(a\ox b))(v\ox 1)$.
\end{itemize}
\end{lemma}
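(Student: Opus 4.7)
For part (2), I would multiply both sides on the right by $(1\ox c)$ for arbitrary $c\in A$ and check coincidence. Using the defining formula $b\sqcap^L(x) = (\epsilon\ox A)T_4(x\ox b)$ from \eqref{eq:Pi} together with the identity $T_4(a^\varrho\ox b)(1\ox c) = T_1(b\ox c)(a^\varrho\ox 1)$ from \eqref{eq:T_14}, and the implicit summations $T_1(b\ox c) = b^1\ox c^1$ and $\varrho(v\ox a) = v^\varrho\ox a^\varrho$, the LHS becomes $v^\varrho\ox \epsilon(b^1 a^\varrho)c^1$. The left $A$-linearity of $\varrho$ (Proposition \ref{prop:prep_comod}(1)) and the definition of the left $R$-action in Theorem \ref{thm:ff} then recast this as $(\sqcap^R(b^1 a)\bl v)\ox c^1$. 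Applying the same $T_4$--$T_1$ identity to the RHS yields the same expression directly. Non-degeneracy of $V\ox A$ as a right $A$-module concludes.

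For part (1), I would first use the right-full assumption to write $r = \overline\sqcap^R(a)$ for some $a\in A$. Lemma \ref{lem:l-r-bim}(6) then gives $\varrho(v\br r\ox b) = \varrho(v\ox b)(1\ox r)$; applying $(V\ox \sqcap^R)$ and using the right $R$-linearity $\sqcap^R(xr) = \sqcap^R(x)r$ (identity (3.8) in \cite{BoGTLC:wmba}), the LHS rewrites as $X(1\ox r)$ in $V\ox R$, where $X := (V\ox \sqcap^R)\varrho(v\ox b)$. By Lemma \ref{lem:l-r-Pi_ac}(1), the image of $X$ under the right $R$-action $\mu_R\colon V\ox R\to V$ equals $\sqcap^R(b)\bl v$. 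Using $\delta(r) = (r\ox 1)F$ from \eqref{eq:delta} together with the co-separability $\mu\circ\delta = \id_R$, the claim becomes equivalent to the stronger identity $X = \tau_R(\sqcap^R(b)\bl v)$, where $\tau_R\colon V\to V\ox R$ is the right $R$-coaction induced on $V$ from its firm right $R$-module structure via \cite[Proposition 2.17]{BoVe}; right $R$-colinearity of $\tau_R$ then yields $X(1\ox r) = \tau_R(\sqcap^R(b)\bl v\br r) = (\sqcap^R(b)\bl v\ox 1)\delta(r)$.

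The main obstacle is the stronger identity $X = \tau_R(\sqcap^R(b)\bl v)$ in part (1): it expresses a compatibility between the $A$-comodule action $\varrho$ on $V$ and the $R$-coaction $\tau_R$ induced from the firm right $R$-module structure on $V$. Verifying it amounts to unpacking $\tau_R$ via the defining formulas \eqref{eq:delta} and \eqref{eq:F} for $\delta$ and $F$, then combining Lemma \ref{lem:l-r-Pi_ac}(1) with the coassociativity \eqref{eq:r-coass} of $\varrho$ to match both sides as elements of $V\ox R$ using the weak multiplier bialgebra axioms.
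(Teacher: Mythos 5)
Your argument for part (2) is correct and is essentially the paper's computation in a different order: the paper first isolates the intertwining relation $(V\ox T_4)(\varrho\ox A)=(\varrho\ox A)(V\ox T_4)$ (from axiom (ix) and the left $A$-linearity of $\varrho$) and then applies $V\ox \epsilon\ox A$, whereas you multiply by $(1\ox c)$ and pass through $T_1$ via \eqref{eq:T_14}; both routes arrive at $\sqcap^R(b^1a)\bl v\ox c^1$ on each side and conclude by non-degeneracy.

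Part (1), however, is not a proof. You correctly observe that the claim would follow from the identity $X=\tau_R(\sqcap^R(b)\bl v)$ with $X=(V\ox\sqcap^R)\varrho(v\ox b)$, but you leave exactly this identity unestablished, describing only in general terms how one ``would'' verify it. Moreover the reduction gains nothing: choosing $r$ to be a local unit for $v$ (which exists since $V$ is firm) shows that $X=\tau_R(\sqcap^R(b)\bl v)$ is precisely the special case of the asserted formula (1) at such an $r$, so the entire content of the lemma is concentrated in the step you defer. Concretely, unwinding $\tau_R$ via Lemma \ref{lem:l-r-Pi_ac}~(1) turns the needed identity into $v^\varrho\ox\sqcap^R(b^\varrho)=(v^\varrho\ox 1)\delta(\sqcap^R(b^\varrho))$, i.e.\ the statement that the element $(V\ox\sqcap^R)\varrho(v\ox b)$ of $V\ox R$ is fixed by the idempotent $u\ox t\mapsto (u\ox 1)\delta(t)$; this is not a formal consequence of coseparability or of Lemma \ref{lem:l-r-Pi_ac}~(1) alone and requires an actual computation with the comultiplication $\delta$. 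The paper proves (1) by sandwiching both sides between $(1\ox c)$ and $(1\ox a)$ and chasing through: $\sqcap^R(x)y=(A\ox\epsilon)T_3(y\ox x)$, the exchange $(c\ox 1)T_3(a\ox b^\varrho)=(1\ox b^\varrho)T_2(c\ox a)$ from \eqref{eq:T_23}, the replacement $\epsilon(b^\varrho a^2)=\epsilon(b^\varrho\overline\sqcap^R(a^2))$, Lemma \ref{lem:l-r-bim}~(6), and finally the explicit formula $(1\ox c)\delta(r)(1\ox a)=r\overline\sqcap^R(a^2)\ox c^2$ extracted from the definition \eqref{eq:F} of $F$ and axiom (ix). Some computation of this kind is indispensable and is missing from your proposal.
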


\begin{proof}
(1) For any $v\in V$, $r\in R$ and $a,b,c\in A$, denote $\varrho(v\br r\ox
b)=(v\br r)^\varrho\ox b^\varrho$ and $T_2(c\ox a)=c^2\ox a^2$, where in
both cases implicit summation is understood. Then 
\begin{eqnarray*}
(1\ox c)((V\ox \sqcap^R)\varrho(v\br r\ox b))(1\ox a)&=&
(v\br r)^\varrho\ox (A\ox \epsilon)((c\ox 1)T_3(a\ox b^\varrho))\\
&=&(v\br r)^\varrho\ox (A\ox \epsilon)((1\ox b^\varrho)T_2(c\ox a))\\
&=&(v\br r)^\varrho\ox c^2\epsilon(b^\varrho\overline\sqcap^R(a^2))\\
&=&(V\ox \epsilon)\varrho(v\br r\overline\sqcap^R(a^2) \ox b)\ox c^2\\
&=&\sqcap^R(b)\bl v\br r\overline\sqcap^R(a^2)\ox c^2\\
&=&(1\ox c)(\sqcap^R(b)\bl v\ox 1)\delta(r)(1\ox a),
\end{eqnarray*}
from which we conclude simplifying by $a$ and $c$.
The first equality follows by (3.3) in \cite{BoGTLC:wmba}
(cf. \eqref{eq:Pi}). In the second equality we applied \eqref{eq:T_23}.
The third equality holds by
\cite[Lemma 3.1]{BoGTLC:wmba} and the fourth one does by Lemma
\ref{lem:l-r-bim}~(6). In the last equality we used that by 
\cite[Proposition 4.3~(1)]{BoGTLC:wmba} (cf. \eqref{eq:F}) and by 
the second equality in axiom (ix) in Definition \ref{def:wmba},
the multiplier $F$ in \cite[Proposition 4.3~(1)]{BoGTLC:wmba} satisfies 
$$
(1\ox c)F(1\ox ab)=
(1\ox c)((\overline\sqcap^R\ox A)\mathsf{tw}T_4(b\ox a))=
((\overline\sqcap^R\ox A)\mathsf{tw}T_2(c\ox a))(1\ox b).
$$
Hence $\delta(r)=(r\ox 1)F$ satisfies $r\overline\sqcap^R(a^2)\ox c^2=(1\ox
c)\delta(r)(1\ox a)$. 

(2) For any $v\in V$ and $a,b,c\in A$,
\begin{eqnarray*}
(1\ox c\ox 1)((V\ox T_4)(\varrho\ox A)(v\ox a\ox b))&=&
(1\ox T_2(c\ox b))(\varrho(v\ox a)\ox 1)\\
&=&(\varrho\ox A)(v\ox T_2(c\ox b)(a\ox 1))\\
&=&(\varrho\ox A)(v\ox (c\ox 1)T_4(a\ox b))\\
&=&(1\ox c\ox 1)((\varrho\ox A)(V\ox T_4)(v\ox a\ox b)).
\end{eqnarray*}
In the first and the third equalities we used the second equality in axiom
(ix) in Definition \ref{def:wmba}. 
In the second and the last equalities we used that $\varrho$ is a left
$A$-module map, cf. Proposition \ref{prop:prep_comod}~(1). Simplifying by $c$,
we obtain 
\begin{equation}\label{eq:T4-rho}
(V\ox T_4)(\varrho\ox A)=
(\varrho\ox A)(V\ox T_4).
\end{equation}
Using this identity in the second equality and (3.3) in \cite{BoGTLC:wmba}
(cf. \eqref{eq:Pi}) in the first one, it follows for any $v\in V$ and $a,b\in
A$ that
\begin{eqnarray*}
(1\ox b)((V\ox \sqcap^L)\varrho(v\ox a))&=&
(V\ox \epsilon\ox A)(V\ox T_4)(\varrho\ox A)(v\ox a\ox b)\\
&=&(V\ox \epsilon\ox A)(\varrho\ox A)(V\ox T_4)(v\ox a\ox b)\\
&=&((\sqcap^R\ox A)T_4(a\ox b))(v\ox 1).
\end{eqnarray*}
\end{proof}

\section{The monoidal category of full comodules}\label{sec:mon_cat}

For a unital (weak) bialgebra, both the category of modules and the category
of comodules are known to be monoidal with respect to the monoidal product
provided by the module tensor product over the base algebra. For a regular
(weak) multiplier bialgebra $A$ with a full comultiplication, it was shown in 
\cite[Section 5]{BoGTLC:wmba} that the category of idempotent non-degenerate
$A$-modules is monoidal with respect to the same monoidal product provided by
the module tensor product over the base algebra. The aim of this section is to
prove a similar result about $A$-comodules: We show that the category
$M^{(A)}$ of full $A$-comodules is monoidal such that the functor in Theorem
\ref{thm:ff} is strict monoidal. 

\begin{lemma}\label{lem:k_product}
For a regular weak multiplier bialgebra $A$ and any right $A$-comodules
$(V,\lambda_V,$ $\varrho_V)$ and $(W,\lambda_W,\varrho_W)$, there is an
$A$-comodule 
$$
(V\ox W,\lambda_V^{13}(V\ox \lambda_W),(V\ox \varrho_W)\varrho_V^{13}).
$$
\end{lemma}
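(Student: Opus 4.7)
Plan: Denote $\lambda := \lambda_V^{13}(V\ox \lambda_W)$ and $\varrho := (V\ox \varrho_W)\varrho_V^{13}$. It suffices to verify three items: the compatibility \eqref{eq:comp}, the normalization \eqref{eq:l-i-norm}, and the coassociativity \eqref{eq:l-coass} for $\lambda$; by Proposition \ref{prop:prep_comod}~(4), these will imply \eqref{eq:r-i-norm} and \eqref{eq:r-coass} for $\varrho$ automatically. The guiding observation is that, when $(\lambda\ox A)\lambda^{13}$ is expanded on $V\ox W\ox A\ox A$ into its four constituent $\lambda_V$- and $\lambda_W$-factors, a middle pair of them act on disjoint sets of tensor positions (the $W$-leg and one $A$-leg, versus the $V$-leg and the other $A$-leg), so they commute; after this reshuffling, each identity for the composite reduces to one application of the corresponding axiom for $V$ and one for $W$.

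Compatibility. Adopting the index shorthand $\lambda_V(v\ox c)=v^1\ox c^1$, $\lambda_W(w\ox b)=w^0\ox b^0$, $\varrho_V(v\ox a)=v^\bullet\ox a^\bullet$ (each with implicit summation), one computes $\lambda(v\ox w\ox b) = v^1\ox w^0\ox (b^0)^1$, hence
\[
(1\ox 1\ox a)\lambda(v\ox w\ox b) \;=\; v^1\ox w^0\ox a(b^0)^1.
\]
By \eqref{eq:comp} applied to the $V$-comodule, $v^1\ox a(b^0)^1 = v^\bullet\ox a^\bullet b^0$, so the right-hand side equals $v^\bullet\ox w^0\ox a^\bullet b^0 = v^\bullet\ox (1\ox a^\bullet)\lambda_W(w\ox b)$. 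A second application of \eqref{eq:comp}, this time to the $W$-comodule, turns the inner factor into $\varrho_W(w\ox a^\bullet)(1\ox b)$, yielding $v^\bullet\ox \varrho_W(w\ox a^\bullet)(1\ox b) = \varrho(v\ox w\ox a)(1\ox 1\ox b)$, as required.

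Normalization and coassociativity. Reading compositions right-to-left, $(\lambda\ox A)\lambda^{13}$ on $V\ox W\ox A\ox A$ is the four-fold composite: (i) $\lambda_W$ on the $W$-leg and second $A$-leg, then (ii) $\lambda_V$ on the $V$-leg and second $A$-leg, then (iii) $\lambda_W$ on the $W$-leg and first $A$-leg, then (iv) $\lambda_V$ on the $V$-leg and first $A$-leg. The factors (ii) and (iii) act on disjoint tensor positions and commute, after which the two $\lambda_W$-pieces assemble into $(\lambda_W\ox A)\lambda_W^{13}$ acting on the $W$-leg together with both $A$-legs, and the two $\lambda_V$-pieces assemble into $(\lambda_V\ox A)\lambda_V^{13}$ acting on the $V$-leg together with both $A$-legs. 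Since $V\ox W\ox E_1$ and $V\ox W\ox T_1$ touch only the two $A$-legs, \eqref{eq:l-i-norm} follows at once by applying the normalization for $W$ alone (the $\lambda_V$-block is a spectator), and \eqref{eq:l-coass} follows by pushing $V\ox W\ox T_1$ first past the $\lambda_W$-block via $W$-coassociativity and then past the $\lambda_V$-block via $V$-coassociativity, leaving exactly $(V\ox W\ox T_1)(\lambda\ox A)$.

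I expect no deep obstacle; the argument is entirely a formal position-tracking exercise. The only subtlety is in the bookkeeping, namely aligning the paper's leg-numbering convention (which treats $V\ox W$ as a single first leg when forming $\lambda^{13}$) with the four-position decomposition of $V\ox W\ox A\ox A$ used above.
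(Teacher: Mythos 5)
Your proof is correct and follows essentially the same route as the paper: the paper verifies only the $\lambda$-side conditions, proves normalization by commuting the disjoint-leg factors $\lambda_V^{14}$ and $\lambda_W^{23}$ and invoking the $W$-normalization alone, and proves coassociativity by pushing $V\ox W\ox T_1$ first through the $\lambda_W$-block and then through the $\lambda_V$-block — exactly your argument, presented there as two commutative diagrams rather than in leg-numbering notation. Your explicit index computation for the compatibility \eqref{eq:comp} is a correct expansion of what the paper dismisses as clear.
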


\begin{proof}
Since \eqref{eq:comp} holds both for $V$ and $W$, it clearly holds for the
stated comodule $V\ox W$. Since the normalization condition
\eqref{eq:l-i-norm} holds for $W$, the top left region in 
$$
\xymatrix{
V\ox W \ox A\ox A\ar[r]^-{
\raisebox{7pt}{${}_{V\ox W\ox E_1}$}}\ar[d]_-{\lambda_W^{24}}&
V\ox W \ox A\ox A\ar[r]^-{
\raisebox{7pt}{${}_{\lambda_W^{24}}$}}&
V\ox W \ox A\ox A\ar[d]^-{\lambda_W^{23}}\ar[r]^-{
\raisebox{7pt}{${}_{\lambda_V^{14}}$}}&
V\ox W \ox A\ox A\ar[dd]^-{\lambda_W^{23}}\\
V\ox W \ox A\ox A\ar[rr]^-{\lambda_W^{23}}\ar[d]_-{\lambda_V^{14}}&&
V\ox W \ox A\ox A\ar[rd]^-{\lambda_V^{14}}&\\
V\ox W \ox A\ox A\ar[rrr]_-{\lambda_W^{23}}&&&
V\ox W \ox A\ox A}
$$
commutes. Postcomposing both paths around this diagram by $\lambda_V^{13}$, we
conclude that the normalization condition \eqref{eq:l-i-norm} holds for the
stated comodule $V\ox W$. Finally, since the coassociativity condition
\eqref{eq:l-coass} holds both for $V$ and $W$, 
$$
\xymatrix{
V\ox W \ox A\ox A\ar[r]^-{
\raisebox{7pt}{${}_{V\ox W\ox T_1}$}}\ar[d]_-{\lambda_W^{23}}&
V\ox W \ox A\ox A\ar[r]^-{
\raisebox{7pt}{${}_{\lambda_W^{24}}$}}&
V\ox W \ox A\ox A\ar[d]^-{\lambda_W^{23}}\ar[r]^-{
\raisebox{7pt}{${}_{\lambda_V^{14}}$}}&
V\ox W \ox A\ox A\ar[d]^-{\lambda_W^{23}}\\
V\ox W \ox A\ox A\ar[rr]^-{V\ox W\ox T_1}\ar[d]_-{\lambda_V^{13}}&&
V\ox W \ox A\ox A\ar[r]^-{\lambda_V^{14}}&
V\ox W \ox A\ox A\ar[d]^-{\lambda_V^{13}}\\
V\ox W \ox A\ox A\ar[rrr]_-{V\ox W\ox T_1}&&&
V\ox W \ox A\ox A}
$$
commutes proving that \eqref{eq:l-coass} holds also for the
stated comodule $V\ox W$. 
\end{proof}

The comodule in Lemma \ref{lem:k_product} is not yet the appropriate monoidal
product. 
With the aim of lifting the monoidal structure of ${}_R M_R$ to $M^{(A)}$, the
candidate monoidal unit in $M^{(A)}$ is the full $A$-comodule $R$ in Example
\ref{ex:R_full}. The candidate monoidal product of full $A$-comodules $V$ and
$W$ is their $R$-module tensor product $V\ox_R W$. 

\begin{proposition}\label{prop:ox_R}
Let $A$ be a regular weak multiplier bialgebra with right full
comultiplication. For any full right $A$-comodules $(V,\lambda_V,\varrho_V)$
and $(W, \lambda_W,\varrho_W)$, $V\ox_R W$ is isomorphic to the range of the
idempotent map
$$
G_1:V\ox W \to V \ox W,\qquad
\sqcap^R(a)\act v\ox w \act \overline \sqcap^R(b) \mapsto 
v^\varrho \ox w^\lambda \epsilon(a^\varrho b^\lambda),
$$
where the implicit summation index notation $\varrho_V(v\ox a)=:v^\varrho \ox
a^\varrho$ and $\lambda_W(w\ox b)=:w^\lambda\ox b^\lambda$ is used.
\end{proposition}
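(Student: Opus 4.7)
My plan is to organize the proof into three tasks: (a) the formula defines a linear map $G_1 \colon V\ox W \to V\ox W$; (b) this map is idempotent; (c) its range is linearly isomorphic to $V\ox_R W$.

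For task (a), I would begin by noting that, by Theorem \ref{thm:ff} and Lemma \ref{lem:Pi_spanned}, the left $R$-action on $V$ and the right $R$-action on $W$ are surjective, so the surjection $\pi\colon V\ox A \ox A\ox W \to V\ox W$, $v\ox a\ox b\ox w \mapsto \sqcap^R(a)\bl v \ox w\br \overline{\sqcap}^R(b)$, is well-defined. The candidate formula can be rewritten as the manifestly well-defined composite
$$
\widetilde{G}_1:=(V\ox \epsilon\mu \ox W)\circ \bigl(\varrho_V\ox (\mathsf{tw}\lambda_W)\bigr) \colon V\ox A\ox A\ox W \to V\ox W,
$$
sending $v\ox a\ox b\ox w$ to $v^\varrho\ox w^\lambda\,\epsilon(a^\varrho b^\lambda)$. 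The task is then to verify that $\widetilde{G}_1$ factors through $\pi$. I would prove this by rewriting both arguments using Lemma \ref{lem:l-r-Pi_ac}: the identity $v^\varrho\br \sqcap^R(a^\varrho)=\sqcap^R(a)\bl v$ (applied to $V$) and its analogue $\overline{\sqcap}^R(b^\lambda)\bl w^\lambda=w\br \overline{\sqcap}^R(b)$ (applied to $W$), combined with the compatibility \eqref{eq:comp} and the balance lemmas in Lemma \ref{lem:l-r-bim}, which let one transport the $R$-action across $\varrho_V$ and $\lambda_W$ and reduce the $\epsilon(a^\varrho b^\lambda)$ pairing to a datum depending only on the images under $\pi$.

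For task (b), I would compute $G_1^2$ on a generator $\sqcap^R(a)\bl v\ox w \br \overline{\sqcap}^R(b)$ by applying the formula twice: the first application produces $v^\varrho\ox w^\lambda\epsilon(a^\varrho b^\lambda)$, and the second application requires expressing $v^\varrho$ again in the form $\sqcap^R(\cdot)\bl(\cdot)$ and $w^\lambda$ in the form $(\cdot)\br\overline{\sqcap}^R(\cdot)$. The equality $G_1^2=G_1$ should then follow from the coassociativity axioms \eqref{eq:l-coass}, \eqref{eq:r-coass} together with the normalization axioms \eqref{eq:l-i-norm}, \eqref{eq:r-i-norm} and the $E$-compatibilities of Lemma \ref{lem:l-r-E}.

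For task (c), I would show that $G_1$ is $R$-balanced, i.e.\ $G_1(v\br r\ox w)=G_1(v\ox r\bl w)$ for $r\in R$ (reducing to $r=\overline{\sqcap}^R(c)$ and $r=\sqcap^R(c)$ via Lemma \ref{lem:l-r-bim}), whence the universal property of $\ox_R$ yields a linear map $\overline{G}_1\colon V\ox_R W\to \operatorname{range}(G_1)$. Surjectivity of $\overline{G}_1$ is immediate. For injectivity, I would check that the restriction to $\operatorname{range}(G_1)$ of the canonical projection $V\ox W \twoheadrightarrow V\ox_R W$ is an inverse to $\overline{G}_1$; concretely, that $G_1(v\ox w)$ and $v\ox w$ represent the same class in $V\ox_R W$ for every $v\in V, w\in W$. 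This reduces, after the rewriting $\sqcap^R(a)\bl v\ox w\br\overline{\sqcap}^R(b)=v^\varrho\br \sqcap^R(a^\varrho)\ox \overline{\sqcap}^R(b^\lambda)\bl w^\lambda$ from Lemma \ref{lem:l-r-Pi_ac}, to sliding the $R$-element $\sqcap^R(a^\varrho)\overline{\sqcap}^R(b^\lambda)$ across the tensor and collapsing it via the counit of the coalgebra $R$.

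The main obstacle is task (a). The formula couples $V$ to $W$ through the scalar $\epsilon(a^\varrho b^\lambda)$, so the usual factor-by-factor argument on kernels of tensor products of surjections does not apply directly; one has to exploit the whole interplay between $\varrho_V$, $\lambda_W$, the idempotent $E$, and the $R$-bimodule structure from Theorem \ref{thm:ff}. Once this is established, idempotency and the identification of the range are essentially formal consequences of the comodule axioms and of the firmness and coseparability of $R$.
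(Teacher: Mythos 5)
Your proposal is essentially correct, but it takes a genuinely different route from the paper. The paper does not verify well-definedness, idempotency and the range identification separately: it quotes \cite[Proposition 2.17]{BoVe} (coseparability of $R$) to know in advance that $V\ox_R W$ is isomorphic to the range of the idempotent $v\act r\ox w\mapsto (v\ox 1)\act\delta(r)\act(1\ox w)$ of \eqref{eq:Theta}, and then the entire proof consists of one computation showing that this map coincides with the stated formula for $G_1$; all three of your tasks then come for free. Your from-scratch verification avoids the external reference but is longer, and one signpost in it is off: Lemma \ref{lem:l-r-Pi_ac} is not what resolves your task (a), since it only controls the contracted expressions $v^\varrho\br\sqcap^R(a^\varrho)$ and $\overline\sqcap^R(b^\lambda)\bl w^\lambda$, whereas $G_1$ couples the uncontracted legs $a^\varrho$ and $b^\lambda$ through $\epsilon\mu$. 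What actually closes the gap is \eqref{eq:comp} together with Lemma \ref{lem:l-r-bim}~(6) and (8) and the identities $\epsilon(ab)=\epsilon(\overline\sqcap^L(a)b)=\epsilon(a\overline\sqcap^R(b))$ (from $\epsilon\mu=(\epsilon\ox\epsilon)T_1=(\epsilon\ox\epsilon)T_2$): these give $v^\varrho\,\epsilon(a^\varrho c)=(\sqcap^R(a)\bl v)\br\overline\sqcap^R(c)$ and $w^\lambda\,\epsilon(c\,b^\lambda)=\sqcap^R(c)\bl(w\br\overline\sqcap^R(b))$, so the formula visibly depends on $(v,a)$ and $(w,b)$ only through $\sqcap^R(a)\bl v$ and $w\br\overline\sqcap^R(b)$, which kills both summands of $\ker\pi$. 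Lemma \ref{lem:l-r-Pi_ac} is instead exactly the right tool for your task (c), where it yields $\pi\,G_1=\pi$ after sliding $\sqcap^R(a^\varrho)$ across $\ox_R$; note also that once you have (c) (descent of $G_1$ through the canonical projection plus $\pi\,G_1=\pi$), idempotency (b) follows formally, so you need not run the coassociativity/normalization argument separately. In short: your architecture is sound and the cited bimodule lemmas do suffice, but the paper's identification of $G_1$ with the coseparability idempotent buys all three properties in a single computation.
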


\begin{proof}
By \cite[Theorem 4.6~(1)]{BoGTLC:wmba} $R$ is a coseparable coalgebra. Thus
by \cite[Proposition 2.17]{BoVe}, for any firm $R$-bimodules $V$ and $W$,
$V\ox_R W$ is isomorphic to the range of the idempotent map
\begin{equation}\label{eq:Theta}
V\ox W \to V \ox W,\qquad 
v\act r \ox w \mapsto (v\ox 1)\act \delta(r) \act (1\ox w),
\end{equation}
where $\delta$ denotes the comultiplication in the coalgebra $R$
(cf. \cite[Theorem 4.4]{BoGTLC:wmba} and \eqref{eq:delta}) and we use that $V$
is spanned by elements of the form $v\act r$ in terms of $v\in V$ and $r\in
R$.

In view of Theorem \ref{thm:ff} and the idempotency of $A$, the tensor product
vector space $V\ox W$ is spanned by elements of the form $\sqcap^R(a)\act
v\act \sqcap^R(cd) \ox w \act \overline \sqcap^R(b)$, for $v\in V$, $w\in W$
and $a,b,c,d\in A$. Evaluate \eqref{eq:Theta} on this element. Introducing the
implicit summation index notation $T_2(c\ox d)=c^2\ox d^2$ and applying (4.1)
in \cite{BoGTLC:wmba}, we obtain 
\begin{eqnarray*}
\sqcap^R(a)\act v\act \sqcap^R(c^2)\hspace{-.4cm}&&\hspace{-.5cm}\ox 
\sqcap^R(d^2)\act w \act \overline \sqcap^R(b)\\
&=&
(V\ox \epsilon)\varrho_V(v\act \sqcap^R(c^2) \ox a)\ox
(W\ox \epsilon)\lambda_W(\sqcap^R(d^2)\act w \ox b)\\
&=&
(V\ox \epsilon)(\varrho_V(v \ox a)(1\ox \sqcap^R(c^2)))\ox
(W\ox \epsilon)((1\ox \sqcap^R(d^2))\lambda_W(w \ox b))\\
&=&
(V\ox \epsilon)(\varrho_V(v \ox a)(1\ox \sqcap^R(c^2)))\ox
(W\ox \epsilon)((1\ox d^2)\lambda_W(w \ox b))\\
&=&
v^\varrho\ox w^\lambda 
\epsilon(a^\varrho(\sqcap^R\ox \epsilon)(T_2(c\ox d)(1\ox b^\lambda)))\\
&=&
v^\varrho\ox w^\lambda 
\epsilon(a^\varrho(\sqcap^R\ox \epsilon)
(T_2(c\ox d)(1\ox \overline \sqcap^R(b^\lambda))))\\
&=&
v^\varrho\ox w^\lambda 
\epsilon(a^\varrho(\sqcap^R\ox \epsilon)T_2(c\ox d\overline\sqcap^R(b^\lambda)))\\
&=&
v^\varrho\ox w^\lambda 
\epsilon(a^\varrho\sqcap^R(cd\overline\sqcap^R(b^\lambda)))\\
&=&
v^\varrho\ox w^\lambda 
\epsilon(a^\varrho\sqcap^R(cd)\overline\sqcap^R(b^\lambda))\\
&=&
v^\varrho\ox w^\lambda 
\epsilon(a^\varrho\sqcap^R(cd)b^\lambda)\\
&=&
(v\act\sqcap^R(cd)) ^\varrho\ox w^\lambda 
\epsilon(a^\varrho b^\lambda).
\end{eqnarray*}
This proves that the idempotent map \eqref{eq:Theta} is equal to the stated
map $G_1$.
In the first equality we used the $R$-actions in Theorem \ref{thm:ff}. In the
second and the last equalities we used parts (6)-(7) and (6) of Lemma
\ref{lem:l-r-bim}, respectively. The third equality follows by one of the
identities in (3.5) in \cite{BoGTLC:wmba}. The fifth and the penultimate
equalities follow by \cite[Lemma 3.1]{BoGTLC:wmba}. In the sixth equality we
used that by \cite[Lemma 3.3]{BoGTLC:wmba}, $T_2(c\ox d)(1\ox \overline
\sqcap^R(b))=T_2(c\ox d\overline \sqcap^R(b))$. The seventh equality follows
by the counitality axiom (iv) of weak multiplier bialgebra in Definition
\ref{def:wmba} and the eighth one follows by \cite[Lemma3.11]{BoGTLC:wmba}.
\end{proof}

\begin{example}
It is immediate from the proof of Proposition \ref{prop:ox_R} that for the
full $A$-comodule $A$ in Example \ref{ex:A_full}, $G_1:A\ox A \to A\ox A$ is
equal to the idempotent map in \eqref{eq:G_1}. More generally, for any full
right $A$-comodule $V$, we can write $G_1:V\ox A \to V\ox A$ as $v\ox a
\mapsto (v(-)\ox A)[F(1\ox a)]$ (which is meaningful by \cite[Proposition
4.3~(1)]{BoGTLC:wmba}; cf. \eqref{eq:F}).
\end{example}

In order to see that the $R$-module tensor product of any full right comodules
is again a full right comodule, we need some compatibilities between the
coactions and the map $G_1$ in Proposition \ref{prop:ox_R}.

\begin{lemma}\label{lem:l-r-Theta}
Consider a regular weak multiplier bialgebra $A$ with right full
comultiplication. For any full right $A$-comodules $V$ and $W$, the map
$G_1$ in Proposition \ref{prop:ox_R} satisfies the following identities. 
\begin{itemize}
\item[{(1)}] $\lambda_V G_1=\lambda_V$.
\item[{(2)}] $(V\ox \lambda_W)(G_1\ox A)=
G_1^{13}(V\ox \lambda_W)$.
\item[{(3)}] $\lambda_V^{13}(V\ox \lambda_W)(G_1\ox A)=
\lambda_V^{13}(V\ox \lambda_W)$.
\item[{(4)}] $(G_1\ox A)\lambda_V^{13}=\lambda_V^{13}(V\ox E_1)$.
\item[{(5)}] $(G_1\ox A)\lambda_V^{13}(V\ox \lambda_W)=
\lambda_V^{13}(V\ox \lambda_W)$.
\item[{(6)}] $\varrho_V G_1^{21}=\varrho_V$.
\item[{(7)}] $\varrho_V^{13}(G_1\ox A)=
(V\ox G_1^{21})\varrho_V^{13}$. 
\item[{(8)}] $(V\ox \varrho_W)\varrho_V^{13}(G_1\ox A)=
(V\ox \varrho_W)\varrho_V^{13}$. 
\item[{(9)}] $(G_1\ox A)(V\ox \varrho_W)=(V\ox \varrho_W)E_2^{13}$.
\item[{(10)}] $(G_1\ox A)(V\ox \varrho_W)\varrho_V^{13}=
(V\ox \varrho_W)\varrho_V^{13}$.
\end{itemize}
\end{lemma}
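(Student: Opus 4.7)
The ten identities split into six base cases --- parts (1), (2), (4), (6), (7), (9) --- from which the remaining four follow by routine combination. The overall strategy is to exploit the explicit formula for $G_1$ recorded in Proposition \ref{prop:ox_R} together with the $R$-bilinearity relations of the coactions collected in Lemma \ref{lem:l-r-bim}; for the parts involving $E_1$ and $E_2$ one additionally invokes the identity between $F$ and $E$ that was isolated in the course of proving Proposition \ref{prop:int_def}.

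I would begin with parts (1) and (6), which express that $G_1$ is absorbed by the coaction from the outside. Using the form $G_1(v\ox a) = (v(-)\ox A)[F(1\ox a)]$ recorded in the Example following Proposition \ref{prop:ox_R}, together with $F(1\ox a)\in R\ox A$ from \cite[Proposition 4.3~(1)]{BoGTLC:wmba} and the $R$-balance relation obtained by combining Lemma \ref{lem:l-r-bim}~(2) with \eqref{eq:comp}, the $R$-entry of $F$ passes through $\lambda_V$; the remaining multiplicand then disappears by Lemma \ref{lem:l-r-E}~(1). Part (6) is symmetric via Lemma \ref{lem:l-r-bim}~(3) and Lemma \ref{lem:l-r-E}~(3). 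For (2) and (7), the explicit formula
$$G_1(\sqcap^R(a)\bl v\ox w\br \overline\sqcap^R(b)) = v^{\varrho_V}\ox w^{\lambda_W}\,\epsilon(a^{\varrho_V}b^{\lambda_W})$$
from Proposition \ref{prop:ox_R} reduces the asserted intertwinings to the coassociativity conditions \eqref{eq:l-coass} for $\lambda_W$ and \eqref{eq:r-coass} for $\varrho_V$, applied in the appropriate leg, once Lemma \ref{lem:l-r-bim}~(2)-(3) is used to convert the arising $A$-multiplications into $R$-actions.

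Parts (4) and (9) are the most delicate. The key input is the identity
$$E^{13}(1\ox E)(r\ox pq\ox 1) = E^{13}(F\ox 1)(r\ox pq\ox 1)$$
established in the course of proving Proposition \ref{prop:int_def} (and its mirror, obtained by symmetry). Combined with the formula for $G_1$ in terms of $F$ and with Lemma \ref{lem:l-r-E}~(2) and (4) applied to the generalized $E_1$ and $E_2$ of Lemma \ref{lem:l-r-E}, this converts $G_1$ into $E_1$ (resp.\ $E_2$) across the maps $\lambda_V^{13}$ (resp.\ $V\ox \varrho_W$). The derived identities then follow cheaply: (3) from (2) by absorbing the resulting $G_1^{13}$ against $\lambda_V^{13}$ using (1); (5) from (4) together with $E_1\lambda_W=\lambda_W$ of Lemma \ref{lem:l-r-E}~(1); (8) from (7) by absorbing $G_1^{21}$ against $\varrho_W$ using (6); (10) from (9) together with $E_2\varrho_V=\varrho_V$ of Lemma \ref{lem:l-r-E}~(3).

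The main obstacle will be bookkeeping the leg-numbering conventions, in particular interpreting the symbol $G_1^{21}$ appearing in parts (6)-(8) as the idempotent on $W\ox A$ obtained by transposing the map of Proposition \ref{prop:ox_R} associated to the pair $(A,W)$, and ensuring that each $R$-balance is invoked on the correct tensor slot. Once this is pinned down, every identity reduces to a mechanical verification using the already-established compatibilities, and no technical content beyond Lemmas \ref{lem:l-r-bim}, \ref{lem:l-r-E} and the $F$-$E$ relation from Proposition \ref{prop:int_def} is needed.
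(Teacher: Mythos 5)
Your decomposition into six base cases and four derived ones, and the way you obtain (3), (5), (8) and (10) from them, exactly match the paper's proof; the derived parts are fine. The base cases are where the work lies, however, and your sketch does not close the two hardest ones, (4) and (9). The identity $E^{13}(1\ox E)=E^{13}(F\ox 1)$ from the proof of Proposition \ref{prop:int_def} lives on $A\ox A\ox A$ and has the absorbing idempotent $E^{13}$ sitting to the \emph{left} of both sides; in part (4) you must instead move an $F$-type idempotent placed \emph{after} $\lambda_V^{13}$ on legs $1$--$2$ (acting through the right $R$-action on $V$ and the left $R$-action on $W$) to an $E$-type idempotent placed \emph{before} $\lambda_V^{13}$ on legs $2$--$3$, and nothing in your sketch explains how the quoted identity accomplishes that transfer, nor how it is transported from $A^{\ox 3}$ to $V\ox W\ox A$. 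What actually makes (4) work is the $R$-valued identity of Lemma \ref{lem:prep_d}~(2), $(\overline\sqcap^R\ox\sqcap^L)T_1(a\ox b)=E(\overline\sqcap^R(ab)\ox 1)$, used together with the expression of $\delta(\overline\sqcap^R(ab))$ in terms of $T_1(a\ox b)$ from Lemma 4.5~(2) of \cite{BoGTLC:wmba} and with Lemma \ref{lem:l-r-bim}~(1); you never invoke these, and without them the claim that the $E$--$F$ relation ``converts $G_1$ into $E_1$ across $\lambda_V^{13}$'' is an assertion, not a proof.

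There are also smaller misattributions in the remaining base cases. In (1) the final step is not Lemma \ref{lem:l-r-E}~(1): after Lemma \ref{lem:l-r-bim}~(2) turns $\lambda_V(v\act f_1\ox f_2)$ into $\lambda_V(v\ox f_1f_2)$ (where $F(1\ox a)=f_1\ox f_2$), what finishes the argument is coseparability --- $\delta$, equivalently $F$, is a section of the multiplication of $R$, so that $f_1f_2=a$. In (2) the mechanism is not coassociativity of $\lambda_W$ but the formula for $\delta(\sqcap^R(ab))$ in terms of $T_2(a\ox b)$ (identity (4.1) of \cite{BoGTLC:wmba}) combined with Lemma \ref{lem:l-r-bim}~(7) (and its counterpart (6) for part (7) of the statement); the parts (2)--(3) of Lemma \ref{lem:l-r-bim} that you cite act on the wrong tensor slot. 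These latter points are fixable bookkeeping, but as written the argument for (4) and (9) is missing its essential ingredient.
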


\begin{proof}
(1) For any $v\in V$, $r\in R$ and $a\in A$, 
$$
\lambda_V G_1(v\ox ra)=
\lambda_V((v\ox 1)\act \delta(r)(1\ox a))=
\lambda_V(v\ox \mu \delta(r) a)=
\lambda_V(v\ox ra).
$$
The second equality follows by Lemma \ref{lem:l-r-bim}~(2) 
and in the last equality we used that the comultiplication $\delta$ of the
coseparable coalgebra $R$ is a section of the multiplication $\mu$, see
\cite[Proposition 4.3~(3)]{BoGTLC:wmba}. Since $A$ is spanned by elements of
the form $ra$, for $r\in R$ and $a\in A$ (cf. Lemma \ref{lem:Pi_spanned}), this
proves the claim.

(2) Let us use the implicit summation index notation $T_2(a\ox
b)=:a^2\ox b^2$ and $\lambda_W(w\ox c)=:w^\lambda\ox c^\lambda$ for any
$a,b,c\in A$ and $w\in W$. For any $v\in V$,
\begin{eqnarray*}
(V\ox \lambda_W)(G_1\ox A)\hspace{-1cm}&&
(v\ox \sqcap^R(ab)\act w\ox c)\\
&=&
(V\ox \lambda_W)(v\act\sqcap^R(a^2)\ox \sqcap^R(b^2)\act w \ox c)\\
&=&
v\act \sqcap^R(a^2)\ox w^\lambda \ox \sqcap^R(b^2)c^\lambda\\
&=&
G_1^{13}(v\ox w^\lambda \ox \sqcap^R(ab)c^\lambda)\\
&=&
G_1^{13}(V\ox \lambda_W)(v\ox \sqcap^R(ab)\act w\ox c).
\end{eqnarray*}
The first and the third equalities follow by (4.1) in \cite{BoGTLC:wmba} and
in the second and the last equalities we applied Lemma
\ref{lem:l-r-bim}~(7). Since $R$ is idempotent and the left $R$-action on $W$
is surjective by Theorem \ref{thm:ff}, this proves the claim.

(3) follows immediately from (1) and (2).

(4) For any $v\in V$, $w\in W$ and $a,b,c\in A$, 
\begin{eqnarray*}
(G_1\ox A)\lambda_V^{13}(v\ox \overline\sqcap^R(ab)\act w\ox c)&=&
v^\lambda\act \overline\sqcap^R(b^1) \ox 
\overline\sqcap^R(a^1)\act w \ox c^\lambda\\
&=&
\lambda_V^{13}(v\ox \overline\sqcap^R(a^1)\act w \ox \sqcap^L(b^1)c)\\
&=&
\lambda_V^{13}(V\ox E_1)(v\ox \overline\sqcap^R(ab)\act w\ox c).
\end{eqnarray*}
where we used the notation $\lambda_V(v\ox c)=v^\lambda\ox c^\lambda$ and
$T_1(a\ox b)=a^1\ox b^1$. The first equality follows by \cite[Lemma
4.5~(2)]{BoGTLC:wmba}. The second one follows by Lemma \ref{lem:l-r-bim}~(1)
and the last one does by Lemma \ref{lem:prep_d}~(2). Since $R$ is idempotent
and the left $R$-action on $W$ is surjective by Theorem \ref{thm:ff}, this
proves the claim.

(5) is immediate by (4) and Lemma \ref{lem:l-r-E}~(1).

\noindent
The remaining assertions are proven symmetrically.
\end{proof}

Since the module tensor product is a coequalizer (of linear maps), we study
next comodule structures on such coequalizers.

\begin{lemma}\label{lem:equalizer}
Consider a regular weak multiplier bialgebra $A$. For some right $A$-comodules
$(V,\lambda,\varrho)$ and $(V',\lambda',\varrho')$, comodule maps $f,g:V\to
V'$, a vector space $P$ and a linear map $\pi:V'\to P$, assume that 
\begin{equation}\label{eq:split_eq}
\xymatrix@C=35pt{
V  \ar@<2pt>[r]^-f\ar@<-2pt>[r]_-g&
V' \ar[r]^-\pi&
P}
\end{equation}
is a coequalizer of linear maps. Then $P$ carries a unique right
$A$-comodule structure such that $\pi$ is a comodule map.
\end{lemma}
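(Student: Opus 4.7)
The plan is to transport the comodule structure from $V'$ to $P$ along $\pi$. Since the endofunctor $(-)\ox A$ on the category of vector spaces is right exact (being a left adjoint), applying it to the coequalizer \eqref{eq:split_eq} shows that $\pi\ox A:V'\ox A\to P\ox A$ is a coequalizer of $f\ox A$ and $g\ox A$; in particular $\pi\ox A$ is surjective, and the same reasoning applies to $\pi\ox A\ox A$.

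First I would verify that the composite $(\pi\ox A)\lambda':V'\ox A\to P\ox A$ coequalizes $f\ox A$ and $g\ox A$: since $f$ is a comodule map, $(\pi\ox A)\lambda'(f\ox A)=(\pi\ox A)(f\ox A)\lambda=(\pi f\ox A)\lambda$, and analogously for $g$; the equality $\pi f=\pi g$ then forces the two composites to agree. The universal property of the coequalizer produces a unique linear map $\lambda_P:P\ox A\to P\ox A$ with $\lambda_P(\pi\ox A)=(\pi\ox A)\lambda'$. A symmetric argument yields a unique $\varrho_P:P\ox A\to P\ox A$ with $\varrho_P(\pi\ox A)=(\pi\ox A)\varrho'$.

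The comodule axioms for $(P,\lambda_P,\varrho_P)$ are then checked by precomposing each required identity with a suitable tensor power of $\pi$ (which is epi, as noted above), using the defining relations of $\lambda_P$ and $\varrho_P$ to pull the $\pi$'s to the outside, and reducing to the corresponding identity of $(V',\lambda',\varrho')$. For the compatibility \eqref{eq:comp}, for any $v'\in V'$ and $a,b\in A$ one computes
$$
(1\ox a)\lambda_P(\pi(v')\ox b)=(1\ox a)(\pi\ox A)\lambda'(v'\ox b)=(\pi\ox A)\varrho'(v'\ox a)(1\ox b)=\varrho_P(\pi(v')\ox a)(1\ox b),
$$
using \eqref{eq:comp} for $V'$ in the middle step; surjectivity of $\pi$ then gives \eqref{eq:comp} on $P$. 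The normalization \eqref{eq:l-i-norm} and coassociativity \eqref{eq:l-coass} are obtained in exactly the same fashion, this time precomposing the required equalities with $\pi\ox A\ox A$ and invoking the corresponding axioms for $\lambda'$.

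Uniqueness is automatic: any right $A$-comodule structure $(\lambda_P,\varrho_P)$ on $P$ making $\pi$ a comodule map must satisfy $\lambda_P(\pi\ox A)=(\pi\ox A)\lambda'$ and $\varrho_P(\pi\ox A)=(\pi\ox A)\varrho'$, and since $\pi\ox A$ is epi these relations pin down $\lambda_P$ and $\varrho_P$. No real obstacle arises; the only point demanding attention is the right exactness of $(-)\ox A$, which is what lets us check identities in $P\ox A$ and $P\ox A\ox A$ after precomposition with $\pi\ox A$ and $\pi\ox A\ox A$ respectively.
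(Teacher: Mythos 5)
Your proposal is correct and follows essentially the same route as the paper: both define $\lambda_P$ and $\varrho_P$ via the universal property of the coequalizer $\pi\ox A$ (which exists because $(-)\ox A$ preserves coequalizers), using the comodule-map property of $f$ and $g$ to see that $(\pi\ox A)\lambda'$ and $(\pi\ox A)\varrho'$ coequalize the pair, and then verify \eqref{eq:comp}, \eqref{eq:l-i-norm} and \eqref{eq:l-coass} by precomposing with the surjections $\pi\ox A$ and $\pi\ox A\ox A$ and invoking the corresponding identities for $V'$. No gaps.
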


\begin{proof}
Since $f$ and $g$ are comodule maps, the diagrams
$$
\xymatrix@C=30pt{
V\ox A \ar@<2pt>[r]^-{f \ox A}\ar@<-2pt>[r]_-{g \ox A}
\ar[d]_-{\lambda}&
V' \ox A \ar[d]^-{\lambda'} \ar[r]^-{\pi \ox A}&
P \ox A\ar@{-->}[d]^-{\lambda_P}&
V\ox A \ar@<2pt>[r]^-{f \ox A}\ar@<-2pt>[r]_-{g \ox A}
\ar[d]_-{\varrho}&
V' \ox A \ar[d]^-{\varrho'}\ar[r]^-{\pi \ox A}&
P \ox A\ar@{-->}[d]^-{\varrho_P} \\
V \ox A \ar@<2pt>[r]^-{f \ox A}\ar@<-2pt>[r]_-{g \ox A}&
V' \ox A \ar[r]_-{\pi \ox A}&
P\ox A &
V \ox A \ar@<2pt>[r]^-{f \ox A}\ar@<-2pt>[r]_-{g \ox A}&
V' \ox A \ar[r]_-{\pi \ox A}&
P\ox A}
$$
serially commute (meaning that they commute with either
simultaneous choice of the upper or lower ones of the parallel arrows). 
Since \eqref{eq:split_eq} is a coequalizer by assumption, 
also the top rows are coequalizers. Hence by their universality, there exist
unique morphisms $\lambda_P$ and $\varrho_P$ making the respective diagrams
commutative. Let us see that they define a comodule
$(P,\lambda_P,\varrho_P)$. For any $v'\in V'$ and $a,b\in A$, 
\begin{eqnarray*}
(1\ox a)\lambda_P(\pi(v') \ox b)&=&
(\pi\ox A)[(1\ox a)\lambda'(v'\ox b)]\\
&=&
(\pi\ox A)[\varrho'(v'\ox a)(1\ox b)]=
\varrho_P(\pi(v') \ox a)(1\ox b).
\end{eqnarray*}
In the second equality we used that \eqref{eq:comp} holds for $V'$. Since
$\pi\ox A$ is surjective, this proves that \eqref{eq:comp} holds for
$P$. Since also $\pi \ox A \ox A$ is surjective, the normalization condition
\eqref{eq:l-i-norm} on $P$ follows by 
\begin{eqnarray*}
(\lambda_P\ox A)\lambda_P^{13}(P\ox E_1)(\pi\ox A\ox A)&=&
(\pi\ox A\ox A)(\lambda'\ox A)\lambda^{\prime 13}(V'\ox E_1)\\
&=&(\pi\ox A\ox A)(\lambda'\ox A)\lambda^{\prime 13}\\
&=&(\lambda_P\ox A)\lambda_P^{13}(\pi\ox A\ox A),
\end{eqnarray*}
where in the second equality we used that \eqref{eq:l-i-norm} holds for $V'$. 
Similarly, the coassociativity condition \eqref{eq:l-coass} follows by
\begin{eqnarray*}
(\lambda_P\ox A)\lambda_P^{13}(P\ox T_1)(\pi\ox A\ox A)&=&
(\pi\ox A\ox A)(\lambda'\ox A)\lambda^{\prime 13}(V'\ox T_1)\\
&=&(\pi\ox A\ox A)(V'\ox T_1)(\lambda'\ox A)\\
&=&(P\ox T_1)(\lambda_P\ox A)(\pi\ox A\ox A),
\end{eqnarray*}
where in the second equality we used that \eqref{eq:l-coass} holds for $V'$.
\end{proof}

\begin{proposition}\label{prop:product}
Consider a regular weak multiplier bialgebra $A$ with right full
comultiplication. For any full right $A$-comodules $V$ and $W$, there is a
unique right $A$-comodule structure on $V\ox_R W$ such that the canonical
epimorphism $\pi_{V,W}:V\ox W \to V \ox_R W$ is a morphism of comodules from
the comodule in Lemma \ref{lem:k_product}.
\end{proposition}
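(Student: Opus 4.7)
The plan is to apply Lemma \ref{lem:equalizer} to a coequalizer diagram exhibiting $\pi_{V,W}$ as the coequalizer of two comodule maps, so that the universal property transports the comodule structure from Lemma \ref{lem:k_product} along $\pi_{V,W}$.

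First I would exploit the description of $V\ox_R W$ in Proposition \ref{prop:ox_R}. Since $G_1:V\ox W\to V\ox W$ is an idempotent linear map whose image is $V\ox_R W$, we obtain a splitting
$$
\xymatrix@C=35pt{V\ox W\ar@{->>}[r]^-{\pi_{V,W}}&V\ox_R W\ar@{^{(}->}[r]^-{\iota_{V,W}}&V\ox W}
$$
with $\pi_{V,W}\iota_{V,W}=\id$ and $\iota_{V,W}\pi_{V,W}=G_1$. In particular $\pi_{V,W}G_1=\pi_{V,W}=\pi_{V,W}\id_{V\ox W}$, and universality of split coequalizers yields that
$$
\xymatrix@C=35pt{V\ox W\ar@<2pt>[r]^-{G_1}\ar@<-2pt>[r]_-{\id_{V\ox W}}&V\ox W\ar[r]^-{\pi_{V,W}}&V\ox_R W}
$$
is a coequalizer of linear maps.

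Next I would verify that $G_1$ is an endomorphism of the right $A$-comodule $(V\ox W, \lambda_V^{13}(V\ox \lambda_W), (V\ox\varrho_W)\varrho_V^{13})$ from Lemma \ref{lem:k_product}. In view of Definition \ref{def:comod_mor}, this amounts to checking
$$
\lambda_V^{13}(V\ox \lambda_W)(G_1\ox A)=(G_1\ox A)\lambda_V^{13}(V\ox \lambda_W)
$$
together with the analogous identity for the $\varrho$-coaction. By parts (3) and (5) of Lemma \ref{lem:l-r-Theta}, both sides of the first equality coincide with $\lambda_V^{13}(V\ox \lambda_W)$; by parts (8) and (10) of the same lemma, both sides of the second equality coincide with $(V\ox \varrho_W)\varrho_V^{13}$. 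Hence $G_1$ is indeed a comodule endomorphism, and $\id_{V\ox W}$ trivially is one as well.

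Finally, Lemma \ref{lem:equalizer} applies to the coequalizer diagram above with the parallel pair $G_1, \id_{V\ox W}$. It produces a unique right $A$-comodule structure on $V\ox_R W$ rendering $\pi_{V,W}$ a morphism of comodules, which is exactly the assertion. The only non-routine step is identifying $G_1$ as a comodule endomorphism, but this is already packaged in Lemma \ref{lem:l-r-Theta}; no further calculation is needed.
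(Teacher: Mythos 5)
Your proposal is correct and follows essentially the same route as the paper: realize $\pi_{V,W}$ as the (split) coequalizer of the idempotent $G_1$ and the identity via Proposition \ref{prop:ox_R}, identify $G_1$ as a comodule endomorphism using Lemma \ref{lem:l-r-Theta}, and conclude by Lemma \ref{lem:equalizer}. The only cosmetic difference is that you verify both the $\lambda$- and $\varrho$-conditions for $G_1$ being a comodule map, whereas by Lemma \ref{lem:comod_mor} checking one (as the paper does, via parts (3) and (5)) suffices.
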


\begin{proof}
By Proposition \ref{prop:ox_R}, $\pi_{V,W}$ is split by the $R$-bimodule map 
$$
V\ox_R W\cong \mathsf{Im}(G_1)\to V\ox W,\qquad
v\ox_R w \mapsto G_1(v\ox w).
$$
Since $G_1$ is idempotent,
$$
\xymatrix@C=35pt{
V\ox W  \ar@<2pt>[r]^-{G_1}\ar@<-2pt>[r]_-{V\ox W}&
V\ox W \ar[r]^-{\pi_{V,W}}&
V\ox_R W}
$$
is a (split) coequalizer of linear maps. Moreover, by Lemma
\ref{lem:l-r-Theta}~(3) and (5), $G_1$ is a comodule map $V\ox W\to V\ox W$ 
for the comodule $V\ox W$ in Lemma \ref{lem:k_product}.
Then we conclude by Lemma \ref{lem:equalizer}. 
\end{proof}

Our next aim is to prove that the product of full comodules in Proposition
\ref{prop:product} is a full comodule again. This starts with the following. 

\begin{lemma}\label{lem:l-r-eps}
Let $A$ be a regular weak multiplier bialgebra with right full
comultiplication. For any full right $A$-comodules $V$ and $W$, consider the
product $A$-comodule $(V\ox_R W,\lambda_{V\ox_R W},\varrho_{V\ox_R W})$ in
Proposition \ref{prop:product}. For all $v\ox_R w\in V\ox_R W$ and $a\in A$,
the following assertions hold. 
\begin{itemize}
\item[{(1)}] $(V\ox_R W \ox \epsilon)\varrho_{V\ox_R W}(v\ox_R w\ox a)=
\sqcap^R(a)\act v\ox_R w$.
\item[{(2)}] $(V\ox_R W \ox \epsilon)\lambda_{V\ox_R W}(v\ox_R w\ox a)=
v\ox_R w\act \overline \sqcap^R(a)$.
\end{itemize}
\end{lemma}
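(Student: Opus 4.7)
The strategy is to push the computation down through the canonical comodule epimorphism $\pi_{V,W}:V\ox W\to V\ox_R W$ of Proposition \ref{prop:product}, and then recognize the result via the interpretation of the $R$-actions in Theorem \ref{thm:ff} together with Lemma \ref{lem:l-r-Pi_ac}.

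First, I would record the key observation: since $\pi_{V,W}$ is a morphism of right $A$-comodules from the comodule in Lemma \ref{lem:k_product} to $V\ox_R W$, the defining relation of $\varrho_{V\ox_R W}$ is
\[
\varrho_{V\ox_R W}(\pi_{V,W}(v\ox w)\ox a)=
(\pi_{V,W}\ox A)(V\ox \varrho_W)\varrho_V^{13}(v\ox w\ox a),
\]
and symmetrically for $\lambda_{V\ox_R W}$ with $\lambda_V^{13}(V\ox \lambda_W)$. Applying $V\ox_R W\ox \epsilon$ to both sides reduces the problem to evaluating $(\pi_{V,W}\ox \epsilon)(V\ox \varrho_W)\varrho_V^{13}$ and $(\pi_{V,W}\ox \epsilon)\lambda_V^{13}(V\ox \lambda_W)$ on $v\ox w\ox a$.

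For part (1), using the implicit index notation $\varrho_V(v\ox a)=v^\varrho\ox a^\varrho$, one computes
\[
(\pi_{V,W}\ox \epsilon)(V\ox \varrho_W)\varrho_V^{13}(v\ox w\ox a)=
\pi_{V,W}\bigl(v^\varrho\ox (W\ox \epsilon)\varrho_W(w\ox a^\varrho)\bigr).
\]
By the definition of the left $R$-action on the full comodule $W$ in Theorem \ref{thm:ff}, the inner expression equals $\sqcap^R(a^\varrho)\bl w$; hence the whole expression equals $v^\varrho\br \sqcap^R(a^\varrho)\ox_R w$ after passing through $\pi_{V,W}$ (using that the tensor product is over $R$). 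Now Lemma \ref{lem:l-r-Pi_ac}~(1) identifies $v^\varrho\br \sqcap^R(a^\varrho)=\sqcap^R(a)\bl v$, which gives $\sqcap^R(a)\act (v\ox_R w)$ as desired. Part (2) is completely symmetric: writing $\lambda_W(w\ox a)=w^\lambda\ox a^\lambda$, one reduces $(\pi_{V,W}\ox \epsilon)\lambda_V^{13}(V\ox \lambda_W)(v\ox w\ox a)$ to $\pi_{V,W}((V\ox\epsilon)\lambda_V(v\ox a^\lambda)\ox w^\lambda)$, identifies the inner piece with $v\br \overline\sqcap^R(a^\lambda)$ by Theorem \ref{thm:ff}, and concludes via Lemma \ref{lem:l-r-Pi_ac}~(2).

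The only potential subtlety is the well-definedness across $\ox_R$ and the compatibility of the bracketing of the index notation --- but these are immediate since $\pi_{V,W}$ is $R$-bilinear and the definitions of the $R$-actions on $V$ and $W$ from Theorem \ref{thm:ff} are precisely what Lemma \ref{lem:l-r-Pi_ac} requires. No heavy calculation with the axioms of weak multiplier bialgebra is needed at this stage; everything has been prepared by the earlier results.
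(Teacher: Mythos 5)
Your proposal is correct and follows essentially the same route as the paper: push the computation through the comodule epimorphism $\pi_{V,W}$, recognize $(W\ox\epsilon)\varrho_W(w\ox a^\varrho)$ as the left $R$-action $\sqcap^R(a^\varrho)\bl w$ from Theorem \ref{thm:ff}, slide the factor across $\ox_R$, and conclude with Lemma \ref{lem:l-r-Pi_ac}~(1) (and its part (2) for the symmetric statement). No gaps.
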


\begin{proof}
(1) The canonical epimorphism $\pi_{V,W}:V\ox W\to V\ox_R W$ is a
morphism of comodules by Proposition \ref{prop:product}. Hence for any $v\in V$,
$w\in W$, $a\in A$ and $v\ox_R w:=\pi_{V,W}(v\ox w)$, 
\begin{eqnarray*}
(V\ox_R W \ox \epsilon)\hspace{-1cm}&&
\varrho_{V\ox_R W}(v\ox_R w\ox a)\\
&=&
\pi_{V,W}(V\ox W \ox \epsilon)(V\ox \varrho_W)\varrho_V^{13}
(v\ox w\ox a)\\
&=&
v^\varrho \ox_R \sqcap^R(a^\varrho)\act w=
v^\varrho \act \sqcap^R(a^\varrho) \ox_R w=
\sqcap^R(a)\act v\ox_R w,
\end{eqnarray*}
where the implicit summation index notation $\varrho_V(v\ox a)=:v^\varrho \ox
a^\varrho$ is used and the last equality holds by Lemma
\ref{lem:l-r-Pi_ac}~(1). Part (2) is proven symmetrically.
\end{proof}

\begin{proposition}\label{prop:product_full}
Consider a regular weak multiplier bialgebra $A$ with right full
comultiplication. For any full right $A$-comodules $V$ and $W$, the product
$A$-comodule $(V\ox_R W,\lambda_{V\ox_R W},\varrho_{V\ox_R W})$ in
Proposition \ref{prop:product} is full.
\end{proposition}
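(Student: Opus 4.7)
The plan is to verify condition (b) of Lemma \ref{lem:full} for the product comodule $(V\ox_R W,\lambda_{V\ox_R W},\varrho_{V\ox_R W})$; that is, to show that every element of $V\ox_R W$ lies in
$$
\langle (V\ox_R W\ox \omega)\varrho_{V\ox_R W}(x\ox a)\ |\ x\in V\ox_R W,\ a\in A,\ \omega\in \mathsf{Lin}(A,k)\rangle.
$$

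The key observation is Lemma \ref{lem:l-r-eps}~(1), which tells us that for every $v\in V$, $w\in W$ and $a\in A$,
$$
\sqcap^R(a)\act v\ox_R w=(V\ox_R W\ox \epsilon)\varrho_{V\ox_R W}(v\ox_R w\ox a).
$$
So every element of the form $\sqcap^R(a)\act v\ox_R w$ already sits inside the candidate spanning set (take $\omega=\epsilon$).

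Next I would use the fullness hypothesis on $V$: by Theorem \ref{thm:ff}, the left $R$-action on $V$ is surjective, hence every $v\in V$ is a finite sum $\sum_i \sqcap^R(a_i)\act v_i$ for some $a_i\in A$, $v_i\in V$. Tensoring over $R$ with any $w\in W$ and invoking the previous display elementwise shows that every simple tensor $v\ox_R w\in V\ox_R W$, and therefore every element of $V\ox_R W$, lies in the subspace displayed above. This verifies condition (b) of Lemma \ref{lem:full}, so $V\ox_R W$ is a full right $A$-comodule.

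There is no real obstacle here: all the work has been absorbed into Lemma \ref{lem:l-r-eps} and Theorem \ref{thm:ff}. The only mild subtlety is to recognize that we need to use fullness of the \emph{left} factor $V$ (expressed as surjectivity of the left $R$-action on $V$) in conjunction with part (1) of Lemma \ref{lem:l-r-eps}; symmetrically one could instead use fullness of $W$ together with part (2), verifying condition (a) of Lemma \ref{lem:full} via $v\ox_R w\act \overline\sqcap^R(a)=(V\ox_R W\ox \epsilon)\lambda_{V\ox_R W}(v\ox_R w\ox a)$.
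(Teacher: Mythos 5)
Your proposal is correct and follows essentially the same route as the paper: the paper also combines Lemma \ref{lem:l-r-eps}~(1) with the surjectivity of the left $R$-action on $V$ (from Theorem \ref{thm:ff}, using fullness of $V$) to exhibit $V\ox_R W$ inside the span appearing in condition (b) of Lemma \ref{lem:full}. Your closing remark about the symmetric alternative via part (2) and condition (a) is a valid observation, though the paper does not spell it out.
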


\begin{proof}
Since $V$ is a full $A$-comodule, its left $R$-action is surjective by
Theorem \ref{thm:ff}. So we conclude by Lemma \ref{lem:l-r-eps}~(1) that
\begin{eqnarray*}
V\ox_R W &\subseteq&
\langle 
(V\ox_R W \ox \epsilon)\varrho_{V\ox_R W}(v\ox_R w\ox a)\ |\ 
v\ox_R w\in V\ox_R W,\ a\in A \rangle\\
&\subseteq&
\langle 
(V \raisebox{-6pt}{$\,\stackrel{\displaystyle\ox} {_R}\,$} W \ox \omega)
\varrho_{V\ox_R W}(
v \raisebox{-6pt}{$\,\stackrel{\displaystyle\ox} {_R}\,$}w \ox a)\ |\ 
v\raisebox{-6pt}{$\,\stackrel{\displaystyle\ox} {_R}\,$}w\in 
V \raisebox{-6pt}{$\,\stackrel{\displaystyle\ox} {_R}\,$} W,\ a\in A,\ 
\omega \in \mathsf{Lin}(A,k) \rangle\\
&\subseteq& V\ox_R W
\end{eqnarray*}
so that $V\ox_R W$ is a full $A$-comodule.
\end{proof}

It follows immediately from Lemma \ref{lem:l-r-eps} that the left and right
$R$-actions on the full $A$-comodule $V\ox_R W$ in Proposition
\ref{prop:product_full} are
$$
\sqcap^R(a)\act (v\ox_R w)=
\sqcap^R(a)\act v\ox_R w
\qquad \textrm{and}\qquad
(v\ox_R w)\act \overline \sqcap^R(a)=
v\ox_R w\act \overline \sqcap^R(a).
$$

\begin{remark}\label{rem:mono-epi}
Consider right comodules $(V,\lambda_V,\varrho_V)$, $(P,\lambda,\varrho)$
and $(P',\lambda',\varrho')$ over a regular weak multiplier bialgebra
$A$. Note that for a surjective homomorphism $\pi:V\to P$ of comodules and a 
linear map $f:P\to P'$, $f\pi$ is a morphism of comodules if and only if
$f$ is so. Indeed, the composite of comodule maps $\pi$ and $f$ is
evidently a morphism of comodules and the converse follows by
$$
\lambda'(f\ox A)(\pi \ox A)=
(f\ox A)(\pi \ox A)\lambda_V=
(f\ox A)\lambda(\pi \ox A)
$$
and the surjectivity of $\pi \ox A$.
\end{remark}

We are ready to prove the main result of this section.

\begin{theorem}\label{thm:monoidal}
For a regular weak multiplier bialgebra $A$ with right full
comultiplication, the category $M^{(A)}$ of full right $A$-comodules is
monoidal and the functor $M^{(A)}\to {}_R M_R$ in Theorem \ref{thm:ff} is
strict monoidal. 
\end{theorem}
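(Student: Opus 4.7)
The plan is to transport the existing monoidal structure of ${}_R M_R$ to $M^{(A)}$ along the forgetful functor $U:M^{(A)}\to {}_R M_R$ of Theorem \ref{thm:ff}, so that $U$ becomes strict monoidal by construction.

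First I would define the tensor product bifunctor on $M^{(A)}$. On objects, set $V\ox W:=V\ox_R W$ with the $A$-comodule structure of Proposition \ref{prop:product}, which is a full comodule by Proposition \ref{prop:product_full} and whose underlying $R$-bimodule is precisely the tensor product in ${}_R M_R$. For morphisms $f:V\to V'$ and $g:W\to W'$ in $M^{(A)}$, Theorem \ref{thm:ff} gives that $f$ and $g$ are $R$-bimodule maps, so $f\ox_R g:V\ox_R W\to V'\ox_R W'$ is well defined in ${}_R M_R$. To see that it is a comodule map I apply Remark \ref{rem:mono-epi}: since $\pi_{V,W}$ is a surjective comodule map (Proposition \ref{prop:product}), it suffices to show that $(f\ox_R g)\pi_{V,W}=\pi_{V',W'}(f\ox g)$ is a comodule map $V\ox W\to V'\ox_R W'$. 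The map $f\ox g$ is immediately a comodule map between the tensor-product comodules of Lemma \ref{lem:k_product}, and $\pi_{V',W'}$ is a comodule map by construction; hence so is their composite.

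For the monoidal unit I would take the full comodule $R$ from Example \ref{ex:R_full}, whose image under $U$ is, by Example \ref{ex:R_bim}, exactly the monoidal unit of ${}_R M_R$. The associator $(V\ox_R W)\ox_R X\to V\ox_R (W\ox_R X)$ and the unitors $R\ox_R V\to V$, $V\ox_R R\to V$ of ${}_R M_R$ are then to be promoted to morphisms of $M^{(A)}$; that is, I must check that they are $A$-comodule maps. By Remark \ref{rem:mono-epi}, each such check can be pulled back along the appropriate surjective comodule map $\pi$ to a statement about the tensor-product comodule structures of Lemma \ref{lem:k_product}. For instance, verifying the right unitor amounts to showing that $V\ox R\to V$, $v\ox r\mapsto v\act r$, intertwines $\lambda_V^{13}(V\ox \lambda_R)$ with $\lambda_V$ (and analogously for $\varrho$), which follows from the bimodule-compatibility identities of Lemma \ref{lem:l-r-bim} together with the explicit form of $\lambda_R$ and $\varrho_R$ in Example \ref{ex:R_com}. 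The left unitor is symmetric, and the associator reduces, after unfolding $\lambda_{V\ox_R W}$ through $\pi_{V,W}$, to the coassociativity identities \eqref{eq:l-coass} and \eqref{eq:r-coass} applied iteratively.

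Once these constraints are shown to lie in $M^{(A)}$, the pentagon and triangle axioms hold in $M^{(A)}$ because they already hold in ${}_R M_R$ and $U$ is faithful. Strict monoidality of $U$ is then immediate: on objects, $U$ sends the tensor product and unit of $M^{(A)}$ to the tensor product and unit of ${}_R M_R$ on the nose, and it acts as the identity on morphisms. The main obstacle I expect is the verification that the unitors are comodule maps: this requires opening up the coaction on $V\ox_R R$ (equivalently, on its preimage $V\ox R$ under $\pi_{V,R}$) and using the identities of Lemma \ref{lem:l-r-bim} to move an $R$-multiplication past the coactions of $V$. The associator, by contrast, is essentially bookkeeping once $\pi$ is used to reduce it to coassociativity.
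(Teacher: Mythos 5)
Your proposal is correct and follows essentially the same route as the paper: define $V\ox_R W$ via Propositions \ref{prop:product} and \ref{prop:product_full}, use Remark \ref{rem:mono-epi} to push comodule-map verifications through the surjective projections $\pi$, and check the unitors by an explicit computation with Lemma \ref{lem:l-r-bim}. The only (harmless) difference is at the associator, where the paper avoids any coassociativity computation entirely: since both composites in \eqref{eq:a_pi} are already surjective comodule maps by Proposition \ref{prop:product}, Remark \ref{rem:mono-epi} gives the conclusion immediately.
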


\begin{proof}
The tensor product $f\ox g:V\ox W\to V'\ox W'$ of any $A$-comodule maps
$f:V\to V'$ and $g:W\to W'$ is evidently an $A$-comodule map between the
comodules as in Lemma \ref{lem:k_product}. Since $\pi_{V,W}$ and $\pi_{V',W'}$
are surjective comodule maps by Proposition \ref{prop:product}, also
$\pi_{V',W'}(f\ox g)=(f\ox_R g) \pi_{V,W}$ is a comodule map. Thus $f\ox_R
g:V\ox_R W\to V'\ox_R W'$ is a comodule map by Remark
\ref{rem:mono-epi}. 

It remains to show that the unitors and the associator of ${}_R M_R$ --- if
evaluated at objects of $M^{(A)}$ --- are morphisms of $A$-comodules. 
For any full right $A$-comodule $V$ the composition of the canonical
epimorphism $\pi_{V,R}:V\ox R \to V\ox_R R$ with the right unitor
$\mathbf{r}_V:V\ox_R R\to V$ yields the right $R$-action $\nu:V\ox R \to V$ on
$V$ in Theorem \ref{thm:ff}. Let us show that $\nu=\mathbf{r}_V\pi_{V,R}$ is a
morphism of $A$-comodules. This is proven by the following computation
for any $v\in V$ and $a,b,c\in A$. (We use again the implicit summation index
notation $T_1(b\ox c)=b^1\ox c^1$.)
\begin{eqnarray*}
(\nu \ox A)&&\hspace{-1cm}\lambda_V^{13}(V\ox \lambda_R)
(v\ox \overline \sqcap^R(a)\ox bc)\\
&=&(\nu \ox A)\lambda_V^{13}(v\ox (1\ox \overline\sqcap^R(a))E(1\ox bc))
=\lambda_V(v\ox \overline\sqcap^R(a)c^1)\act (\overline\sqcap^R(b^1)\ox 1)\\
&=&\lambda_V(v\ox \sqcap^L(b^1)\overline\sqcap^R(a)c^1)
=\lambda_V(v\ox \overline\sqcap^R(a)\sqcap^L(b^1)c^1)\\
&=&\lambda_V(v\ox \overline\sqcap^R(a)bc)
=\lambda_V(\nu\ox A)(v\ox \overline\sqcap^R(a)\ox bc).
\end{eqnarray*}
The second equality follows by identity (2.3) in \cite{BoGTLC:wmba}. The
third and the last equalities hold by parts (1) and (2) of Lemma
\ref{lem:l-r-bim}, respectively. In the fourth equality we used (3.9) in
\cite{BoGTLC:wmba} and in the fifth one we applied Lemma 3.7~(3) in
\cite{BoGTLC:wmba}. We know from Proposition \ref{prop:product} that
$\pi_{V,R}$ is a surjective morphism of $A$-comodules; hence 
$\mathbf{r}_V$ is a comodule map by Remark \ref{rem:mono-epi}. The left
unitor is treated symmetrically.

For any full $A$-comodules $V,W$ and $Z$, the associator
$\mathbf{a}_{V,W,Z}:(V\ox_R W)\ox_R Z\to V\ox_R (W\ox_R Z)$ satisfies by
construction 
\begin{equation}\label{eq:a_pi}
\mathbf{a}_{V,W,Z}\pi_{V\ox_R W,Z}  (\pi_{V,W}\ox Z)=
\pi_{V,W\ox_R Z} (V\ox \pi_{W,Z})
\end{equation}
(where we omit explicitly denoting the associator in the monoidal category of
vector spaces). 
By Proposition \ref{prop:product} both $\pi_{V\ox_R W,Z}  (\pi_{V,W}\ox Z)$
and $\pi_{V,W\ox_R Z} (V\ox \pi_{W,Z})$ are surjective morphisms of
$A$-comodules. Hence $\mathbf{a}_{V,W,Z}$ is a morphism of $A$-comodules by
Remark \ref{rem:mono-epi}. 
\end{proof}

\section{Antipode and dual comodules}
\label{sec:dual}

In this section we deal with a regular weak multiplier bialgebra with a left
and right full comultiplication. Assuming that it possesses an antipode (in
the sense of \cite[Theorem 6.8]{BoGTLC:wmba}), we show that finite dimensional
full right $A$-comodules possess duals in $M^{(A)}$.

To begin with, let $A$ be a regular weak multiplier bialgebra over a field
$k$. As a first step, we equip the $k$-dual $V^*:=\mathsf{Lin}(V,k)$ of any
finite dimensional right $A$-comodule $V$ with the structure of a left
$A$-comodule.

\begin{proposition}\label{prop:V^*_left}
Let $A$ be a regular weak multiplier bialgebra over a field $k$ and let
$(V,\lambda,\varrho)$ be a finite dimensional right $A$-comodule. Using a dual
basis basis $\{a_i\}$ in $A$ and $\{\alpha_i\}$ in $A^*:=\mathsf{Lin}(A,k)$,
the dual vector space $V^*:=\mathsf{Lin}(V,k)$ carries a left $A$-comodule
structure 
\begin{eqnarray*}
&\lambda^*:A\ox V^* \to A\ox V^*,\qquad
&b\ox \varphi\mapsto \sum_i a_i\ox (\varphi\ox \alpha_i)\lambda(-\ox b)\\
&\varrho^*:A\ox V^* \to A\ox V^*,\qquad
&b\ox \varphi\mapsto \sum_i a_i\ox 
(\varphi\ox \alpha_i)\varrho(-\ox b).
\end{eqnarray*}
\end{proposition}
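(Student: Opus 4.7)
The plan is to exploit the finite-dimensionality of $V$ to reinterpret $\lambda^*$ and $\varrho^*$ in a basis-free way, and then reduce each left-comodule axiom for $(V^*,\lambda^*,\varrho^*)$ to the corresponding right-comodule axiom for $(V,\lambda,\varrho)$ by pairing with evaluation functionals on $V^*$. Since $V$ is finite-dimensional, the canonical map $A\ox V^*\to \mathsf{Lin}(V,A)$, $c\ox \psi\mapsto (v\mapsto \psi(v)c)$, is a linear isomorphism, so any element of $A\ox V^*$ is uniquely determined by its images under $A\ox \mathsf{ev}_v$, where $\mathsf{ev}_v:V^*\to k$ denotes evaluation at $v$ and $v$ ranges over $V$. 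Unpacking the dual basis sum in the statement one checks that $\lambda^*(b\ox \varphi)$ is the unique element of $A\ox V^*$ satisfying
$$
(A\ox \mathsf{ev}_v)\lambda^*(b\ox \varphi)=(\varphi\ox A)\lambda(v\ox b)\qquad \forall v\in V,
$$
and similarly $(A\ox \mathsf{ev}_v)\varrho^*(b\ox \varphi)=(\varphi\ox A)\varrho(v\ox b)$. This makes the definitions manifestly independent of the choice of dual basis and shows that only finitely many terms of the nominal sum contribute.

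Given this characterisation, the compatibility axiom \eqref{eq:left_comp} follows at once by applying $A\ox \mathsf{ev}_v$ to its two sides. Using that left multiplication by $a\ox 1$ and right multiplication by $b\ox 1$ on $A\ox V^*$ commute with $A\ox \mathsf{ev}_v$, the desired equality reduces to $a\cdot (\varphi\ox A)\lambda(v\ox b)=(\varphi\ox A)\varrho(v\ox a)\cdot b$, which is precisely $\varphi\ox A$ applied to \eqref{eq:comp} of the right comodule $(V,\lambda,\varrho)$. The uniqueness statement then yields \eqref{eq:left_comp}.

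For the normalisation \eqref{eq:left_norm} and the coassociativity \eqref{eq:left_coass}, I would unpack the characterisation twice to obtain, for any $a,b\in A$, $\varphi\in V^*$ and $v\in V$, the two identities
\begin{align*}
(A\ox A\ox \mathsf{ev}_v)(A\ox \lambda^*)\lambda^{*13}(a\ox b\ox \varphi)
&=(\varphi\ox A\ox A)(\lambda\ox A)\lambda^{13}(v\ox a\ox b),\\
(A\ox A\ox \mathsf{ev}_v)(T_1\ox V^*)\lambda^{*13}(a\ox b\ox \varphi)
&=(\varphi\ox A\ox A)(V\ox T_1)(\lambda\ox A)(v\ox a\ox b).
\end{align*}
Pre-composing with $E_1\ox V^*$ (respectively with $T_1\ox V^*$) on the left-hand side of the first identity corresponds, on the right, to pre-composing the third argument of $\lambda^{13}$ with $V\ox E_1$ (respectively with $V\ox T_1$). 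Invoking \eqref{eq:l-i-norm} and \eqref{eq:l-coass} of $(V,\lambda,\varrho)$ and again appealing to the uniqueness statement then produces \eqref{eq:left_norm} and \eqref{eq:left_coass} respectively, completing the verification of the axioms in Definition~\ref{def:left_comod}.

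The main obstacle is the bookkeeping of leg positions, in particular correctly transferring $\lambda^{13}$ on the $V$-side to $\lambda^{*13}$ on the $V^*$-side through the pairing $V\ox V^*\to k$; once the characterisation is in place, every remaining step is a direct unpacking of the tensor-leg manipulations.
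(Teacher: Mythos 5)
Your proof is correct and follows essentially the same route as the paper: the key step in both is the pairing characterisation $(A\ox \mathsf{ev}_v)\lambda^*(b\ox\varphi)=(\varphi\ox A)\lambda(v\ox b)$ (the paper's identity \eqref{eq:dual_coaction}), from which compatibility, normalisation and coassociativity of $(V^*,\lambda^*,\varrho^*)$ are read off directly from the corresponding axioms \eqref{eq:comp}, \eqref{eq:l-i-norm} and \eqref{eq:l-coass} of $(V,\lambda,\varrho)$. The paper's separate finiteness argument for the dual-basis sum is subsumed in your identification $A\ox V^*\cong\mathsf{Lin}(V,A)$, which is a legitimate (and slightly cleaner) way to package the well-definedness.
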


\begin{proof}
For any given elements $v\in V$ and $b\in A$, there are only finitely many
values of the index $i$ such that $(V\ox \alpha_i)\lambda(v\ox b)\neq 0$. 
Thus since $V$ is a finite dimensional vector space, for any given element
$b\in A$ there are only finitely many indices $i$ such that $(V\ox
\alpha_i)\lambda(-\ox b)$ is a non-zero map $V\to V$. Symmetrically, there
are only finitely many indices $i$ such that $(V\ox \alpha_i)\varrho(-\ox
b)\neq 0$. This proves that the sums defining the maps $\lambda^*$ and
$\varrho^*$ contain only finitely many non-zero terms; hence they are
meaningful. 

Introduce the implicit summation index notation $\lambda(v\ox a)=v^\lambda\ox
a^\lambda$, $\varrho(v\ox a)=v^\varrho\ox a^\varrho$, $\lambda^*(a\ox
\varphi)= a^{\lambda^*}\ox \varphi^{\lambda^*}$ and $\varrho^*(a\ox
\varphi)= a^{\varrho^*}\ox \varphi^{\varrho^*}$ for any $a\in A$, $v\in V$ and
$\varphi\in V^*$. By construction,
\begin{equation}\label{eq:dual_coaction}
a^{\lambda^*}\varphi^{\lambda^*}(v)=
\varphi(v^\lambda)a^\lambda
\qquad \textrm{and}\qquad 
a^{\varrho^*}\varphi^{\varrho^*}(v)=
\varphi(v^\varrho)a^\varrho,
\end{equation}
hence 
$$
ab^{\lambda^*}\varphi^{\lambda^*}(v)=
\varphi(v^\lambda)ab^\lambda=
 \varphi(v^\varrho)a^\varrho b=
a^{\varrho^*}\varphi^{\varrho^*}(v)b.
$$
In the second equality we used the compatibility condition \eqref{eq:comp}
between $\lambda$ and $\varrho$. This proves the compatibility condition
\eqref{eq:left_comp}, that is, 
$$
(a\ox 1)\lambda^*(b\ox \varphi)=
\varrho^*(a\ox \varphi)(b\ox 1)\qquad
\forall a,b\in A,\ \varphi\in V^*.
$$
In view of \eqref{eq:dual_coaction}, the normalization condition
$$
(A\ox \lambda^*)\lambda^{*13}(E_1\ox V^*)=(A\ox \lambda^*)\lambda^{*13}
$$
on $\lambda^*$ (see \eqref{eq:left_norm}) is immediate from the normalization
condition \eqref{eq:l-i-norm} on $\lambda$ and the coassociativity condition
$$
(A\ox \lambda^*)\lambda^{*13}(T_1\ox V^*)=(T_1\ox V^*)\lambda^{*13}
$$
on $\lambda^*$ (see \eqref{eq:left_coass}) is immediate from the
coassociativity condition \eqref{eq:l-coass} on $\lambda$.
\end{proof}

\begin{proposition}\label{prop:V^*_full}
Let $A$ be a regular weak multiplier bialgebra with a right full
comultiplication. For a finite dimensional full right $A$-comodule $V$, the 
(obviously finite dimensional) left $A$-comodule $V^*$ in Proposition
\ref{prop:V^*_left} is also full. 
\end{proposition}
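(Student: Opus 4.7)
The plan is to prove fullness of $V^*$ by a duality argument. Since $V^*$ is finite dimensional, the span
\[
S := \langle (\omega \ox V^*)\lambda^*(b \ox \varphi) \mid b \in A,\ \varphi \in V^*,\ \omega \in \mathsf{Lin}(A,k) \rangle
\]
equals $V^*$ if and only if its annihilator in $V \cong V^{**}$ is zero. So my first move is to rewrite the candidate spanning elements in concrete form: using the dual basis identity $\sum_i \omega(a_i)\alpha_i = \omega$ and the definition of $\lambda^*$ in Proposition \ref{prop:V^*_left}, a short computation yields
\[
\bigl((\omega \ox V^*)\lambda^*(b \ox \varphi)\bigr)(v) = (\varphi \ox \omega)\lambda(v \ox b)
\]
for all $v \in V$. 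Hence $v \in V$ annihilates $S$ if and only if $(\varphi \ox \omega)\lambda(v \ox b) = 0$ for every $\varphi \in V^*$, $\omega \in \mathsf{Lin}(A,k)$ and $b \in A$. Since $V$ is finite dimensional, the pairing $V^* \ox \mathsf{Lin}(A,k) \to (V \ox A)^*$ separates points of $V \ox A$, so this is equivalent to the single condition $\lambda(v \ox b) = 0$ for all $b \in A$.

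Next I would translate this vanishing into a statement about the $R$-action. By Theorem \ref{thm:ff} (which applies because the comultiplication is assumed right full), the right $R$-action on the full comodule $V$ satisfies $v \br \overline{\sqcap}^R(b) = (V \ox \epsilon)\lambda(v \ox b)$. Therefore $\lambda(v \ox b) = 0$ for all $b \in A$ forces $v \br r = 0$ for every $r \in R$.

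To finish, I would invoke firmness of $V$ as a right $R$-module (also from Theorem \ref{thm:ff}) together with the existence of local units in $R$ (\cite[Theorem 4.6~(2)]{BoGTLC:wmba}). Writing $v = \sum_j v_j \br r_j$ by firmness and picking a common local unit $r \in R$ of the finitely many $r_j$, one has $v = v \br r$; combined with $v \br r = 0$ this gives $v = 0$, as required. The main conceptual step is the opening dual basis computation reducing fullness of $V^*$ to the injectivity of the family $\{\lambda(-\ox b)\}_{b \in A}$ on a single vector; everything after it is routine, so I do not anticipate any genuine obstacle.
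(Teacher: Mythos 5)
Your argument is correct, but it is organized differently from the paper's. The paper's proof is direct: since $V$ is a finite dimensional firm right $R$-module and $R$ has local units (Theorem \ref{thm:ff} and \cite[Theorem 4.6~(2)]{BoGTLC:wmba}), there is a \emph{single} element $e\in A$ with $v\br \overline{\sqcap}^R(e)=v$ for all $v\in V$, and then the computation $(\epsilon\ox V^*)\lambda^*(e\ox\varphi)=\varphi(-\br\overline{\sqcap}^R(e))=\varphi$ exhibits every functional explicitly as an element of the required span. You instead dualize: after the (correct) dual basis identity $\bigl((\omega\ox V^*)\lambda^*(b\ox\varphi)\bigr)(v)=(\varphi\ox\omega)\lambda(v\ox b)$, you reduce fullness of $V^*$ to the vanishing of the annihilator in $V\cong V^{**}$, i.e.\ to the non-degeneracy statement that $\lambda(v\ox b)=0$ for all $b\in A$ forces $v=0$, which you then deduce from the very same firmness-plus-local-units input. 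Both proofs hinge on identical ingredients and on finite dimensionality (the paper uses it to get a common local unit, you use it for $V\cong V^{**}$); the paper's version is slightly more economical and constructive, while yours isolates a reusable non-degeneracy property of $\lambda$ in its first argument. I see no gap in your reasoning.
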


\begin{proof}
By Theorem \ref{thm:ff}, $V$ is a firm right $R$-module so in particular the
action $V\ox R\to V$ is surjective. By \cite[Theorem 4.6~(2)]{BoGTLC:wmba} $R$
has local units; so for any finite set of elements in $R$ there is a common
right unit. Thus by the finite dimensionality of $V$ there is an element $e\in
A$ such that $v\act \overline\sqcap^R(e)=v$ for all $v\in V$. Then for any
$v\in V$ and $\varphi\in V^*$, 
$$
\epsilon(e^{\lambda^*})\varphi^{\lambda^*}(v)\stackrel{\eqref{eq:dual_coaction}}=
(\varphi \ox \epsilon)\lambda(v\ox e)=
\varphi(v\act \overline \sqcap^R(e))=
\varphi(v),
$$
so that $(\epsilon\ox V^*)\lambda^*(e\ox \varphi)=\varphi$. This proves 
$$
V^*\subseteq 
\langle (\epsilon\ox V^*)\lambda^*(e\ox \varphi) | \varphi\in V^* \rangle
\subseteq 
\langle (\omega \ox V^*)\lambda^*(a\ox \varphi) | 
\varphi\in V^*,\ \omega\in A^*,\ a\in A \rangle
\subseteq 
V^*
$$
hence the fullness of $V^*$.
\end{proof}

The notion of {\em antipode} $S:A\to \M(A)$ on a regular weak multiplier
bialgebra $A$ was introduced in \cite[Section 6]{BoGTLC:wmba}, see Section
\ref{sec:antipode}. In what follows, we study the bearing of the existence of
an antipode on the relation between left and right comodules. 

\begin{theorem}\label{thm:S_functor}
Let $A$ be a regular weak multiplier bialgebra with a left and right full
comultiplication, possessing an antipode $S$. Any left $A$-comodule $V$ (with
structure maps $\lambda,\varrho:A\ox V \to A\ox V$) is a right $A$-comodule as
well with the structure maps 
$$
\begin{array}{llll}
\lambda^S:
&V\ox A \to V\ox A,\quad
&v\ox S(b)a
&\mapsto 
((V\ox S)\varrho^{21}(v\ox b))(1\ox a)\\
\varrho^S:
&V\ox A \to V\ox A,\quad
&v\ox aS(b)
&\mapsto 
(1\ox a)((V\ox S)\lambda^{21}(v\ox b)).
\end{array}
$$
This is the object map of a functor --- acting on the morphisms as the
identity map --- from the category of left $A$-comodules to the category of
right $A$-comodules.
\end{theorem}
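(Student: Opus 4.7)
The plan is to verify, in order: (i)~well-definedness of the formulas defining $\lambda^S$ and $\varrho^S$, (ii)~the compatibility \eqref{eq:comp} between them, (iii)~the normalization and coassociativity axioms of a right $A$-comodule, and (iv)~functoriality. The central ingredients are the anti-multiplicativity of $S$ (\cite[Theorem~6.12]{BoGTLC:wmba}), its anti-comultiplicativity in the multiplier sense $\overline{\Delta}\,S(a)=(\overline{S\ox S})\Delta(a)^{21}$ (\cite[Corollary~6.16]{BoGTLC:wmba}), and the non-degeneracy of $S$ furnished by Lemma \ref{lem:S_nd}.

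For (i), note that by Lemma \ref{lem:Pi_spanned}~(6) together with the identity $\mu(S\ox A)T_1=\mu(\sqcap^R\ox A)$ from \eqref{eq:S_id}, every element of $A$ is a linear combination of elements of the form $S(b)a$; symmetrically, left fullness yields a spanning by elements $aS(b)$. Writing $\varrho(b\ox v)=:b^{[\varrho]}\ox v^{[\varrho]}$ and $\lambda(c\ox v)=:c^{[\lambda]}\ox v^{[\lambda]}$ with implicit summation, I would show that whenever $\sum_j S(b_j)a_j=0$, the element $\sum_j v_j^{[\varrho]}\ox S(b_j^{[\varrho]})a_j\in V\ox A$ vanishes for every $v\in V$. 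Left-multiplying its second factor by $S(c)$ and using anti-multiplicativity of $S$ produces $\sum_j v_j^{[\varrho]}\ox S(b_j^{[\varrho]}c)a_j$; the compatibility \eqref{eq:left_comp} applied with $(a,b):=(b_j,c)$ gives $b_j^{[\varrho]}c\ox v_j^{[\varrho]}=b_jc^{[\lambda]}\ox v^{[\lambda]}$, so the expression collapses to $v^{[\lambda]}\ox S(c^{[\lambda]})\bigl(\sum_jS(b_j)a_j\bigr)=0$. Since this holds for every $c$, Lemma \ref{lem:S_nd}(a')$\Leftrightarrow$(b'), applied componentwise after a linearly independent expansion in $V$, yields the desired vanishing. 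Well-definedness of $\varrho^S$ is symmetric, invoking right fullness and Lemma \ref{lem:S_nd}(a)$\Leftrightarrow$(b).

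For (ii), setting $x=S(b)a$ and $y=a'S(b')$ and using anti-multiplicativity of $S$ reduces the two sides of \eqref{eq:comp} to $v^{[\varrho]}\ox a'S(b^{[\varrho]}b')a$ and $v^{[\lambda]}\ox a'S(bb'^{[\lambda]})a$ respectively. These coincide after applying $(S\ox V)\circ\mathsf{tw}$ to the identity $bb'^{[\lambda]}\ox v^{[\lambda]}=b^{[\varrho]}b'\ox v^{[\varrho]}$ --- which is \eqref{eq:left_comp} with $(a,b):=(b,b')$ --- and multiplying by $a'$ on the left and $a$ on the right in the second tensor factor. For (iv), any morphism $f:V\to V'$ of left $A$-comodules satisfies $(A\ox f)\varrho_V=\varrho_{V'}(A\ox f)$ and the corresponding identity for $\lambda$; substituting these into the defining formulas immediately shows that $f\ox A$ intertwines $\lambda^S_V$ with $\lambda^S_{V'}$ and $\varrho^S_V$ with $\varrho^S_{V'}$, so the assignment is functorial with identity action on morphisms.

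The main obstacle is (iii): the right comodule normalization \eqref{eq:r-i-norm} and coassociativity \eqref{eq:r-coass} for $(\lambda^S,\varrho^S)$. My plan is to reduce them to the left comodule axioms \eqref{eq:left_norm} and \eqref{eq:left_coass} for $(\lambda,\varrho)$ by transporting through $S$. Using axiom~(vi) of Definition \ref{def:wmba} to write $E(a\ox b)=\sum_iT_1(p^i\ox r^i)(q^i\ox 1)$ (as in the proof of Proposition \ref{prop:prep_comod}(4.a)$\Rightarrow$(4.d)) and Lemma \ref{lem:T_1_S} to control $T_1$ applied to $S$-containing arguments, one rewrites the $E_1$ and $T_1$ occurring in \eqref{eq:r-i-norm} and \eqref{eq:r-coass} (after the coactions are unpacked via the $\lambda^S,\varrho^S$ formulas) in terms of the $E_2$ and $T_3$ adapted to the left comodule axioms, via anti-comultiplicativity of $S$. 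The delicate point is the multiplier bookkeeping: the formulas for $\lambda^S$ and $\varrho^S$ only yield elements of $V\ox A$ after the final multiplication by $a$ or $a'$, so the transport identities between multiplier-valued expressions must be checked to descend to genuine identities in $V\ox A$ after this multiplication.
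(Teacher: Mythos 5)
Your steps (i), (ii) and (iv) are correct and follow essentially the paper's own route: the identity you derive by left-multiplying by $S(c)$ and invoking \eqref{eq:left_comp} is exactly the paper's key display \eqref{eq:S_1}, from which both well-definedness (via Lemma \ref{lem:S_nd}) and the compatibility \eqref{eq:comp} are read off, and the spanning of $A$ by elements $S(b)a$, resp.\ $aS(b)$, is \cite[Proposition 6.13]{BoGTLC:wmba} (your derivation from Lemma \ref{lem:Pi_spanned}~(6) and \eqref{eq:S_id} is a legitimate substitute). Functoriality is immediate, as you say.

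The gap is in (iii), which is the technical core of the theorem and which you only outline. Two points. First, your plan misidentifies the mechanism: the paper does not decompose $E(a\ox b)$ via axiom (vi); instead it verifies conditions (2.a) and \eqref{eq:coass-r-prep-f} of Proposition \ref{prop:prep_comod}~(4.f) directly, and the normalization rests on the specific identity $E_1(S\ox S)=(S\ox S)E_2^{21}$ (from Theorem 6.12 and Proposition 6.15 of \cite{BoGTLC:wmba}), which you do not name and which does not follow from anti-comultiplicativity of $S$ alone. Second, "reduce to the left comodule axioms \eqref{eq:left_norm} and \eqref{eq:left_coass}" is not quite enough as stated: what is actually needed are the \emph{equivalent} mixed forms $(E_2\ox V)(A\ox \varrho)\varrho^{13}=(A\ox \varrho)\varrho^{13}$ and $(A\ox \lambda)(T_2\ox V)\varrho^{13}=(T_2\ox V)(A\ox \lambda)$ supplied by the left-comodule counterpart of Proposition \ref{prop:prep_comod}, because $\lambda^S$ is built from $\varrho$ and $\varrho^S$ from $\lambda$, so the two coactions get entangled; the second of these, combined with Lemma \ref{lem:T_1_S} (which you do correctly identify), is what makes the coassociativity computation close. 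Your sketch also speaks of rewriting "the $E_1$ and $T_1$ occurring in \eqref{eq:r-i-norm} and \eqref{eq:r-coass}", but those conditions involve $E_2$ and $T_3$; this suggests the bookkeeping you flag as "delicate" has not actually been carried out. Until these computations are written down, the claim that $(V,\lambda^S,\varrho^S)$ satisfies the right-comodule axioms is not established.
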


\begin{proof}
We need to show first that $\lambda^S$ and $\varrho^S$ are well-defined linear
maps. By \cite[Proposition 6.13]{BoGTLC:wmba}, any element of $A$ is a linear
combination of elements of the form $S(b)a$ or,
alternatively, a linear combination of elements of the form $aS(b)$. 
So $\lambda^S$ and $\varrho^S$ can be defined by giving their values on the
elements above. For any $a,b,c\in A$ and $v\in V$,
\begin{eqnarray}\label{eq:S_1}
((V\ox S)\lambda^{21}(v \ox c))(1\ox S(b)a)&=&
((V\ox S)\mathsf{tw}((b\ox 1)\lambda(c\ox v))) (1\ox a)\\
&\stackrel{\eqref{eq:left_comp}}=&
((V\ox S)\mathsf{tw}(\varrho(b\ox v)(c\ox 1))) (1\ox a)\nonumber\\
&=&(1 \ox S(c))((V\ox S)\varrho^{21}(v\ox b))(1\ox a).
\nonumber
\end{eqnarray}
The first and the last equalities follow by the anti-multiplicativity of $S$,
see \cite[Theorem 6.12]{BoGTLC:wmba}. So if for some elements $v^i\in V$ and
$a^i,b^i\in A$ the (finite) sum $\sum_i v^i\ox S(b^i)a^i$ is equal to zero,
then we conclude from \eqref{eq:S_1} by Lemma \ref{lem:S_nd} that
$\sum_i((V\ox S)\varrho^{21}(v^i\ox b^i))(1\ox a^i)=0$; hence
$\lambda^S$ is a well-defined linear map. A symmetric reasoning applies to
$\varrho^S$.

Multiplying both sides of \eqref{eq:S_1} on the left by $1\ox d$ for any $d\in
A$, we obtain
$$
\varrho^S(v \ox dS(c))(1\ox S(b)a)=
(1 \ox dS(c))\lambda^S(v\ox S(b)a).
$$ 
In light of \cite[Proposition 6.13]{BoGTLC:wmba}, this proves the
compatibility condition \eqref{eq:comp} between $\lambda^S$ and $\varrho^S$. 

By Proposition \ref{prop:prep_comod}, $(V,\lambda^S,\varrho^S)$ is a right
$A$-comodule provided that it satisfies the conditions in Proposition
\ref{prop:prep_comod}~(4.f). First we prove that the identity 
in Proposition \ref{prop:prep_comod}~(2.a) holds. For any $v\in V$ and
$a,b,c,d\in A$,
\begin{eqnarray*}
(V\ox E_1)(\lambda^S\!\!\!\!&\!\!\!\!\ox\!\!\!\!&\!\!\!\! A)\lambda^{S13}
(v\ox S(b)a\ox S(d)c)\\
&=&v^{\varrho\varrho'}\ox E(S(b^{\varrho'})a\ox S(d^\varrho)c)\\
&=&v^{\varrho\varrho'}\ox 
(S\ox S)((b^{\varrho'}\ox d^\varrho)E^{21})(a\ox c)\\
&=&v^{\varrho\varrho'}\ox S(b^{\varrho'})a\ox S(d^{\varrho})c\\
&=&(\lambda^S\ox A)\lambda^{S13}(v\ox S(b)a\ox S(d)c).
\end{eqnarray*}
where we used the implicit summation index notation 
$\varrho(b\ox v)=b^\varrho\ox v^\varrho=b^{\varrho'}\ox v^{\varrho'}$.
In the second equality we used that 
by \cite[Theorem 6.12 and Proposition 6.15]{BoGTLC:wmba},
$E_1(S \ox S)=(S\ox S)E_2^{21}$.
The third equality holds since
$(E_2\ox V)(A\ox \varrho)\varrho^{13}=(A\ox \varrho)\varrho^{13}$ by a
symmetric form of Proposition \ref{prop:prep_comod} on left $A$-comodules. By
\cite[Proposition 6.13]{BoGTLC:wmba}, this proves the normalization condition
in the form $(V\ox E_1)(\lambda^S\ox A)\lambda^{S13}=(\lambda^S\ox
A)\lambda^{S13}$. 

It remains to prove the coassociativity condition 
\eqref{eq:coass-r-prep-f}. For any $v\in V$ and $a,b,c,d\in A$, introduce the
implicit summation index notation $\varrho(d\ox v)=d^\varrho\ox v^\varrho$,
$\lambda(b\ox v)=b^\lambda\ox v^\lambda$ and $T_2(d\ox b)=d^2\ox b^2$.
Then
\begin{eqnarray*}
(\varrho^S\ox A)(V\ox T_1)\lambda^{S13}(v\ox aS(b)\ox S(d)c)\!\!&=&\!\!
(\varrho^S\ox A)(V\ox T_1)(v^\varrho\ox aS(b)\ox S(d^\varrho)c)\\
\!\!&=&\!\!(\varrho^S\ox A)(v^\varrho\ox 
T_1(a\ox S(d^{\varrho 2})c)(S(b^2)\ox 1))\\
\!\!&=&\!\!v^{\varrho\lambda}\ox 
T_1(a\ox S(d^{\varrho 2})c)(S(b^{2\lambda})\ox 1)\\
\!\!&=&\!\!v^\lambda\ox 
T_1(a\ox S(d^2)c)(S(b^{\lambda 2})\ox 1)\\
\!\!&=&\!\!(V\ox T_1)(v^\lambda\ox aS(b^\lambda)\ox S(d)c)\\
\!\!&=&\!\!(V\ox T_1)(\varrho^S\ox A)(v\ox aS(b)\ox S(d)c).
\end{eqnarray*}
From this we conclude by \cite[Proposition 6.13]{BoGTLC:wmba} that the
coassociativity condition \eqref{eq:coass-r-prep-f} holds. The second 
and the penultimate equalities hold by Lemma \ref{lem:T_1_S} and the fourth
equality holds since
$(A\ox \lambda)(T_2\ox V)\varrho^{13}=(T_2\ox V)(A\ox \lambda)$ by a
symmetric form of Proposition \ref{prop:prep_comod} on left $A$-comodules.
 
Left $A$-comodule maps are evidently maps between the induced right
$A$-comodules above. 
\end{proof}

Clearly, there is a symmetric functor from the category of right comodules to
the category of left comodules: It takes a right $A$-comodule
$(V,\lambda,\varrho)$ to the left $A$-comodule $(V,\lambda^S,\varrho^S)$,
where for any $v\in V$ and $a,b\in A$, 
\begin{eqnarray}\label{eq:rightS}
&&\lambda^S(S(b)a\ox v)=((S\ox V)\varrho^{21}(b\ox v))(a\ox 1)\\
&&\varrho^S(aS(b)\ox v)=(a\ox 1)((S\ox V)\lambda^{21}(b\ox v)).\nonumber
\end{eqnarray}
It acts on the morphisms as the identity map.

Let $A$ be a regular weak multiplier bialgebra with left and right full
comultiplication and base algebra $R$. Consider the linear maps 
\begin{eqnarray*}
&E_1:V\ox A\to V\ox A,\qquad
&v\ox a\mapsto ((-)v\ox A)[E(1\ox a)]\equiv E(v\ox a)\\
&G_1:V\ox A\to V\ox A,\qquad
&v\ox a\mapsto (v(-)\ox A)[F(1\ox a)]\equiv (v\ox 1)F(1\ox a)
\end{eqnarray*}
as before (they are meaningful since for any $a\in A$, both $E(1\ox a)$ and
$F(1\ox a)$ are elements of $R\ox A$, see (2.3) and Proposition 4.3~(1) in
\cite{BoGTLC:wmba}).

\begin{lemma}\label{lem:lambda_tilde}
Let $A$ be a regular weak multiplier bialgebra with left and right full
comultiplication possessing an antipode $S$. For any right $A$-comodule
$(V,\lambda,\varrho)$, the linear map $\lambda^S:V\ox A \to V\ox A$ in
\eqref{eq:rightS} obeys the following identities. 
\begin{itemize}
\item[{(1)}] $\lambda^{S21} E_1=\lambda^{S21}$.
\item[{(2)}] $\lambda \lambda^{S21}=E_1$.
\item[{(3)}] $\lambda^{S21}\lambda=G_1$.
\end{itemize}
\end{lemma}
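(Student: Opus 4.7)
The plan is to start by deriving a usable explicit formula for $\lambda^{S21}$. Unpacking \eqref{eq:rightS} and applying the twist, one obtains
$$\lambda^{S21}(v\ox S(b)a) = v^\varrho\ox S(b^\varrho)a,$$
where $\varrho(v\ox b) = v^\varrho\ox b^\varrho$ (implicit summation). This is well-defined by the argument in the proof of Theorem \ref{thm:S_functor} (using Lemma \ref{lem:S_nd}), and by \cite[Proposition 6.13]{BoGTLC:wmba} it determines $\lambda^{S21}$ completely since $A$ is spanned by elements of the form $S(b)a$. In particular, $\lambda^{S21}$ is right $A$-linear in the second factor.

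For assertion (2), I would first peel off $a$ via right $A$-linearity of $\lambda$ (Proposition \ref{prop:prep_comod}(1)) to get $\lambda\lambda^{S21}(v\ox S(b)a) = \lambda(v^\varrho\ox S(b^\varrho))(1\ox a)$. To match $E(v\ox S(b)a) = E_1(v\ox S(b)a)$, I would test against an arbitrary $(1\ox d)$ on the left: applying \eqref{eq:comp} converts the LHS to $\varrho(v^\varrho\ox d)(1\ox S(b^\varrho)a)$. This is then manipulated using the equivalent coassociativity form $(\varrho\ox A)\varrho^{13}(V\ox T_2) = (V\ox T_2)\varrho^{13}$ from Proposition \ref{prop:prep_comod}(4.e), together with the antipode identity $\mu(A\ox S)T_2=\mu(A\ox \sqcap^L)$ from \eqref{eq:S_id}, to collapse the $S\ox T_2$ combination into a $\sqcap^L$ factor, which is finally rewritten via Lemma \ref{lem:Pi_on_E}(2) to produce $E(v\ox S(b)a)$.

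Assertion (3) proceeds symmetrically. Starting from $\lambda^{S21}\lambda(v\ox a)$, one expresses $a$ appropriately, uses the equivalent coassociativity form $(\lambda\ox A)\lambda^{13}(V\ox T_4) = (V\ox T_4)\lambda^{13}$ from Proposition \ref{prop:prep_comod}(4.b), and applies the antipode identity $\mu(S\ox A)T_1 = \mu(\sqcap^R\ox A)$ from \eqref{eq:S_id}. The resulting expression is then matched against $G_1(v\ox a) = (v\ox 1)F(1\ox a)$ via the defining formula \eqref{eq:F} for the multiplier $F$ in terms of $\overline\sqcap^R$ and $T_4$.

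Finally, (1) is a consequence of (2) and (3):
$$\lambda^{S21}E_1 \stackrel{(2)}{=} \lambda^{S21}\lambda\lambda^{S21} \stackrel{(3)}{=} G_1\lambda^{S21},$$
so it suffices to show $G_1\lambda^{S21}=\lambda^{S21}$. Using $G_1(v\ox a) = (v\ox 1)F(1\ox a)$ and the description of $F$ in \cite[Proposition 4.3~(1)]{BoGTLC:wmba}, this reduces to verifying that $v^\varrho\ox S(b^\varrho)a$ lies in the fixed-point set of $G_1$, which can be extracted from Lemma \ref{lem:T_tw_R}(2) together with the range properties of $\sqcap^R$ (the analogue of the identity $R_1T_1R_1 = R_1$ from \cite[Theorem 6.8]{BoGTLC:wmba} for $V = A$). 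The main obstacle will be the bookkeeping involved in manipulating multiplier-valued expressions (since $S(b)\in \M(A)$) while keeping the various coassociativity and antipode identities properly aligned, so that the answers land precisely as $E_1$ and $G_1$ in their forms given immediately before the lemma statement.
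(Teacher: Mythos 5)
Your setup is on track: the explicit formula $\lambda^{S21}(v\ox S(b)a)=((V\ox S)\varrho(v\ox b))(1\ox a)$, its well-definedness via Lemma \ref{lem:S_nd} and \cite[Proposition 6.13]{BoGTLC:wmba}, and the general plan of combining the coassociativity conditions with the antipode identities \eqref{eq:S_id} all match the paper. But the proofs of (2) and (3) have a genuine gap: the weak-inverse maps $R_1,R_2$ and Lemma \ref{lem:T_tw_R} are entirely absent, and without them your chains do not close. To invoke the coassociativity in the form $(\varrho\ox A)\varrho^{13}(V\ox T_2)=(V\ox T_2)\varrho^{13}$ you must first put the argument in the image of $V\ox T_2$, which is done by inserting $E_2=T_2R_2$ via the normalization \eqref{eq:r-i-norm}; and after $\mu(A\ox S)T_2=\mu(A\ox\sqcap^L)$ has absorbed the antipode into $\sqcap^L$, you are left with an expression involving $R_2(c\ox b)$ from which $S(b)$ must be recovered in order to recognize the result as $E(v\ox S(b)a)$. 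That is exactly what Lemma \ref{lem:T_tw_R}~(2), $T_4\mathsf{tw}R_2(a\ox b)=\Delta(a)(1\ox S(b))$, accomplishes (and symmetrically part (1) of that lemma for assertion (3), where the target is $(v\ox 1)F(1\ox a)$). Your proposed substitute, Lemma \ref{lem:Pi_on_E}~(2), concerns $(A\ox\overline\sqcap^L)E_2$ and cannot perform this step. A further problem in (3): you cannot ``express $a$ appropriately'' and apply the defining formula of $\lambda^{S21}$ to $\lambda(v\ox a)$, because the second leg $a^\lambda$ is entangled with the first; the paper instead derives the auxiliary identity \eqref{eq:lt_1}, $(1\ox S(c))\lambda^{S21}(v\ox a)=((V\ox S)\lambda(v\ox c))(1\ox a)$, from the left-comodule compatibility of $(V,\lambda^S,\varrho^S)$, and tests both sides of (3) against $(1\ox bS(c))$.

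Your derivation of (1) from (2) and (3) also does not make progress: given (2) and (3), $G_1\lambda^{S21}=\lambda^{S21}\lambda\lambda^{S21}=\lambda^{S21}E_1$, so the claim $G_1\lambda^{S21}=\lambda^{S21}$ that you reduce to is precisely equivalent to (1) itself, and your proposed verification of it (``extracted from Lemma \ref{lem:T_tw_R}~(2) together with the range properties of $\sqcap^R$'') is too vague to count as a proof. The paper proves (1) directly and independently of (2) and (3): multiply on the left by $1\ox S(c)$, apply \eqref{eq:lt_1} to rewrite both sides as $(V\ox\mu(S\ox A))(\lambda\ox A)$ applied to $E_1^{13}(v\ox c\ox a)$ and to $v\ox c\ox a$ respectively, commute $E_1$ past $\lambda$ by Lemma \ref{lem:l-r-E}~(2), and conclude with $\mu(S\ox A)E_1=\mu(S\ox A)$ from \eqref{eq:S_id} and Lemma \ref{lem:S_nd}. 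You should either adopt that direct argument or supply a complete independent proof of $G_1\lambda^{S21}=\lambda^{S21}$.
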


\begin{proof}
From \eqref{eq:rightS} we have 
\begin{equation}\label{eq:lambda_tilde}
\lambda^{S21}:V\ox A \to V\ox A,\qquad
v\ox S(b)a\mapsto 
((V\ox S)\varrho(v\ox b))(1\ox a).
\end{equation}
Since $(V,\lambda^S,\varrho^S)$ in \eqref{eq:rightS} is a left $A$-comodule,
it follows by the compatibility axiom \eqref{eq:left_comp} that for any $v\in
V$ and $a,b,c\in A$,
$$
(1\ox bS(c))\lambda^{S21}(v\ox a)=
\varrho^{S21}(v\ox bS(c)))(1 \ox a)=
(1\ox b)((V\ox S)\lambda(v\ox c))(1\ox a).
$$
Thus simplifying by $b$,
\begin{equation}\label{eq:lt_1}
(1\ox S(c))\lambda^{S21}(v\ox a)=
((V\ox S)\lambda(v\ox c))(1\ox a)\qquad 
\forall v\in V,\ a,c\in A.
\end{equation}

(1) Using \eqref{eq:lt_1} in the first and the last equalities, Lemma
\ref{lem:l-r-E}~(2) in the second equality, and an identity in (6.14) of
\cite{BoGTLC:wmba} (cf. \eqref{eq:S_id}) in the third one, we obtain
\begin{eqnarray*}
(1\ox S(c))(\lambda^{S21} E_1(v\ox a))&=&
(V\ox \mu(S\ox A))(\lambda\ox A)E_1^{13}(v\ox c\ox a)\\
&=&(V\ox \mu(S\ox A)E_1)(\lambda\ox A)(v\ox c\ox a)\\
&=&(V\ox \mu(S\ox A))(\lambda\ox A)(v\ox c\ox a)\\
&=&(1\ox S(c))\lambda^{S21} (v\ox a).
\end{eqnarray*}
In light of Lemma \ref{lem:S_nd}, this proves assertion (1).

(2) For any $v\in V$ and $a,b,c\in A$,
\begin{eqnarray*}
(1\ox c)\!\!\!\!\!\!\!&&\!\!\!\!\!\!\!\!
(\lambda\lambda^{S21}(v\ox S(b)a))\\
&\stackrel{\eqref{eq:lambda_tilde}}=&
(1\ox c)\lambda((V\ox S)\varrho(v\ox b)(1\ox a))\\
&=&((V\ox \mu(A\ox S))(\varrho\ox A)\varrho^{13}(v\ox c\ox b))(1\ox a)\\
&=&((V\ox \mu(A\ox S)T_2)\varrho^{13}(V\ox R_2)(v\ox c\ox b))(1\ox a)\\
&=&((V\ox \epsilon \ox A)(\varrho\ox A)(V\ox T_4 \mathsf{tw}R_2)
(v\ox c\ox b))(1\ox a)\\
&=&(V\ox \epsilon \ox A)(\varrho\ox A)(v\ox T_1(c\ox S(b)a))\\
&=&((\sqcap^R\ox A)T_1(c\ox S(b)a))(v\ox 1)\\
&=&(1\ox c)E(v\ox S(b)a).
\end{eqnarray*}
From this we conclude simplifying by $c$ and using that $A$ is spanned by
elements of the form $S(b)a$, cf. \cite[Proposition 6.13]{BoGTLC:wmba}.
The second equality follows by \eqref{eq:comp} and the third equality follows
by 
$$
(\varrho\ox A)\varrho^{13}\stackrel{\eqref{eq:r-i-norm}}=
(\varrho\ox A)\varrho^{13}(V\ox E_2)=
(\varrho\ox A)\varrho^{13}(V\ox T_2R_2)
\stackrel{\eqref{eq:coass-r-prep-e}}=
(V\ox T_2)\varrho^{13}(V\ox R_2).
$$
In the fourth equality we used that by an identity in (6.14) in
\cite{BoGTLC:wmba} (cf. \eqref{eq:S_id}); by (3.3) in \cite{BoGTLC:wmba}
(cf. \eqref{eq:Pi}); and by \eqref{eq:T4-rho}, 
\begin{eqnarray*}
(V\ox \mu(A\ox S)T_2)\varrho^{13}&=&
(V\ox \mu(A\ox \sqcap^L))\varrho^{13}\\
&=&(V\ox \epsilon \ox A)(V\ox T_4\mathsf{tw})\varrho^{13}\\
&=&(V\ox \epsilon \ox A)(\varrho\ox A)(V\ox T_4 \mathsf{tw}).
\end{eqnarray*}
The fifth equality is a consequence of Lemma \ref{lem:T_tw_R}~(2) and the
last equality holds since by the first axiom in (ix) in Definition
\ref{def:wmba} and by (3.4) in \cite{BoGTLC:wmba},
$$
(1\ox d)((\sqcap^R\ox A)T_1(c\ox a))=
((\sqcap^R \ox A)T_3(c\ox d))(1\ox a)=
(1\ox dc)E(1\ox a),
$$
for all $a,c,d\in A$, so that $(\sqcap^R\ox A)T_1(c\ox a)=(1\ox c)E(1\ox a)$.

(3) The proof is similar to part (2). For any $v\in V$ and $a,b,c\in A$,
\begin{eqnarray*}
(1\ox bS(c))\hspace{-.5cm}&&\hspace{-.5cm}
(\lambda^{S21}\lambda(v\ox a))\\
&=&(1\ox b)((V\ox \mu(S\ox A))(\lambda\ox A)\lambda^{13}(v\ox c \ox a))\\
&=&(1 \ox b)((V\ox \mu(S\ox A)T_1)(\lambda\ox A)(V\ox R_1)(v\ox c \ox a))\\
&=&(1 \ox b)((V\ox A\ox \epsilon)(V \ox T_3)\lambda^{13}(V\ox \mathsf{tw}R_1)
(v\ox c \ox a))\\
&=&(1 \ox b)((V\ox A\ox \epsilon)\lambda^{13}(V\ox T_3\mathsf{tw}R_1)
(v\ox c \ox a))\\
&=&(V\ox A\ox \epsilon)\lambda^{13}(v\ox T_2(bS(c)\ox a))\\
&=&(v\ox 1)((\overline \sqcap^R\ox A)\mathsf{tw}T_2(bS(c)\ox a))\\
&=&(v\ox bS(c))F(1\ox a).
\end{eqnarray*}
From this we conclude simplifying by $bS(c)$ (cf. \cite[Proposition
6.13]{BoGTLC:wmba}). 
The first equality follows by \eqref{eq:lt_1}.
The second one holds since 
$$
(\lambda\ox A)\lambda^{13}\stackrel{\eqref{eq:l-i-norm}}=
(\lambda\ox A)\lambda^{13}(V\ox E_1)=
(\lambda\ox A)\lambda^{13}(V\ox T_1R_1)\stackrel{\eqref{eq:l-coass}}=
(V\ox T_1)(\lambda\ox A)(V\ox R_1). 
$$
In the third equality we used that by an identity in (6.14) in
\cite{BoGTLC:wmba} (cf. \eqref{eq:S_id}) and by (3.3) in \cite{BoGTLC:wmba}
(cf. \eqref{eq:Pi}),
$$
V\ox \mu(S\ox A)T_1=
V\ox \mu(\sqcap^R \ox A)=
V\ox (A\ox \epsilon)T_3 \mathsf{tw}.
$$
In the fourth equality we used that since $\lambda$ is a right $A$-module map
(cf. Proposition \ref{prop:prep_comod}~(1)) and $T_3$ is a left $A$-module map,
it follows for any $v\in V$ and $a,b,c\in A$ that
$$
(V\ox T_3)\lambda^{13}(v\ox a\ox bc)=
\lambda(v \ox b)^{13}(1\ox T_3(a\ox c))=
\lambda^{13}(V\ox T_3)(v\ox a\ox bc).
$$
Hence by the idempotency of $A$, $(V\ox T_3)\lambda^{13}=\lambda^{13}(V\ox
T_3)$. 
The fifth equality follows by Lemma \ref{lem:T_tw_R}~(1) and the last one
holds since by the second axiom in (ix) in Definition \ref{def:wmba} and by
\cite[Proposition 4.3~(1)]{BoGTLC:wmba} (cf. \eqref{eq:F}),
\begin{eqnarray*}
((\overline \sqcap^R\ox A)\mathsf{tw}T_2(b\ox a))(1\ox c)&=&
(\overline \sqcap^R\ox A)\mathsf{tw}(T_2(b\ox a)(c\ox 1))\\
&=&(\overline \sqcap^R\ox A)\mathsf{tw}((b\ox 1)T_4(c\ox a))\\
&=&(1\ox b)((\overline \sqcap^R\ox A)\mathsf{tw}T_4(c\ox a))=
(1\ox b)F(1\ox ac),
\end{eqnarray*}
for all $a,b,c\in A$; hence $(\overline \sqcap^R\ox A)\mathsf{tw}T_2(b\ox
a)=(1\ox b)F(1\ox a)$. 
\end{proof}

\begin{proposition}\label{prop:left_right_full}
Let $A$ be a regular weak multiplier bialgebra with left and right full
comultiplication, possessing an antipode $S$. For a full left $A$-comodule
$(V,\lambda,\varrho)$, also the right $A$-comodule $(V,\lambda^S,\varrho^S)$
in Theorem \ref{thm:S_functor} is full. 
\end{proposition}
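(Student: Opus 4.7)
The plan is to verify the fullness condition (a) of Lemma \ref{lem:full} for the right $A$-comodule $(V,\lambda^S,\varrho^S)$. By \cite[Proposition 6.13]{BoGTLC:wmba}, every element of $A$ is a linear combination of elements of the form $S(b)c$ with $b,c\in A$, and from the formula in Theorem \ref{thm:S_functor},
$$\lambda^S(v\ox S(b)c)=v^\varrho\ox S(b^\varrho)c,$$
where $\varrho(b\ox v)=b^\varrho\ox v^\varrho$ (with implicit summation). Hence the candidate spanning subspace reduces to
$$\tilde V:=\langle v^\varrho\,\omega(S(b^\varrho)c):v\in V,\ b,c\in A,\ \omega\in\mathsf{Lin}(A,k)\rangle,$$
and the task is to show that $\tilde V = V$.

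I would argue by a duality argument. Suppose $v^*\in V^*$ annihilates $\tilde V$. Pulling the scalar $v^*(v^\varrho)$ through $\omega$ and using the linearity of $S$, the vanishing condition rewrites as $\omega(S(y)c)=0$ for all $\omega\in\mathsf{Lin}(A,k)$ and all $c\in A$, where $y:=(A\ox v^*)\varrho(b\ox v)\in A$. Arbitrariness of $\omega$ and $c$, together with the density of $A$ in $\M(A)$, then yields $S(y)=0$ in $\M(A)$.

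The main obstacle is to deduce $y=0$ from $S(y)=0$ for the elements $y$ that arise, i.e.\ to establish a suitable injectivity of the antipode on these elements. I would handle this by combining the anti-multiplicativity of $S$ (\cite[Theorem 6.12]{BoGTLC:wmba}) with Lemma \ref{lem:S_nd} and the identity $\mu(S\ox A)T_1=\mu(\sqcap^R\ox A)$ from \eqref{eq:S_id}: anti-multiplicativity makes $\ker S$ a two-sided ideal of $A$ containing $y$, and Lemma \ref{lem:S_nd} --- whose hypotheses are satisfied because the comultiplication is both left and right full --- provides the non-degeneracy needed to collapse this ideal. To facilitate the deduction one can moreover bring in the parallel conclusion $S(z)=0$ for $z:=(A\ox v^*)\lambda(b\ox v)$ (obtained from the $\varrho^S$-version of condition (b) of Lemma \ref{lem:full}) together with the multiplier relation $a\,z(b)=y(a)\,b$ coming from \eqref{eq:left_comp}, so that the two vanishings reinforce each other through the antipode identities.

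Once $(A\ox v^*)\varrho(b\ox v)=0$ for all $b\in A$ and $v\in V$, the fullness of $V$ as a left $A$-comodule --- in the dual formulation that $v^*=0$ is equivalent to the vanishing of the map $A\ox V\to A$, $b\ox v\mapsto (A\ox v^*)\varrho(b\ox v)$ --- forces $v^*=0$, completing the verification that $(V,\lambda^S,\varrho^S)$ is full.
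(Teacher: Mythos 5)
There is a genuine gap at the step you yourself flag as ``the main obstacle'': deducing $y=0$ from $S(y)=0$. None of the tools you invoke delivers this. Anti-multiplicativity does make $\ker S$ a two-sided ideal, but a nonzero two-sided ideal can perfectly well lie in $\ker S$; being an ideal does not ``collapse'' it. Lemma \ref{lem:S_nd} is not an injectivity statement about $S$: it says that $aS(b)=0$ for all $b$ forces $a=0$, i.e.\ that multiplication against the \emph{image} of $S$ is non-degenerate. To apply it to your $y$ you would need $yS(b)=0$ for all $b$, which does not follow from $S(y)=0$ (from $S(y)=0$ and anti-multiplicativity you only get $by,\,yb\in\ker S$). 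The ``reinforcement'' via the companion element $z$ and the multiplier relation from \eqref{eq:left_comp} does not escape this either: even extending $S$ to $\M(A)$ (which is possible here by \cite[Proposition 6.13]{BoGTLC:wmba}) only converts your data into $\overline S(m)=0$ for the multiplier $m$ built from $y$ and $z$, and you are back to needing injectivity. In the setting of this paper the antipode is only assumed to satisfy the conditions of \cite[Theorem 6.8]{BoGTLC:wmba} --- explicitly weaker than the bijectivity built into regular weak multiplier Hopf algebras --- so injectivity of $S$ is simply not available, and your argument cannot be completed as proposed.

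The paper's proof avoids $S$-injectivity entirely and is worth contrasting. It first establishes the counit identity $\epsilon(aS(b))=\epsilon(b\,\overline\sqcap^L(a))$ (combining \cite[Lemma 6.11, (3.5), Lemma 6.10, Lemma 3.12, (3.8), Proposition 4.1]{BoGTLC:wmba}), and then computes directly that
$(V\ox\epsilon)\varrho^S(v\ox aS(b))=(\epsilon\ox V)\lambda(b\,\overline\sqcap^L(a)\ox v)$,
which is the action of $\sqcap^L(b\,\overline\sqcap^L(a))$ on $v$ coming from the \emph{left} comodule structure. Since that action is surjective (the left-comodule analogue of Theorem \ref{thm:ff}, using fullness of $V$ as a left comodule) and $A$ is spanned by elements $a\overline\sqcap^L(b)$ (Lemma \ref{lem:Pi_spanned}~(1)) and by elements $aS(b)$ (\cite[Proposition 6.13]{BoGTLC:wmba}), the single functional $\omega=\epsilon$ already witnesses fullness of $(V,\lambda^S,\varrho^S)$. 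If you want to salvage your write-up, replace the duality/injectivity argument by this constructive one.
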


\begin{proof}
By Lemma 6.11, (3.5) and Lemma 6.10 in \cite{BoGTLC:wmba},
\begin{equation}\label{eq:full_1}
\epsilon(aS(b))=
\epsilon\mu (A\ox \sqcap^L)R_2(a\ox b)=
\epsilon\mu R_2(a\ox b)=
\epsilon(a\sqcap^R(b)),\qquad \forall a,b\in A. 
\end{equation}
By Lemma 3.12 and (3.8) in \cite{BoGTLC:wmba},
$\sqcap^R(b\,\overline\sqcap^L(a))=\sqcap^R(a)\sqcap^R(b)=
\sqcap^R(a\sqcap^R(b))$ for any $a,b\in A$. Applying to this identity the
counit $\varepsilon$ of $R$ and using \cite[Proposition 4.1]{BoGTLC:wmba}, we
conclude that $\epsilon(b\overline\sqcap^L(a))=\epsilon(a\sqcap^R(b))$. 
Combining this with \eqref{eq:full_1}, we obtain
\begin{equation}\label{eq:counit_S}
\epsilon(aS(b))=\epsilon(b\overline\sqcap^L(a)),\qquad \forall a,b\in A. 
\end{equation}
Using this identity in the second equality, it follows for any $v\in V$ and
$a,b\in A$ that 
\begin{equation}\label{eq:full_2}
(V\ox \epsilon)\varrho^S(v\ox aS(b))=
\epsilon(aS(b^{\lambda}))v^\lambda=
\epsilon(b^{\lambda}\overline\sqcap^L(a))v^\lambda
=(\epsilon\ox V)\lambda(b\overline\sqcap^L(a)\ox v),
\end{equation}
where the implicit summation index notation $\lambda(b\ox v)=b^\lambda\ox
v^\lambda$ is used. In the third equality we used that $\lambda$ is a morphism
of right $\M(A)$-modules by Proposition \ref{prop:prep_comod}~(1) and the
idempotency of $A$. The last expression in \eqref{eq:full_2} is the right
action of $\sqcap^L(b\overline\sqcap^L(a))$ on $v$ obtained from the left
$A$-comodule structure of $V$ (cf. a symmetric counterpart of Theorem
\ref{thm:ff} on left comodules). Hence by Lemma \ref{lem:Pi_spanned}~(1) and a
symmetric counterpart of Theorem \ref{thm:ff} on left comodules, $V$ is
spanned by elements of the form in \eqref{eq:full_2}. Taking into account
\cite[Proposition 6.13]{BoGTLC:wmba}, this proves that
$(V,\lambda^S,\varrho^S)$ is a full right $A$-comodule. 
\end{proof}

Let $A$ be a regular weak multiplier bialgebra with left and right full
comultiplication, possessing an antipode $S$. Let $V$ be a finite dimensional
full right $A$-comodule. Combining the constructions in Proposition
\ref{prop:V^*_left} and in Theorem \ref{thm:S_functor}, we obtain a right
$A$-comodule structure on $V^*$ with the structure maps 
\begin{equation}\label{eq:V^*_right}
\begin{array}{llll}
\lambda^{*S}:
&V^*\ox A \to V^*\ox A,\quad
&\varphi \ox S(b)a
&\mapsto \ 
[v\mapsto \varphi(v^\varrho)]\ox S(b^{\varrho})a\\
\varrho^{*S}:
&V^*\ox A \to V^*\ox A,\quad 
&\varphi\ox aS(b)
&\mapsto \ 
[v\mapsto \varphi(v^\lambda)]\ox aS(b^{\lambda}),
\end{array}
\end{equation}
where the implicit summation index notation $\varrho(v\ox b)=v^\varrho\ox
b^\varrho$ and $\lambda(v\ox b)=v^\lambda\ox b^\lambda$ is used. This right
$A$-comodule $V^*$ is full by Proposition \ref{prop:V^*_full} and Proposition
\ref{prop:left_right_full}. 

\begin{proposition}\label{prop:V^*-R-actions}
Let $A$ be a regular weak multiplier bialgebra with left and right full
comultiplication, possessing an antipode $S$. Let $(V,\lambda,\varrho)$ be a
finite dimensional full right $A$-comodule and consider the $A$-comodule
$(V^*,\lambda^{*S},\varrho^{*S})$ in \eqref{eq:V^*_right}. In terms of the
Nakayama automorphism $\vartheta$ of $R$ (cf. \cite[Theorem
4.6~(3)]{BoGTLC:wmba}), the corresponding $R$-actions in Theorem \ref{thm:ff}
on $V^*$ come out as 
$$
r\act \varphi=\varphi(-\,\vartheta^{-1}(r))
\qquad \textrm{and}\qquad
\varphi\act r=\varphi(r\,-).
$$
In particular, the isomorphism $V^*\cong \mathsf{Hom}_R(V,R)$ in Proposition
\ref{prop:R/k-dual} is an isomorphism of $R$-bimodules.
\end{proposition}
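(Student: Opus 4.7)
My plan is to derive the two $R$-action formulas by direct computation from \eqref{eq:V^*_right} and Theorem \ref{thm:ff}, and then to deduce the $R$-bimodule isomorphism from Proposition \ref{prop:R/k-dual}. I will treat the left action first, where the computation is cleanest, and the right action by the symmetric argument. The main technical hurdle is the algebraic identification of certain projection-map composites with Nakayama-twisted values in $R$.

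For the left action, by \cite[Proposition 6.13]{BoGTLC:wmba} I write $r=\sqcap^R(cS(b))\in R$. Then by \eqref{eq:V^*_right} and Theorem \ref{thm:ff},
\[
(r\act\varphi)(v)=(V^{*}\ox\epsilon)\varrho^{*S}(\varphi\ox cS(b))(v)=\varphi(v^{\lambda})\,\epsilon(cS(b^{\lambda})),
\]
with $\lambda(v\ox b)=v^{\lambda}\ox b^{\lambda}$. The identity \eqref{eq:counit_S} converts the scalar to $\epsilon(b^{\lambda}\overline{\sqcap}^L(c))$, and right $A$-linearity of $\lambda$ (Proposition \ref{prop:prep_comod}(1)) together with Theorem \ref{thm:ff} gives $\varphi(v\act\overline{\sqcap}^R(b\overline{\sqcap}^L(c)))$. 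To match the claimed formula I verify the algebraic identity
\[
\overline{\sqcap}^R(b\overline{\sqcap}^L(c))=\vartheta^{-1}\bigl(\sqcap^R(cS(b))\bigr)\quad\text{in }R,
\]
by unwinding $\vartheta=\sigma\overline{\sigma}^{-1}$ (so $\vartheta^{-1}=\overline{\sigma}\tau$ with $\tau\colon\sqcap^R(a)\mapsto\overline{\sqcap}^L(a)$ and $\overline{\sigma}\colon\sqcap^L(a)\mapsto\overline{\sqcap}^R(a)$) and combining \cite[Lemma 3.12]{BoGTLC:wmba} with \eqref{eq:counit_S}. For the right action, writing $r=\overline{\sqcap}^R(S(b)c)$ yields $(\varphi\act r)(v)=\varphi(v^{\varrho})\,\epsilon(S(b^{\varrho})c)$, and a parallel sequence of manipulations (using the identities in (6.14) of \cite{BoGTLC:wmba} that relate $\mu(S\ox A)$ to $\mu(\sqcap^R\ox A)$ via $T_1$, together with \eqref{eq:comp} and Theorem \ref{thm:ff}) identifies the element $v^{\varrho}\epsilon(S(b^{\varrho})c)\in V$ with $\overline{\sqcap}^R(S(b)c)\act v$, giving $\varphi\act r=\varphi(r\,-)$.

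For the bimodule-isomorphism claim, Proposition \ref{prop:R/k-dual} provides the vector-space isomorphism $V^{*}\to\mathsf{Hom}_R(V,R)$, $\varphi\mapsto\Psi_{\varphi}$, with inverse $\Psi\mapsto\varepsilon\Psi$. The right-action formula $\varphi\act r=\varphi(r\,-)$ gives $\Psi_{\varphi\act r}(v)=\Psi_{\varphi}(r\act v)$, matching the natural right $R$-action $(\Psi\cdot r)(v):=\Psi(r\act v)$ on $\mathsf{Hom}_R(V,R)$. For the left actions, using right $R$-linearity of $\Psi_{\varphi}$ and the Nakayama identity $\varepsilon(xy)=\varepsilon(\vartheta(y)x)$,
\[
\varepsilon\bigl(r\,\Psi_{\varphi}(v)\bigr)=\varepsilon\bigl(\Psi_{\varphi}(v)\,\vartheta^{-1}(r)\bigr)=\varepsilon\bigl(\Psi_{\varphi}(v\act\vartheta^{-1}(r))\bigr)=\varphi(v\act\vartheta^{-1}(r))=(r\act\varphi)(v),
\]
matching the natural left $R$-action $(r\cdot\Psi)(v):=r\,\Psi(v)$. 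Thus the isomorphism preserves both $R$-actions, hence is one of $R$-bimodules. The main obstacle, as noted, is the algebraic identity $\overline{\sqcap}^R(b\overline{\sqcap}^L(c))=\vartheta^{-1}(\sqcap^R(cS(b)))$, which packages the interplay among $S$, the four projection maps, and the Nakayama automorphism into a single equation.
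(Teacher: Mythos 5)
Your proposal is correct and follows essentially the same route as the paper's proof: compute both actions from \eqref{eq:V^*_right} via Theorem \ref{thm:ff}, convert the scalar using \eqref{eq:counit_S} (resp.\ its companion $\epsilon(S(b)a)=\epsilon(\overline\sqcap^R(a)b)$), invoke the $A$-linearity of the coaction, and reduce everything to the two identities $\vartheta^{-1}\sqcap^R(cS(b))=\overline\sqcap^R(b\,\overline\sqcap^L(c))$ and $\overline\sqcap^R(S(b)c)=\sqcap^R(\overline\sqcap^R(c)b)$, after which the bimodule claim follows exactly as in the paper from the Nakayama relation $\varepsilon(xy)=\varepsilon(\vartheta(y)x)$. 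The only point where you are thinner than the paper is the verification of those two identities: your factorization $\vartheta^{-1}=\overline\sigma\tau$ is the correct starting point, but the paper completes the computation not from Lemma 3.12 and \eqref{eq:counit_S} alone but by passing through the maps $R_1,R_2$ and the relations $T_iR_i=E_i$ together with Lemmas 6.11, 3.11 and Proposition 4.9 of \cite{BoGTLC:wmba}.
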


\begin{proof}
By \cite[Proposition 6.13]{BoGTLC:wmba}, $R$ is spanned by elements of the form
$\overline \sqcap^R(S(b)a)$, for $a,b\in A$. So we only compute its
right action on an arbitrary element $\varphi\in V^*$. 
\begin{eqnarray*} 
\varphi\act \overline \sqcap^R(S(b)a)&=&
(V^*\ox \epsilon)\lambda^{*S}(\varphi \ox S(b)a)=
\varphi(-^\varrho) \epsilon(S(b^{\varrho})a)\\
&=&\varphi(-^\varrho) \epsilon(\overline \sqcap^R(a)b^{\varrho})=
(\varphi\ox \epsilon)\varrho(-\ox \overline \sqcap^R(a)b)\\
&=&\varphi(\sqcap^R(\overline \sqcap^R(a)b)\act -)=
\varphi(\overline \sqcap^R(S(b)a)\act -).
\end{eqnarray*}
The fourth equality holds because $\varrho$ is a left $A$-module map (see
Proposition \ref{prop:prep_comod}~(1)), hence by the idempotency of $A$ it is
also a left $R$-module map. In the last equality we used that by Lemma 6.11,
Lemma 3.11, and identity (2.2) in \cite{BoGTLC:wmba} and by the equality
$T_1R_1=E_1$, 
\begin{eqnarray*}
\overline \sqcap^R(S(b)a)&=&
\overline \sqcap^R\mu(\sqcap^R \ox A)R_1(b\ox a)=
\sqcap^R\mu(A\ox \overline \sqcap^R)R_1(b\ox a)\\
&=&\sqcap^R(A\ox \epsilon)T_1R_1(b\ox a)=
\sqcap^R(A\ox \epsilon)E_1(b\ox a)=
\sqcap^R(\overline \sqcap^R(a)b)
\end{eqnarray*}
for any $a,b\in A$. Applying the counit $\varepsilon$ of $R$ to both sides, by
\cite[Proposition 4.1]{BoGTLC:wmba} and \cite[Lemma 4.5~(3)]{BoGTLC:wmba} we
obtain $\epsilon(S(b)a)=\epsilon(\overline \sqcap^R(a)b)$ justifying the third
equality. 
For $\Phi\in \mathsf{Hom}_R(V,R)$, $r\in R$, and the counit $\varepsilon$ on
$R$, $\varepsilon(\Phi\act r)=\varepsilon\Phi(r\act -)$ so the isomorphism
in Proposition \ref{prop:R/k-dual} is an isomorphism of right $R$-modules.

Symmetrically, by \cite[Proposition 6.13]{BoGTLC:wmba}, $R$ is spanned by
elements of the form $\sqcap^R(aS(b))$, for $a,b\in A$. So we only
compute its left action on an arbitrary element $\varphi\in V^*$. 
\begin{eqnarray*} 
\sqcap^R(aS(b))\act \varphi&=&
(V^*\ox \epsilon)\varrho^{*S}(\varphi \ox aS(b))=
\varphi(-^\lambda)\epsilon(aS(b^{\lambda}))\\
&=&\varphi(-^\lambda)\epsilon(b^{\lambda}\overline\sqcap^L(a))=
(\varphi\ox \epsilon)\lambda(-\ox b\overline\sqcap^L(a))\\
&=&\varphi(-\act \overline\sqcap^R(b\overline\sqcap^L(a)))=
\varphi(-\act \vartheta^{-1}\sqcap^R(aS(b))).
\end{eqnarray*} 
In the third equality we applied \eqref{eq:counit_S} and in the last equality
we used that by Proposition 4.9, Lemma 6.11, Lemma 3.11 and identity (2.1) in
\cite{BoGTLC:wmba} and by the equality $T_2R_2=E_2$,
\begin{eqnarray*}
\vartheta^{-1} \sqcap^R(aS(b))&=&
\overline\sigma\,\overline\sqcap^L\mu(A\ox \sqcap^L)R_2(a\ox b)=
\overline\sqcap^R\mu(\overline\sqcap^L\ox A)R_2(a\ox b)\\
&=&\overline\sqcap^R(\epsilon\ox A)T_2R_2(a\ox b)=
\overline\sqcap^R(\epsilon\ox A)E_2(a\ox b)=
\overline\sqcap^R(b\overline\sqcap^L(a)).
\end{eqnarray*}
For $\Phi\in \mathsf{Hom}_R(V,R)$, $r\in R$, and the counit $\varepsilon$ on
$R$, 
$$
\varepsilon(r\act\Phi)=
\varepsilon(r\Phi(-))=
\varepsilon(\Phi(-)\vartheta^{-1}(r))=
\varepsilon\Phi(-\act \vartheta^{-1}(r))
$$ 
so the isomorphism in Proposition \ref{prop:R/k-dual} is an isomorphism of
left $R$-modules. 
\end{proof}

We are ready to prove the main result of the section:

\begin{theorem}\label{thm:dual}
Let $A$ be a regular weak multiplier bialgebra over a field $k$ with left
and right full comultiplication, possessing an antipode $S$. Then for any
finite dimensional full right $A$-comodule $V$, the right $A$-comodule
$V^*:=\mathsf{Lin}(V,k)$ in \eqref{eq:V^*_right} is the dual of $V$ in the
monoidal category $M^{(A)}$ of full right $A$-comodules in Theorem
\ref{thm:monoidal}. 
\end{theorem}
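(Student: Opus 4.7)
The plan is to exhibit $V^*$ as a dual of $V$ in the sense of a rigid pair by constructing an evaluation morphism $\mathsf{ev}\colon V^*\ox_R V\to R$ and a coevaluation morphism $\mathsf{coev}\colon R\to V\ox_R V^*$ in the monoidal category $M^{(A)}$ of Theorem \ref{thm:monoidal}, and then verifying the two triangle identities. Here $R$ is the monoidal unit supplied by Example \ref{ex:R_full}, and $\ox_R$ is the monoidal product of Proposition \ref{prop:product}. By Lemma \ref{lem:comod_mor} it will suffice to check the comodule compatibility for one of the structure maps of each comodule.

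For the evaluation I would use the $R$-bimodule isomorphism $V^*\cong \mathsf{Hom}_R(V,R)$ provided by Proposition \ref{prop:R/k-dual} and Proposition \ref{prop:V^*-R-actions}, and set $\mathsf{ev}(\Phi\ox_R v):=\Phi(v)$. Right $R$-linearity of $\Phi$ makes this well defined on $V^*\ox_R V$, and bimodule linearity in the $R$-valued sense is automatic. For the coevaluation I would exploit the finite dimensionality of $V$: choosing a $k$-basis $\{v_i\}\subset V$ with dual basis $\{\varphi_i\}\subset V^*$, I would set $\mathsf{coev}(r):=\sum_i (r\act v_i)\ox_R \varphi_i$. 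The sum is finite, and independence of the basis (together with $R$-bilinearity of the resulting map) follows from the standard dualizability of a finite dimensional firm $R$-bimodule with surjective actions, using Theorem \ref{thm:ff} (so $V$ is firm) and Proposition \ref{prop:V^*-R-actions} (to identify the bimodule structures on $V^*$).

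With these candidates in hand, the triangle identities reduce, after projecting through the canonical epimorphisms onto the $R$-balanced tensor products, to the familiar dual-basis equalities $\sum_i v_i\,\Phi(v_i)\mapsto v$ and $\sum_i\Phi(v_i)\,\varphi_i\mapsto \Phi$ (for $\Phi\in \mathsf{Hom}_R(V,R)$); this is routine once Proposition \ref{prop:V^*-R-actions} is used to match the bimodule structures. The substantive step — and the principal obstacle — is the comodule compatibility of $\mathsf{ev}$ and $\mathsf{coev}$. For $\mathsf{ev}$, unwinding the product coaction of Proposition \ref{prop:product} together with the explicit form \eqref{eq:V^*_right} of $\varrho^{*S}$ and the right comodule $R$ of Example \ref{ex:R_bim}, the verification collapses to the antipode identity $\mu(A\ox S)T_2=\mu(A\ox \sqcap^L)$ from \eqref{eq:S_id}. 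The harder task is $\mathsf{coev}$: one must show that summing $\varrho$ on the $V$-leg against $\lambda^{*S}$ on the $V^*$-leg (as dictated by the product coaction) and contracting via the dual basis reproduces the coaction $r\ox a\mapsto E(1\ox ra)$ on $R$. This is a convolution-inverse statement for $S$ against $\mathrm{id}_A$: after dual-basis collapse it amounts to the identity $\mu(S\ox A)T_1=\mu(\sqcap^R\ox A)$ of \eqref{eq:S_id}, combined with the normalization $(\lambda\ox A)\lambda^{13}(V\ox E_1)=(\lambda\ox A)\lambda^{13}$ and the fullness of $V$ (Definition \ref{def:full}) to absorb the idempotent $E_1$ correctly. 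The key technical inputs would be Lemma \ref{lem:l-r-E}, Lemma \ref{lem:T_1_S}, and the formulas of Lemma \ref{lem:prep_dual_comod}, together with the anti-(co)multiplicativity of $S$ recalled in Section \ref{sec:antipode}.
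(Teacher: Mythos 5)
Your proposal follows essentially the same route as the paper: the same evaluation $\Psi\ox_R w\mapsto \Psi(w)$ and dual-basis coevaluation $r\mapsto\sum_i r\act v_i\ox_R\varphi^i$, the same reduction of the triangle identities to the dual-basis equalities in ${}_R M_R$ via Proposition \ref{prop:V^*-R-actions}, and the same identification of the comodule-map property of $\mathsf{ev}$ and $\mathsf{coev}$ as the substantive step, settled by the antipode identities \eqref{eq:S_id} together with Lemma \ref{lem:prep_dual_comod} and the normalization absorbing $E_1$ (the paper packages the latter for $\mathsf{coev}$ as Lemma \ref{lem:lambda_tilde}~(2), i.e.\ $\lambda\lambda^{S21}=E_1$). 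The plan is correct.
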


\begin{proof}
Let us choose a finite $k$-basis $\{v_i\}$ in $V$ with dual
basis $\{\varphi^i\}$ in $V^*$. The isomorphism in Proposition
\ref{prop:R/k-dual} takes $\{\varphi^i\}$ to the set $\{\Phi^i\in
\mathsf{Hom}_R(V,R)$\}, satisfying 
$$
\sum_i v_i \act \Phi^i(w\act r)=
\sum_i (\varphi_i \ox V)((w\ox v_i)\delta(r))=
w\act (\mu\delta(r))=
w\act r,
$$
for any $w\in V$ and $r\in R$. Since the $R$-action on $V$ is surjective by
Theorem \ref{thm:ff}, this means that $\sum_i v_i \act \Phi^i(-)=V$ and hence
$V\ox_R V^*\cong V\ox_R \mathsf{Hom}_R(V,R)\cong \mathsf{Hom}_R(V,V)$ via 
\begin{equation}\label{eq:omega}
\kappa:V\ox_R V^* \to \mathsf{Hom}_R(V,V),\qquad
v \ox_R \psi\mapsto [w\act s\mapsto (\psi \ox V)((w\ox v)\delta(s))].
\end{equation}

We need to show that the evaluation map
$$
\mathsf{Hom}_R(V,R)\ox_R V\to R,\qquad
\Psi\ox_R w\mapsto \Psi(w)
$$
and the coevaluation map
$$
R \to V\ox_R \mathsf{Hom}_R(V,R),\qquad
r\mapsto \sum_i r\act v_i \ox_R \Phi^i=\sum_i v_i \ox_R \Phi^i\act r
$$
are morphisms of right $A$-comodules. Equivalently, using the ($R$-bimodule)
isomorphism $\mathsf{Hom}_R(V,R)\cong V^*$ in Proposition \ref{prop:R/k-dual},
\begin{equation}\label{eq:ev}
\mathsf{ev}:V^* \ox_R V\to R,\qquad
\psi \ox w\act r \mapsto (\psi \ox R)((w\ox 1)\delta(r))
\end{equation}
and 
\begin{equation}\label{eq:coev}
\mathsf{coev}:R\to V \ox_R V^*,\qquad
r \mapsto \sum_i r\act v_i \ox_R \varphi^i=
\sum_i v_i \ox_R \varphi^i\act r
\end{equation}
are morphisms of right $A$-comodules. 

We know from Proposition \ref{prop:product} that the canonical
epimorphism $\pi:V^*\ox V\to V^*\ox_R V$ is a morphism of $A$-comodules. Thus
by Remark \ref{rem:mono-epi}, $\mathsf{ev}$ is a morphism of
$A$-comodules if and only if $\mathsf{ev}\pi$ is so, that is,
\begin{equation}\label{eq:ev_colin}
(\mathsf{ev}\pi\ox A)\lambda^{*S13}(V^*\ox \lambda)
=\lambda_R(\mathsf{ev}\pi \ox A).
\end{equation}
By the idempotency of $A$, by surjectivity of the $R$-actions on $V$
(cf. Theorem \ref{thm:ff}) and Lemma \ref{lem:Pi_spanned}~(3), the domain
$V^*\ox V \ox A$ of these maps in \eqref{eq:ev_colin} is spanned
by elements of the form $\psi\ox \sqcap^R(b)\act v \ox
\sqcap^L(cd)a$, for $v\in V$, $\psi\in V^*$ and $a,b,c,d\in A$. So it
is enough to show that both sides of \eqref{eq:ev_colin} are equal if
evaluated on such elements. 
By Lemma \ref{lem:l-r-bim}~(7) and (1), $\psi\ox \sqcap^R(b)\act v \ox
\sqcap^L(cd)a$ is taken by $V^*\ox \lambda$ to
$$
\psi
\ox v^\lambda \overline\sqcap^R(cd)\ox
\sqcap^R(b)a^\lambda=
\psi \ox v^\lambda \overline\sqcap^R(cd)\ox
S(b^1)a^{\lambda 1},
$$
where the implicit summation index notation $\lambda(v\ox a)=v^\lambda\ox
a^\lambda$ and $T_1(b\ox a)=b^1\ox a^1$ is used and the equality follows by
(6.14) in \cite{BoGTLC:wmba} (cf. \eqref{eq:S_id}). Applying to this
$\lambda^{*S13}$ and using the implicit summation index notation $\varrho(-\ox
b)=(-)^\varrho\ox b^\varrho$, we get
$$
\psi(-^\varrho)\ox v^\lambda \overline\sqcap^R(cd)\ox
S(b^{1\varrho})a^{\lambda 1}.
$$
In light of \cite[Lemma 4.5~(2)]{BoGTLC:wmba}, denoting $T_1(c\ox d)=c^{1'}\ox
d^{1'}$, this is taken by $\mathsf{ev}\pi\ox A$ to 
\begin{eqnarray*}
\psi
\!\!\!\!\!\!\!\!&&\!\!\!\!\!\!\!\!
((v^\lambda\act \overline\sqcap^R(d^{1'}))^\varrho)
\overline\sqcap^R(c^{1'})\ox S(b^{1\varrho})a^{\lambda 1}\\
&=&
\psi(v^{\lambda\varrho})\overline\sqcap^R(c^{1'})\ox 
S(b^{1\varrho}\overline\sqcap^R(d^{1'}))a^{\lambda 1}\\
&=&
\psi(v^{\lambda\varrho})\overline\sqcap^R(c^{1'})\ox 
\sqcap^L(d^{1'})S(b^{1\varrho})a^{\lambda 1}\\
&=&
\psi(v^{\varrho})\overline\sqcap^R(c^{1'})\ox 
\sqcap^L(d^{1'})S(b^{\varrho 1})a^{1}\\
&=&
\psi(v^{\varrho})\overline\sqcap^R(c^1)\ox 
\sqcap^L(d^1)\sqcap^R(b^{\varrho })a\\
&=&
\psi(v^{\varrho})
E(1\ox \sqcap^L(cd)\sqcap^R(b^{\varrho})a)\\
&=&
E(1\ox \sqcap^L(cd) \mathsf{ev}(\psi\ox_R \sqcap^R(b)\act v) a)\\
&=&
E(1\ox \mathsf{ev}(\psi\ox_R \sqcap^R(b)\act v)\sqcap^L(cd) a)\\
&=&
\lambda_R(\mathsf{ev}\pi\ox A)(\psi\ox \sqcap^R(b)\act v \ox \sqcap^L(cd) a).
\end{eqnarray*}
The first equality follows by Lemma \ref{lem:l-r-bim}~(6), the second one does
by \cite[Lemma 6.14]{BoGTLC:wmba}, the third one does by
\eqref{eq:coass-r-prep-f}, the fourth one does by (6.14) in
\cite{BoGTLC:wmba} (cf. \eqref{eq:S_id}) and the fifth one follows since by
(2.3) in \cite{BoGTLC:wmba} and Lemma \ref{lem:Pi_on_E}~(4),
$$
\overline \sqcap^R(c^1)\ox \sqcap^L(d^1)=
(\M(A)\ox \sqcap^L)(E(1\ox cd))=
E(1\ox \sqcap^L(cd)).
$$
In the sixth equality we used that by Lemma \ref{lem:prep_dual_comod}~(1)
and by surjectivity of the right $R$-action on $V$, $(\psi\ox
\sqcap^R)\varrho(v\ox b)= \mathsf{ev}(\psi\ox_R \sqcap^R(b)\act v)$, for any
$v\in V$, $\psi\in V^*$ and $b\in A$. 
In the penultimate equality we made use of (3.9) in \cite{BoGTLC:wmba}.
This proves that \eqref{eq:ev_colin} holds hence $\mathsf{ev}$ is a morphism
of $A$-comodules.

The coevaluation map $\mathsf{coev}:R \to V\ox_R V^*$ factorizes through the
canonical epimorphism $\pi:V\ox V^*\to V\ox_R V^*$ (which is a morphism of
$A$-comodules by Proposition \ref{prop:product}) via the map
$$
\mathsf{coev}': R \to V\ox V^*,\qquad 
r\mapsto  \sum_i r\act v_i \ox \varphi^i.
$$ 
Hence $\mathsf{coev}$ is a morphism of $A$-comodules if and only if 
\begin{equation}\label{eq:coev_colin}
(\kappa\,\pi\,\mathsf{coev}'\ox A)\lambda_R=
(\kappa\,\pi\ox A)\lambda^{13}(V\ox \lambda^{*S})(\mathsf{coev}'\ox A),
\end{equation}
where $\kappa$ is the isomorphism in \eqref{eq:omega}. By \cite[Proposition
6.13]{BoGTLC:wmba}, the domain $R\ox A$ of these maps in
\eqref{eq:coev_colin} is spanned by elements of the form $r \ox S(b)a$, for
$a,b\in A$ and $r\in R$. So it is enough to prove that both sides of
\eqref{eq:coev_colin} are equal if evaluated on such elements. Applying
$\mathsf{coev}'\ox A$ to $r \ox S(b)a$ and omitting the summation symbol for
brevity, we obtain $rv_i\ox \varphi^i\ox S(b)a$. This is taken by $V\ox
\lambda^{*S}$ to $rv_i \ox \varphi^i(-^\varrho)\ox S(b^\varrho)a$ (where the
implicit summation index notation $\varrho(-\ox b)=(-)^\varrho \ox b^\varrho$
is used) and then by $(\pi\ox A)\lambda^{13}$ to $(rv_i)^\lambda \ox_R
\varphi^i(-^\varrho)\ox (S(b^\varrho)a)^\lambda$ (where we introduced the
implicit summation index notation $\lambda(v\ox a)=v^\lambda\ox
a^\lambda$). Applying $\kappa\ox A$ (cf. \eqref{eq:omega}), and introducing
the implicit summation index notation $\delta(s)=s_1\ox s_2$ for the
comultiplication $\delta$ of $R$ and any $s\in R$, we get
$$
[w\act s\mapsto 
\varphi^i((w\act s_1)^{\varrho})
(rv_i)^\lambda \act s_2] \ox (S(b^{\varrho})a)^\lambda.
$$
Now 
\begin{eqnarray}\label{eq:rhs}
\varphi^i((w\act s_1)^{\varrho})
\lambda(rv_i\ox S(b^{\varrho})a)(s_2\ox 1)
&=&
\lambda(r (w\act s_1)^{\varrho}\ox 
S(b^{\varrho})a)(s_2\ox 1)\\
&=&
(1\ox r)(\lambda\lambda^{S21}(ws_1\ox S(b)a))(s_2\ox 1) \nonumber\\
&=&
(1\ox r)E(w\act s_1\ox S(b)a))(s_2\ox 1)\nonumber\\
&=&
E(w\act s\ox r S(b)a)).\nonumber
\end{eqnarray}
In the second equality we applied Lemma \ref{lem:l-r-bim}~(7) and in the
penultimate equality we applied Lemma \ref{lem:lambda_tilde}~(2) (the map
$\lambda^S$ appearing here is that in \eqref{eq:rightS}). The last equality
holds by identity (3.7) in \cite{BoGTLC:wmba} and the fact that the
comultiplication of $R$ splits its multiplication (i.e $s_1s_2=s$). 

On the other hand, applying $\lambda_R$ to $r \ox S(b)a$ we obtain $E(1\ox r
S(b)a)$. Applying to this $\mathsf{coev}\ox A=\pi\,\mathsf{coev}'\ox A$ we get
$(\pi\ox A)[(v_i\ox \varphi^i\ox 1)(1\ox E(1\ox rS(b)a))]$ which is taken by
the isomorphism $\kappa\ox A$ (cf. \eqref{eq:omega}) to   
\begin{equation}\label{eq:lhs}
[w\act s\mapsto v_i\act s_2]\ox 
(\varphi^i \ox A)E_1(ws_1 \ox rS(b)a)=
E(-\ox rS(b)a).
\end{equation}
The first expression is obtained applying Proposition \ref{prop:V^*-R-actions}
and the second one is obtained using again that $s_1s_2=s$. Comparing
\eqref{eq:rhs} and \eqref{eq:lhs}, we conclude that \eqref{eq:coev_colin}
holds hence $\mathsf{coev}$ is a morphism of $A$-comodules too. 
\end{proof}

\section{Hopf modules}\label{sec:Hopf_mod}

This section is devoted to the study of Hopf modules over a regular weak
multiplier bialgebra $A$. These are vector spaces carrying compatible
(non-degenerate idempotent) module and (full) comodule structures. Whenever
the comultiplication is left and right full and there exists an antipode, we
prove the Fundamental Theorem of Hopf Modules. That is, an equivalence between
the category of $A$-Hopf modules and the category of firm modules over the
base algebra $\sqcap^L(A)=\overline \sqcap^L(A)$.
 
\begin{definition}\label{def:Hopf_mod}
For a regular weak multiplier bialgebra $A$, a {\em right-right Hopf module}
is a vector space $V$ carrying the following structures. 
\begin{itemize}
\item $V$ is a non-degenerate right $A$-module with a surjective action
 $\cdot\ : V\ox A\to V$,
\item $(V,\lambda,\varrho)$ is a full right $A$-comodule, 
\end{itemize}
obeying the following equivalent conditions.
\begin{eqnarray}
&&\varrho(\cdot \ox A)=(\cdot \ox A)(V\ox T_3)\varrho^{13}\label{eq:Hr}\\
&&\lambda(\cdot \ox A)=(\cdot \ox A)\lambda^{13}(V\ox T_1).\label{eq:Hl}
\end{eqnarray}
A morphism of Hopf modules is a linear map which is both a morphism of modules
and a morphism of comodules.
The category of $A$-Hopf modules will be denoted by $M^{(A)}_{(A)}$.
\end{definition}

Conditions \eqref{eq:Hr} and \eqref{eq:Hl} are equivalent, indeed: any
one of them asserts precisely that $\cdot$ is a comodule map
(with respect to the comodule structure in Lemma \ref{lem:k_product} on the
domain).

\begin{example}\label{ex:reg_Hopf_mod}
Consider a regular weak multiplier bialgebra $A$ with right full
comultiplication. Via the multiplication $\mu:A\ox A \to A$ and the maps 
$T_1,T_3:A\ox A \to A\ox A$, $A$ is an $A$-Hopf module itself by the first
condition in axiom (v) in Definition \ref{def:wmba}.
\end{example}

\begin{remark}
A usual, unital (weak) bialgebra $A$ can be regarded as a monoid in the
monoidal category of $A$-comodules. An $A$-Hopf module is then precisely a
module over the monoid $A$ in the category of $A$-comodules. 

For a regular weak multiplier bialgebra $A$ with right full comultiplication a
similar interpretation is possible. It follows by Example \ref{ex:A_bim} that
the multiplication $\mu:A\ox A\to A$ factorizes as the canonical epimorphism
$\pi:A\ox A\to A\ox_R A$ composed with an associative $R$-bilinear
multiplication $\mu_R:A\ox_R A \to A$. 
By axiom (v) in Definition \ref{def:wmba} $\mu$ is a morphism of
$A$-comodules; hence so is $\mu_R$ by Remark \ref{rem:mono-epi}. That is to
say, via the multiplication $\mu_R$, $A$ is a (non-unital) monoid in $M^{(A)}$.

For any associative action $\cdot:V\ox A \to V$, it follows by \eqref{eq:G_1}
and by \cite[Lemma 3.7~(2)]{BoGTLC:wmba} that for any $v\in V$ and $a,b\in A$
\begin{equation}\label{eq:.G_1}
\cdot\ G_1(v\ox ab)=
v\cdot \mu^{\mathsf{op}}(A\ox \overline \sqcap^R)T_4(b\ox a)=
v\cdot ab.
\end{equation}
So by the idempotency of $A$ and Proposition \ref{prop:ox_R}, the $A$-actions
$\cdot:V\ox A \to V$ are in a bijective correspondence with the $A$-actions
$\star:V\ox_R A \to V$ such that $\cdot=\star \pi$. Moreover, if $V$ carries a
right $A$-comodule structure $(\lambda,\varrho)$, then \eqref{eq:Hr} 
(equivalently, \eqref{eq:Hl}) asserts precisely that $\cdot$ is a comodule
map. Hence by Remark \ref{rem:mono-epi}, any of \eqref{eq:Hr} and 
\eqref{eq:Hl} is equivalent to $\star$ being a morphism of
$A$-comodules; that is, to $(V,\star)$ being a module over $(A,\mu_R)$ in
$M^{(A)}$.
\end{remark}

\begin{proposition}\label{prop:ind_functor}
Let $A$ be a regular weak multiplier bialgebra with left and right full
comultiplication. Denote by $L$ the coinciding range of the maps $\sqcap^L$
and $\overline \sqcap^L:A\to \M(A)$ and regard $A$ as a left $L$-module via
the multiplication in $\M(A)$. There is a functor $(-)\ox_L A$ from the
category $M_L$ of firm right $L$-modules to the category $M^{(A)}_{(A)}$ of
$A$-Hopf modules.
\end{proposition}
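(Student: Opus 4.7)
The plan is to induce the Hopf module structure on $V:=N\ox_L A$ from that of $A$ itself (Example \ref{ex:reg_Hopf_mod}). First, equip $V$ with the right $A$-action $(n\ox_L a)\cdot b:=n\ox_L ab$; this is well-defined since the left $L$-action on $A$ is by multiplication in $\M(A)$ and hence commutes with right multiplication on $A$, non-degenerate by non-degeneracy of $A$ itself, and surjective by the idempotency of $A$ together with firmness of $N$ as a right $L$-module.

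Next, I would define coaction maps $\lambda_V,\varrho_V:V\ox A\to V\ox A$ by
\[
\lambda_V((n\ox_L a)\ox b):=(n\ox_L a^1)\ox b^1,\qquad
\varrho_V((n\ox_L a)\ox b):=(n\ox_L a^3)\ox b^3,
\]
with the implicit summation notation $T_1(a\ox b)=a^1\ox b^1$ and $T_3(a\ox b)=a^3\ox b^3$. The main obstacle is to verify that these formulas descend to the $L$-balanced tensor product: one must show that for $\ell\in L$ and $a,b\in A$, the differences
\[
(n\ell\ox_L a^1)\ox b^1-(n\ox_L (\ell a)^1)\ox (\ell a)^{1'}\quad\text{and}\quad(n\ell\ox_L a^3)\ox b^3-(n\ox_L (\ell a)^3)\ox (\ell a)^{3'}
\]
vanish in $V\ox A$. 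This reduces to identities describing how left multiplication by $\ell$ interacts with $T_1$ and $T_3$, which I would derive from axiom (vii) of Definition \ref{def:wmba}, from Lemma \ref{lem:Pi_on_E}, and from the balancing identities \eqref{eq:T_1_balanced} and \eqref{eq:T_1_L_mod} that already appeared in Example \ref{ex:A_com}, exploiting the characterization of how $L=\sqcap^L(A)=\overline\sqcap^L(A)$ sits inside $A$ via the idempotent $E$.

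Once well-definedness is in hand, the comodule axioms \eqref{eq:comp}, \eqref{eq:l-i-norm}, and \eqref{eq:l-coass} for $(V,\lambda_V,\varrho_V)$ follow directly from the corresponding identities on $(A,T_1,T_3)$ established in Example \ref{ex:A_com}, since the $N$-factor is inert in the definitions of $\lambda_V$ and $\varrho_V$. Fullness of $V$ follows from right fullness of the comultiplication of $A$: the identity $(V\ox \omega)\lambda_V((n\ox_L a)\ox b)=n\ox_L (A\ox \omega)T_1(a\ox b)$, together with the fact that $\{(A\ox \omega)T_1(a\ox b)\mid a,b\in A,\ \omega\in\mathsf{Lin}(A,k)\}$ spans $A$, shows that $\{(V\ox \omega)\lambda_V((n\ox_L a)\ox b)\}$ spans all of $V$. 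The Hopf module compatibility \eqref{eq:Hr} for $V$ reduces likewise to the second equality in axiom (v) of Definition \ref{def:wmba}, i.e.\ to the Hopf module structure on $A$ itself.

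Finally, on morphisms the functor sends a right $L$-linear map $f:N\to N'$ to $f\ox_L A:V\to V'$. This is automatically a morphism of right $A$-modules and of right $A$-comodules, since both structures only involve the $A$-factor and leave the $N$-factor untouched by $f$; hence it is a morphism of Hopf modules, and functoriality is immediate. The principal difficulty throughout is the well-definedness of $\lambda_V$ and $\varrho_V$ on $N\ox_L A$: this is where the careful bookkeeping of the interaction between $L$, $E$, and $\Delta$ developed in Section \ref{sec:prelims} enters decisively.
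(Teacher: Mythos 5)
Your construction is essentially the paper's: every structure on $N\ox_L A$ is induced from the Hopf module $(A,\mu,T_1,T_3)$ of Example \ref{ex:reg_Hopf_mod}, using that $\mu$, $T_1$ and $T_3$ are morphisms of left $L$-modules, and fullness, the Hopf compatibility and functoriality are read off from $A$ itself. Two points, however, need attention.

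First, a genuine gap: you assert that the right $A$-action on $N\ox_L A$ is non-degenerate ``by non-degeneracy of $A$ itself''. This does not suffice. Non-degeneracy is available for $N\ox A$ (this is \cite[Lemma 1.11]{JaVe}), but $N\ox_L A$ is a quotient of $N\ox A$, and non-degeneracy does not descend to quotient modules in general: from $\sum_i n_i\ox_L a_ib=0$ for all $b\in A$ one cannot directly conclude $\sum_i n_i\ox_L a_i=0$, since the hypothesis only says that certain elements of $N\ox A$ lie in the span of the $L$-balancing relations. The paper closes exactly this gap by using that $L$ is a coseparable coalgebra: its comultiplication $\delta:L\to L\ox L$ is an $L$-bimodule section of the multiplication, so the canonical epimorphism $N\ox A\to N\ox_L A$ splits via $n\ox_L \ell a\mapsto (n\ox 1)\delta(\ell)(1\ox a)$; one lifts the hypothesis to $N\ox A$, applies non-degeneracy there, and pushes back down. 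Some argument of this kind is indispensable and is missing from your proposal.

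Second, a smaller inaccuracy: the well-definedness of $\lambda_V$ and $\varrho_V$ on the $L$-balanced tensor product amounts precisely to the identities $T_1(\ell a\ox b)=(\ell\ox 1)T_1(a\ox b)$ and $T_3(\ell a\ox b)=(\ell\ox 1)T_3(a\ox b)$ for $\ell\in L$, i.e.\ to $T_1$ and $T_3$ being left $L$-module maps in their first leg; this is \cite[Lemma 3.3]{BoGTLC:wmba}, which the paper cites directly. The identities \eqref{eq:T_1_balanced} and \eqref{eq:T_1_L_mod} that you invoke instead concern the action of $R$ on the other leg and are not the ones needed here, although the required statement belongs to the same family and is certainly available. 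With these two repairs your argument coincides with the paper's proof.
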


\begin{proof}
Since both $A$ and $L$ are subalgebras of the associative algebra $\M(A)$, the
$A$-action provided by the multiplication of $A$ is a morphism of left
$L$-modules. The maps $T_1$ and $T_3$ are also left $L$-module maps by
\cite[Lemma 3.3]{BoGTLC:wmba}. These observations imply that the $A$-Hopf
module in Example \ref{ex:reg_Hopf_mod} induces an $A$-module structure $P\ox_L
\mu$ and an $A$-comodule structure $(P\ox_L T_1,P\ox_L T_3)$ on $P\ox_L A$,
for any firm right $L$-module $P$. By the surjectivity of $\mu$, this
$A$-action is surjective. Let us see that it is also non-degenerate. Denote by
$\delta:L\to L\ox L$ the comultiplication in the coseparable coalgebra $L$. If
for some $p\in P$, $l\in L$ and $a\in A$ we have $p\act l\ox_L ab=0$ for all
$b\in A$, then also $(p\ox 1)\delta(l)(1\ox ab)\in P\ox A$ is equal to zero
for all $b\in A$. By \cite[Lemma 1.11]{JaVe}, $P\ox A$ is a non-degenerate right
$A$-module hence $0=(p\ox 1)\delta(l)(1\ox a)$. Applying the epimorphism $P\ox
A\to P\ox_L A$ and using that $\delta$ is a section of the multiplication in
$L$, we conclude that $p\act l\ox_L a=0$ as needed. Since the comultiplication
of $A$ is right full by assumption, the comodule $P\ox_L A$ is also full so
that $P\ox_L A$ is an $A$-Hopf module.
For any right $L$-module map $f:P\to P'$, $f\ox_L A$ is evidently a morphism
of Hopf modules. 
\end{proof}

If $A$ is a regular non-weak multiplier bialgebra (i.e. it obeys the
equivalent conditions in \cite[Theorem 2.11]{BoGTLC:wmba}) with left and right
full comultiplication, then the functor in Proposition \ref{prop:ind_functor}
reduces to the functor $(-)\ox A$ from the category of vector
spaces to the category $M^{(A)}_{(A)}$ of right-right $A$-Hopf modules,
possessing a right adjoint $\mathsf{Hom}^{(A)}_{(A)}(A,-)$. An
analogous result fails to hold if $A$ is a weak multiplier bialgebra since then
$\mathsf{Hom}^{(A)}_{(A)}(A,V)$ may not be a firm right $L$-module for an
arbitrary Hopf module $V$. However, if the comultiplication is left and right
full and there exists an antipode for $A$, then the functor $(-)\ox_L
A$ in Proposition \ref{prop:ind_functor} turns out to be even an
equivalence. But its inverse is no longer of the form
$\mathsf{Hom}^{(A)}_{(A)}(A,-)$ as we shall see below. 

\begin{proposition}\label{prop:omega}
Let $A$ be a regular weak multiplier bialgebra with left and right full
comultiplication possessing an antipode $S$. For any right-right $A$-Hopf
module $(V,\cdot,\lambda,\varrho)$, there is a linear map 
$$
\varpi_V:V\to \mathsf{Hom}^{(A)}_{(A)}(A,V),\qquad
v\mapsto \cdot \ \lambda^{S21}(v\ox -),
$$
where $\lambda^S$ is the map from \eqref{eq:rightS}.
\end{proposition}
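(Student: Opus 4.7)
The task is to check three things: (i) $\varpi_V(v)$ is a well-defined linear map $A\to V$ for each $v\in V$; (ii) it is a right $A$-module map; and (iii) it is a right $A$-comodule map. For (i), note that $\lambda^{S21}$ is already a well-defined linear map $V\ox A\to V\ox A$ (by Theorem~\ref{thm:S_functor}, as also exploited in Lemma~\ref{lem:lambda_tilde}); well-definedness there rests on the spanning elements $S(b)a'$ of $A$ (\cite[Proposition 6.13]{BoGTLC:wmba}) and on Lemma~\ref{lem:S_nd}. Composing with the linear action $\cdot:V\ox A\to V$ yields $\varpi_V(v)$, and the assignment $v\mapsto\varpi_V(v)$ is linear by construction.

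For (ii), the explicit formula $\lambda^{S21}(v\ox S(b)a')=((V\ox S)\varrho(v\ox b))(1\ox a')$ on the spanning set of $A$ shows that $\lambda^{S21}$ is right $A$-linear in its second argument. Combined with associativity of $\cdot$, this gives
\[
\varpi_V(v)(xy)=\cdot\,\lambda^{S21}(v\ox xy)=\cdot\,\bigl(\lambda^{S21}(v\ox x)(1\ox y)\bigr)=\varpi_V(v)(x)\cdot y,\qquad x,y\in A.
\]

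The substantive step is (iii). I would verify the equivalent condition $\varrho(\varpi_V(v)(a)\ox b)=(\varpi_V(v)\ox A)T_3(a\ox b)$ for all $a,b\in A$. Applying \eqref{eq:Hr} on the left-hand side (with $\lambda^{S21}(v\ox a)\in V\ox A$ playing the role of the Hopf-module input) rewrites it as $(\cdot\ox A)(V\ox T_3)\varrho^{13}(\lambda^{S21}(v\ox a)\ox b)$. Specializing to $a=S(c)d$ and using $\lambda^{S21}(v\ox S(c)d)=v^\varrho\ox S(c^\varrho)d$ (where $\varrho(v\ox c)=v^\varrho\ox c^\varrho$), this becomes an expression involving a second application of $\varrho$ to $v^\varrho$ together with $T_3$ applied to $S(c^\varrho)d$ paired with the resulting second leg. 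On the other side, one expands $T_3(S(c)d\ox b)=(1\ox b)\Delta(S(c))\Delta(d)$ using multiplicativity of $\Delta$ and anti-comultiplicativity of $S$ (\cite[Corollary 6.16]{BoGTLC:wmba}), and then applies $\varpi_V(v)\ox A$ by invoking the defining formula for $\lambda^{S21}$ once more. The two expressions are matched by the coassociativity of $\varrho$ in the form of Proposition~\ref{prop:prep_comod}(4.d), the anti-multiplicativity of $S$ (\cite[Theorem 6.12]{BoGTLC:wmba}), and the antipode identities \eqref{eq:S_id}.

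The principal obstacle is the bookkeeping: one must correctly track two distinct applications of $\varrho$ to $v$ and align them with the two antipodes produced by the identity $\Delta\circ S=(\overline{S\ox S})\Delta^{21}$, while also handling the leg-numbering of $\varrho^{13}$ and the action of $T_3$ on the compound element $S(c)d$. All ingredients are in place from Section~\ref{sec:prelims} and from Lemma~\ref{lem:lambda_tilde}; no new conceptual tool is required, but the calculation is delicate.
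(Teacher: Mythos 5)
Your steps (i) and (ii) are fine and agree with the paper (the paper disposes of them in one sentence: since $\lambda^{S21}$ is a right $A$-module map, so is $\varpi_V(v)$). The problem is step (iii), which is the entire substance of the proposition and which you do not actually carry out: after setting up the two sides of $\varrho(\varpi_V(v)(a)\ox b)=(\varpi_V(v)\ox A)T_3(a\ox b)$ you assert that they ``are matched by'' coassociativity, anti-multiplicativity of $S$ and \eqref{eq:S_id}, and you yourself flag the calculation as delicate. That matching is precisely where the weak structure bites: expanding $T_3(S(c)d\ox b)=(1\ox b)\Delta(S(c))\Delta(d)$ on one side versus two successive applications of $\varrho$ on the other produces expressions that differ by an occurrence of the idempotent $E$ (equivalently $E_1$, $E_2$ or $G_1$), and your sketch nowhere explains how that idempotent is absorbed. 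Without invoking the normalization conditions \eqref{eq:r-i-norm}/\eqref{eq:l-i-norm}, the identity $\cdot\,G_1=\cdot$ from \eqref{eq:.G_1}, and a commutation of $E_1$ with $T_1$ (which in the paper comes from axiom (vii)), the two sides simply do not coincide term by term. So as written the proposal has a genuine gap rather than a complete, different proof.

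For comparison, the paper routes around the Sweedler-type bookkeeping entirely by exploiting the weak-invertibility package of Lemma \ref{lem:lambda_tilde}: starting from $\lambda(\varpi_V(v)\ox A)=\lambda(\cdot\,\lambda^{S21}\ox A)$ it applies \eqref{eq:Hl}, inserts $G_1=\lambda^{S21}\lambda$ in front of the action (legitimate by $\cdot\,G_1=\cdot$), uses coassociativity \eqref{eq:l-coass} to slide $\lambda$ past $V\ox T_1$, replaces $\lambda\lambda^{S21}$ by $E_1$, commutes $E_1$ past $T_1$ via axiom (vii), and finally absorbs $E_1$ into $\lambda^{S21}$ by Lemma \ref{lem:lambda_tilde}~(1). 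If you want to salvage your direct computation, you will in effect have to reprove parts (2) and (3) of Lemma \ref{lem:lambda_tilde} inside your calculation; it is cleaner to quote them, as the paper does.
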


\begin{proof}
Since $\lambda^{S21}$ is a morphism of right $A$-modules, so is
$\varpi_V(v)$, for any $v\in V$. Let us see that $\varpi_V(v)$ is also a
morphism of $A$-comodules. The proof of this is analogous to \cite[Lemma
4.4]{VDaZha:corep_I} in the non-weak case. For any $v\in V$ and $a,b\in A$, 
\begin{eqnarray*}
\lambda(\varpi_V(v)\ox A)(a\ox b)&=&
\lambda(\cdot \ \lambda^{S21} \ox A)(v\ox a\ox b)\\
&\stackrel{\eqref{eq:Hl}}=&
(\cdot \ \ox A)\lambda^{13}(V\ox T_1)(\lambda^{S21} \ox A)(v\ox a\ox b)\\
&\stackrel{\eqref{eq:.G_1}}=&
(\cdot \ \ox A)(G_1\ox A)\lambda^{13}(V\ox T_1)(\lambda^{S21} \ox A)
(v\ox a\ox b)\\
&=&(\cdot \ \ox A)(\lambda^{S21}\lambda\ox A)\lambda^{13}(V\ox T_1)
(\lambda^{S21} \ox A)(v\ox a\ox b)\\
&\stackrel{\eqref{eq:l-coass}}=&
(\cdot \ \ox A)(\lambda^{S21} \ox A)(V\ox T_1)
(\lambda\lambda^{S21} \ox A)(v\ox a\ox b)\\
&=&(\cdot \ \ox A)(\lambda^{S21} \ox A)(V\ox T_1)(E_1\ox A)
(v\ox a\ox b)\\
&=&(\cdot \ \ox A)(\lambda^{S21} \ox A)(E_1\ox A)(V\ox T_1)
(v\ox a\ox b)\\
&=&(\cdot \ \ox A)(\lambda^{S21} \ox A)(V\ox T_1)(v\ox a\ox b)\\
&=&(\varpi_V(v)\ox A)T_1(a\ox b).
\end{eqnarray*} 
The fourth, sixth and eighth equalities follow by parts (3), (2) and (1) of
Lemma \ref{lem:lambda_tilde}, respectively. In the seventh equality we used
that by 
axiom (vii) in Definition \ref{def:wmba}, for any $v\in V$ and $a,b\in A$,   
\begin{eqnarray*}
(V\ox T_1)(E_1\ox A)(v\ox a\ox b)&=&
((-)\cdot v \ox A \ox A)(R\ox T_1)[E(1\ox a) \ox b]\\
&=&((-)\cdot v \ox A \ox A)[(E\ox 1)(1\ox T_1(a\ox b))]\\
&=&(E_1\ox A)(V\ox T_1)(v\ox a\ox b).
\end{eqnarray*}
\end{proof}

\begin{example}\label{ex:A_omega}
Let $A$ be a regular weak multiplier bialgebra with left and right full
comultiplication possessing an antipode $S$, and consider its right-right Hopf
module $A$ in Example \ref{ex:reg_Hopf_mod}. For any $a,b,c,d\in A$, it
follows form (6.3) in \cite{BoGTLC:wmba}, the anti-multiplicativity of $S$
(cf. \cite[Theorem 6.12]{BoGTLC:wmba}) and \eqref{eq:T_23} that
\begin{eqnarray*}
(d\ox 1)R_1(c\ox S(b)a)
&=&((A\ox S)T_2(d\ox c))(1\ox S(b)a)\\
&=&((A\ox S)((1\ox b)T_2(d\ox c)))(1\ox a)\\
&=&((A\ox S)((d\ox 1)T_3(c\ox b)))(1\ox a)\\
&=&(d\ox 1)((A\ox S)T_3(c\ox b))(1\ox a).
\end{eqnarray*}
Simplifying by $d$, we conclude that
$$
T_1^{S21}(c\ox S(b)a)
\stackrel{\eqref{eq:rightS}}=((A\ox S)T_3(c\ox b))(1\ox a)
=R_1(c\ox S(b)a);
$$
that is, (in view of \cite[Proposition 6.13]{BoGTLC:wmba})
$T_1^{S21}=R_1$. The corresponding map in Proposition \ref{prop:omega} comes
out as 
$$
\varpi_A(a)(b)=\mu R_1(a\ox b)=\sqcap^L(a)b
$$
for any $a,b\in A$, where the last equality follows by \cite[Lemma
6.10]{BoGTLC:wmba}. 
\end{example}

\begin{proposition}\label{prop:omega_surjective}
Let $A$ be a regular non-weak multiplier bialgebra (i.e. a regular weak
multiplier bialgebra satisfying the equivalent assertions in \cite[Theorem
2.11]{BoGTLC:wmba}) with left and right full comultiplication possessing an
antipode $S$. For any right-right $A$-Hopf module $(V,\cdot,\lambda,\varrho)$,
the map $\varpi_V$ in Proposition \ref{prop:omega} is surjective.
\end{proposition}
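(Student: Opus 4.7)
The strategy is to exhibit an explicit preimage under $\varpi_V$ for an arbitrary Hopf module map $f \colon A \to V$. In the non-weak case, $E = 1\ox 1$, so $E_1$ is the identity map and \eqref{eq:Pi} collapses to $\sqcap^L(a) = \epsilon(a)\cdot 1_{\M(A)}$; moreover axiom (iv) of Definition \ref{def:wmba} forces $\epsilon \neq 0$, so there exists $a_0 \in A$ with $\epsilon(a_0) = 1$. The plan is to show that $v := f(a_0) \in V$ satisfies $\varpi_V(v) = f$.

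To verify this I would first reduce the equality $\varpi_V(f(a_0)) = f$ of linear maps $A \to V$ to testing on a spanning set, using that by \cite[Proposition 6.13]{BoGTLC:wmba}, $A$ is spanned by elements of the form $S(d)c$ with $c, d \in A$. Then I would compute $\varpi_V(f(a))(S(d)c)$ for arbitrary $a,c,d \in A$: combining the formula \eqref{eq:lambda_tilde} for $\lambda^{S21}$ with the comodule map property $\varrho(f\ox A) = (f\ox A)T_3$ of $f$ (as a morphism from the $A$-comodule of Example \ref{ex:A_com}) and the right $A$-module property of $f$, the value should reduce to $f\bigl([\mu(A\ox S)T_3(a\ox d)]\cdot c\bigr)$. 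The crucial identity is then
\[
\mu(A\ox S)T_3(a\ox d) = \epsilon(a)\, S(d) \qquad\text{in } \M(A),
\]
which, given $T_3(a\ox d) = (1\ox d)\Delta(a)$, follows from the anti-multiplicativity of $S$ (\cite[Theorem 6.12]{BoGTLC:wmba}) together with $\mu(A\ox S)\Delta = \sqcap^L$ contained in \eqref{eq:S_id} and the non-weak identification $\sqcap^L = \epsilon(-)\cdot 1_{\M(A)}$ established above. Plugging this in yields $\varpi_V(f(a))(S(d)c) = \epsilon(a)\, f(S(d)c)$ by $k$-linearity of $f$, and specialising $a = a_0$ closes the argument.

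The principal technical hurdle is the rigorous interpretation of the identity $\mu(A\ox S)T_3(a\ox d) = \epsilon(a)\, S(d)$ in the multiplier formalism: with the implicit convention $T_3(a\ox d) = a^3\ox d^3$, the summed expression $a^3 S(d^3)$ does not live in $A$ but only in $\M(A)$, so the equality must be verified in $\M(A)$ by pairing on the right against arbitrary $c \in A$ (giving $[a^3 S(d^3)]c = \epsilon(a)\,S(d)c \in A$). Apart from this, the argument is a direct manipulation with the Hopf module axioms and the spanning property of \cite[Proposition 6.13]{BoGTLC:wmba}.
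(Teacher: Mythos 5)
Your proposal is correct and follows essentially the same route as the paper: both arguments reduce to showing $\varpi_V(f(a))=\epsilon(a)f$ for every Hopf module map $f:A\to V$ and then choosing $a$ with $\epsilon(a)=1$. The only difference is the bookkeeping — the paper gets there via $(f\ox A)R_1=\lambda^{S21}(f\ox A)$ and $\mu R_1(a\ox b)=\sqcap^L(a)b$, while you unwind $\lambda^{S21}$ through $\varrho$ and the identity $\mu(A\ox S)T_3=\mu(\sqcap^L\ox S)$ (established in the paper's proof of Lemma \ref{lem:zeta}), both of which collapse to $\epsilon(a)$ in the non-weak case.
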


\begin{proof}
Under the hypotheses of the proposition, it follows by Lemma
\ref{lem:lambda_tilde} that $\lambda^{S21}$ is the inverse
of $\lambda$. In particular, $R_1$ is the inverse of $T_1$. 

Take a morphism of Hopf modules $f:A\to V$. It is a right $A$-module map
satisfying $\lambda(f\ox A)=(f\ox A)T_1$, equivalently, $(f\ox A)R_1=
\lambda^{S21}(f\ox A)$. Therefore for any $a,b\in A$,
$$
\varpi_V(f(a))(b)=
\cdot \ \lambda^{S21}(f(a)\ox b)
=\cdot\ (f\ox A)R_1(a\ox b)
=f\mu R_1(a\ox b)=
\epsilon(a)f(b).
$$
where in the penultimate equality we used that $f$ is a right $A$-module map
and the last equality follows by \cite[Lemma 6.10]{BoGTLC:wmba}. Thus if we
choose $a\in A$ such that $\epsilon(a)=1$, then $\varpi_V(f(a))=f$. 
\end{proof}

\begin{example}
The equivalent assertions in \cite[Theorem 2.11]{BoGTLC:wmba} are really
needed to prove Proposition \ref{prop:omega_surjective}. If they do not hold,
then the discussed map $\varpi_V$ may not be surjective for all Hopf modules
$V$. Consider e.g. an arbitrary groupoid $C$ and the weak multiplier bialgebra
$A:=kC$ in \cite[Example 2.12]{BoGTLC:wmba}, spanned by the morphisms in
$C$. For its right-right Hopf module $A=kC$ in Example \ref{ex:reg_Hopf_mod},
it follows by Example \ref{ex:A_omega} that the range of $\varpi_A$ is the
range of the map $\sqcap^L:A\to \M(A)$; which is spanned by the identity
morphisms in $C$. This is a proper non-unital subalgebra of
$\mathsf{Hom}^{(A)}_{(A)}(A,A)$ (which contains also infinite linear
combinations of the identity morphisms of $C$ regarded as multipliers on
$A=kC$).
\end{example}

\begin{definition}\label{def:coinv}
Consider a regular weak multiplier bialgebra $A$ with left and right full
comultiplication possessing an antipode $S$. For any right-right $A$-Hopf
module $V$, the {\em coinvariant space} is defined as the range
$V^c:=\varpi_V(V)\subseteq \mathsf{Hom}^{(A)}_{(A)}(A,V)$ of the map in
Proposition \ref{prop:omega}. 
\end{definition}

\begin{proposition}\label{prop:coinv_functor}
For any regular weak multiplier bialgebra $A$ with left and right full
comultiplication possessing an antipode $S$, there is a functor $(-)^c$ from
the category $M^{(A)}_{(A)}$ of right-right $A$-Hopf modules to the category
$M_L$ of firm right modules over the base algebra $L:=\sqcap^L(A)=\overline
\sqcap^L(A)$. 
\end{proposition}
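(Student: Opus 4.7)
The plan is to endow each coinvariant space $V^c$ with a firm right $L$-module structure, extend the construction to morphisms of Hopf modules, and verify functoriality.

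Since the right $A$-action on a Hopf module $V$ is non-degenerate and surjective, it extends canonically to a right $\M(A)$-action on $V$; in particular, $v\cdot l$ is a well-defined element of $V$ for any $v\in V$ and $l\in L\subseteq \M(A)$. I propose to define the right $L$-action on $V^c=\varpi_V(V)$ by
$$
\varpi_V(v)\cdot l:=\varpi_V(v\cdot l),\qquad v\in V,\ l\in L.
$$
The crucial step is well-definedness, which I would establish by proving the identity
\begin{equation}\label{eq:plan_key}
\varpi_V(v\cdot l)(a)=\varpi_V(v)(la)\qquad \forall\, v\in V,\ l\in L,\ a\in A.
\end{equation}
Indeed, if $\varpi_V(v)=0$ then \eqref{eq:plan_key} forces $\varpi_V(v\cdot l)(a)=0$ for all $a\in A$, hence $\varpi_V(v\cdot l)=0$. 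To prove \eqref{eq:plan_key}, I would write $l=\sqcap^L(c)$ for some $c\in A$, unfold the explicit formula \eqref{eq:rightS} for $\lambda^{S21}$, and use the Hopf-module axiom \eqref{eq:Hr} together with the anti-multiplicativity of $S$ from \cite[Theorem 6.12]{BoGTLC:wmba} and the identities relating $S$ to $\sqcap^L$ and $\overline \sqcap^L$ in \cite[Lemma 6.11]{BoGTLC:wmba}. Associativity and the firm right-unit property of the resulting $L$-action are then formal consequences of \eqref{eq:plan_key}.

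For firmness, recall from \cite[Theorem 4.6~(2)]{BoGTLC:wmba} that $L$ has local units; hence an $L$-module is firm precisely when the $L$-action is surjective. By Lemma \ref{lem:Pi_spanned}~(1), $A$ is spanned by elements $b\,\overline \sqcap^L(c)$ with $b,c\in A$, and the surjectivity of the $A$-action on $V$ then shows that $V$ is spanned by elements $v'\cdot l$ with $v'\in V$ and $l\in L$. Applying $\varpi_V$ yields $V^c=\varpi_V(V\cdot L)=\varpi_V(V)\cdot L$, so the $L$-action on $V^c$ is surjective and $V^c$ is firm.

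For any morphism $f:V\to V'$ of Hopf modules, set $f^c(\varpi_V(v)):=\varpi_{V'}(f(v))$. The identity $\varpi_{V'}(f(v))=f\circ\varpi_V(v)$ holds because $f$ commutes with the $A$-action and with the coaction $\varrho$ (and therefore also with $\lambda^{S21}$, which is built from $\varrho$ and $S$). This identity simultaneously proves that $f^c$ is well-defined and yields right $L$-linearity: $f^c(\varpi_V(v)\cdot l)=\varpi_{V'}(f(v\cdot l))=\varpi_{V'}(f(v)\cdot l)=f^c(\varpi_V(v))\cdot l$, where the middle equality uses that $f$ is an $\M(A)$-module map. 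The relations $(gf)^c=g^cf^c$ and $\mathrm{id}_V^c=\mathrm{id}_{V^c}$ then follow immediately. The main obstacle is the verification of \eqref{eq:plan_key}: it requires a careful unpacking of $\lambda^{S21}$ and a delicate use of the Hopf-module compatibility between the $A$-action and the coaction, combined with the antipode identities linking $S$ to $\sqcap^L$ and $\overline \sqcap^L$. Once \eqref{eq:plan_key} is in hand, the remaining assertions are purely structural.
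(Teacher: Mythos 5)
Your construction rests on the identity $\varpi_V(v\cdot l)(a)=\varpi_V(v)(la)$, and this identity is false; as a consequence the proposed action $\varpi_V(v)\cdot l:=\varpi_V(v\cdot l)$ is not even well defined on $V^c$. Already for the Hopf module $V=A$ of Example \ref{ex:reg_Hopf_mod} one has $\varpi_A(a)(b)=\sqcap^L(a)b$ (Example \ref{ex:A_omega}), so with $l=\sqcap^L(c)$ your identity would assert $\sqcap^L(a\sqcap^L(c))=\sqcap^L(a)\sqcap^L(c)$. This fails: in the groupoid algebra $A=kC$ of \cite[Example 2.12]{BoGTLC:wmba} one finds $\sqcap^L(g)=t(g)$, the unit at the target of the morphism $g$, whence $\sqcap^L(g\,\sqcap^L(h))=\delta_{s(g),t(h)}\,t(g)$ while $\sqcap^L(g)\sqcap^L(h)=\delta_{t(g),t(h)}\,t(g)$. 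Concretely, take a groupoid with two objects $1,2$, a morphism $g:2\to 1$ and the identity $e_1$ at $1$; then $a=e_1-g$ satisfies $\sqcap^L(a)=e_1-e_1=0$, i.e.\ $\varpi_A(a)=0$, yet $a\cdot e_1=e_1$ and $\varpi_A(e_1)=e_1(-)\neq 0$. So $\varpi_V(v)\cdot l:=\varpi_V(v\cdot l)$ does not descend, and the same example defeats the firmness argument built on the same formula.

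The underlying problem is a conflation of two different structures. The $L$-action that makes the statement work is by precomposition on $\mathsf{Hom}^{(A)}_{(A)}(A,V)$, namely $(f\act l)(a):=f(la)$, and the substantive step is to show that $V^c$ is closed under it. This is done via the identity $\varpi_V(v)\act\sqcap^L(c)=\varpi_V(\overline\sqcap^R(c)\act v)$ of \eqref{eq:L_act}, where $\overline\sqcap^R(c)\act v$ denotes the left $R$-action induced by the \emph{comodule} structure as in Theorem \ref{thm:ff}, not the extension of the $A$-module action to $\M(A)$; its proof uses $\sqcap^L(c)S(b)=S(b\,\overline\sqcap^R(c))$ from \cite[Lemma 6.14]{BoGTLC:wmba} together with Lemma \ref{lem:l-r-bim}~(3). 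The relation of $\varpi_V$ to the \emph{module} action is the different identity $\varpi_V(v\cdot a)=\varpi_V(v\br\overline\sqcap^R(a))$ (with the right comodule-induced $R$-action) appearing in the proof of Lemma \ref{lem:zeta}, and it does not give your formula. Surjectivity of the $L$-action on $V^c$ --- hence firmness, since $L$ has local units by the symmetric counterpart of \cite[Theorem 4.6~(2)]{BoGTLC:wmba}, which as stated concerns $R$ --- then follows from surjectivity of the left $R$-action on $V$, not from surjectivity of the $A$-action. Your treatment of morphisms is fine once the correct action is in place.
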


\begin{proof}
Since $L$ is a (non-unital) subalgebra of $\mathsf{Hom}^{(A)}_{(A)}(A,A)$, for
any right-right $A$-Hopf module $V$ the vector space
$\mathsf{Hom}^{(A)}_{(A)}(A,V)$ is a right $L$-module via composition: 
$$
(f\act l)(a):=f(la), \qquad \textrm{for}\ f\in
\mathsf{Hom}^{(A)}_{(A)}(A,V),\ l\in L,\ a\in A.
$$
Let us see that the subspace $V^c\subseteq \mathrm{Hom}^{(A)}_{(A)}(A,V)$ is
closed under this action. Using \cite[Lemma 6.14]{BoGTLC:wmba} in the third
equality and Lemma \ref{lem:l-r-bim}~(3) in the penultimate one, for any $v\in
V$ and $a,b,c\in 
A$ we get
\begin{eqnarray*}
(\varpi_V(v)\act \sqcap^L(c))(S(b)a)&=&
\varpi_V(v)(\sqcap^L(c)S(b)a)=
\cdot\ \lambda^{S21}(v\ox \sqcap^L(c)S(b)a)\\
&=&\cdot\ \lambda^{S21}(v\ox S(b\overline\sqcap^R(c))a)
\stackrel{\eqref{eq:lambda_tilde}}=
\cdot[((V\ox S)\varrho(v\ox b\overline\sqcap^R(c)))(1\ox a)]\\
&=&\cdot[((V\ox S)\varrho(\overline\sqcap^R(c)\act v\ox b))(1\ox a)]=
\varpi_V(\overline\sqcap^R(c)\act v)(S(b)a).
\end{eqnarray*}
Thus regarding $V$ as a left $R:=\sqcap^R(A)=\overline \sqcap^R(A)$-module as
in Theorem \ref{thm:ff}, it follows by \cite[Proposition 6.13]{BoGTLC:wmba} that
\begin{equation}\label{eq:L_act}
\varpi_V(v)\act \sqcap^L(c)=\varpi_V(\overline\sqcap^R(c)\act v),\qquad
\forall v\in V, \ c\in A
\end{equation}
proving that $V^c$ is an $L$-submodule of $\mathsf{Hom}^{(A)}_{(A)}(A,V)$. 
Moreover, since the left $R$-action on $V$ is surjective by Theorem
\ref{thm:ff}, \eqref{eq:L_act} also implies that the right $L$-action on $V^c$
is surjective. Since $L$ has local units by a symmetric variant of
\cite[Theorem 4.6~(2)]{BoGTLC:wmba}, this proves that $V^c$ is a firm right
$L$-module. 

For a morphism of Hopf modules $f:V\to V'$, and for $v\in V$ and $a,b\in A$,
\begin{eqnarray*}
f\varpi_V(v)(S(b)a)&=&
f\cdot\ (((V\ox S)\varrho(v\ox b))(1\ox a))\\
&=&\cdot\ (((V'\ox S)\varrho'(f(v)\ox b))(1\ox a))
=\varpi_{V'}(f(v))(S(b)a).
\end{eqnarray*}
Hence (in view of \cite[Proposition 6.13]{BoGTLC:wmba}) there is a map
$$
f^c:V^c \to V^{\prime c}, \qquad
\varpi_V(v)\mapsto f\varpi_V(v)=\varpi_{V'}(f(v)).
$$
It is a morphism of $L$-modules since for any $v\in V$ and $c\in A$,
\begin{eqnarray*}
f(\varpi_V(v)\act \sqcap^L(c))&\stackrel{\eqref{eq:L_act}}=&
f\varpi_V(\overline\sqcap^R(c)\act v)=
\varpi_{V'}f(\overline\sqcap^R(c)\act v)\\
&=&\varpi_{V'}(\overline\sqcap^R(c)\act f(v))\stackrel{\eqref{eq:L_act}}=
(\varpi_{V'}f(v))\act \sqcap^L(c)=
(f\varpi_V(v))\act \sqcap^L(c).
\end{eqnarray*}
In the third equality we used that $f$ is a left $R$-module homomorphism by
Theorem \ref{thm:ff}. 
\end{proof}

Before we can prove the Fundamental Theorem of Hopf Modules, we need the
following technical lemma.

\begin{lemma}\label{lem:zeta}
Consider a regular weak multiplier bialgebra $A$ with left and right full
comultiplication possessing an antipode $S$. For any right-right $A$-Hopf
module $(V,\cdot,\lambda,\varrho)$, there is a linear map
$$
\zeta^0:V\to V^c\ox A,\qquad
v\cdot a \mapsto (\varpi_V\ox A)\lambda(v\ox a),
$$
where $V^c$ is the coinvariant space in Definition \ref{def:coinv} and
$\varpi_V$ is the map in Proposition \ref{prop:omega}. 
\end{lemma}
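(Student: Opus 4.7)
The plan is to establish well-definedness of $\zeta^0$ by showing that the auxiliary linear map $\tilde\zeta : V \otimes A \to V^c \otimes A$, $v \otimes a \mapsto (\varpi_V \otimes A)\lambda(v \otimes a)$, factors through the surjection $\cdot : V \otimes A \to V$. Because $V$ is a non-degenerate idempotent right $A$-module, $\cdot$ realizes $V$ as the coequalizer of $V \otimes \mu$ and $\cdot \otimes A$, so factorization is equivalent to the $A$-balancing identity
\[
(\varpi_V \otimes A)\lambda(v \otimes ab) \;=\; (\varpi_V \otimes A)\lambda(v \cdot a \otimes b), \qquad v \in V,\ a, b \in A.
\]
Granted this, the formula $\zeta^0(v \cdot a) := \tilde\zeta(v \otimes a)$ defines the required linear map.

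The key steps, in order, are to unfold each side with its natural identity: using the right $A$-linearity of $\lambda$ (Proposition \ref{prop:prep_comod}~(1)), the left hand side becomes $\varpi_V(v^{(0)}) \otimes a^{(1)} b$ with $\lambda(v \otimes a) = v^{(0)} \otimes a^{(1)}$; using the Hopf module axiom \eqref{eq:Hl}, the right hand side becomes $\varpi_V(v^{(0')} \cdot a^1) \otimes (b^1)^{(1')}$ with $T_1(a \otimes b) = a^1 \otimes b^1$ and $\lambda(v \otimes b^1) = v^{(0')} \otimes (b^1)^{(1')}$. To compare them in $V^c \otimes A$, I would use the canonical map
\[
V^c \otimes A \hookrightarrow \mathrm{Lin}(A,\, V \otimes A), \qquad f \otimes c \mapsto \bigl[b \mapsto f(b) \otimes c\bigr],
\]
whose injectivity is a routine basis-expansion argument in $A$. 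Evaluating at $c \in A$ converts the desired equality into the $V \otimes A$-valued identity
\[
\cdot\,\lambda^{S21}(v^{(0)} \otimes c) \otimes a^{(1)} b \;=\; \cdot\,\lambda^{S21}(v^{(0')} \cdot a^1 \otimes c) \otimes (b^1)^{(1')}.
\]

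The main obstacle is proving this last identity, because it must bridge two a priori unrelated structures: the Hopf module axiom transports $\lambda$ across the $A$-action via $T_1$, whereas $\varpi_V$ is built from the antipode-twisted pseudo-inverse $\lambda^{S21}$. My plan is to pre-compose $\lambda^{S21} \otimes A$ with the coassociativity equation \eqref{eq:l-coass}, collapse $\lambda^{S21}\lambda$ using Lemma \ref{lem:lambda_tilde}~(3) into the idempotent $G_1$, and then post-compose with $\cdot \otimes A$ and invoke the identity $\cdot\,G_1 = \cdot$ from \eqref{eq:.G_1}. On one side the transformed equation becomes exactly $\lambda(v \cdot a \otimes b)$ via \eqref{eq:Hl}, while on the other side it becomes a $\cdot\,\lambda^{S21}$-expression that matches the target shape once the right $A$-linearities of $\lambda$ and $\lambda^{S21}$ are applied. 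This interplay of coassociativity, $\lambda^{S21}\lambda = G_1$, and $\cdot\,G_1 = \cdot$ is precisely the ingredient that reconciles the Hopf module side with the coinvariant side and gives the $A$-balancing.
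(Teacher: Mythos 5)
You correctly isolate the crux: well-definedness reduces to the balancing identity $(\varpi_V\ox A)\lambda(v\ox ab)=(\varpi_V\ox A)\lambda(v\cdot a\ox b)$, and this is indeed the identity the paper establishes. But your route from that identity to well-definedness, and more importantly your proposed proof of the identity itself, both have problems. The lesser one first: you assert that a non-degenerate right $A$-module with surjective action is automatically the coequalizer of $V\ox\mu$ and $\cdot\ox A$, i.e.\ that $V\cong V\ox_A A$ is firm over $A$. For a non-unital idempotent algebra this is not a routine fact and you do not prove it; it is also unnecessary. The paper instead combines the balancing identity with the right $A$-linearity of $\lambda$ (Proposition \ref{prop:prep_comod}~(1)) to get $((\varpi_V\ox A)\lambda(v\ox a))(1\ox b)=(\varpi_V\ox A)\lambda(v\cdot a\ox b)$, which vanishes for all $b$ whenever $v\cdot a=0$; non-degeneracy of the right $A$-module $\mathsf{Hom}^{(A)}_{(A)}(A,V)\ox A$ then finishes the argument. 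You should substitute this for the coequalizer step.

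The genuine gap is in the proof of the balancing identity. The tools you list --- coassociativity \eqref{eq:l-coass}, $\lambda^{S21}\lambda=G_1$ from Lemma \ref{lem:lambda_tilde}~(3), and $\cdot\,G_1=\cdot$ from \eqref{eq:.G_1} --- only reproduce \eqref{eq:Hl} up to the pseudo-invertibility of $\lambda$ (this is essentially the computation already carried out in Proposition \ref{prop:omega}); they never address the actual difficulty. After applying \eqref{eq:Hl}, the right-hand side contains $\varpi_V(v^{\lambda}\cdot a^1)$, so one must control how $\varpi_V$ (equivalently, how $\lambda^{S21}$ in its \emph{first} tensor factor) interacts with the module action. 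The indispensable step is the identity $\varpi_V(w\cdot a)=\varpi_V(w\,\overline\sqcap^R(a))$, where $\overline\sqcap^R(a)$ acts via the $R$-action of Theorem \ref{thm:ff}; proving it requires the other Hopf-module axiom \eqref{eq:Hr} together with the antipode identities $\mu(A\ox S)T_3=\mu(\sqcap^L\ox S)$ and $\sqcap^L(a)S(b)=S(b\,\overline\sqcap^R(a))$ (Lemma 6.14 of \cite{BoGTLC:wmba}) and Lemma \ref{lem:l-r-bim}~(6). One then still needs Lemma \ref{lem:l-r-bim}~(1) and $\mu(\sqcap^L\ox A)T_1=\mu$ to convert $\varpi_V(v^{\lambda}\overline\sqcap^R(a^1))\ox b^{1\lambda}$ back into $(\varpi_V\ox A)\lambda(v\ox ab)$. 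None of these ingredients appear in your outline, and the right $A$-linearity of $\lambda^{S21}$ in its second argument, which you do invoke, cannot substitute for them; as written the computation does not close.
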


\begin{proof}
We only need to prove that $\zeta^0$ is a well-defined linear map; that is, it
takes zero to zero. So assume that $v\cdot a=0$. Then using the implicit
summation index notation $T_1(a\ox b)=a^1\ox b^1$ and $\lambda(v\ox
b)=v^\lambda\ox b^\lambda$, it follows for any $b\in A$ that
\begin{eqnarray}\label{eq:zeta_0}
0&=&
(\varpi_V\ox A)\lambda(v\cdot a\ox b)\stackrel{\eqref{eq:Hl}}=
\varpi_V(v^\lambda\cdot a^1)\ox b^{1 \lambda}
=\varpi_V(v^\lambda\overline\sqcap^R(a^1))\ox b^{1 \lambda}\\
&=&(\varpi_V\ox A)\lambda(V\ox \mu(\sqcap^L \ox A)T_1)(v\ox a\ox b)\nonumber\\
&=&(\varpi_V\ox A)\lambda(v\ox ab)=
((\varpi_V\ox A)\lambda(v\ox a))(1\ox b), \nonumber
\end{eqnarray}
so that by non-degeneracy of the right $A$-module
$\mathsf{Hom}^{(A)}_{(A)}(A,V)\ox A$, the expression $(\varpi_V\ox
A)\lambda(v\ox a)$ is equal to zero as needed. 

The third equality in \eqref{eq:zeta_0} follows by the following reasoning. By
\eqref{eq:T_23}, the anti-multiplicativity of $S$ (cf. \cite[Theorem
6.12]{BoGTLC:wmba} ) and (6.14) in \cite{BoGTLC:wmba} (cf. \ref{eq:S_id}), for
any $a,b,c\in A$ 
\begin{eqnarray*}
c(\mu(A\ox S)T_3(a\ox b))&=&
\mu(A\ox S)((c\ox 1)T_3(a\ox b))=
\mu(A\ox S)((1\ox b)T_2(c\ox a))\\
&=&(\mu(A\ox S)T_2(c\ox a))S(b)=
c\sqcap^L(a)S(b)
\end{eqnarray*}
so that $\mu(A\ox S)T_3=\mu(\sqcap^L\ox S)$. Using this identity in the third
equality, 
\cite[Lemma 6.14]{BoGTLC:wmba} in the fourth one and Lemma
\ref{lem:l-r-bim}~(6) in the penultimate one, we obtain 
\begin{eqnarray*}
\varpi_V(v\cdot a)(S(b)c)&\stackrel{\eqref{eq:lambda_tilde}}=&
(v\cdot a)^\varrho \cdot S(b^\varrho)c\stackrel{\eqref{eq:Hr}}=
v^\varrho \cdot (\mu(A\ox S)T_3(a\ox b^\varrho))c=
v^\varrho \cdot \sqcap^L(a)S(b^\varrho)c\\
&=&v^\varrho \cdot S(b^\varrho\overline\sqcap^R(a))c=
(v\act \overline\sqcap^R(a))^\varrho \cdot S(b^\varrho)c=
\varpi_V(v\act \overline\sqcap^R(a))(S(b)c),
\end{eqnarray*}
for any $v\in V$ and $a,b,c\in A$, where $V$ is regarded as a right
$R=\sqcap^R(A)=\overline \sqcap^R(A)$-module as in Theorem \ref{thm:ff} and
the implicit summation index notation $\varrho(v\ox b)=:v^\varrho\ox
b^\varrho$ is used. Applying \cite[Proposition 6.13]{BoGTLC:wmba}, this proves
$\varpi_V(v\cdot a)= \varpi_V(v\act \overline\sqcap^R(a))$ hence the third
equality in \eqref{eq:zeta_0}. The fourth equality in \eqref{eq:zeta_0}
follows by Lemma \ref{lem:l-r-bim}~(1), the penultimate equality follows by
\cite[Lemma 3.7~(3)]{BoGTLC:wmba} and the last one follows since $\lambda$ is
a right $A$-module map, see Proposition \ref{prop:prep_comod}~(1).
\end{proof}

\begin{theorem}\label{thm:FundThm}
Consider a regular weak multiplier bialgebra $A$ with left and right full
comultiplication possessing an antipode $S$. The functors in Proposition
\ref{prop:ind_functor} and Proposition \ref{prop:coinv_functor} are mutually
inverse equivalences. 
\end{theorem}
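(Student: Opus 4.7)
The plan is to exhibit explicit natural transformations $\epsilon_V : V^c \ox_L A \to V$ and $\eta_P : P \to (P \ox_L A)^c$ that will serve as counit and unit of the claimed equivalence, and to prove them mutually inverse using the infrastructure already set up.

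Define the counit by $\epsilon_V(\varpi_V(v) \ox_L a) := \varpi_V(v)(a)$. It is well-defined over $\ox_L$ because the right $L$-action on $V^c$ is by precomposition, $(f \act \sqcap^L(c))(a) = f(\sqcap^L(c) a)$, matching the left $L$-action on $A$. Each $\varpi_V(v)$ is already a morphism of Hopf modules and the Hopf module structure on $V^c \ox_L A$ comes entirely from the $A$-factor, so $\epsilon_V$ is a Hopf module map. Its candidate inverse is the map $\zeta_V : V \to V^c \ox_L A$ obtained by composing $\zeta^0$ of Lemma \ref{lem:zeta} with the canonical projection $V^c \ox A \twoheadrightarrow V^c \ox_L A$. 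The identity $\epsilon_V \zeta_V = \mathsf{id}_V$ is immediate from Lemma \ref{lem:lambda_tilde}(3) and \eqref{eq:.G_1}: for $v \cdot a \in V$,
$$
\epsilon_V \zeta_V(v \cdot a) \;=\; \varpi_V(v^\lambda)(a^\lambda) \;=\; \cdot\, \lambda^{S21}\lambda(v \ox a) \;=\; \cdot\, G_1(v \ox a) \;=\; v \cdot a.
$$
For the reverse composite, unfolding the definitions and applying Lemma \ref{lem:lambda_tilde}(2) gives
$$
\zeta_V \epsilon_V(\varpi_V(v) \ox_L a) \;=\; (\varpi_V \ox A)\lambda \lambda^{S21}(v \ox a) \;=\; (\varpi_V \ox A) E_1(v \ox a).
$$
Writing $a = bc$ (by idempotency of $A$) and expanding $E_1(v \ox bc) = (\overline\sqcap^R(b^1)\bl v) \ox c^1$ via identity $(2.3)$ of \cite{BoGTLC:wmba} (where $T_1(b \ox c) = b^1 \ox c^1$), transporting the $\overline\sqcap^R$-factor across $\ox_L$ as a right $L$-action $\sqcap^L(b^1)$ on $\varpi_V(v)$ via \eqref{eq:L_act}, and finally collapsing by $\sqcap^L(b^1) c^1 = bc$ (which follows from axiom $(\mathrm{iv})$ of Definition \ref{def:wmba} together with $E_1 T_1 = T_1$, and is also a case of \cite[Lemma 3.7(3)]{BoGTLC:wmba}), we obtain $\varpi_V(v) \ox_L bc$; by idempotency this establishes $\zeta_V \epsilon_V = \mathsf{id}$.

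For the unit, the key observation is that extending Example \ref{ex:A_omega} to $P \ox_L A$ by left $L$-linearity yields $\varpi_{P \ox_L A}(p \ox_L a)(c) = p \ox_L \sqcap^L(a) c = p \act \sqcap^L(a) \ox_L c$. Since $P$ is firm and $L = \sqcap^L(A)$, every element of $P$ has the form $p \act \sqcap^L(a)$, so the assignment $\eta_P(p \act \sqcap^L(a)) := \varpi_{P \ox_L A}(p \ox_L a)$ is well-defined (the image depends only on $p \act \sqcap^L(a)$) and manifestly surjective onto $(P \ox_L A)^c = \varpi_{P \ox_L A}(P \ox_L A)$. For injectivity: assume $\eta_P(q) = 0$, so $q \ox_L c = 0$ in $P \ox_L A$ for every $c \in A$; composing with the well-defined $L$-module retraction $\pi : P \ox_L A \to P$, $p \ox_L a \mapsto p \act \sqcap^L(a)$ (whose well-definedness uses the $L$-linearity of $\sqcap^L : A \to L$, a consequence of identities in \cite[Section 3]{BoGTLC:wmba}), gives $q \act \sqcap^L(c) = 0$ for all $c \in A$, and then the local units of $L$ together with the firmness of $P$ force $q = 0$. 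Naturality of both $\epsilon$ and $\eta$ is immediate from the defining formulae, so the two functors form an equivalence as claimed.

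The main technical obstacle is the second computation above, namely the collapse $(\varpi_V \ox A) E_1(v \ox bc) = \varpi_V(v) \ox_L bc$, where the definition of $E_1$ on a comodule, the right $L$-action on $V^c$ encoded in \eqref{eq:L_act}, and the multiplier identities \cite[identity (2.3)]{BoGTLC:wmba} and \cite[Lemma 3.7(3)]{BoGTLC:wmba} must be orchestrated in a single chain of rewrites; the injectivity of $\eta_P$ is mildly subtle for the same reason that the left $L$-linearity of $\sqcap^L$ must be invoked to even define the relevant retraction.
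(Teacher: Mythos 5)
Your proposal is correct and follows essentially the same route as the paper: the counit $\epsilon_V$ is the paper's map $\xi$, your $\zeta_V$ is the paper's $\zeta$ built from Lemma \ref{lem:zeta}, and both composites are verified with exactly the same ingredients (Lemma \ref{lem:lambda_tilde}~(2)--(3), \eqref{eq:.G_1}, identity (2.3) and Lemma 3.7~(3) of \cite{BoGTLC:wmba}, and \eqref{eq:L_act}). The only cosmetic difference is that you unpack the unit isomorphism $P\cong (P\ox_L A)^c$ explicitly via the formula $\varpi_{P\ox_L A}(p\ox_L a)(c)=p\act\sqcap^L(a)\ox_L c$, where the paper simply writes the chain $(P\ox_L A)^c\cong P\ox_L A^c\cong P\ox_L L\cong P$ using Example \ref{ex:A_omega} and firmness.
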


\begin{proof}
For any firm right $L$-module $P$, 
$$
(P\ox_L A)^c\cong P\ox_L A^c\cong P\ox_L L\cong P
$$
as $L$-modules.
In the second step we applied Example \ref{ex:A_omega} and in the last step we
used that $P$ is firm. This isomorphism is clearly natural in $P$.

For a right-right Hopf module $(V,\cdot,\lambda,\varrho)$, we claim that
$$
\xi:V^c\ox_L A \to V,\qquad 
\varpi_V(v)\ox_L a \mapsto \varpi_V(v)(a)=\cdot\ \lambda^{S21}(v\ox a)
$$
is a natural isomorphism of Hopf modules. It is a morphism of Hopf modules
since $\varpi_V(v):A\to V$ is so by Proposition \ref{prop:omega}. It is
natural in $V$ since for a morphism $f:V\to V'$ of Hopf modules and $a\in A$,
$$
\xi'(f^c\ox_L A)(\varpi_V(v)\ox_L a)= 
f\varpi_{V}(v)(a)=
f\xi(\varpi_V(v)\ox_L a).
$$
The candidate to be the inverse of $\xi$ is 
$$
\zeta:V \to V^c\ox_L A, \qquad 
v\cdot a \mapsto \pi(\varpi_V\ox A)\lambda(v\ox a),
$$
where $\pi$ is the canonical epimorphism $V^c\ox A\to V^c\ox_L A$. It is a
well-defined linear map by Lemma \ref{lem:zeta}. It is the inverse of $\xi$
since by Lemma \ref{lem:lambda_tilde}~(3) and \eqref{eq:.G_1}, for any $v\in
V$ and $a\in A$
$$
\xi\zeta(v\cdot a)=
\cdot \ \lambda^{S21}\lambda(v\ox a)=
\cdot \ G_1(v\ox a)=
v\cdot a
$$
and by Lemma \ref{lem:lambda_tilde}~(2), identity (2.3) in \cite{BoGTLC:wmba},
\eqref{eq:L_act} and by \cite[Lemma 3.7~(3)]{BoGTLC:wmba}, for any $v\in V$
and $a,b\in A$
\begin{eqnarray*}
\zeta\xi(\varpi_V(v)\ox_L ab)&=&
\pi(\varpi_V\ox A)\lambda\lambda^{S21}(v\ox ab)=
\pi(\varpi_V\ox A)E_1(v\ox ab)\\
&=&\pi(\varpi_V\ox A)(((\overline \sqcap^R\ox A)T_1(a\ox b))(v\ox 1))\\
&=&\pi((\varpi_V(v)\ox 1)((\sqcap^L\ox A)T_1(a\ox b)))\\
&=&\varpi_V(v)\ox_L \mu (\sqcap^L\ox A)T_1(a\ox b)=
\varpi_V(v)\ox_L ab.
\end{eqnarray*}
\end{proof}

\end{document}